\documentclass[leqno,twoside]{article}[10 pt]

\usepackage[english]{babel}

\usepackage[letterpaper,top=2cm,bottom=2cm,left=2cm,right=2cm,marginparwidth=1.75cm]{geometry}

\usepackage[utf8]{inputenc}
\usepackage[T1]{fontenc}
\usepackage{mathrsfs}
\usepackage{fancyhdr}
\usepackage{lastpage}
\usepackage{extramarks}
\usepackage[inline]{enumitem}
\usepackage{amsmath,amssymb,latexsym,amsfonts, amsthm}
\usepackage{comment}
\usepackage[square,sort,comma,numbers]{natbib}
\usepackage{mathtools}
\usepackage{extarrows}
\usepackage{titlesec}
\usepackage{stmaryrd}
\usepackage{upgreek}
\usepackage{url}
\SetSymbolFont{stmry}{bold}{U}{stmry}{m}{n}

\usepackage[dvipsnames]{xcolor}
\definecolor{tiffanyblue}{rgb}{0.04, 0.73, 0.71}
\definecolor{darkscarlet}{rgb}{0.34, 0.01, 0.1}
\definecolor{falured}{rgb}{0.5, 0.09, 0.09}
\usepackage[linktocpage,
  colorlinks,
            linkcolor=darkscarlet,
            anchorcolor=darkscarlet,
            citecolor=tiffanyblue,
            urlcolor=falured
            ]{hyperref}

\PassOptionsToPackage{hyphens}{url}\usepackage{hyperref}

\usepackage{url}

\usepackage[all]{xy}

\usepackage{tikz}
\usepackage{tikz-cd}
\usetikzlibrary{arrows}
\usetikzlibrary{matrix}

\newtheorem{mainthm}{Theorem}
\newtheorem{maincor}[mainthm]{Corollary}
\newtheorem*{mainrem}{Remark}

\usepackage{fancyhdr}

\makeatletter
\def\@pnumwidth{2em}
\makeatother

\makeatletter
\newcommand{\fundingnote}[1]{%
  \begingroup
  \renewcommand{\@makefnmark}{}%
  \footnotetext[0]{#1}%
  \endgroup
}
\makeatother

\usepackage{abstract}

\renewenvironment{abstract}
 {\small
  \list{}{%
    \setlength{\leftmargin}{5mm}% <---------- CHANGE HERE
    \setlength{\rightmargin}{\leftmargin}%
  }%
  \item\relax}
 {\endlist}

\titleformat*{\section}{\large\centering\scshape}
\titleformat*{\subsection}{\bfseries}
\titlelabel{\thetitle.\quad}

\usepackage{titletoc}% http://ctan.org/pkg/titletoc
\titlecontents{section}[0pt]
{\addvspace{2pt}}
{\contentspush{\thecontentslabel.\quad}\normalsize}
{\Large}
{\hspace{1em plus 1fill}\large\contentspage}

\newcommand{\R}{\mathbb{R}}
\newcommand{\Z}{\mathbb{Z}}

\newcommand{\C}{\mathbb{C}}

\newcommand{\Q}{\mathbb{Q}}

\newcommand{\la}{\langle}
\newcommand{\ra}{\rangle}

\newcommand{\Spec}{\mathrm{Spec}}

\newcommand{\Tr}{\mathrm{Tr}}
\newcommand{\Norm}{\mathrm{N}}

\newcommand{\vol}{\mathrm{vol}}

\newcommand{\GL}{\mathrm{GL}}

\def\map#1#2{\def\mapname{#1}\def\mapdomain{#2}\def\maparrow{\longrightarrow}\futurelet\next\mapAR}
\def\mapAR{\ifx\next[\expandafter\mapar\else\expandafter\mapAux\fi}
\def\mapar[#1]{\def\maparrow{#1}\expandafter\mapAux}
\def\mapAux#1#2{\def\mapcodomain{#1}\def\mapelement{#2}\def\mapmaparrow{\longmapsto}\futurelet\next\mapMAR}
\def\mapMAR{\ifx\next[\expandafter\mapmar\else\expandafter\mapAuxAux\fi}
\def\mapmar[#1]{\def\mapmaparrow{#1}\expandafter\mapAuxAux}
\def\mapAuxAux#1{\def\mapcoelemnt{#1}\futurelet\next\mapAuxAuxAux}
\def\mapAuxAuxAux{\def\nothing{}%
\ifx\mapname\nothing%
  \mapdomain\maparrow\mapcodomain\qquad\mapelement\mapmaparrow\mapcoelemnt%%
\else%
  \mapname:\mapdomain\maparrow\mapcodomain\qquad\mapelement\mapmaparrow\mapcoelemnt%
\fi%
}
\def\Map#1#2{\def\Mapname{#1}\def\Mapdomain{#2}\def\Maparrow{\longrightarrow}\futurelet\next\MapAR}
\def\MapAR{\ifx\next[\expandafter\Mapar\else\expandafter\MapAux\fi}
\def\Mapar[#1]{\def\Maparrow{#1}\expandafter\MapAux}
\def\MapAux#1#2{\def\Mapcodomain{#1}\def\Mapelement{#2}\def\Mapmaparrow{\longmapsto}\futurelet\next\MapMAR}
\def\MapMAR{\ifx\next[\expandafter\Mapmar\else\expandafter\MapAuxAux\fi}
\def\Mapmar[#1]{\def\Mapmaparrow{#1}\expandafter\MapAuxAux}
\def\MapAuxAux#1{\def\Mapcoelemnt{#1}\futurelet\next\MapAuxAuxAux}
\def\MapAuxAuxAux{\def\nothing{}%
\ifx\Mapname\nothing%
  \begin{array}{rcl}{}
  \Mapdomain&\Maparrow&\Mapcodomain\\{}
  \Mapelement&\Mapmaparrow&\Mapcoelemnt
  \end{array}
\else%
  \begin{array}{rrcl}{}
  \Mapname:&\Mapdomain&\Maparrow&\Mapcodomain\\{}
  &\Mapelement&\Mapmaparrow&\Mapcoelemnt
  \end{array}
\fi%
}

\renewcommand{\subset}{\subseteq}

\newtheorem{thm}{Theorem}[section]
\newtheorem{notation}[thm]{Notation}
\newtheorem{lem}[thm]{Lemma}
\newtheorem{prop}[thm]{Proposition}
\newtheorem{cor}[thm]{Corollary}

\newtheorem{defn}[thm]{Definition}
\newtheorem{rem}[thm]{Remark}
\newtheorem{exmp}[thm]{Example}
\newtheorem{constr}[thm]{Construction}
\numberwithin{equation}{section}

\usetikzlibrary{calc}
\usetikzlibrary{arrows.meta}

\usepackage{mathtools}
\usetikzlibrary{arrows}
\usepackage{xparse}
\NewDocumentCommand{\xrightleftarrow}{ O{}O{} }{%
  \mathrel{%
    \vcenter{
      \hbox{%
        \begin{tikzpicture}
          \node[minimum width=0.75cm,align=center] (a){\text{\vphantom{hg}#1}\\[0.3ex]\vphantom{hg}#2};
          \draw[-stealth] ([yshift=0.4ex]a.west) -- ([yshift=0.4ex]a.east);
          \draw[stealth-] ([yshift=-0.4ex]a.west) -- ([yshift=-0.4ex]a.east);
        \end{tikzpicture}
      }
    }%
  }\!\!%
}

\title{\Large\bfseries\boldmath
Algebraic Eisenstein classes for \texorpdfstring{$\mathrm{GL}_n$}{GLn} with applications to the Bianchi case}
\author{ZHENGHANG DU}
\date{}

\begin{document}
\maketitle
\fundingnote{This research was supported by the Deutsche Forschungsgemeinschaft (DFG) through SFB 1085 ``Higher Invariants''.}
\vspace{-7mm}
\begin{abstract}\small
  \par\noindent\textsc{\abstractname.}
  Let $L$ be a number field containing a CM field $K$.
  We compute an explicit current representing the
  $\GL_{\mathcal O_K}(\mathcal O_L)$-equivariant algebraic
  Eisenstein--Kronecker classes constructed by Kings--Sprang.
  In the Bianchi case, where $K$ is imaginary quadratic and $[L:K]=2$,
  this yields a purely algebraic construction of Eisenstein cohomology
  classes, even though the associated locally symmetric space is not
  an algebraic variety.
  We express their constant terms at the cusp in terms of partial
  Hecke $L$-values of $K$.
\end{abstract}

\setcounter{tocdepth}{1}
\tableofcontents

\section*{Introduction}

Eisenstein cohomology studies the part of the cohomology of an arithmetic
locally symmetric space that is detected on the Borel--Serre boundary.
More precisely, if $\Gamma$ is an arithmetic group with associated locally
symmetric space $Y_\Gamma$, and $\mathcal V_E$ is an
algebraic local system on $Y_\Gamma$ defined over a number field $E$, the
basic map is the boundary restriction
\[
  \operatorname{res}_{\partial}:
  H^\bullet(Y_\Gamma,\mathcal V_E)
  \longrightarrow
  H^\bullet(\partial\overline{Y}_\Gamma^{\,\mathrm{BS}},\mathcal V_E).
\]
Over $\C$, the Eisenstein operator gives a section of this map on its
image,
\[
  \operatorname{Eis}:
  \operatorname{im}(\operatorname{res}_{\partial})\otimes_E\C
  \longrightarrow
  H^\bullet(Y_\Gamma,\mathcal V_E)\otimes_E\C,
  \qquad
  \operatorname{res}_{\partial}\circ\operatorname{Eis}=\mathrm{id}.
\]
In the classical analytic approach, suitable boundary classes are
lifted using Eisenstein series, whose constant terms encode special
values of $L$-functions.
Since the Eisenstein operator is defined by analytic summation and,
in general, meromorphic continuation, compatibility with the natural
rational structures does not follow formally from the construction.
Harder's rationality proof, for example, compares Galois-conjugate
lifts and uses multiplicity one to exclude a cuspidal discrepancy
\cite{Harder87}.

Our approach starts instead from the equivariant coherent
Eisenstein--Kronecker classes of Kings--Sprang~\cite{KS_2025} and gives explicit
representatives for these algebraically defined classes for $\GL_n$ over
a general CM field $K$ without assuming that the acting arithmetic subgroup is commutative. The algebraic structure comes
from the abelian scheme and its group action, rather than from an
algebraic model of the locally symmetric space, which is generally
non-Hermitian. Using Levin's method of polylogarithmic currents~\cite{Levin_2000}, we compute
their complex realizations, thereby obtaining explicit representatives
of algebraically defined Eisenstein--Kronecker classes in arithmetic-group
cohomology. In the Bianchi case, where $n=2$
and $K$ is imaginary quadratic, we compute the boundary restrictions as
Fourier constant terms at the cusps, which for suitable arithmetic data
are expressed in terms of partial Hecke $L$-values of $K$. The
algebraicity of the corresponding regularized partial values, after
normalization by the appropriate CM periods, then follows directly from
the algebraic construction, without a separate rationality argument for
the Eisenstein operator.

\subsection*{Main results}

Let $L$ be a number field containing a CM field, let $K$ be its maximal
CM subfield, and write $[K:\Q]=2g$, $[L:K]=n$, and $d=gn$. For an
abelian scheme $\mathcal A_0/S$ of relative dimension $g$ with complex
multiplication by $\mathcal O_K$, the Serre tensor construction
$\mathcal A=\mathcal O_L\otimes_{\mathcal O_K}\mathcal A_0$ carries the
natural action of $\GL_{\mathcal O_K}(\mathcal O_L)$. Fix a subgroup
$\Gamma\subseteq\GL_{\mathcal O_K}(\mathcal O_L)$, a $\Gamma$-stable
finite étale torsion subscheme $\mathcal D\subset\mathcal A$ satisfying
the hypotheses of the construction~\ref{notation: cond of D}, and a trace-zero invariant
function $f\in\mathcal O_S[\mathcal D]^{0,\Gamma}$. Writing
$j:\mathcal U_{\mathcal D}=\mathcal A\setminus\mathcal D\hookrightarrow
\mathcal A$, one obtains
\[
  EK_{\Gamma,\mathcal A}(f)\in H^{d-1}\!\left(\mathcal U_{\mathcal D},\Gamma;j^{-1}\!\left(\widehat{\mathcal P}\otimes\Omega_{\mathcal A/S}^d\right)\right).
\]
Here $\widehat{\mathcal P}$ is the completed Poincaré bundle.
Write $EK_{\Gamma,\mathcal A}(f)_\C$ for the complex realization and $S_\infty:=(\GL_n(\C)/U(n))^g$ for the symmetric space attached to $\operatorname{Res}_{K/\Q}\GL_n$.

\begin{mainthm}[Explicit realization of the Eisenstein--Kronecker class;
  see Theorem~\ref{prop:functional equation for currents}]
  \label{thm:intro explicit EK current}
  There is an explicit $\Gamma$-equivariant current $\phi_{g,f}$ on
  $\mathcal A(\C)\times S_\infty$, with values in the pullback of
  $\widehat{\mathcal P}^{an}$, whose coefficients are given explicitly
  by generalized Eisenstein--Kronecker series. Its restriction to
  $\mathcal U_{\mathcal D}(\C)\times S_\infty$ is smooth and closed in
  the equivariant Dolbeault complex, and its cohomology class satisfies
  \[
    [\phi_{g,f}]_{\mathrm{coh}}
    =
    EK_{\Gamma,\mathcal A}(f)_\C\in H^{d-1}\left(
    \mathcal U_{\mathcal D}(\C),\Gamma;
    j^{-1}\bigl(
      \widehat{\mathcal P}^{\mathrm{an}}
      \otimes\Omega^d_{\mathcal A(\C)}
    \bigr)
  \right).
  \]
\end{mainthm}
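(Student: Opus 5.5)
The plan is to follow Levin's method of polylogarithmic currents, adapted to the equivariant Dolbeault setting. I would build $\phi_{g,f}$ directly and then match it with $EK_{\Gamma,\mathcal A}(f)_\C$ through its defining property. In Kings--Sprang the equivariant class is characterised as the unique class whose image under the residue (boundary) map
\[
H^{d-1}\bigl(\mathcal U_{\mathcal D},\Gamma;j^{-1}(\widehat{\mathcal P}\otimes\Omega^d)\bigr)\longrightarrow H^0\bigl(\mathcal D,\Gamma;\mathcal O\bigr)
\]
is $f$. This uniqueness needs the vanishing of the relevant equivariant cohomology of $\mathcal A$ with coefficients in the completed Poincaré bundle, which comes from $H^\bullet(\mathcal A,\widehat{\mathcal P})$ being concentrated at the zero section. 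It then suffices to produce a current $\phi_{g,f}$ with three properties: it is $\Gamma$-equivariant, it satisfies $\bar\partial\phi_{g,f}=\delta_f$ (the current of integration against $f$ on $\mathcal D$, with values in $\widehat{\mathcal P}$), and it is smooth off $\mathcal D$. The analytic residue of $[\phi_{g,f}]$ then equals $f$, and uniqueness gives $[\phi_{g,f}]_{\mathrm{coh}}=EK_{\Gamma,\mathcal A}(f)_\C$.

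\textbf{Step 1: Fourier construction on a fibre.} Fix a point of $S_\infty$, which amounts to a choice of hermitian metric on $\mathcal O_L\otimes\C$ and hence a Kähler metric on $\mathcal A(\C)=\C^d/\Lambda$. Expand $\widehat{\mathcal P}^{an}$ formally around the zero section, so that its sections become power series in the dual coordinates. Write $\delta_f$ as a Fourier series over the dual lattice $\Lambda^\vee$. The trace-zero condition on $f$ kills the constant Fourier coefficient. One then solves $\bar\partial^*$-Green equations termwise: with the Bochner--Martinelli / Levin kernel in $d$ variables, each coefficient becomes a generalized Eisenstein--Kronecker series
\[
\sum_{\gamma}\langle\gamma,z\rangle\,\bar\gamma^{\,a}/\|\gamma\|^{2s}
\]
evaluated at the appropriate $s$, with values in $\widehat{\mathcal P}$. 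The output is a $(d,d-1)$-current.

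\textbf{Step 2: equivariance over $S_\infty$.} Because the metric varies, the single fibrewise form is not $\Gamma$-invariant. I would pass to the Cartan/Mathai--Quillen-type total complex on $\mathcal A(\C)\times S_\infty$ and extend the fibrewise kernel to a form of mixed degree. Under $\gamma\in\Gamma$, which acts on both factors, the resulting form is invariant, since the series transform by reindexing the lattice. Closedness in the equivariant Dolbeault complex amounts to a transgression identity $(\bar\partial+d_{S_\infty})\phi=\delta_f$, and I would check it by differentiating under the Fourier sum.

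\textbf{Step 3: analytic control.} I would show that the series converge as currents, using analytic continuation in $s$ and the functional equation that the paper labels "functional equation for currents". I would also show they are smooth away from $\mathcal D$ by the standard Poisson-summation argument. Finally I would compare Dolbeault with sheaf cohomology of $\Gamma$-equivariant objects via an equivariant Dolbeault resolution.

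\textbf{Main obstacle.} I expect Step 2 to be the hardest. Finding the correct higher components along $S_\infty$ so that the form is simultaneously $\Gamma$-equivariant and closed requires a careful transgression computation, especially since $\Gamma$ need not be commutative. I would first attempt it by writing the kernel as a Gaussian superconnection form in the style of Mathai--Quillen and reading off the components.
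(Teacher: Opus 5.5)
Your reduction is the one the paper uses. Injectivity of the localization map $\partial$ comes from the equivariant vanishing theorem, the computation takes place in the Borel/equivariant Dolbeault model on $\mathcal A(\C)\times S_\infty$, and the current equation is solved mode by mode in Fourier space, with the zero mode removed by the trace-zero condition.

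The gap is Step~2, which you flag but do not carry out, and that is where the substance of the proof lies. A family over $S_\infty$ of fibrewise $(d,d-1)$-solutions is not closed for the total differential, because its $d_{S_\infty}$-derivative is nonzero. You need components of positive $S_\infty$-degree, and these are not corrections: after pullback along a torsion section the antiholomorphic fibre differentials vanish and \emph{only} the top such component survives, so it carries the whole specialized class $EK^b_{\Gamma,\mathcal A}(f,x)$. Proposing to ``write the kernel as a Mathai--Quillen superconnection form and read off the components'' names a framework but constructs nothing.

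The operator is also not $\bar\partial+d_{S_\infty}$ as you write. In the translation-invariant smooth frame $\widehat{\mathcal P}^{\mathrm{sm}}\cong\widehat{\mathrm{TSym}}(\mathcal H^{0,-1})$ it is $\nabla''=\bar\partial_{\mathcal A}+d_{S_\infty}+\nu^{0,1}$. It is the $\nu^{0,1}$ term that produces the higher $\mathrm{TSym}$-coefficients, i.e.\ the higher Eisenstein--Kronecker series. The right-hand side of the equation is $(2\pi i)^d\delta_f$, not $\delta_f$.

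The missing idea is that no superconnection is needed. Keep your fibrewise homotopy: on the $l$-th mode, $\bar\partial^*$ is, up to normalization, contraction $\iota_{\tilde l_g}$ with the antiholomorphic vector field attached to $l_g$. Then let the differential see $S_\infty$.
On the mode $\chi_{l_g}$, the operator $\nabla''$ becomes $C_{l_g}=d''+A_{l_g}$, with $d''=\bar\partial_{\mathcal A}+d_{S_\infty}$ and $A_{l_g}=2\pi i\,d''\la l_g,\cdot\ra_g+\nu^{0,1}$. Flatness gives $C_{l_g}^2=0$.
Cartan's formula gives $C_{l_g}\iota_{\tilde l_g}+\iota_{\tilde l_g}C_{l_g}=L_{\tilde l_g}-\pi\|l_g\|_g^2+\tilde l_g$. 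This is invertible for $l_g\neq0$: $\tilde l_g$ is nilpotent in $\mathrm{TSym}^{\le n}$, and $L_{\tilde l_g}$ is nilpotent because it only trades $d\bar z_i$ for $d_{S_\infty}(\bar l_g)_i$.
Now set $\phi_{l_g,t}:=\iota_{\tilde l_g}(C_{l_g}\iota_{\tilde l_g}+\iota_{\tilde l_g}C_{l_g})^{-1}(\chi_{l_g}(-t)\vol_g)$. This is a finite expression.
It satisfies $C_{l_g}\phi_{l_g,t}=\chi_{l_g}(-t)\vol_g$, because $C_{l_g}$ commutes with the inverse and the source is $C_{l_g}$-closed. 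Its terms $\iota_{\tilde l_g}L^j_{\tilde l_g}\vol_g$ with $j\geq1$ are exactly the higher $S_\infty$-components you were looking for.
$\Gamma$-equivariance is just naturality, $\gamma_*\tilde l_g=\tilde l_{\gamma\cdot g}$, so the non-commutativity of $\Gamma$ plays no role.

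Two smaller points. First, ``the residue of $[\phi_{g,f}]$ is $f$'' needs a check. You must show that $(2\pi i)^d\delta_t$, with coefficient the unit of the canonical splitting of $\widehat{\mathcal P}$ at $t$, represents the image of $[t]$ under the canonical inclusion $\mathcal O_S[\mathcal D]^\Gamma\hookrightarrow H^d_{\mathcal D}$. The boundary map lands in this local cohomology group, not in $H^0(\mathcal D,\Gamma;\mathcal O)$. Second, the label you cite as a ``functional equation for currents'' is the theorem being proved. Convergence as currents needs only polynomial growth of the Fourier coefficients, with no continuation in $s$.
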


Over an affine base $S=\Spec R$, specialization at suitable
$\Gamma$-invariant torsion sections and application of the moment maps
give group-cohomology classes with algebraic Hodge coefficients. For
arithmetic $\Gamma$, their restrictions to parabolic stabilizers are
therefore defined over the same coefficient ring. Thus
Theorem~\ref{thm:intro explicit EK current} gives explicit complex
representatives of global $\GL_n$ classes whose algebraic structure is
fixed before passing to $\C$. In Harder's analytic Eisenstein
construction, by contrast, compatibility with rational cohomology has to
be established separately \cite[Section~4.3]{Harder87}.

In the Bianchi case, where $K$ is imaginary quadratic and $n=2$, the
boundary class at each cusp is represented explicitly by the zeroth
horospherical Fourier coefficient of the Eisenstein--Kronecker form.
For suitable arithmetic torsion data, its two rank-one components give
regularized partial Hecke $L$-values of $K$. Since the boundary class is
algebraic, this gives a direct route to the algebraicity of the
corresponding period-normalized values; summing over ray classes yields
the following consequence for the full imprimitive $L$-values.

\begin{maincor}[Algebraicity of critical Hecke $L$-values,
  see Theorem~\ref{thm: algebraicity of critical Hecke L values}]
  \label{cor:intro algebraicity of critical Hecke L values}
  Let $b\in\mathbb N$, and let $\chi_A$ and $\chi_B$ be algebraic Hecke
  characters of $K$, of conductor dividing $\mathfrak f$, characterized by
  \(
    \chi_A((\lambda))
    =
    \frac{\overline{\lambda}^{\,b}}{\lambda^2}\) and \(
    \chi_B((\lambda))
    =
    \frac{\overline{\lambda}^{\,b+1}}{\lambda}
  \)
  for $\lambda$ satisfying the ray congruence modulo $\mathfrak f$.
  After enlarging the field of definition if necessary, there exists a
  number field $k\subset\C$ containing the coefficient field and $R$ such
  that
  \[
    \frac{(2\pi i)^b}
    {(\Omega_K^\vee)^b\Omega_K^2}
    L_{\mathfrak f}(\chi_A,0)
    \in k,
    \qquad
    \frac{(2\pi i)^{b+1}}
    {(\Omega_K^\vee)^b\Omega_K^2\operatorname{vol}(\mathcal F)}
    L_{\mathfrak f}(\chi_B,0)
    \in k.
  \]
  Here $\Omega_K$ and $\Omega_K^\vee$ are the fixed CM periods, and
  $\mathcal F$ is a fundamental domain for the unipotent lattice at the
  standard cusp.
\end{maincor}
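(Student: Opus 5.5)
The proof will deduce the algebraicity from two facts: the Eisenstein--Kronecker class is algebraic, and its constant term at the cusp is given by partial Hecke $L$-values. We work in the Bianchi setting $n=2$, $g=1$, with $\mathcal A_0$ a CM elliptic curve defined over a number field $R$ and complex multiplication by $\mathcal O_K$. Fix the periods $\Omega_K,\Omega_K^\vee$ so that the lattice of $\mathcal A_0(\C)$ is $\Omega_K\mathcal O_K$ (or a fractional ideal), with the dual period attached to the dual lattice.

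First, choose the torsion data. Let $\mathcal D$ be the $\mathfrak f$-torsion, with $\Gamma$ the corresponding principal-congruence-type subgroup. For each ray class $c$ modulo $\mathfrak f$, take a trace-zero $\Gamma$-invariant function $f_c$ on $\mathcal D$ supported on the torsion points lying in $c$ (corrected by a multiple of the delta at $0$ to make the trace vanish). Then specialize $EK_{\Gamma,\mathcal A}(f_c)$ at a $\Gamma$-invariant torsion section and apply the moment map in degree $b$. This gives a class in $H^{\bullet}(\Gamma, \mathrm{Sym}^b\otimes\cdots)$ with coefficients defined over $k$, a number field containing $R$, the values of $f_c$, and the coordinates of the torsion points.

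Restriction to the Borel stabilizer of the standard cusp is algebraic, so the boundary class is also defined over $k$. This uses the Hodge-coefficient remark following Theorem~\ref{thm:intro explicit EK current}.

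Second, compute the boundary class analytically. By Theorem~\ref{thm:intro explicit EK current}, the complex realization is represented by the current $\phi_{g,f}$, whose coefficients are generalized Eisenstein--Kronecker series. Restricting to the horosphere at the standard cusp and integrating over the unipotent fundamental domain $\mathcal F$ picks out the zeroth Fourier coefficient. In that coefficient, the lattice sum collapses to the sum over the rank-one sublattice (vectors $(0,\lambda)$) plus a Poisson-dual term.

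The two rank-one components take the following form:
\begin{itemize}
\item The first component is a sum $\sum_{\lambda\in c}\overline\lambda^{\,b}\lambda^{-2}|\lambda|^{-2s}$ evaluated at $s=0$, which is a partial $L$-value of $\chi_A$.
\item The second component comes with a factor $1/\operatorname{vol}(\mathcal F)$ produced by the Fourier integral, and gives the partial value of $\chi_B$.
\end{itemize}
The CM periods enter through the homogeneity of the sums: rescaling the lattice by $\Omega_K$ and the dual lattice by $\Omega_K^\vee$ produces exactly the factors $(\Omega_K^\vee)^b\Omega_K^2$. The Fourier normalization supplies the powers $(2\pi i)^b$ and $(2\pi i)^{b+1}$.

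Third, sum over ray classes. Write $L_{\mathfrak f}(\chi,0)=\sum_c\chi(\mathfrak a_c)\,\zeta(c,0)$ with $\chi(\mathfrak a_c)$ algebraic, and enlarge $k$ to contain these values. Then a $k$-linear combination of the algebraic boundary classes has complex components equal to the normalized $L$-values.

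To extract the individual numbers, pair the boundary class with a $k$-rational boundary cycle, or project to the $k$-rational basis vector of the rank-one coefficient module. The resulting scalar lies in $k$, which proves both memberships.

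The main obstacle will be the second step. There are two separate issues:
\begin{itemize}
\item \textbf{Regularization at $s=0$.} The Eisenstein--Kronecker series converge only for $\operatorname{Re}s$ large. One must show that the regularized constant term agrees with the analytically continued $L_{\mathfrak f}(\chi,0)$.
\item \textbf{Exact bookkeeping of constants.} This means the powers of $2\pi i$, the period normalizations, and the volume factor. In addition, the spurious term from the trace-zero correction must be shown to contribute nothing, or only algebraic numbers.
\end{itemize}
I would handle both through Levin's polylogarithmic-current method. Compute the Fourier expansion of $\phi_{g,f}$ for $\operatorname{Re}s\gg0$ term by term, identify the constant term with a Kronecker double series in $s$, and use its functional equation (Theorem~\ref{prop:functional equation for currents}) to continue it to $s=0$.
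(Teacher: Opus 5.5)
Your architecture (algebraic class, restriction to the standard cusp, zeroth Fourier coefficient, rank-one projections, summation over ray classes) is the paper's. The arithmetic core, however, is missing: you need torsion data that make the projected constant term a computable, nonzero multiple of $L_{\mathfrak f}(\chi,0)$, and the data you sketch do not work.

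\emph{The specialization point.} The section must factor through $\mathcal U_{\mathcal D}$ (Notation~\ref{notation: cond of D}). If $\mathcal D$ is the full $\mathfrak f$-torsion with a $\delta_0$-correction, no $\mathfrak f$-torsion section is available, not even $e$. For $\mathfrak f=\mathcal O_K$, which Theorem~\ref{thm: algebraicity of critical Hecke L values} allows, your $\mathcal D$ is just the origin.

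\emph{Reaching all ray classes.} Points of the rank-two torsion $\mathcal A[\mathfrak F]$ do not lie in ray classes. Even on one coordinate line of the fixed lattice $\Omega_K\mathcal O_K\oplus\Omega_K\mathcal O_K\alpha$, a coset $t_1+\mathcal O_K$ with $t_1\in\mathfrak f^{-1}$ primitive parametrizes (via Lemma~\ref{lem:Ray-class parametrization}) only the integral ideals of $[t_1\mathfrak f]$. That ray class always lies over the ideal class $[\mathfrak f]$. So for $h_K>1$ the standard cusp of a single $\mathcal A$ cannot see all of $\mathcal I(\mathfrak f)/\mathcal P_{\mathfrak f}$, unless you also bring in the other cusps.

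The paper instead puts the ray class into the abelian scheme. For each representative $\mathfrak b$ it uses the Serre twist $\mathcal A(\mathfrak A)$ with $\mathfrak a=\mathfrak f\mathfrak b^{-1}$, whose lattice is $\mathfrak a\Omega_K\oplus\mathfrak a\Omega_K\alpha$. It specializes at two different fixed $\mathfrak F$-torsion sections: $x^A=(\Omega_K,0)$ for $\mathrm{pr}_A$ and $x^B=(0,\Omega_K)$ for $\mathrm{pr}_B$. The relevant coset is then $1+\mathfrak f\mathfrak b^{-1}$, which indexes exactly the integral ideals in $[\mathfrak b]$.

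The step you list as an obstacle, the trace-zero correction, is in fact the missing idea. A trace-zero $f$ always produces a difference of lattice sums. Your $\delta_0$-correction contributes a multiple of the Eisenstein sum over $x+\Lambda$, which is another $L$-value, neither zero nor a priori algebraic. The paper handles this as follows.
\begin{enumerate}
\item It takes $\mathcal D=\mathcal A(\mathfrak A)[\mathfrak C]$ of \emph{auxiliary} level $\mathfrak c$, coprime to $\mathfrak f\mathfrak b$, and $f=N_{L/\Q}(\mathfrak C)1_e-1_{\mathcal D}$.
\item The $1_e$-term gives the partial value for $[\mathfrak b]$. The distribution relation (Lemma~\ref{lem: distribution relation}) turns the $1_{\mathcal D}$-term into the partial value for $[\mathfrak b\mathfrak c]$.
\item Since multiplication by $\mathfrak c$ permutes ray classes, summing over them gives $(N_{K/\Q}(\mathfrak c)^2\chi_A(\mathfrak c)-1)L_{\mathfrak f}(\chi_A,0)$, resp.\ $(N_{K/\Q}(\mathfrak c)\chi_B(\mathfrak c)-1)L_{\mathfrak f}(\chi_B,0)$.
\item One divides by this nonzero algebraic factor. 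For $\mathfrak f=\mathcal O_K$ the section is $e$, and a second auxiliary ideal $\mathfrak c'$ is needed.
\end{enumerate}
This smoothing factor is what ``regularized'' means here. The continuation in $s$ is harmless: the constant-term identity holds for $\operatorname{Re}(s)\gg0$, and both sides are regular at $s=2$ because $|\mu|=b+2>0$ (Proposition~\ref{prop:functional equation of Eisenstein-Kronecker series}). The label you cite for the functional equation is actually the identification theorem.

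Three further steps are missing or misattributed.
\begin{itemize}
\item The $\chi_B$ weight comes from the fiber multiplicity $N_{K/\Q}((y)\mathfrak b)$ of Lemma~\ref{lem: normalized fiber multiplicity}; it does not appear automatically from the dual term.
\item Reading off $\mathrm{pr}_A,\mathrm{pr}_B$ from the zero mode requires showing that the nonzero modes are exact (Proposition~\ref{prop: zero Fourier coefficient determines boundary class}).
\item The factor $(2\pi i)^b$ comes from the de Rham pairing factor via Proposition~\ref{prop: dual CM periods}, not from Fourier normalization. Only the extra $2\pi i$ in the $\chi_B$ term comes from the polar integral in Proposition~\ref{prop: zeroth Fourier coefficients}.
\end{itemize}
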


This recovers a case of the algebraicity theorem of Kings--Sprang~\cite[Theorem~4.10]{KS_2025} through the explicit Borel--Serre boundary realization of the equivariant Eisenstein--Kronecker class.

\begin{mainrem}[Towards explicit boundary realizations in the general $\GL_n$-CM case]\rm
The equivariant Eisenstein--Kronecker classes and their explicit current
representatives constructed above are available for general $g$ and $n$.
In the Bianchi case, we describe their full Borel--Serre boundary
restriction by the zeroth horospherical Fourier coefficient at each cusp.
A comparably explicit description beyond this case must also account
for the cohomology of Levi quotients and, in higher rank, the
compatibility of constant terms across different boundary strata
\cite[Chapter~4]{HarderRaghuram2020}.
\end{mainrem}

These results belong to the geometric approach to Eisenstein classes through
specializations of polylogarithms. The elliptic polylogarithm of
Beilinson--Levin, and its generalization to families of commutative group
schemes, give rise to such classes by specialization at torsion sections
\cite{BeilinsonLevin,HK_2018}. For families of abelian varieties, Levin
constructed explicit polylogarithmic currents whose Fourier description
provides the analytic framework for Eisenstein--Kronecker series
\cite{Levin_2000}. Related topological constructions go back to Nori and Sczech
\cite{Nori,Sczech}. Beilinson--Kings--Levin developed the topological
polylogarithm as an integral refinement of Nori's construction
\cite{BKL2018}, and Graf used this framework to construct Eisenstein classes
for $\GL_2$ over totally real fields and compare them with Harder's classes
\cite{graf2016polylogarithmsgl2totallyreal}. Related Eisenstein cocycles and
their integral, imaginary-quadratic, and motivic variants have been studied
in \cite{Charollois-Dasgupta,BCG2,Fl_rez_2019,Sharifi-Venkatesh}. Most directly related to the present work, Kings--Sprang construct
equivariant coherent Eisenstein--Kronecker classes for abelian schemes
with automorphisms and compute their complex realizations explicitly
for finite-index subgroups of $\mathcal O_L^\times$~\cite{KS_2025}.

\subsection*{Overview of the approach}

Starting from the Poincaré bundle on
$\mathcal A\times_S\mathcal A^\vee$, we complete along
$\mathcal A\times_S e^\vee(S)$ to obtain $\widehat{\mathcal P}$.
The vanishing theorem for
$\widehat{\mathcal P}\otimes\Omega_{\mathcal A/S}^d$, together with
equivariant cohomology with support and the localization sequence
associated with
\[
  \begin{tikzcd}[column sep=large]
    \mathcal D
      \arrow[r, hook, "i"]
      \arrow[dr, "\pi_{\mathcal D}"']
    &
    \mathcal A
      \arrow[d, "\pi"]
    &
    \mathcal U_{\mathcal D}
      \arrow[l, hook', "j"']
      \arrow[dl, "\pi_{\mathcal U_{\mathcal D}}"]
    \\
    & S &
  \end{tikzcd}
\]
produces the canonical class $EK_{\Gamma,\mathcal A}(f)$ from the
trace-zero invariant function $f$. Canonical splittings at suitable
torsion sections allow us to pull back along $x$ and apply the
degree-$b$ moment map, obtaining $EK_{\Gamma,\mathcal A}^b(f,x)$.
Both classes are therefore defined algebraically, before any analytic
representative is introduced.

After base change to $\C$, we work on the product of $\mathcal A(\C)$
with the symmetric space $S_\infty$ associated with
$\operatorname{Res}_{K/\Q}\GL_n$, retaining the $\Gamma$-action on
both factors. Following Levin's construction of polylogarithmic
currents \cite[Section~3.4]{Levin_2000} and its equivariant adaptation
by Kings--Sprang \cite[Section~3]{KS_2025}, we construct an explicit
current with values in $\widehat{\mathcal P}$ by solving its
differential equation through a Fourier expansion. Its coefficients
are expressed in terms of generalized Eisenstein--Kronecker series.
The differential equation gives precisely the residue that
characterizes the original algebraic class, and thus identifies
the current as a representative of the complex realization of
$EK_{\Gamma,\mathcal A}(f)$.
Specializing at $x$ and applying the moment map then gives explicit
differential forms on $S_\infty$ representing the corresponding
algebraic moment classes.

In the Bianchi case $g=1$ and $n=2$, the resulting forms are closed
one-forms on $S_\infty\simeq\mathbb H^3\times\R_{>0}$.
To compute their boundary classes, we first restrict to a horosphere
at the standard cusp. A suitable change of basis makes the scalar
coefficients periodic, allowing us to expand them in Fourier series
and compute the constant term explicitly. We show that the
nonconstant terms are exact, so the restricted form and its zeroth
Fourier coefficient represent the same boundary class.
The calculation at the standard cusp extends to every cusp,
giving the full Borel--Serre boundary restriction.

We next express the constant term in the chosen algebraic Hodge
bases. The period factors arise from the de Rham pairing when
comparing these bases with those used in the analytic calculation.
This comparison identifies the computed constant term with the
complex realization of the algebraic boundary restriction.
The multiplication action of $L^\times$ on the $K$-vector space $L$
defines an embedding
\(
  \operatorname{Res}_{L/\Q}\mathbb G_m
  \hookrightarrow
  \operatorname{Res}_{K/\Q}\GL_2.
\)
Integrating the same differential form over the associated torus
cycle recovers the dimension-two lattice sum of Kings--Sprang
\cite[Corollary~3.28]{KS_2025}.

The arithmetic application instead uses two scalar lattice sums
obtained by projecting the constant term at the standard cusp.
For the chosen torsion data, a common distribution relation
expresses both sums as linear combinations of partial Hecke
$L$-values. The original class, its boundary restriction, and
the projections are defined over a number field, so the
coordinates of the projected class in algebraic bases lie
in that field. Comparing these coordinates with the analytic
formulas proves that the corresponding $L$-value expressions
are algebraic after division by the appropriate periods.
Working with integral coefficients and keeping track of
denominators gives the stated integrality results. Summing
over ray classes then yields the corresponding statements
for full Hecke $L$-values.

\subsection*{Outline of the paper}

Section~1 sets up the CM and Hodge-theoretic framework for the Serre
tensor construction. It fixes the resulting group actions, coefficient
systems, and algebraic Hodge bases, together with the decompositions
and pairings needed to compare algebraic and analytic realizations.

Section~2 constructs the equivariant Eisenstein--Kronecker classes
from the completed Poincaré bundle and equivariant localization,
and establishes their moment specializations and functoriality.
It also introduces the two rank-one projections arising from
unipotent cohomology in the rank-two setting.

Section~3 develops the complex realization through the Borel model
and polylogarithmic currents, then specializes to the Bianchi setting
$g=1$, $n=2$. It computes the cuspwise Fourier constant terms and
identifies them with the Borel--Serre boundary restriction, the
nonzero modes being cohomologically trivial. Comparison with the
algebraic Hodge bases gives the algebraic normalization of the
boundary class. The quadratic-torus period calculation recovers
the dimension-two Kings--Sprang formula.

Section~4 applies the boundary calculation to critical values of
Hecke $L$-functions, after fixing the Hecke-character conventions
and CM periods for compatible Serre twists. The projections
$\mathrm{pr}_A$ and $\mathrm{pr}_B$, together with distribution
relations, give localized integrality statements for regularized,
period-normalized partial values. Summation over ray classes yields
algebraicity of the full imprimitive values, with the
trivial-conductor case treated using a second auxiliary ideal.

\subsection*{Acknowledgements}

I am grateful to my supervisor, Guido Kings, for his support,
valuable comments, and many helpful mathematical discussions.
I also thank Johannes Sprang for the productive discussions during
my visits to Essen. My thanks also go to Christoph Fronhöfer,
Han-Ung Kufner, and Chenying Lin for their helpful comments and
suggestions. This research was supported by the Deutsche
Forschungsgemeinschaft (DFG) through the Collaborative Research
Centre SFB 1085, \emph{Higher Invariants}.

\section{Abelian schemes with complex multiplication}

In this section, we establish the basic algebraic structure of the CM abelian schemes used throughout the paper and make explicit the CM-type decompositions governing their Hodge-theoretic realizations and compatible algebraic bases.

\subsection{Serre tensor constructions of CM abelian schemes}\label{Section: Serre tensor constructions of CM abelian schemes}

Throughout this paper, let $L$ be a number field containing a CM field,
and let $K$ be its maximal CM subfield. We write $[K:\Q]=2g$,
$[L:K]=n$, and $[L:\Q]=2d$, so that $d=gn$.

Let $S$ be a Noetherian scheme, and let
$\pi_0\colon\mathcal A_0\to S$ be an abelian scheme of relative
dimension $g$, with unit section $e_0\colon S\to\mathcal A_0$.
Its sheaf of translation-invariant relative differential $1$-forms is
\[
  \omega_{\mathcal A_0/S}
  :=
  e_0^*\Omega^1_{\mathcal A_0/S}
  \cong
  \pi_{0*}\Omega^1_{\mathcal A_0/S}
  \cong
  \mathcal{H}om_{\mathcal O_S}
  \bigl(
    \operatorname{Lie}(\mathcal A_0/S),
    \mathcal O_S
  \bigr),
\]
which is locally free of rank $g$. For $i\geq 0$, put
$\omega_{\mathcal A_0/S}^{i}
:=\bigwedge_{\mathcal O_S}^{i}\omega_{\mathcal A_0/S}$
and
$\omega_{\mathcal A_0/S}^{-i}
:=\mathcal{H}om_{\mathcal O_S}
(\omega_{\mathcal A_0/S}^{i},\mathcal O_S)
\cong\bigwedge_{\mathcal O_S}^{i}
\operatorname{Lie}(\mathcal A_0/S)$.

Write $\pi_0^\vee\colon\mathcal A_0^\vee\to S$ for the dual abelian
scheme and $e_0^\vee\colon S\to\mathcal A_0^\vee$ for its unit section,
and use analogous notation for its Lie algebra and invariant
differentials. The tangent-space description of
$\operatorname{Pic}^0_{\mathcal A_0/S}$ gives
$R^1\pi_{0*}\mathcal O_{\mathcal A_0}
\cong\operatorname{Lie}(\mathcal A_0^\vee/S)$.
Together with the cup-product isomorphisms
$\bigwedge_{\mathcal O_S}^{i}
R^1\pi_{0*}\mathcal O_{\mathcal A_0}
\xrightarrow{\sim}
R^i\pi_{0*}\mathcal O_{\mathcal A_0}$,
this yields
$R^i\pi_{0*}\mathcal O_{\mathcal A_0}
\cong\omega_{\mathcal A_0^\vee/S}^{-i}$
for all $i\geq 0$.

\begin{defn}[CM structure]\rm
  A \textit{CM structure by $\mathcal O_K$} on $\mathcal A_0/S$
  is an injective ring homomorphism
  $\iota_K\colon\mathcal O_K\hookrightarrow
  \operatorname{End}_S(\mathcal A_0)$
  such that $\operatorname{Lie}(\mathcal A_0/S)$, with its induced
  $\mathcal O_K$-action, is projective over
  $\mathcal O_K\otimes_{\Z}\mathcal O_S$.
  We then say that $\mathcal A_0/S$ has CM by $\mathcal O_K$. Restriction of $\iota_K$ gives an action of
  $\mathcal O_K^\times$ on $\mathcal A_0/S$.
\end{defn}

We use the Serre tensor construction in the following form.
Let $R$ be a ring, not necessarily commutative, which is finite free
over $\Z$. Suppose that $\mathcal A_0/S$ is equipped with a ring
homomorphism $R\to\operatorname{End}_S(\mathcal A_0)$, and let
$\mathfrak a$ be a finitely generated projective right $R$-module.
The Serre tensor construction gives an abelian scheme
$\mathcal A_0(\mathfrak a):=\mathfrak a\otimes_R\mathcal A_0$
representing the fppf sheaf
$T\mapsto\mathfrak a\otimes_R\mathcal A_0(T)$
on $S$-schemes.
It is compatible with relative Lie algebras and invariant differentials:
\[
  \begin{aligned}
    \operatorname{Lie}\bigl(\mathcal A_0(\mathfrak a)/S\bigr)
    \cong
    \mathfrak a\otimes_R\operatorname{Lie}(\mathcal A_0/S),
    \qquad
    \omega_{\mathcal A_0(\mathfrak a)/S}
    \cong
    \omega_{\mathcal A_0/S}\otimes_R\mathfrak a^\vee.
  \end{aligned}
\]
Here $\mathfrak a^\vee:=\operatorname{Hom}_R(\mathfrak a,R)$,
with $\operatorname{Hom}_R$ taken in the category of right
$R$-modules, is naturally a left $R$-module.
The right $R$-action on $\omega_{\mathcal A_0/S}$ is induced
by pullback along the endomorphisms of $\mathcal A_0$.

From now on, $\mathcal A_0/S$ is endowed with a fixed CM structure
by $\mathcal O_K$. If $\mathfrak a\subseteq\mathfrak b$ are
fractional $\mathcal O_K$-ideals, their inclusion induces an isogeny
$\mathcal A_0(\mathfrak a)\to\mathcal A_0(\mathfrak b)$
of degree $[\mathfrak b:\mathfrak a]$.
In particular, for a nonzero integral ideal
$\mathfrak c\subseteq\mathcal O_K$, the inclusion
$\mathcal O_K\hookrightarrow\mathfrak c^{-1}$ induces an isogeny
$[\mathfrak c]\colon\mathcal A_0\to
\mathcal A_0(\mathfrak c^{-1})$.
We write $\mathcal A_0[\mathfrak c]:=\ker[\mathfrak c]$;
its degree is
$\deg[\mathfrak c]=N_{K/\Q}(\mathfrak c)$.

\begin{constr}\label{example_A}\rm
  The abelian scheme used throughout the paper is
  $\mathcal A:=\mathcal O_L\otimes_{\mathcal O_K}\mathcal A_0$.
  Since $\mathcal O_L$ is projective of rank $n$ over
  $\mathcal O_K$, this is an abelian scheme over $S$ of relative
  dimension $d=gn$. Multiplication on the first tensor factor
  gives its $\mathcal O_L$-action. More generally, the group
  $\operatorname{GL}_{\mathcal O_K}(\mathcal O_L)
  :=\operatorname{Aut}_{\mathcal O_K}(\mathcal O_L)$
  acts on $\mathcal A$ through the first tensor factor,
  extending the multiplication action of $\mathcal O_L^\times$.

  Denote the structure morphism and unit section by
  $\pi\colon\mathcal A\to S$ and $e\colon S\to\mathcal A$,
  respectively. Likewise, write
  $\pi^\vee\colon\mathcal A^\vee\to S$ and
  $e^\vee\colon S\to\mathcal A^\vee$ for the corresponding
  morphisms of the dual abelian scheme.
  The Serre tensor construction gives canonical isomorphisms
  \[
    \begin{aligned}
      \operatorname{Lie}(\mathcal A/S)
      \cong
      \mathcal O_L\otimes_{\mathcal O_K}
      \operatorname{Lie}(\mathcal A_0/S),\qquad
      \omega_{\mathcal A/S}
      \cong
      \omega_{\mathcal A_0/S}\otimes_{\mathcal O_K}\mathcal O_L^\vee
      \cong
      \omega_{\mathcal A_0/S}\otimes_{\mathcal O_K}
      \mathfrak d_{L/K}^{-1}.
    \end{aligned}
  \]
  Here
  $\mathcal O_L^\vee
  :=\operatorname{Hom}_{\mathcal O_K}(\mathcal O_L,\mathcal O_K)$,
  and the trace pairing of $L/K$ identifies
  $\mathcal O_L^\vee$ with the relative inverse different
  $\mathfrak d_{L/K}^{-1}$.

  Let $\mathfrak c\subseteq\mathcal O_K$ be a nonzero integral
  ideal, and put
  $\mathfrak C:=\mathfrak c\mathcal O_L
  \cong\mathfrak c\otimes_{\mathcal O_K}\mathcal O_L$.
  The inclusion $\mathcal O_L\hookrightarrow\mathfrak C^{-1}$
  induces an isogeny
  $[\mathfrak C]\colon\mathcal A\to
  \mathcal A(\mathfrak C^{-1})$,
  where
  $\mathcal A(\mathfrak C^{-1})
  :=\mathfrak C^{-1}\otimes_{\mathcal O_L}\mathcal A$.
  Under the canonical identification
  \[
    \mathcal A(\mathfrak C^{-1})
    \cong
    \mathcal O_L\otimes_{\mathcal O_K}
    \mathcal A_0(\mathfrak c^{-1}),
  \]
  the isogeny $[\mathfrak C]$ identifies with
  $\operatorname{id}_{\mathcal O_L}\otimes[\mathfrak c]$.
  Thus it is equivariant for the action on the first tensor
  factor, and its degree is
  $\deg[\mathfrak C]
  =N_{L/\Q}(\mathfrak C)
  =N_{K/\Q}(\mathfrak c)^n$.
\end{constr}

The Serre tensor construction equips $\mathcal A$ with its
natural $\operatorname{GL}_{\mathcal O_K}(\mathcal O_L)$-action.

\subsection{CM types and their algebraic realizations}
\label{subsection: CM types and algebraic realizations}
\label{section:CM field and CM type}
\label{subsection: Paring and decompositions}

We keep the number fields $K\subset L$ and the notation fixed above.
Fix an algebraic closure $\overline{\mathbb Q}\subset\mathbb C$
containing $L$, and denote the discriminant of $K$ by $d_K$.
Put $J_K:=\operatorname{Hom}_{\mathbb Q}(K,\overline{\mathbb Q})$
and $I_K:=\mathbb Z[J_K]$, and use analogous notation for $L$.
A \textit{type} of $K$ is an element
$\mu=\sum_{\sigma\in J_K}\mu(\sigma)\sigma\in I_K$.
Its degree is $|\mu|:=\sum_{\sigma\in J_K}\mu(\sigma)$, and we write
$I_K^+:=\{\mu\in I_K\mid\mu(\sigma)\geq 0\text{ for all }\sigma\in J_K\}$.
For $k\in K^\times$, set
$k^\mu:=\prod_{\sigma\in J_K}\sigma(k)^{\mu(\sigma)}$.

\begin{defn}[CM type]\rm
  A \textit{CM type} of $K$ is a subset $\Sigma_K\subset J_K$
  such that $J_K=\Sigma_K\sqcup\overline{\Sigma}_K$, where
  $\overline{\Sigma}_K:=\{\overline\sigma\mid\sigma\in\Sigma_K\}$
  and $\overline\sigma$ denotes the complex conjugate embedding.
  Its lift to $L$ is
  \[
    \Sigma_L:=\{\tau\in J_L\mid\tau|_K\in\Sigma_K\},
  \]
  called a \textit{CM type of $L$ lifted from $K$}.
\end{defn}

Then $J_L=\Sigma_L\sqcup\overline{\Sigma}_L$, where
$\overline{\Sigma}_L=\{\tau\in J_L\mid\tau|_K\in\overline{\Sigma}_K\}$,
and $I_L=I_{\Sigma_L}\oplus I_{\overline{\Sigma}_L}$, with
$I_{\Sigma_L}:=\mathbb Z[\Sigma_L]$ and
$I_{\overline{\Sigma}_L}:=\mathbb Z[\overline{\Sigma}_L]$.
We use the analogous notation for $K$ and also regard a CM type
as the element obtained by summing its embeddings in $I_K$ or $I_L$.

We now realize these types on the Hodge-theoretic objects associated
with the Serre tensor construction. Let $K^{\mathrm{Gal}}$ be the
Galois closure of $K$ in $\overline{\mathbb Q}$. We work over
$S=\operatorname{Spec}R$, where $R$ is an algebra over
$\mathcal O_{K^{\mathrm{Gal}}}[1/d_K]$.
The trace pairing identifies
$\operatorname{Hom}_{\mathbb Z}(\mathcal O_K,\mathbb Z)$ with the
inverse different $\mathfrak d_K^{-1}$. Since $d_K$ is invertible
in $R$, the algebra $\mathcal O_K\otimes_{\mathbb Z}R$ is finite
\'{e}tale over $R$ and its trace pairing is perfect. Thus, for every
$\mathcal O_K\otimes_{\mathbb Z}R$-module $M$, there is a canonical
identification
\[
  \operatorname{Hom}_R(M,R)
  \cong
  \operatorname{Hom}_{\mathcal O_K\otimes_{\mathbb Z}R}
  (M,\mathcal O_K\otimes_{\mathbb Z}R).
\]
Here the module action on the $R$-linear dual is
$(a\cdot\varphi)(m):=\varphi(am)$. If $M$ is finite projective over
$\mathcal O_K\otimes_{\mathbb Z}R$, the resulting
$\mathcal O_K\otimes_{\mathbb Z}R$-valued pairing is perfect.

The $\mathcal O_K$-actions on the Hodge-theoretic modules below
induce algebraic actions of the split $R$-torus
$\mathbb T_{K,R}:=(\operatorname{Res}_{\mathcal O_K/\mathbb Z}\mathbb G_m)_R$.
Restriction to $\mathcal O_K^\times$ recovers the corresponding
unit actions. For an algebraic $\mathbb T_{K,R}$-module $M$ and
$J\subset J_K$, write
$M(J):=\bigoplus_{\sigma\in J}M(\sigma)$, where $M(\sigma)$ is the
$\sigma$-weight space. For $\mu\in I_K^+\cap\mathbb Z[J]$, set
\[
  \operatorname{TSym}_R^\mu(M(J))
  :=\bigotimes_{\sigma\in J}
  \operatorname{TSym}_R^{\mu(\sigma)}(M(\sigma)).
\]
If $m\in M(J)$ has components $m(\sigma)$, put
$m^{[\mu]}:=\bigotimes_{\sigma\in J}m(\sigma)^{[\mu(\sigma)]}$,
where $m(\sigma)^{[k]}$ is represented by
$m(\sigma)^{\otimes k}$ and $m(\sigma)^{[0]}:=1$.
The tensor-symmetric algebra carries divided powers satisfying
$m^k=k!\,m^{[k]}$. For every $r\geq 0$, the weight decomposition gives
\[
  \operatorname{TSym}_R^r(M(J))
  \cong
  \bigoplus_{\substack{\mu\in I_K^+\cap\mathbb Z[J]\\|\mu|=r}}
  \operatorname{TSym}_R^\mu(M(J)).
\]

Write
$H^1_{\mathrm{dR}}(\mathcal A_0/S):=
R^1\pi_{0*}\Omega^\bullet_{\mathcal A_0/S}$.
The first de Rham Chern class of the Poincaré bundle induces a
canonical perfect pairing between
$H^1_{\mathrm{dR}}(\mathcal A_0/S)$ and
$H^1_{\mathrm{dR}}(\mathcal A_0^\vee/S)$ with values in
$\mathcal O_S$, denoted by $\langle-,-\rangle_{\mathrm{dR}}$.
We use the same convention for $\mathcal A$.

\begin{defn}\label{def_2}\rm
  For an abelian scheme $\mathcal A/S$, put
  \[
    \mathcal H_{\mathcal A}
    :=\mathcal H om_{\mathcal O_S}
    \bigl(H^1_{\mathrm{dR}}(\mathcal A/S),\mathcal O_S\bigr)
    \cong H^1_{\mathrm{dR}}(\mathcal A^\vee/S).
  \]
  We write $\mathcal H_0:=\mathcal H_{\mathcal A_0}$ and, when no
  ambiguity can arise, $\mathcal H:=\mathcal H_{\mathcal A}$.
  The Hodge filtration gives an exact sequence
  \begin{equation}\label{eq:Hodge-decomp}
    0\longrightarrow\omega_{\mathcal A^\vee/S}
    \longrightarrow\mathcal H_{\mathcal A}
    \longrightarrow\operatorname{Lie}(\mathcal A/S)
    \longrightarrow 0.
  \end{equation}
  No splitting is implicit in this sequence.
\end{defn}

\begin{rem}\label{rem_2}\rm
  Over $\mathbb C$, the de Rham comparison identifies
  $\mathcal H_{\mathbb C}$ with
  $H_1(\mathcal A(\mathbb C),\mathbb C)$, whose Hodge decomposition is
  \[
    \mathcal H_{\mathbb C}
    =\mathcal H^{-1,0}\oplus\mathcal H^{0,-1}.
  \]
  It is compatible with \eqref{eq:Hodge-decomp}, with canonical
  identifications
  $\mathcal H^{-1,0}\cong\operatorname{Lie}(\mathcal A/\mathbb C)$
  and $\mathcal H^{0,-1}\cong\omega_{\mathcal A^\vee/\mathbb C}$.
\end{rem}

\begin{prop}[Compatibility of the CM type with the Hodge filtration]
\label{prop_5}
  Assume that $R\subset\mathbb C$. There exists a CM type
  $\Sigma_K\subset J_K$ such that
  \[
    \mathcal H_0
    \cong\mathcal H_0(\Sigma_K)\oplus
    \mathcal H_0(\overline{\Sigma}_K),
  \]
  and the inclusion and quotient maps in the Hodge filtration induce
  isomorphisms
  \[
    \omega_{\mathcal A_0^\vee/R}
    \cong\mathcal H_0(\overline{\Sigma}_K),
    \qquad
    \mathcal H_0(\Sigma_K)
    \cong\operatorname{Lie}(\mathcal A_0/R).
  \]
  In particular, the Hodge filtration splits as a sequence of
  $\mathbb T_{K,R}$-modules.
\end{prop}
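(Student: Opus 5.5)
Since $R$ is an $\mathcal O_{K^{\mathrm{Gal}}}[1/d_K]$-algebra, $\mathcal O_K\otimes_{\mathbb Z}R\cong\prod_{\sigma\in J_K}R$, with idempotents $e_\sigma$. Hence every $\mathcal O_K\otimes R$-module $M$, equivalently every algebraic $\mathbb T_{K,R}$-module, decomposes canonically as $M=\bigoplus_\sigma M(\sigma)$. This decomposition is functorial for $\mathcal O_K$-linear maps, so it is exact. We apply it to the Hodge sequence \eqref{eq:Hodge-decomp} for $\mathcal A_0$. The three terms carry $\mathcal O_K$-actions induced by $\iota_K$ (on $\mathcal A_0^\vee$ via the dual/Rosati action, suitably normalized), and the maps are $\mathcal O_K$-equivariant. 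Taking $\sigma$-components gives short exact sequences
\[
0\to\omega_{\mathcal A_0^\vee/R}(\sigma)\to\mathcal H_0(\sigma)\to\operatorname{Lie}(\mathcal A_0/R)(\sigma)\to 0
\]
for every $\sigma\in J_K$.

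Next we determine the ranks. By the CM-structure definition, $\operatorname{Lie}(\mathcal A_0/R)$ is projective over $\prod_\sigma R$, so each $\operatorname{Lie}(\sigma)$ is projective over $R$ with locally constant rank $r_\sigma$; since $R\subset\mathbb C$ is a domain, these ranks are constant. Since $\sum_\sigma r_\sigma=g$, base change to $\mathbb C$ and the analytic description determine the $r_\sigma$. Over $\mathbb C$, $H_1(\mathcal A_0(\mathbb C),\mathbb Q)$ is a $K$-vector space of dimension $2g/2g=1$. Hence $\mathcal H_{0,\mathbb C}=H_1\otimes_{\mathbb Q}\mathbb C\cong K\otimes\mathbb C=\bigoplus_\sigma\mathbb C$, so each $\mathcal H_0(\sigma)$ has rank $1$. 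It follows that $r_\sigma\in\{0,1\}$ and $\operatorname{rk}\omega_{\mathcal A_0^\vee}(\sigma)=1-r_\sigma$.

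The key step, and the main obstacle, is to show that $\Sigma_K:=\{\sigma: r_\sigma=1\}$ is a CM type, i.e.\ that exactly one of $\sigma,\overline\sigma$ lies in it. I would use the Hodge decomposition of Remark~\ref{rem_2}. Complex conjugation on $H_1(\mathcal A_0(\mathbb C),\mathbb C)=H_1(\mathbb Q)\otimes\mathbb C$ is $\mathcal O_K$-semilinear, so it interchanges the $\sigma$ and $\overline\sigma$ eigenspaces. It also interchanges $\mathcal H^{-1,0}\cong\operatorname{Lie}$ and $\mathcal H^{0,-1}$. Hence $\dim\mathcal H^{-1,0}(\sigma)=\dim\mathcal H^{0,-1}(\overline\sigma)=1-r_{\overline\sigma}$, which gives $r_\sigma+r_{\overline\sigma}=1$. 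Care is needed that the $\mathcal O_K$-action on $\omega_{\mathcal A_0^\vee}$ transported through the de Rham pairing agrees with the one on $\mathcal H_0$ (using the duality convention $(a\varphi)(m)=\varphi(am)$), so the eigenspace labels are consistent.

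With this in hand, for $\sigma\in\Sigma_K$ the term $\omega_{\mathcal A_0^\vee}(\sigma)$ vanishes, so $\mathcal H_0(\sigma)\xrightarrow{\sim}\operatorname{Lie}(\sigma)$. For $\sigma\in\overline\Sigma_K$ we have $\operatorname{Lie}(\sigma)=0$, so $\omega_{\mathcal A_0^\vee}(\sigma)\xrightarrow{\sim}\mathcal H_0(\sigma)$. Summing over $\sigma$ yields $\mathcal H_0=\mathcal H_0(\Sigma_K)\oplus\mathcal H_0(\overline\Sigma_K)$, $\omega_{\mathcal A_0^\vee/R}\cong\mathcal H_0(\overline\Sigma_K)$ and $\mathcal H_0(\Sigma_K)\cong\operatorname{Lie}(\mathcal A_0/R)$. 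The inclusion $\mathcal H_0(\Sigma_K)\hookrightarrow\mathcal H_0$ is a $\mathbb T_{K,R}$-equivariant section of the quotient map, which gives the splitting.
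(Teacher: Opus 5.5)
Your proof is correct and follows the paper's route. The steps match: weight (idempotent) decomposition of the Hodge sequence, ranks computed after base change to $\mathbb C$ via $H_1(\mathcal A_0(\mathbb C),\mathbb C)\cong K\otimes_{\mathbb Q}\mathbb C$, and complex conjugation swapping both the two Hodge summands and the $\sigma$/$\overline\sigma$ eigenspaces; you are simply more explicit than the paper (which cites SGA3 for the weight decomposition) about the per-$\sigma$ exact sequences and the rank bookkeeping.

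One small clarification concerns the action on $\omega_{\mathcal A_0^\vee/R}$. It should just be the one induced by pullback along $\iota(a)^\vee$, equivalently the restriction of the action on $\mathcal H_0$. A Rosati-transported action would instead interchange $\sigma$ and $\overline\sigma$.
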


\begin{proof}
  By the decomposition theorem in
  \cite[Expos\'{e}~I, Proposition~4.7.3]{SGA3_1}, the Hodge filtration
  decomposes into weight spaces. Since $R\subset\mathbb C$ is a
  domain, the ranks of these locally free weight spaces can be
  computed after extension to $\mathbb C$. We have
  \[
    \mathcal H_0\otimes_R\mathbb C
    \cong H_1(\mathcal A_0(\mathbb C),\mathbb C)
    \cong K\otimes_{\mathbb Q}\mathbb C
    \cong\bigoplus_{\sigma\in J_K}\mathbb C,
  \]
  where the summand indexed by $\sigma$ has weight $\sigma$.
  Thus every embedding occurs exactly once. Complex conjugation
  exchanges the two Hodge summands, so the weights of
  $\operatorname{Lie}(\mathcal A_0/\mathbb C)$ form a CM type
  $\Sigma_K$, while those of
  $\omega_{\mathcal A_0^\vee/\mathbb C}$ form
  $\overline{\Sigma}_K$. Applying the weight decomposition to
  \eqref{eq:Hodge-decomp} gives the asserted identifications and
  splitting over $R$.
\end{proof}

From now on, assume $R\subset\mathbb C$ and fix the CM type
$\Sigma_K$ supplied by Proposition~\ref{prop_5}, together with its
lift $\Sigma_L$. Over $\mathbb C$, the Hodge decomposition also
identifies $\overline{\omega_{\mathcal A_0^\vee/\mathbb C}}$
with $\operatorname{Lie}(\mathcal A_0/\mathbb C)$.
On $\omega_{\mathcal A_0/R}$, we use the left unit action associated
with the right pullback action fixed above, namely
$a\cdot\eta=(a^{-1})^*\eta$. Its torus action is contragredient to
that on $\operatorname{Lie}(\mathcal A_0/R)$. This convention is
separate from the underlying $\mathcal O_K$-module structure used
in trace duality and in the Serre tensor products.

\begin{cor}\label{cor_1}
  There is a decomposition of $\mathbb T_{K,R}$-modules
  \[
    \omega_{\mathcal A_0/R}
    \cong\bigoplus_{\sigma\in\Sigma_K}
    \omega_{\mathcal A_0/R}(\sigma^{-1}),
  \]
  where $\sigma^{-1}$ denotes the inverse torus character.
  Moreover, there is an isomorphism of $\mathcal O_K^\times$-modules
  \[
    \omega_{\mathcal A_0/R}^{g}
    \cong\omega_{\mathcal A_0^\vee/R}^{g}.
  \]
\end{cor}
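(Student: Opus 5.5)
The plan is to deduce both assertions from Proposition~\ref{prop_5} by dualizing. For the first, recall that $\omega_{\mathcal A_0/R}\cong\mathcal Hom_{\mathcal O_S}(\operatorname{Lie}(\mathcal A_0/R),\mathcal O_S)$. Proposition~\ref{prop_5} identifies $\operatorname{Lie}(\mathcal A_0/R)\cong\mathcal H_0(\Sigma_K)=\bigoplus_{\sigma\in\Sigma_K}\mathcal H_0(\sigma)$ as $\mathbb T_{K,R}$-modules. Each summand is locally free of rank one, since over $\C$ each embedding occurs exactly once. Under the convention $a\cdot\eta=(a^{-1})^*\eta$, the torus acts on $\omega_{\mathcal A_0/R}$ contragredient to $\operatorname{Lie}(\mathcal A_0/R)$. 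The $R$-dual of a weight-$\sigma$ line is therefore a line of weight $\sigma^{-1}$. Because a direct sum dualizes to a direct sum of duals, we obtain
\[
  \omega_{\mathcal A_0/R}\cong\bigoplus_{\sigma\in\Sigma_K}\mathcal H_0(\sigma)^\vee,
  \qquad
  \mathcal H_0(\sigma)^\vee=\omega_{\mathcal A_0/R}(\sigma^{-1}).
\]
The one point to check is that $(a^{-1})^*$ really is the inverse transpose of the Lie action, and not merely its transpose.

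For the second assertion, take top exterior powers. From the first part,
\[
  \omega^g_{\mathcal A_0/R}=\bigotimes_{\sigma\in\Sigma_K}\omega_{\mathcal A_0/R}(\sigma^{-1}),
\]
a line on which the torus acts by $\prod_{\sigma\in\Sigma_K}\sigma^{-1}$, i.e.\ by the character $-\Sigma_K$. On the other side, Proposition~\ref{prop_5} gives $\omega_{\mathcal A_0^\vee/R}\cong\mathcal H_0(\overline\Sigma_K)$, so $\omega^g_{\mathcal A_0^\vee/R}$ has weight $\overline\Sigma_K$. Here the action is the one induced on $\mathcal H_0$, which is how the paper treats $\omega_{\mathcal A_0^\vee/R}$.

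These two characters of $\mathbb T_{K,R}$ are generally different. On units $u\in\mathcal O_K^\times$, however, they agree:
\[
  u^{-\Sigma_K}=\prod_{\sigma\in\Sigma_K}\sigma(u)^{-1}
  \quad\text{and}\quad
  u^{\overline\Sigma_K}=\prod_{\sigma\in\Sigma_K}\overline{\sigma(u)}.
\]
Their ratio is $u^{\Sigma_K+\overline\Sigma_K}=N_{K/\Q}(u)=\prod_{\sigma\in J_K}\sigma(u)$. For a CM field this norm is $N_{K^+/\Q}(u\bar u)$, which is positive, and it is a unit in $\Z$, so it equals $1$. Hence $u^{-\Sigma_K}=u^{\overline\Sigma_K}$.

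It follows that both line bundles carry the same $\mathcal O_K^\times$-character. Both are invertible $R$-modules, so the remaining task is an $R$-isomorphism between them; any such isomorphism is then automatically $\mathcal O_K^\times$-equivariant.

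\emph{Main obstacle.} The difficulty is producing that $R$-isomorphism $\omega^g_{\mathcal A_0/R}\cong\omega^g_{\mathcal A_0^\vee/R}$, since a general invertible module need not be trivial. I would try a polarization first. Over $R\subset\C$ with CM by $\mathcal O_K$, choose an $\mathcal O_K$-linear polarization $\lambda\colon\mathcal A_0\to\mathcal A_0^\vee$, which intertwines $a$ with $\bar a$ via the Rosati involution. If $\lambda$ is prime-to-residue-characteristic, or after inverting its degree, it induces the desired isomorphism of determinants of invariant differentials. Alternatively, work locally on $\Spec R$, where both lines are trivial, and fix global trivializations when $\operatorname{Pic}(R)$ permits; this is the generality the statement seems to intend, since it is used only up to such choices. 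Whichever route works, the compatibility of the $\mathcal O_K^\times$-actions then follows from the character computation above.
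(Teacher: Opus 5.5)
Your first assertion is handled correctly and as in the paper: you dualize Proposition~\ref{prop_5}, noting that $(a^{-1})^*$ is the inverse transpose of the action on $\operatorname{Lie}(\mathcal A_0/R)$, so the weight $\sigma$ becomes $\sigma^{-1}$. Your character computation for the second assertion is also right. The two determinant lines have weights $-\Sigma_K$ and $\overline{\Sigma}_K$, and their ratio on a unit $u$ is $N_{K/\Q}(u)=1$. Since units act on each line by a scalar of $R$, any $R$-linear isomorphism between the lines is automatically $\mathcal O_K^\times$-equivariant.

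The gap is the step you yourself flag as the main obstacle: you never produce an $R$-linear isomorphism $\omega^g_{\mathcal A_0/R}\cong\omega^g_{\mathcal A_0^\vee/R}$, and neither fallback proves it.
\begin{itemize}
\item Pullback along a polarization $\lambda\colon\mathcal A_0\to\mathcal A_0^\vee$ is an isomorphism on invariant differentials only when $\deg\lambda$ is invertible in $R$. That is not among the hypotheses; for instance, $R=\mathcal O_{K^{\mathrm{Gal}}}[1/d_K]$ is allowed. Inverting $\deg\lambda$ changes the base, and one would also need such a $\lambda$ to exist over $R$.
\item Local trivializations give no global isomorphism unless you already know the two lines are isomorphic, which is the claim itself.
\end{itemize}

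The missing idea is that no choice is needed.
\begin{itemize}
\item Relative Serre (Grothendieck) duality for $\pi_0$ gives $R^g\pi_{0*}\mathcal O_{\mathcal A_0}\cong\mathcal{H}om_{\mathcal O_S}(\pi_{0*}\Omega^g_{\mathcal A_0/R},\mathcal O_S)=(\omega^g_{\mathcal A_0/R})^\vee$.
\item The identification $R^1\pi_{0*}\mathcal O_{\mathcal A_0}\cong\operatorname{Lie}(\mathcal A_0^\vee/R)$, together with cup product, gives $R^g\pi_{0*}\mathcal O_{\mathcal A_0}\cong\bigwedge^g_R\operatorname{Lie}(\mathcal A_0^\vee/R)=(\omega^g_{\mathcal A_0^\vee/R})^\vee$. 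This is already recorded in Section~\ref{Section: Serre tensor constructions of CM abelian schemes}.
\end{itemize}
Dualizing yields a canonical isomorphism of the determinant lines over an arbitrary base. Your character computation then finishes the proof, and this is exactly the paper's argument.

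Canonicity also matters later. Proposition~\ref{prop_10} uses these canonical determinant identifications for $\mathcal A$ and $\mathcal B$ to define the top-degree part of its coefficient morphism, and a choice-dependent trivialization would not provide that.
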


\begin{proof}
  Dualizing Proposition~\ref{prop_5} gives the first assertion.
  For the second, Grothendieck duality and the identification
  $R^g\pi_{0*}\mathcal O_{\mathcal A_0}
  \cong\bigwedge_R^g\operatorname{Lie}(\mathcal A_0^\vee/R)$
  give an isomorphism of the underlying determinant lines.
  Their torus characters differ by $N_{K/\mathbb Q}$, which is
  trivial on $\mathcal O_K^\times$ because $K$ is a CM field.
  The resulting isomorphism is therefore
  $\mathcal O_K^\times$-equivariant.
\end{proof}

Over $\mathbb C$, integration defines the usual period pairing
$(\eta,\gamma)\longmapsto\int_\gamma\eta$ between
$H^1_{\mathrm{dR}}(\mathcal A_0/\mathbb C)$ and
$H_1(\mathcal A_0(\mathbb C),\mathbb C)$.
Trace duality gives canonical $K\otimes_{\mathbb Q}\mathbb C$-valued
refinements of this pairing and of the de Rham pairing, denoted
by $\langle-,-\rangle_{\mathcal A_0}$ and
$\langle-,-\rangle_{\mathrm{dR},\mathcal A_0}$, respectively.
Their compositions with the trace recover the usual scalar-valued
pairings.

\begin{prop}[Refined period pairings]
\label{prop: refined period pairings}
  Put $\mathbb C^{\Sigma_K}:=
  (K\otimes_{\mathbb Q}\mathbb C)(\Sigma_K)
  \cong\prod_{\sigma\in\Sigma_K}\mathbb C$, and define
  $\mathbb C^{\overline{\Sigma}_K}$,
  $\mathbb C^{\Sigma_L}$, and
  $\mathbb C^{\overline{\Sigma}_L}$ similarly.
  The refined pairings restrict to
  \[
    \langle-,-\rangle_{\mathcal A_0}\colon
    \omega_{\mathcal A_0/\mathbb C}
    \times H_1(\mathcal A_0(\mathbb C),\mathbb Z)
    \longrightarrow\mathbb C^{\Sigma_K}
  \]
  and
  \[
    \langle-,-\rangle_{\mathrm{dR},\mathcal A_0}\colon
    \overline{\omega_{\mathcal A_0/\mathbb C}}
    \times\omega_{\mathcal A_0^\vee/\mathbb C}
    \longrightarrow\mathbb C^{\overline{\Sigma}_K}.
  \]
  The analogous restrictions of the refined pairings
  $\langle-,-\rangle_{\mathcal A}$ and
  $\langle-,-\rangle_{\mathrm{dR},\mathcal A}$ for
  $\mathcal A=\mathcal O_L\otimes_{\mathcal O_K}\mathcal A_0$
  take values in $\mathbb C^{\Sigma_L}$ and
  $\mathbb C^{\overline{\Sigma}_L}$, respectively.
\end{prop}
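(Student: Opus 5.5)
The plan is to reduce both statements to weight bookkeeping for the torus $\mathbb T_{K,\mathbb C}$, using the decompositions of Proposition~\ref{prop_5} and Corollary~\ref{cor_1}.

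The key general fact is the following. Let $M,N$ be finite projective $K\otimes_{\mathbb Q}\mathbb C$-modules and $\langle-,-\rangle\colon M\times N\to\mathbb C$ a $\mathbb C$-bilinear pairing such that $\langle am,n\rangle=\langle m,an\rangle$ for $a\in K$. Its trace-dual refinement $\langle-,-\rangle_K\colon M\times N\to K\otimes_{\mathbb Q}\mathbb C\cong\prod_{\sigma\in J_K}\mathbb C$ is $K$-bilinear. Write $e_\sigma$ for the idempotents. Then the $\sigma$-component of $\langle m,n\rangle_K$ equals $\langle e_\sigma m,e_\sigma n\rangle$, and it is zero whenever $e_\sigma m=0$ or $e_\sigma n=0$. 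So a refined pairing restricted to $M(J)\times N$ lands in $\mathbb C^{J}$. Here $M(J)$ means the $\sigma$-eigenspaces for the $K$-module structure used in trace duality, with $\sigma\in J$. I would prove this by expanding $m=\sum e_\sigma m$ and using $e_\sigma e_\tau=\delta_{\sigma\tau}e_\sigma$.

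\emph{Period pairing.} I would check that the period pairing is $K$-balanced: for $a\in\mathcal O_K$, $\int_{a_*\gamma}\eta=\int_\gamma a^*\eta$. The module structure on $\omega_{\mathcal A_0/\mathbb C}$ is the pullback action. Dualizing Proposition~\ref{prop_5}, $\omega_{\mathcal A_0/\mathbb C}=\operatorname{Lie}^\vee$ has, as a $K\otimes\mathbb C$-module under pullback, eigencharacters exactly $\Sigma_K$. Corollary~\ref{cor_1} records the inverse characters for the contragredient left action. By the general fact, the restriction lands in $\mathbb C^{\Sigma_K}$; integrality of $\gamma$ plays no role. The same argument applies to $\mathcal A$. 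By Construction~\ref{example_A}, $\omega_{\mathcal A/\mathbb C}\cong\omega_{\mathcal A_0}\otimes_{\mathcal O_K}\mathcal O_L^\vee$, so its $L\otimes\mathbb C$-eigencharacters are the $\tau$ with $\tau|_K\in\Sigma_K$, which is $\Sigma_L$.

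\emph{De Rham pairing.} Proposition~\ref{prop_5} gives $\omega_{\mathcal A_0^\vee/\mathbb C}\cong\mathcal H_0(\overline\Sigma_K)$, so its weights are $\overline\Sigma_K$. By the $K$-compatibility of the Poincaré-bundle pairing, the refined pairing with $\omega_{\mathcal A_0^\vee}$ in the second slot lands in $\mathbb C^{\overline\Sigma_K}$. For consistency I would also check the first slot: complex conjugation sends the $\Sigma_K$-weights of $\omega_{\mathcal A_0}$ to $\overline\Sigma_K$. This agrees with the identification $\overline{\omega_{\mathcal A_0^\vee}}\cong\operatorname{Lie}$ noted after Proposition~\ref{prop_5}. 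For $\mathcal A$ one lifts to $\overline\Sigma_L$ as before.

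The main obstacle is the convention bookkeeping. The de Rham pairing is between $H^1_{\mathrm{dR}}(\mathcal A_0)$ and $H^1_{\mathrm{dR}}(\mathcal A_0^\vee)$, so the $K$-action on $\mathcal A_0^\vee$ must be the dual (Rosati-type) action. One has to verify that $\langle a^*x,y\rangle=\langle x,(a^\vee)^*y\rangle$, and that under this action the weights of $\omega_{\mathcal A_0^\vee}$ are indeed $\overline\Sigma_K$ rather than $\Sigma_K$, without confusing this with the contragredient convention of Corollary~\ref{cor_1}. I would settle this by computing both pairings explicitly on $\mathcal A_0(\mathbb C)=\mathbb C^{\Sigma_K}/\Phi(\mathfrak a)$, using the coordinate forms $dz_\sigma$ and $d\bar z_\sigma$.
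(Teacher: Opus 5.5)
Your proposal is correct and follows the paper's own argument. The paper just says that the Hodge decomposition, Proposition~\ref{prop_5} and Corollary~\ref{cor_1} identify the surviving factors of the trace-valued pairings, and that the case of $\mathcal A$ follows from the Serre tensor construction and the definition of $\Sigma_L$; your idempotent lemma makes that weight bookkeeping explicit. The convention issue you flag as the main obstacle is harmless. Since $\mathcal H_0$ is the $\mathcal O_S$-linear dual of $H^1_{\mathrm{dR}}(\mathcal A_0/S)$ with the dual module action, the de Rham pairing is $K$-balanced by construction, and Proposition~\ref{prop_5} computes the weights $\overline{\Sigma}_K$ of $\omega_{\mathcal A_0^\vee/\mathbb C}$ for exactly that action.
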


\begin{proof}
  The Hodge decomposition, Proposition~\ref{prop_5}, and
  Corollary~\ref{cor_1} identify the indicated factors of the
  trace-valued pairings. The assertions for $\mathcal A$ follow
  from the Serre tensor construction and the definition of the
  lifted type $\Sigma_L$.
\end{proof}

The Serre tensor construction gives the compatible decompositions
\[
  \omega_{\mathcal A^\vee/R}
  \cong\bigoplus_{\sigma\in\Sigma_K}
  \bigl(\omega_{\mathcal A_0^\vee/R}(\overline\sigma)
  \otimes_{\mathcal O_K}\mathcal O_L\bigr)\qquad\text{and}\qquad
  \omega_{\mathcal A/R}
  \cong\bigoplus_{\sigma\in\Sigma_K}
  \bigl(\omega_{\mathcal A_0/R}(\sigma^{-1})
  \otimes_{\mathcal O_K}\mathcal O_L^\vee\bigr),
\]
where $\mathcal O_L^\vee$ is the relative trace dual fixed above.
The rank-one components admit generators after shrinking
$\operatorname{Spec}R$ or making a suitable base change if necessary.
In the indexed formulas below, $(m_1,\ldots,m_n)$ denotes an
$\mathcal O_K$-basis of $\mathcal O_L$, with dual basis
$(m_1^\vee,\ldots,m_n^\vee)$ of $\mathcal O_L^\vee$.
Since $\mathcal O_L$ need only be projective over $\mathcal O_K$,
these formulas are understood in local frames when a global
basis is unavailable.

\begin{defn}[Algebraic bases]\label{def: algebraic bases}\rm
  A pair of \textit{algebraic bases}
  $(\omega(\mathcal A_0),\omega(\mathcal A_0^\vee))$ consists of
  generators $\omega(\mathcal A_0)(\sigma^{-1})$ of
  $\omega_{\mathcal A_0/R}(\sigma^{-1})$ and
  $\omega(\mathcal A_0^\vee)(\bar{\sigma})$ of
  $\omega_{\mathcal A_0^\vee/R}(\bar{\sigma})$,
  for $\sigma\in\Sigma_K$.
  The induced Serre bases of $\omega_{\mathcal A/R}$ and
  $\omega_{\mathcal A^\vee/R}$ are, respectively,
  $\omega(\mathcal A)(\sigma^{-1},m_i^\vee)
  :=\omega(\mathcal A_0)(\sigma^{-1})\otimes_{\mathcal O_K}m_i^\vee$
  and
  $\omega(\mathcal A^\vee)(\bar{\sigma},m_i)
  :=\omega(\mathcal A_0^\vee)(\bar{\sigma})\otimes_{\mathcal O_K}m_i$,
  for $\sigma\in\Sigma_K$ and $1\leq i\leq n$.
\end{defn}

Dualizing the chosen basis of invariant differentials gives a
trivialization
\begin{equation}\label{eq_1}
  \omega(\mathcal A_0)\colon
  \operatorname{Lie}(\mathcal A_0/R)
  \xrightarrow{\sim}
  (\mathcal O_K\otimes_{\mathbb Z}R)(\Sigma_K).
\end{equation}
Tensoring with $\mathcal O_L$ over $\mathcal O_K$ gives
\begin{equation}\label{eq_2}
  \omega(\mathcal A)\colon
  \operatorname{Lie}(\mathcal A/R)
  \xrightarrow{\sim}
  \mathcal O_L\otimes_{\mathcal O_K}
  (\mathcal O_K\otimes_{\mathbb Z}R)(\Sigma_K).
\end{equation}
The choice of the $m_i$ supplies coordinates but is not needed
for the intrinsic trivialization \eqref{eq_2}. In particular,
this construction does not require
$\mathcal O_L\otimes_{\mathbb Z}R$ to split over $R$.
The analogous observation applies to
$\omega(\mathcal A_0^\vee)$ and $\omega(\mathcal A^\vee)$.
By Proposition~\ref{prop_5}, specifying the pair
$(\omega(\mathcal A_0),\omega(\mathcal A_0^\vee))$ is equivalent
to specifying a basis of $\mathcal H_0$ compatible with its
CM-type decomposition and Hodge filtration; the same holds for
$\mathcal A$.

After base change to $\mathbb C$, \eqref{eq_1} identifies
$\operatorname{Lie}(\mathcal A_0/\mathbb C)$ with
$\mathbb C^{\Sigma_K}$. The exponential sequence
\[
  0\longrightarrow H_1(\mathcal A_0(\mathbb C),\mathbb Z)
  \longrightarrow\operatorname{Lie}(\mathcal A_0/\mathbb C)
  \xrightarrow{\exp}\mathcal A_0(\mathbb C)
  \longrightarrow 0
\]
identifies the image of integral homology with a period lattice
$\Lambda_0\subset\mathbb C^{\Sigma_K}$, giving
\begin{equation}\label{eq_3}
  \mathcal A_0(\mathbb C)
  \cong\mathbb C^{\Sigma_K}/\Lambda_0.
\end{equation}
Similarly, the identifications
$\mathcal O_L\otimes_{\mathcal O_K,\sigma}\mathbb C
\cong\bigoplus_{\substack{\tau\in J_L\\\tau|_K=\sigma}}\mathbb C$
identify the target of \eqref{eq_2} over $\mathbb C$ with
$\mathbb C^{\Sigma_L}$. Let $\Lambda\subset\mathbb C^{\Sigma_L}$
be the image of $H_1(\mathcal A(\mathbb C),\mathbb Z)$ under the
resulting identification of $\operatorname{Lie}(\mathcal A/\mathbb C)$.
Then
\begin{equation}\label{eq_4}
  \mathcal A(\mathbb C)
  \cong\mathbb C^{\Sigma_L}/\Lambda.
\end{equation}

\section{Algebraic construction of equivariant Eisenstein--Kronecker classes}

This section develops the algebraic framework for the equivariant
Eisenstein--Kronecker classes used throughout the paper. We first recall
the construction and basic functoriality of the equivariant coherent
Eisenstein--Kronecker classes of Kings--Sprang. We then specialize to the
Serre tensor abelian scheme
\(
\mathcal A=\mathcal O_L\otimes_{\mathcal O_K}\mathcal A_0
\)
and, in the rank-two case \(g=1\), \(n=2\), describe the cuspwise
decomposition relevant to the Bianchi boundary. Finally, we construct
the specialization at \(\mathfrak F\)-torsion sections using isogeny
functoriality, thereby obtaining the algebraic classes that will be used
in the analytic computations of the next section.

\subsection{The equivariant coherent Eisenstein--Kronecker classes}
\label{Section: The equivariant coherent Eisenstein--Kronecker classes}

In this subsection, we briefly recall the construction of the equivariant coherent Eisenstein--Kronecker classes developed by Kings--Sprang in \cite[Section~2]{KS_2025}. We review only the notation and the basic properties of the construction that will be needed later, in a form suited to the equivariant setting considered in this paper.

Let $\pi:\mathcal A\to S$ be an abelian scheme of relative dimension $d$
over a Noetherian scheme $S$, with zero section $e:S\to\mathcal A$.
Let $\pi^\vee:\mathcal A^\vee\to S$ be its dual abelian scheme, with
zero section $e^\vee:S\to\mathcal A^\vee$. We denote by $\mathcal P$
the normalized Poincaré bundle on $\mathcal A\times_S\mathcal A^\vee$,
equipped with its canonical rigidifications
$(e\times\operatorname{id}_{\mathcal A^\vee})^*\mathcal P
\cong\mathcal O_{\mathcal A^\vee}$ and
$(\operatorname{id}_{\mathcal A}\times e^\vee)^*\mathcal P
\cong\mathcal O_{\mathcal A}$.

Let $\mathcal J^\vee\subset\mathcal O_{\mathcal A^\vee}$ be the ideal
sheaf of $e^\vee(S)$. For $n\geq0$, let $\mathcal A^{\vee(n)}$ be the
$n$-th infinitesimal neighborhood of $e^\vee(S)$, defined by
$(\mathcal J^\vee)^{n+1}$, and write
$\pi^{\vee(n)}:\mathcal A^{\vee(n)}\to S$ for its structure morphism.
We denote the formal completion of $\mathcal A^\vee$ along $e^\vee(S)$
by $\widehat{\mathcal A}^{\vee}$, with structure morphism
$\widehat\pi^\vee:\widehat{\mathcal A}^{\vee}\to S$.
The infinitesimal and completed Poincaré bundles are defined by
\[
  \mathcal P^{(n)}
  :=
  \bigl(\operatorname{id}_{\mathcal A}\times\pi^{\vee(n)}\bigr)_*
  \left(
    \left.\mathcal P\right|_{\mathcal A\times_S\mathcal A^{\vee(n)}}
  \right),
  \qquad
  \widehat{\mathcal P}
  :=
  \varprojlim_{n\geq0}\mathcal P^{(n)};
\]
see \cite[Definition~2.7]{KS_2025}.
Thus $\widehat{\mathcal P}$ is the completion of $\mathcal P$ along
$\operatorname{id}_{\mathcal A}\times e^\vee:
\mathcal A\hookrightarrow\mathcal A\times_S\mathcal A^\vee$,
viewed as a sheaf on $\mathcal A$ with its natural
$\mathcal O_{\mathcal A\times_S\widehat{\mathcal A}^{\vee}}$-module
structure.

The rigidification gives
$\mathcal P^{(0)}\cong\mathcal O_{\mathcal A}$.
Writing
$\omega_{\mathcal A^\vee/S}:=(e^\vee)^*\Omega^1_{\mathcal A^\vee/S}$
for the sheaf of invariant differentials, we have, for every $n\geq1$,
the canonical short exact sequence
\[
  0
  \longrightarrow
  \pi^*\operatorname{Sym}^n_{\mathcal O_S}
  \bigl(\omega_{\mathcal A^\vee/S}\bigr)
  \longrightarrow
  \mathcal P^{(n)}
  \longrightarrow
  \mathcal P^{(n-1)}
  \longrightarrow
  0.
\]
In particular, each $\mathcal P^{(n)}$ is a finite locally free
$\mathcal O_{\mathcal A}$-module.

\begin{prop}[Comultiplication
  {\cite[Proposition~2.12]{KS_2025}}]
  \label{prop: comultiplication of Poincare bundles}
For all $m,n\geq0$, the group law on $\mathcal A^\vee$, together
with the theorem of the square for $\mathcal P$, induces canonical
homomorphisms
\[
  \mathcal P^{(m+n)}
  \longrightarrow
  \mathcal P^{(m)}
  \otimes_{\mathcal O_{\mathcal A}}
  \mathcal P^{(n)}.
\]
These homomorphisms are compatible with the transition maps and
therefore induce a comultiplication
\[
  \widehat{\mathcal P}
  \longrightarrow
  \widehat{\mathcal P}
  \widehat{\otimes}_{\mathcal O_{\mathcal A}}
  \widehat{\mathcal P},
\]
where the completed tensor product denotes
$\varprojlim_{m,n}
(\mathcal P^{(m)}\otimes_{\mathcal O_{\mathcal A}}\mathcal P^{(n)})$.
This comultiplication is cocommutative.
Under the identification
$\operatorname{gr}\widehat{\mathcal P}
\cong
\pi^*\operatorname{Sym}^{\bullet}_{\mathcal O_S}
(\omega_{\mathcal A^\vee/S})$
for the infinitesimal filtration, its associated graded is induced
by the diagonal
$\omega_{\mathcal A^\vee/S}
\to
\omega_{\mathcal A^\vee/S}\oplus\omega_{\mathcal A^\vee/S}$.
\end{prop}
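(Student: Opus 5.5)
The comultiplication will be built from the group law $\mu\colon\mathcal A^\vee\times_S\mathcal A^\vee\to\mathcal A^\vee$ and the theorem of the square for $\mathcal P$. The theorem of the square (seesaw applied to the rigidified bundle) gives a canonical isomorphism on $\mathcal A\times_S\mathcal A^\vee\times_S\mathcal A^\vee$:
\[
  (\operatorname{id}_{\mathcal A}\times\mu)^*\mathcal P\;\cong\;p_{12}^*\mathcal P\otimes p_{13}^*\mathcal P .
\]
It is compatible with the rigidifications along $e\times\operatorname{id}$. Since $\mu$ sends $e^\vee\times e^\vee$ to $e^\vee$, it maps the product of infinitesimal neighbourhoods $\mathcal A^{\vee(m)}\times_S\mathcal A^{\vee(n)}$ into $\mathcal A^{\vee(m+n)}$. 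This holds because $\mu^*\mathcal J^\vee\subset\mathcal J^\vee\otimes1+1\otimes\mathcal J^\vee$, so
\[
  \mu^*(\mathcal J^\vee)^{m+n+1}\subset\sum_{i+j=m+n+1}(\mathcal J^\vee)^i\otimes(\mathcal J^\vee)^j ,
\]
and every term on the right has $i\ge m+1$ or $j\ge n+1$.

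To produce the maps, I restrict the isomorphism above to $\mathcal A\times_S\mathcal A^{\vee(m)}\times_S\mathcal A^{\vee(n)}$ and push forward to $\mathcal A$. The pushforward of the left side receives the restriction map
\[
  \mathcal P^{(m+n)}\to(\operatorname{id}\times\pi^{\vee(m)}\times\pi^{\vee(n)})_*\bigl((\operatorname{id}\times\mu)^*\mathcal P\bigr).
\]
The pushforward of the right side is $\mathcal P^{(m)}\otimes_{\mathcal O_{\mathcal A}}\mathcal P^{(n)}$. This is a Künneth statement: all the schemes involved are finite over $\mathcal A$ (affine morphisms), and $\mathcal P^{(m)},\mathcal P^{(n)}$ are locally free, so the pushforward of an external tensor product over the fibre product is the tensor product of pushforwards. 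Compatibility with the transition maps follows from naturality of restriction to smaller neighbourhoods, and passing to the limit gives the completed comultiplication.

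Cocommutativity follows from commutativity of $\mu$: the swap $s$ of the two $\mathcal A^\vee$ factors satisfies $\mu\circ s=\mu$. The isomorphism from the theorem of the square is compatible with $s$ up to the canonical symmetry of the tensor product. Both sides are rigidified line bundles, so any two such isomorphisms agree once they agree along the rigidification, and here they do.

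For the associated graded, I reduce to the ideal computation. On $\operatorname{gr}$, the map is induced by
\[
  \mu^*\colon\mathcal J^\vee/(\mathcal J^\vee)^2\to(\mathcal J^\vee\otimes1+1\otimes\mathcal J^\vee)/(\cdots)^2 .
\]
This is $x\mapsto x\otimes1+1\otimes x$, since the differential of the group law at the origin is addition on $\operatorname{Lie}$, i.e.\ the diagonal on $\omega_{\mathcal A^\vee/S}$. On the graded pieces the twist by $\mathcal P$ is trivialized by the rigidification along $\operatorname{id}\times e^\vee$, so it contributes nothing. The induced map on $\operatorname{Sym}^\bullet$ is therefore the algebra map extending the diagonal.

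I expect the main obstacle to be the Künneth/base-change identification and its compatibility with the transition maps. The limit is a completed tensor product, so one has to check that the inverse systems are Mittag-Leffler (the transition maps are surjective) for the maps to assemble. I would handle this by working locally on $S$, where each $\mathcal P^{(n)}$ is free.
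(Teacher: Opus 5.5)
Your proposal is correct and follows the construction that the statement itself describes; the paper does not prove this result but quotes it from \cite[Proposition~2.12]{KS_2025}. The construction is to pull back along $\operatorname{id}_{\mathcal A}\times\mu$ from $\mathcal A\times_S\mathcal A^{\vee(m+n)}$ to $\mathcal A\times_S\mathcal A^{\vee(m)}\times_S\mathcal A^{\vee(n)}$, identify via the rigidified theorem-of-the-square isomorphism, and push forward, with cocommutativity and the graded statement coming from $\mu\circ s=\mu$ and from $d\mu$ at $(e^\vee,e^\vee)$ being addition. The one thing to drop is your closing worry: no Mittag-Leffler condition is needed, since compatible maps into an inverse limit assemble by the universal property, and your proposed fix would fail anyway, because $\mathcal P^{(n)}$ is not free over $\pi^{-1}(U)$ for opens $U\subset S$ (already $\mathcal P^{(1)}$ is a non-split extension of $\mathcal O_{\mathcal A}$ by $\pi^*\omega_{\mathcal A^\vee/S}$).
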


\begin{cor}[{\cite[Corollary~2.14]{KS_2025}}]
  \label{cor_5}
For every $n\geq0$, iterating the cocommutative comultiplication
gives a canonical homomorphism
\[
  \mathcal P^{(n)}
  \longrightarrow
  \operatorname{TSym}^n_{\mathcal O_{\mathcal A}}
  \bigl(\mathcal P^{(1)}\bigr).
\]
If $n!$ is invertible on $S$, this homomorphism is an isomorphism.
\end{cor}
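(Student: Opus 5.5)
The map is obtained by iterating Proposition~\ref{prop: comultiplication of Poincare bundles} and then projecting to first-order neighbourhoods. Applying the comultiplication $n-1$ times gives a map
\[
  \Delta^{(n)}\colon\mathcal P^{(n)}\longrightarrow(\mathcal P^{(n)})^{\otimes n},
\]
defined via the components $\mathcal P^{(n)}\to\mathcal P^{(n_1)}\otimes\cdots\otimes\mathcal P^{(n_n)}$ with $\sum n_i=n$. Its well-definedness uses compatibility with the transition maps. We then compose with the reduction $\mathcal P^{(n)}\to\mathcal P^{(1)}$ in each factor. Coassociativity, which comes from associativity of the group law on $\mathcal A^\vee$, shows that the result does not depend on the order of iteration. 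Cocommutativity shows that the image is invariant under the symmetric group $\mathfrak S_n$ permuting the factors. Hence the map factors through $\operatorname{TSym}^n_{\mathcal O_{\mathcal A}}(\mathcal P^{(1)})=((\mathcal P^{(1)})^{\otimes n})^{\mathfrak S_n}$. For this we need that $\mathcal P^{(1)}$ is locally free, so that invariants commute with base change; this follows from the short exact sequences. Canonicity is then automatic, since the construction uses only the group law and the theorem of the square.

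For the isomorphism statement when $n!\in\mathcal O_S^\times$, I would argue via filtrations. Filter $\mathcal P^{(n)}$ by the images of $(\mathcal J^\vee)^k$, whose graded pieces are $\pi^*\operatorname{Sym}^k\omega_{\mathcal A^\vee/S}$ for $0\le k\le n$. Filter $\mathcal P^{(1)}$ by $0\subset\pi^*\omega_{\mathcal A^\vee/S}\subset\mathcal P^{(1)}$, and give $\operatorname{TSym}^n(\mathcal P^{(1)})$ the induced filtration. Since $n!$ is invertible, $\operatorname{TSym}^n\cong\operatorname{Sym}^n$, and therefore
\[
  \operatorname{gr}\operatorname{TSym}^n(\mathcal P^{(1)})
  \cong\operatorname{TSym}^n\bigl(\mathcal O_{\mathcal A}\oplus\pi^*\omega_{\mathcal A^\vee/S}\bigr)
  \cong\bigoplus_{k=0}^n\pi^*\operatorname{TSym}^k\omega_{\mathcal A^\vee/S}.
\]
The map is filtered, because the comultiplication respects the infinitesimal filtration. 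By the last assertion of Proposition~\ref{prop: comultiplication of Poincare bundles}, on $\operatorname{gr}^k$ it is induced by the diagonal. Concretely, $\operatorname{Sym}^k\omega\to\operatorname{TSym}^k\omega$ sends $x_1\cdots x_k$ to the symmetrization $\sum_{\sigma}x_{\sigma(1)}\otimes\cdots\otimes x_{\sigma(k)}$, suitably combined with $1$'s in the remaining $n-k$ slots.

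This graded map is the classical comparison $\operatorname{Sym}^k\to\operatorname{TSym}^k$, which becomes an isomorphism once $k!$ is invertible; this holds since $k\le n$. A combinatorial factor such as $\binom nk$ may also appear from choosing which slots are nontrivial, and it divides $n!$, so it is invertible as well. A filtered map between finite filtrations that is an isomorphism on graded pieces is an isomorphism, by the five lemma and induction on $k$.

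The main obstacle is making the graded computation precise. One must check that the induced filtration on $\operatorname{TSym}^n(\mathcal P^{(1)})$ has the stated graded pieces, and that the iterated comultiplication on $\operatorname{gr}^k$ is exactly the symmetrization map up to an invertible integer. I would verify this locally, where $\mathcal P^{(1)}\cong\mathcal O_{\mathcal A}\oplus\pi^*\omega_{\mathcal A^\vee/S}$ after choosing a splitting. The associated graded does not depend on that choice, and the computation reduces to the universal computation for the formal group of $\mathcal A^\vee$, whose coordinate ring is a divided-power/symmetric Hopf algebra to the relevant order.
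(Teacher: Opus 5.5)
Your argument is correct and is the natural one. The paper does not reprove this statement but quotes \cite[Corollary~2.14]{KS_2025}, and the last assertion of Proposition~\ref{prop: comultiplication of Poincare bundles} (the associated graded of the comultiplication is induced by the diagonal) is exactly the input that, as in your filtration argument, reduces the claim to the symmetrization maps $\operatorname{Sym}^k\omega_{\mathcal A^\vee/S}\to\operatorname{TSym}^k\omega_{\mathcal A^\vee/S}$ for $k\leq n$. The one slip is the intermediate map $\mathcal P^{(n)}\to(\mathcal P^{(n)})^{\otimes n}$: it is not available, because the comultiplication lowers the infinitesimal level, and the components with $\sum n_i=n$ only assemble into a map to the quotient of $(\mathcal P^{(n)})^{\otimes n}$ by its part of total infinitesimal degree $>n$. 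So define the map directly by iterating $\mathcal P^{(k)}\to\mathcal P^{(1)}\otimes\mathcal P^{(k-1)}$; the coassociativity that makes the order irrelevant uses, besides associativity of the group law, the compatibility of the theorem-of-the-square isomorphisms, which holds by rigidity.
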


Under the rigidification
$e^*\widehat{\mathcal P}
\cong
(\widehat\pi^\vee)_*
\mathcal O_{\widehat{\mathcal A}^{\vee}}$,
pulling the comultiplication back along $e$ identifies it with
the coproduct induced by the formal group law on
$\widehat{\mathcal A}^{\vee}$, which we use below to construct
the moment maps at the zero section.

The rigidification along
$e\times\operatorname{id}_{\mathcal A^\vee}$ induces canonical
identifications
\[
  e^*\mathcal P^{(n)}
  \cong
  (\pi^{\vee(n)})_*
  \mathcal O_{\mathcal A^{\vee(n)}},
  \qquad
  e^*\widehat{\mathcal P}
  \cong
  (\widehat\pi^\vee)_*
  \mathcal O_{\widehat{\mathcal A}^{\vee}}.
\]
We use these identifications to define moment maps at the zero section.

The group law on $\mathcal A^\vee$ induces compatible morphisms
$\mathcal A^{\vee(m)}\times_S\mathcal A^{\vee(n)}
\to\mathcal A^{\vee(m+n)}$, and hence cocommutative coproducts on the
corresponding $\mathcal O_S$-algebras.
The first-order algebra has the canonical augmentation splitting
$(\pi^{\vee(1)})_*\mathcal O_{\mathcal A^{\vee(1)}}
\cong\mathcal O_S\oplus\omega_{\mathcal A^\vee/S}$.
Iterating the coproduct and using cocommutativity together with this
splitting gives the finite-level moment homomorphisms
\[
  \operatorname{mom}_{e,n}:
  e^*\mathcal P^{(n)}
  \cong
  (\pi^{\vee(n)})_*
  \mathcal O_{\mathcal A^{\vee(n)}}
  \longrightarrow
  \bigoplus_{b=0}^{n}
  \operatorname{TSym}^b_{\mathcal O_S}
  \bigl(\omega_{\mathcal A^\vee/S}\bigr).
\]
Here
$\operatorname{TSym}^b_{\mathcal O_S}(M)
:=(M^{\otimes_{\mathcal O_S}b})^{\mathfrak S_b}$
denotes the symmetric tensors of a finite locally free
$\mathcal O_S$-module $M$. Set
$\widehat{\operatorname{TSym}}_{\mathcal O_S}
(\omega_{\mathcal A^\vee/S})
:=
\prod_{b\geq0}\operatorname{TSym}^b_{\mathcal O_S}
(\omega_{\mathcal A^\vee/S})$,
viewed as the inverse limit of the truncations
$\bigoplus_{b=0}^{n}\operatorname{TSym}^b_{\mathcal O_S}
(\omega_{\mathcal A^\vee/S})$.
The finite-level moment homomorphisms are compatible with the
transition maps and therefore induce the completed moment map
\[
  \operatorname{mom}_e:
  e^*\widehat{\mathcal P}
  \cong
  (\widehat\pi^\vee)_*
  \mathcal O_{\widehat{\mathcal A}^{\vee}}
  \longrightarrow
  \widehat{\operatorname{TSym}}_{\mathcal O_S}
  \bigl(\omega_{\mathcal A^\vee/S}\bigr).
\]
For every $b\geq0$, we denote its projection to the $b$-th component by
$\operatorname{mom}_e^b:
e^*\widehat{\mathcal P}\to
\operatorname{TSym}^b_{\mathcal O_S}
(\omega_{\mathcal A^\vee/S})$.

The functoriality of the infinitesimal and completed Poincaré bundles
under isogenies provides both their equivariant structures and the
splittings used to define moment maps at torsion sections.

\begin{thm}[Functoriality under isogenies
  {\cite[Theorem~2.8]{KS_2025}}]
  \label{thm: Functoriality of Poincaré bundles}
Let $\varphi:\mathcal A\to\mathcal B$ be an isogeny of abelian schemes
over $S$, with dual
$\varphi^\vee:\mathcal B^\vee\to\mathcal A^\vee$.
For every $n\geq0$, there is a canonical morphism
\[
  \varphi_\#^{(n)}:
  \mathcal P_{\mathcal A}^{(n)}
  \longrightarrow
  \varphi^*\mathcal P_{\mathcal B}^{(n)}.
\]
These morphisms are compatible with the transition maps in $n$ and
induce a canonical morphism
\[
  \varphi_\#:
  \widehat{\mathcal P}_{\mathcal A}
  \longrightarrow
  \varphi^*\widehat{\mathcal P}_{\mathcal B}.
\]
If $\varphi^\vee$ is étale, then $\varphi_\#^{(n)}$ and
$\varphi_\#$ are isomorphisms. This hypothesis holds, for example,
when $\deg(\varphi)$ is invertible on $S$.
The functoriality morphisms are compatible with composition of
isogenies.
\end{thm}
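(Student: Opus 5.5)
The plan is to construct $\varphi_\#^{(n)}$ from the dual isogeny and the functoriality of the Poincaré bundle, and then to check the isomorphism claim on infinitesimal neighbourhoods.

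\emph{Construction.} The Poincaré bundles carry a canonical isomorphism of rigidified line bundles on $\mathcal A\times_S\mathcal B^\vee$,
\[
  (\varphi\times\operatorname{id}_{\mathcal B^\vee})^*\mathcal P_{\mathcal B}
  \cong
  (\operatorname{id}_{\mathcal A}\times\varphi^\vee)^*\mathcal P_{\mathcal A}.
\]
This follows from the defining property of $\varphi^\vee$ as the map $\operatorname{Pic}^0_{\mathcal B/S}\to\operatorname{Pic}^0_{\mathcal A/S}$, $\mathcal L\mapsto\varphi^*\mathcal L$. Since $\varphi^\vee(e^\vee_{\mathcal B})=e^\vee_{\mathcal A}$, the map $\varphi^\vee$ sends the ideal of $e^\vee_{\mathcal A}(S)$ into that of $e^\vee_{\mathcal B}(S)$. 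It therefore induces morphisms $\mathcal B^{\vee(n)}\to\mathcal A^{\vee(n)}$ over $S$, compatible in $n$.

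Restricting the displayed isomorphism to $\mathcal A\times_S\mathcal B^{\vee(n)}$ and pushing forward to $\mathcal A$ gives the following identification, using that pushforward along the finite map $\operatorname{id}\times\pi^{\vee(n)}$ commutes with the flat base change $\varphi$:
\[
  \varphi^*\mathcal P_{\mathcal B}^{(n)}
  \cong
  (\operatorname{id}\times\pi_{\mathcal B}^{\vee(n)})_*
  \bigl((\operatorname{id}\times\varphi^{\vee(n)})^*\mathcal P_{\mathcal A}|_{\mathcal A\times\mathcal A^{\vee(n)}}\bigr).
\]
The adjunction unit for $\operatorname{id}\times\varphi^{\vee(n)}$ then gives a map from $\mathcal P_{\mathcal A}^{(n)}$ to the right-hand side. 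Concretely, this map is $\mathcal P_{\mathcal A}|_{\mathcal A\times\mathcal A^{\vee(n)}}\otimes_{\mathcal O_{\mathcal A^{\vee(n)}}}\mathcal O_{\mathcal B^{\vee(n)}}$ applied to $f\mapsto f\otimes1$. Composing the two yields $\varphi_\#^{(n)}$. All ingredients are canonical, so the maps are compatible with the closed immersions $\mathcal A^{\vee(n-1)}\subset\mathcal A^{\vee(n)}$, hence with the transition maps; passing to $\varprojlim$ gives $\varphi_\#$. Compatibility with composition follows from $(\psi\varphi)^\vee=\varphi^\vee\psi^\vee$ and the cocycle property of the Poincaré isomorphisms.

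\emph{Isomorphism when $\varphi^\vee$ is étale.} An étale morphism sending $e^\vee_{\mathcal B}$ to $e^\vee_{\mathcal A}$ induces isomorphisms of all infinitesimal neighbourhoods, $\mathcal B^{\vee(n)}\xrightarrow{\sim}\mathcal A^{\vee(n)}$. This holds because étale maps induce isomorphisms of completed local rings along sections, equivalently $\mathcal O_{\mathcal A^\vee}/(\mathcal J^\vee)^{n+1}\to\mathcal O_{\mathcal B^\vee}/(\mathcal J^\vee_{\mathcal B})^{n+1}$ is an isomorphism. Hence the adjunction map above is an isomorphism, and so is $\varphi_\#^{(n)}$. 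Alternatively, one can argue by induction using the filtration exact sequences: on graded pieces the map is $\operatorname{Sym}^n$ of $\omega_{\mathcal A^\vee}\to\omega_{\mathcal B^\vee}$, which is an isomorphism exactly when $\varphi^\vee$ is étale, and the five lemma concludes.

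If $\deg\varphi$ is invertible on $S$, then $\deg\varphi^\vee=\deg\varphi$ is invertible, so the finite flat group scheme $\ker\varphi^\vee$ is étale and $\varphi^\vee$ is étale.

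The main subtlety I expect is the precise normalization of the Poincaré comparison isomorphism, namely its compatibility with both rigidifications, which is needed for canonicity and for the composition property. I would handle this using the universal property of the normalized Poincaré bundle, which rules out scalar ambiguity.
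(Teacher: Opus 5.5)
Your proposal is correct and is essentially the standard argument behind \cite[Theorem~2.8]{KS_2025}, which the paper quotes without reproducing a proof. That argument takes the rigidified isomorphism $(\varphi\times\mathrm{id}_{\mathcal B^\vee})^*\mathcal P_{\mathcal B}\cong(\mathrm{id}_{\mathcal A}\times\varphi^\vee)^*\mathcal P_{\mathcal A}$, combines it with the induced maps $\mathcal B^{\vee(n)}\to\mathcal A^{\vee(n)}$ and the adjunction unit to produce $\varphi_\#^{(n)}$, and uses étaleness of $\varphi^\vee$ to make these maps of infinitesimal neighbourhoods isomorphisms (your alternative check on graded pieces via $\operatorname{Sym}^n$ of $(\varphi^\vee)^*\colon\omega_{\mathcal A^\vee/S}\to\omega_{\mathcal B^\vee/S}$ is also valid).
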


Applying this functoriality to automorphisms gives the equivariant
structures needed below.

\begin{cor}[Equivariant structure
  {\cite[Corollary~2.11]{KS_2025}}]
Let $\Gamma$ be a discrete group acting from the left on
$\mathcal A/S$ by automorphisms of abelian schemes.
Then $\mathcal P^{(n)}$ and $\widehat{\mathcal P}$ carry canonical
$\Gamma$-equivariant structures, compatible with the transition maps.
For every $\gamma\in\Gamma$, the structure isomorphism on
$\widehat{\mathcal P}$ is
$\gamma_\#^{-1}:
\gamma^*\widehat{\mathcal P}\xrightarrow{\sim}\widehat{\mathcal P}$,
and these isomorphisms satisfy the cocycle condition.
\end{cor}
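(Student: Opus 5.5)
We obtain the equivariant structure by applying Theorem~\ref{thm: Functoriality of Poincaré bundles} to the automorphisms $\gamma\colon\mathcal A\to\mathcal A$, $\gamma\in\Gamma$, viewed as isogenies of degree one. The dual $\gamma^\vee$ is an automorphism of $\mathcal A^\vee$, hence étale. The theorem therefore provides canonical isomorphisms
\[
  \gamma_\#^{(n)}\colon\mathcal P^{(n)}\xrightarrow{\sim}\gamma^*\mathcal P^{(n)},
  \qquad
  \gamma_\#\colon\widehat{\mathcal P}\xrightarrow{\sim}\gamma^*\widehat{\mathcal P},
\]
compatible with the transition maps in $n$. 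We define the structure isomorphism as the inverse $\gamma_\#^{-1}\colon\gamma^*\widehat{\mathcal P}\to\widehat{\mathcal P}$, and likewise at each finite level. Because the finite-level maps commute with the transition maps, the completed structure is the inverse limit of the finite-level structures. This gives the asserted compatibility.

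It remains to verify the cocycle condition. For a left action, the structure maps $\theta_\gamma:=\gamma_\#^{-1}\colon\gamma^*\widehat{\mathcal P}\to\widehat{\mathcal P}$ must satisfy
\[
  \theta_{\gamma\delta}=\theta_\delta\circ\delta^*(\theta_\gamma)
  \colon(\gamma\delta)^*\widehat{\mathcal P}=\delta^*\gamma^*\widehat{\mathcal P}\to\widehat{\mathcal P}.
\]
Inverting both sides, this becomes $(\gamma\delta)_\#=\delta^*(\gamma_\#)\circ\delta_\#$. That identity is exactly the compatibility of functoriality with composition of isogenies, applied to $\gamma\circ\delta$, which is part of Theorem~\ref{thm: Functoriality of Poincaré bundles}. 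We also need $\mathrm{id}_\#=\mathrm{id}$, which follows from canonicity: the morphism is built from the pullback of $\mathcal P$ along $\mathrm{id}\times\varphi^\vee$, and for $\varphi=\mathrm{id}$ this pullback is the identity.

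The main point needing care is bookkeeping rather than mathematics: the order of the pullbacks, and the correct identification $(\gamma\delta)^*\cong\delta^*\gamma^*$. With the left action convention, these must line up so that the composition law in the theorem yields the cocycle identity rather than its opposite. I would check this on the finite-level sheaves $\mathcal P^{(n)}$ first, where everything is a finite locally free $\mathcal O_{\mathcal A}$-module, and then pass to the inverse limit.
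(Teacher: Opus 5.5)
Your proposal is correct and takes the same route as the paper, which defers to Kings--Sprang: you apply the functoriality theorem for isogenies to each automorphism $\gamma$, whose dual is an automorphism and hence étale, take $\gamma_\#^{-1}$ as the structure map, and deduce the cocycle identity $(\gamma\delta)_\#=\delta^*(\gamma_\#)\circ\delta_\#$ from compatibility with composition, with the pullbacks in the correct order. Your separate verification of $\mathrm{id}_\#=\mathrm{id}$ is not needed, because the cocycle identity with $\gamma=\delta=1$ gives $\theta_1=\theta_1\circ\theta_1$, and invertibility of $\theta_1$ then forces $\theta_1=\mathrm{id}$.
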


Let $\varphi:\mathcal A\to\mathcal B$ be an isogeny of abelian
schemes over $S$. A section $x:S\to\mathcal A$ is called a
\textit{$\varphi$-torsion section} if it factors through
$\ker(\varphi)$, or equivalently, if
$\varphi\circ x=e_{\mathcal B}$, where
$e_{\mathcal B}:S\to\mathcal B$ is the zero section.
Restricting the isogeny functoriality to $\ker(\varphi)$ gives
the following splitting principle.

\begin{cor}[Splitting principle
  {\cite[Corollary~2.9]{KS_2025}}]
  \label{cor: splitting principle}
Let $\varphi:\mathcal A\to\mathcal B$ be an isogeny whose dual
$\varphi^\vee$ is étale, and let
$\pi_{\ker(\varphi)}:\ker\varphi\to S$ be the structure morphism.
There is a canonical isomorphism
\[
  \left.\widehat{\mathcal P}_{\mathcal A}\right|_{\ker\varphi}
  \cong
  \pi_{\ker(\varphi)}^*
  \bigl(\widehat\pi_{\mathcal B}^{\vee}\bigr)_*
  \mathcal O_{\widehat{\mathcal B}^{\vee}},
\]
where
$\widehat\pi_{\mathcal B}^{\vee}:
\widehat{\mathcal B}^{\vee}\to S$
is the structure morphism of the formal completion of
$\mathcal B^\vee$ along its zero section.
The analogous splitting holds at every finite level.

For every $\varphi$-torsion section $x:S\to\mathcal A$, there are canonical isomorphisms
\[
  \begin{aligned}
    \rho_x^{(n)}:
    x^*\mathcal P_{\mathcal A}^{(n)}
    &\xrightarrow{\sim}
    \bigl(\pi_{\mathcal A}^{\vee(n)}\bigr)_*
    \mathcal O_{\mathcal A^{\vee(n)}},
    \qquad
    \rho_x:
    x^*\widehat{\mathcal P}_{\mathcal A}
    &\xrightarrow{\sim}
    \bigl(\widehat\pi_{\mathcal A}^{\vee}\bigr)_*
    \mathcal O_{\widehat{\mathcal A}^{\vee}}.
  \end{aligned}
\]
The finite-level isomorphisms are compatible with the transition
maps and induce $\rho_x$ on passing to the inverse limit.
\end{cor}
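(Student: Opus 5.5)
The splitting principle will be derived from the isogeny functoriality of Theorem~\ref{thm: Functoriality of Poincaré bundles} together with the rigidification of the Poincaré bundle of $\mathcal B$ along the zero section of $\mathcal B$. Since $\varphi^\vee$ is étale, Theorem~\ref{thm: Functoriality of Poincaré bundles} supplies an isomorphism $\varphi_\#^{(n)}\colon\mathcal P_{\mathcal A}^{(n)}\xrightarrow{\sim}\varphi^*\mathcal P_{\mathcal B}^{(n)}$ for every $n$, compatible with the transition maps. Let $i_\varphi\colon\ker\varphi\hookrightarrow\mathcal A$ be the inclusion. Restricting the isomorphism to $\ker\varphi$ gives
\[
  i_\varphi^*\mathcal P_{\mathcal A}^{(n)}
  \cong
  i_\varphi^*\varphi^*\mathcal P_{\mathcal B}^{(n)}
  =
  (\varphi\circ i_\varphi)^*\mathcal P_{\mathcal B}^{(n)}.
\]
The map $\varphi\circ i_\varphi$ factors as $e_{\mathcal B}\circ\pi_{\ker(\varphi)}$. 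Hence the right-hand side equals $\pi_{\ker(\varphi)}^*e_{\mathcal B}^*\mathcal P_{\mathcal B}^{(n)}$.

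Next I would use the rigidification of $\mathcal P_{\mathcal B}$ along $e_{\mathcal B}\times\operatorname{id}_{\mathcal B^\vee}$, as recalled before the definition of the moment maps. Combined with base change for the finite morphism $\pi^{\vee(n)}_{\mathcal B}$, it gives $e_{\mathcal B}^*\mathcal P_{\mathcal B}^{(n)}\cong(\pi_{\mathcal B}^{\vee(n)})_*\mathcal O_{\mathcal B^{\vee(n)}}$. Base change applies here because the pushforward along an affine morphism commutes with arbitrary base change. This yields the finite-level splitting
\[
  i_\varphi^*\mathcal P_{\mathcal A}^{(n)}\cong\pi_{\ker(\varphi)}^*(\pi_{\mathcal B}^{\vee(n)})_*\mathcal O_{\mathcal B^{\vee(n)}}.
\]
Every map in this chain is canonical and compatible with the transition maps. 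Passing to the inverse limit therefore gives the completed statement, where I use that each $\mathcal P^{(n)}$ is finite locally free, so that restriction to $\ker\varphi$ commutes with the limit.

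For a $\varphi$-torsion section $x$, write $x=i_\varphi\circ x'$ with $x'\colon S\to\ker\varphi$. Pulling the splitting back along $x'$ uses $\pi_{\ker(\varphi)}\circ x'=\operatorname{id}_S$ and gives $x^*\mathcal P_{\mathcal A}^{(n)}\cong(\pi_{\mathcal B}^{\vee(n)})_*\mathcal O_{\mathcal B^{\vee(n)}}$. To reach the target $\mathcal O_{\mathcal A^{\vee(n)}}$ stated in the corollary, I would use that $\varphi^\vee\colon\mathcal B^\vee\to\mathcal A^\vee$ is étale and preserves the zero section. It therefore induces isomorphisms of infinitesimal neighborhoods $\mathcal B^{\vee(n)}\xrightarrow{\sim}\mathcal A^{\vee(n)}$, and hence of their pushed-forward structure sheaves and of the formal completions. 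Composing defines $\rho_x^{(n)}$, and taking the limit defines $\rho_x$.

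The main obstacle is canonicity and compatibility. The identification $\varphi\circ i_\varphi=e_{\mathcal B}\circ\pi_{\ker(\varphi)}$ must be combined with the rigidification so that the result does not depend on choices and is compatible with the transition maps. This in turn requires that $\varphi_\#^{(n)}$ respects the rigidifications, which I would check directly from its construction in \cite[Theorem~2.8]{KS_2025}. There, $\varphi_\#^{(n)}$ comes from the isomorphism $(\varphi\times\operatorname{id})^*\mathcal P_{\mathcal B}\cong(\operatorname{id}\times\varphi^\vee)^*\mathcal P_{\mathcal A}$ of rigidified bundles, restricted along the isomorphism $\mathcal B^{\vee(n)}\cong\mathcal A^{\vee(n)}$.
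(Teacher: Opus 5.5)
Your proposal is correct and follows the route the paper indicates, namely obtaining the splitting principle by restricting the isogeny functoriality to $\ker(\varphi)$ (as in \cite[Corollary~2.9]{KS_2025}): you restrict the isomorphism $\varphi_\#$ along $i_\varphi$, use $\varphi\circ i_\varphi=e_{\mathcal B}\circ\pi_{\ker(\varphi)}$ together with the rigidification of $\mathcal P_{\mathcal B}$ along $e_{\mathcal B}$, and then identify $\mathcal B^{\vee(n)}\cong\mathcal A^{\vee(n)}$ via the étale dual isogeny. The one loosely justified step is passing restriction through the inverse limit; it is cleanest to read $\widehat{\mathcal P}_{\mathcal A}|_{\ker\varphi}$ as $\varprojlim_n i_\varphi^*\mathcal P^{(n)}_{\mathcal A}$, so that the finite-level isomorphisms and their compatibility with the transition maps suffice.
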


Using the rigidification at the zero section, these splittings
transport the moment maps defined above to $x$.

\begin{defn}[Moment maps at torsion sections
  {\cite[Definition~2.10]{KS_2025}}]
  \label{def: moment map x}\rm
Let $\varphi$ and $x$ be as in
Corollary~\ref{cor: splitting principle}.
For every $n\geq0$, the finite-level moment map at $x$ is
\[
  \operatorname{mom}_{x,n}
  :=
  \operatorname{mom}_{e,n}\circ\rho_x^{(n)}:
  x^*\mathcal P^{(n)}
  \longrightarrow
  \bigoplus_{b=0}^{n}
  \operatorname{TSym}^b_{\mathcal O_S}
  \bigl(\omega_{\mathcal A^\vee/S}\bigr).
\]
The completed moment map at $x$ is
\[
  \operatorname{mom}_x
  :=
  \operatorname{mom}_e\circ\rho_x:
  x^*\widehat{\mathcal P}
  \longrightarrow
  \widehat{\operatorname{TSym}}_{\mathcal O_S}
  \bigl(\omega_{\mathcal A^\vee/S}\bigr).
\]
For $0\leq b\leq n$, we denote by
$\operatorname{mom}_{x,n}^b$ the projection of
$\operatorname{mom}_{x,n}$ to
$\operatorname{TSym}^b_{\mathcal O_S}
(\omega_{\mathcal A^\vee/S})$.
For every $b\geq0$, we similarly denote the $b$-th component of
$\operatorname{mom}_x$ by $\operatorname{mom}_x^b$.
The finite-level moment maps are compatible with the transition
maps and induce the completed moment map.
\end{defn}

If $\mathcal B$ also carries a $\Gamma$-action, $\varphi$ is
$\Gamma$-equivariant, and $x$ is $\Gamma$-invariant, then the
splittings $\rho_x^{(n)}$, $\rho_x$ and the corresponding moment
maps are $\Gamma$-equivariant.
Moreover, if $\psi:\mathcal B\to\mathcal C$ is an isogeny with
étale dual, then $x$ is also a $(\psi\circ\varphi)$-torsion section.
Replacing $\varphi$ by $\psi\circ\varphi$ leaves the splittings and
moment maps unchanged, by compatibility of the functoriality
morphisms with composition.

The construction of the equivariant coherent Eisenstein--Kronecker
class uses the following cohomological vanishing theorem.

\begin{thm}[Vanishing of cohomology
  {\cite[Theorem~2.15]{KS_2025}}]
  \label{thm: Vanishing of cohomology}
There are canonical isomorphisms
\[
  R^q\pi_*
  \left(
    \widehat{\mathcal P}
    \otimes_{\mathcal O_{\mathcal A}}
    \Omega_{\mathcal A/S}^d
  \right)
  \cong
  \begin{cases}
    \mathcal O_S, & q=d,\\
    0, & q\neq d.
  \end{cases}
\]
\end{thm}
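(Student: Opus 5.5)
The plan is to reduce to the finite levels $\mathcal P^{(n)}$ and pass to the limit. First I will compute $R^q\pi_*(\mathcal P\otimes\Omega^d_{\mathcal A/S})$ along the infinitesimal neighbourhoods of $e^\vee(S)$. Let $p_2\colon\mathcal A\times_S\mathcal A^{\vee}\to\mathcal A^\vee$ be the second projection. Mumford's computation of the cohomology of the Poincaré bundle, in its relative form, says that $Rp_{2*}\mathcal P$ is concentrated in degree $d$ and is supported on $e^\vee(S)$. More precisely,
\[
Rp_{2*}\mathcal P\cong e^\vee_*\,\omega_{\mathcal A^\vee/S}^{-d}[-d]
\cong e^\vee_*\,\omega_{\mathcal A/S}^{-d}[-d],
\]
and it commutes with base change. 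Since $\Omega^d_{\mathcal A/S}\cong\pi^*\omega^d_{\mathcal A/S}$, twisting by it cancels this determinant line. Base change along the closed immersion $\mathcal A^{\vee(n)}\hookrightarrow\mathcal A^\vee$ then gives
\[
R\Gamma\bigl(\mathcal A^{\vee(n)},\,Rp_{2*}\mathcal P\,\big|^{L}_{\mathcal A^{\vee(n)}}\bigr).
\]
The derived restriction of a sheaf supported on $e^\vee(S)$ has Tor terms. To avoid tracking them, I will instead argue degree by degree using the filtration of $\mathcal P^{(n)}$.

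The filtration step works as follows. The short exact sequences
\[
0\to\pi^*\operatorname{Sym}^n\omega_{\mathcal A^\vee/S}\to\mathcal P^{(n)}\to\mathcal P^{(n-1)}\to0,
\]
tensored with $\Omega^d$, give long exact sequences for $R^q\pi_*$. The graded pieces have
\[
R^q\pi_*(\pi^*M\otimes\Omega^d)\cong M\otimes\omega^d\otimes\omega^{-q}_{\mathcal A^\vee}\cdots,
\]
which is nonzero in all degrees $0\le q\le d$. So naive dévissage does not give vanishing; the point is that the connecting homomorphisms
\[
R^{q}\pi_*(\mathcal P^{(n-1)}\otimes\Omega^d)\to R^{q+1}\pi_*(\pi^*\operatorname{Sym}^n\omega\otimes\Omega^d)
\]
are isomorphisms in a pro-sense. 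More concretely, I will show the transition maps $R^q\pi_*(\mathcal P^{(n+d)}\otimes\Omega^d)\to R^q\pi_*(\mathcal P^{(n)}\otimes\Omega^d)$ are zero for $q<d$, and that the system stabilizes to $\mathcal O_S$ for $q=d$. I will identify the connecting map via the first-order extension class, the Kodaira–Spencer-type class in $H^1(\mathcal A,\mathcal O)\otimes\omega_{\mathcal A^\vee}=\omega^{-1}_{\mathcal A^\vee}\otimes\omega_{\mathcal A^\vee}$ given by the identity. Cup product with it then acts as a Koszul differential on
\[
\operatorname{Sym}^\bullet\omega_{\mathcal A^\vee}\otimes\textstyle\bigwedge^\bullet\omega^{-1}_{\mathcal A^\vee},
\]
and this complex is exact except at the top, where its cohomology is $\mathcal O_S$. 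This is the standard "Koszul complex" picture of the Fourier–Mukai transform of the formal neighbourhood.

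Since the base-change and Koszul identifications make each $R^q\pi_*(\mathcal P^{(n)}\otimes\Omega^d)$ coherent, the inverse system is Mittag-Leffler. The $\varprojlim^1$ terms therefore vanish, and $R^q\pi_*$ commutes with the limit via the theorem on formal functions/Milnor sequence. Hence $R^q\pi_*(\widehat{\mathcal P}\otimes\Omega^d)=\varprojlim_n R^q\pi_*(\mathcal P^{(n)}\otimes\Omega^d)$, which is $0$ for $q\neq d$ and $\mathcal O_S$ for $q=d$. For canonicity of the $q=d$ isomorphism, I will take the composite
\[
R^d\pi_*(\widehat{\mathcal P}\otimes\Omega^d)\to R^d\pi_*(\mathcal P^{(0)}\otimes\Omega^d)=R^d\pi_*\Omega^d\xrightarrow{\operatorname{tr}}\mathcal O_S
\]
given by projection to level $0$ followed by the trace map, and show it is an isomorphism.

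The main obstacle is the identification of the connecting maps with the Koszul differential, uniformly in $n$ and over a general Noetherian $S$. I will first try to deduce it from Mumford's result, $Rp_{2*}\mathcal P$ supported at $e^\vee$ with fibre $\omega^{-d}[-d]$, via Grothendieck duality for the finite morphism $\mathcal A^{\vee(n)}\to S$. The idea is that
\[
R\pi_*(\mathcal P^{(n)}\otimes\Omega^d)\cong R\mathcal Hom\bigl(\mathcal O_{\mathcal A^{\vee(n)}},\,e^\vee_*\mathcal O_S\bigr)[-d]
\]
up to a shift, and in the limit $\operatorname{colim}_n\operatorname{Ext}^\bullet(\mathcal O/\mathcal J^{n+1},\,e^\vee_*\mathcal O_S)$ is local cohomology, concentrated in one degree with value $\mathcal O_S$. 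The dual statement gives the claim.
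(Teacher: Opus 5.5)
The paper gives no proof of this statement; it quotes it from \cite[Theorem~2.15]{KS_2025}. Your strategy is viable: Mukai's $Rp_{2*}\mathcal P\cong e^\vee_*\bigl(R^d\pi_*\mathcal O_{\mathcal A}\bigr)[-d]$, base change to $\mathcal A^{\vee(n)}$, pro-vanishing below degree $d$, then the limit. However, the step you finally rest it on is misstated.

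Tor-independent base change along $i_n\colon\mathcal A^{\vee(n)}\hookrightarrow\mathcal A^\vee$ is valid because $p_2$ is flat and $\mathcal P$ is invertible. Combined with the projection formula, and after cancelling $R^d\pi_*\mathcal O_{\mathcal A}\otimes\omega^d_{\mathcal A/S}\cong\mathcal O_S$ by the trace, it gives
\[
  R\pi_*\bigl(\mathcal P^{(n)}\otimes\Omega^d_{\mathcal A/S}\bigr)
  \cong
  L(e^\vee)^*\mathcal O_{\mathcal A^{\vee(n)}}[-d],
  \qquad
  R^q\pi_*\bigl(\mathcal P^{(n)}\otimes\Omega^d_{\mathcal A/S}\bigr)
  \cong
  \mathcal{T}or^{\mathcal O_{\mathcal A^\vee}}_{d-q}\bigl(e^\vee_*\mathcal O_S,\mathcal O_{\mathcal A^{\vee(n)}}\bigr).
\]
Your complex $R\mathcal{H}om\bigl(\mathcal O_{\mathcal A^{\vee(n)}},e^\vee_*\mathcal O_S\bigr)$ is not this. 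By the adjunction $L(e^\vee)^*\dashv e^\vee_*$ it is the $\mathcal O_S$-linear dual. With the shift $[-d]$ you wrote, it would even put nonzero cohomology in degrees $d+1,\dots,2d$.

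Going from ``$\operatorname{colim}_n$ of the $\mathcal{E}xt$'s is concentrated in degree $0$'' back to pro-vanishing of the $\mathcal{T}or$'s needs arguments you do not give. You would need finite generation to pass from colimit zero to ind-zero, and then a universal-coefficient argument to dualize. Separately, coherence alone does not give Mittag-Leffler: multiplication by $p$ on $\Z$ is a counterexample.

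The $\mathcal{T}or$ terms you tried to avoid are in fact easy, and handling them directly removes the need for both the filtration/connecting-map analysis and the duality detour. Locally on $S$ and near $e^\vee(S)$, write $\mathcal J^\vee=(T_1,\dots,T_d)$ for a regular sequence. Then $\mathcal O_{\mathcal A^\vee}/(\mathcal J^\vee)^{n+1}\cong\mathcal O_S[T_1,\dots,T_d]/(T_1,\dots,T_d)^{n+1}$ compatibly in $n$. Each $T$-graded strand of the Koszul complex on this ring is a (possibly trivial) brutal truncation of a strand of the Koszul resolution of $\mathcal O_S$. Consequently:
\begin{enumerate}
\item $H_0=\mathcal O_S$, with identity transitions;
\item for $k\geq1$, $H_k$ sits in $T$-degree $n+k$, so every transition map kills it.
\end{enumerate}
Hence the one-step transitions already vanish for $q<d$. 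The composite $R^d\pi_*(\mathcal P^{(n)}\otimes\Omega^d_{\mathcal A/S})\to R^d\pi_*\Omega^d_{\mathcal A/S}\xrightarrow{\operatorname{tr}}\mathcal O_S$ is an isomorphism for every $n$, which justifies your normalization at level $0$. Mittag-Leffler holds for these reasons, not because of coherence.

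You can also bypass finite levels entirely. Let $p_1$ be the first projection. Apply the theorem on formal functions (EGA~III, 4.1.5) to the proper morphism $p_2$ and the coherent sheaf $\mathcal P\otimes p_1^*\Omega^d_{\mathcal A/S}$ along $e^\vee(S)$, and use that $\pi^{\vee(n)}$ is finite. This gives
\[
  R^q\pi_*\bigl(\widehat{\mathcal P}\otimes\Omega^d_{\mathcal A/S}\bigr)
  \cong
  \varprojlim_n R^q\pi_*\bigl(\mathcal P^{(n)}\otimes\Omega^d_{\mathcal A/S}\bigr)
  \cong
  (e^\vee)^{-1}\bigl(R^qp_{2*}\mathcal P\bigr)^{\wedge}\otimes\omega^d_{\mathcal A/S}.
\]
The Mukai computation you already quote then finishes the proof. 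The right side is $0$ for $q\neq d$. For $q=d$ it is $R^d\pi_*\mathcal O_{\mathcal A}\otimes\omega^d_{\mathcal A/S}\cong\mathcal O_S$, since $R^dp_{2*}\mathcal P$ is already killed by $\mathcal J^\vee$ and completion does not change it.

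Your route, once corrected, yields the explicit finite-level structure. The formal-functions route uses only the limit, which is all the statement requires.
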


\begin{cor}[Equivariant vanishing
  {\cite[Corollary~2.17]{KS_2025}}]
  \label{cor: vanishing of equiv coh}
For $0\leq q\leq d$, there are canonical isomorphisms
\[
  H^q\left(
    \mathcal A,\Gamma;
    \widehat{\mathcal P}\otimes\Omega_{\mathcal A/S}^d
  \right)
  \cong
  \begin{cases}
    H^0(S,\mathcal O_S), & q=d,\\
    0, & q<d.
  \end{cases}
\]
\end{cor}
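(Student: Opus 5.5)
The argument runs through the Hochschild--Serre type spectral sequence for equivariant coherent cohomology. Recall that $H^\bullet(\mathcal A,\Gamma;-)$ is the derived functor of $\mathcal F\mapsto H^0(\mathcal A,\mathcal F)^\Gamma$ on $\Gamma$-equivariant sheaves. Since this functor is the composite of $H^0(\mathcal A,-)$ with $\Gamma$-invariants, there is a spectral sequence
\[
  E_2^{p,q}
  =
  H^p\bigl(\Gamma, H^q(\mathcal A,\widehat{\mathcal P}\otimes\Omega^d_{\mathcal A/S})\bigr)
  \Longrightarrow
  H^{p+q}\bigl(\mathcal A,\Gamma;\widehat{\mathcal P}\otimes\Omega^d_{\mathcal A/S}\bigr).
\]
To justify it, I will check that $H^0(\mathcal A,-)$ sends injective equivariant sheaves to $\Gamma$-acyclic modules. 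This is the standard argument: the forgetful functor has an exact left adjoint given by inducing along $\Gamma$, so equivariant injectives are injective as sheaves, and their global sections are coinduced-type $\Gamma$-modules.

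Next I compute the inner cohomology. For this I first reduce to affine $S$, or else replace $H^q(\mathcal A,-)$ by a Leray spectral sequence through $\pi$. Theorem~\ref{thm: Vanishing of cohomology} gives $R^q\pi_*(\widehat{\mathcal P}\otimes\Omega^d_{\mathcal A/S})=0$ for $q\neq d$, and $\cong\mathcal O_S$ for $q=d$. The Leray spectral sequence $H^a(S,R^q\pi_*)\Rightarrow H^{a+q}(\mathcal A,-)$ therefore collapses, giving $H^q(\mathcal A,\widehat{\mathcal P}\otimes\Omega^d)=0$ for $q<d$ and $H^d(\mathcal A,\widehat{\mathcal P}\otimes\Omega^d)\cong H^0(S,\mathcal O_S)$. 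One technical point needs care here: $\widehat{\mathcal P}$ is an inverse limit. To pass $R\pi_*$ through the limit, I will use the finite-level sequences, the Mittag-Leffler property of the surjective transition maps $\mathcal P^{(n)}\to\mathcal P^{(n-1)}$, and vanishing of $\varprojlim^1$. I expect this to be already built into the cited theorem.

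Plugging in, $E_2^{p,q}=0$ for all $q<d$, so every total degree $p+q<d$ vanishes, which gives the case $q<d$. In total degree $d$ the only surviving term is $E_2^{0,d}=H^0(S,\mathcal O_S)^\Gamma$. No differential enters or leaves this term in a way that matters: outgoing differentials $d_r\colon E_r^{0,d}\to E_r^{r,d-r+1}$ land in rows $d-r+1<d$ for $r\geq2$, which are zero, and incoming differentials come from negative columns. Hence $H^d(\mathcal A,\Gamma;\widehat{\mathcal P}\otimes\Omega^d_{\mathcal A/S})\cong H^0(S,\mathcal O_S)^\Gamma$.

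The step I expect to be the main obstacle is showing that $\Gamma$ acts trivially on $H^d(\mathcal A,\widehat{\mathcal P}\otimes\Omega^d)\cong H^0(S,\mathcal O_S)$, so that the invariants are everything. The canonical isomorphism of Theorem~\ref{thm: Vanishing of cohomology} is built from the trace (Grothendieck duality) combined with the rigidified Poincaré bundle. I will check that for an automorphism $\gamma$ both pieces are preserved. First, $\gamma_\#$ is compatible with the rigidifications and the identification $e^*\widehat{\mathcal P}\cong(\widehat\pi^\vee)_*\mathcal O$, and it fixes the counit. Second, the trace $R^d\pi_*\Omega^d\to\mathcal O_S$ is invariant under automorphisms of $\mathcal A$ over $S$. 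Concretely, I will identify $R^d\pi_*(\widehat{\mathcal P}\otimes\Omega^d)$, via the Fourier--Mukai description used by Kings--Sprang, with the dual of the skyscraper at $e^\vee$. Then $\gamma$ acts through $(\gamma^\vee)^{-1}$ fixing $e^\vee$, and the evaluation functional at the origin is therefore invariant. With this triviality established, the isomorphism is canonical and the corollary follows.
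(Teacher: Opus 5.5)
Your argument is correct. It is essentially the standard deduction behind the cited result; the paper itself gives no proof here, only the reference to Kings--Sprang. The deduction runs through Grothendieck's spectral sequence $H^p(\Gamma,H^q(\mathcal A,-))\Rightarrow H^{p+q}(\mathcal A,\Gamma;-)$, the Leray collapse furnished by Theorem~\ref{thm: Vanishing of cohomology}, and the identification of total degree $d$ with the edge term $E_2^{0,d}=H^0(S,\mathcal O_S)^\Gamma$. The triviality of the $\Gamma$-action that you flag is genuinely needed, and your resolution is the right one: $\Gamma$ acts over $S$, $\gamma_\#$ is compatible with the rigidification $\mathcal P^{(0)}\cong\mathcal O_{\mathcal A}$, and the trace on $R^d\pi_*\Omega^d_{\mathcal A/S}$ is invariant under automorphisms.
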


For the localization construction, we use the following notation
for finite étale supports and trace-zero functions.

\begin{defn}[Trace-zero functions
  {\cite[Definition~2.18]{KS_2025}}]\rm
Let $\mathcal D\subset\mathcal A$ be a $\Gamma$-stable closed
subscheme which is finite étale over $S$. We write
\[
  \mathcal O_S[\mathcal D]
  :=
  H^0(\mathcal D,\mathcal O_{\mathcal D})
\]
for the ring of global functions on $\mathcal D$. The finite étale
morphism $\mathcal D\to S$ induces a trace map
\[
  \operatorname{Tr}_{\mathcal D/S}:
  \mathcal O_S[\mathcal D]
  \longrightarrow
  H^0(S,\mathcal O_S).
\]
The submodule of \emph{trace-zero functions} is defined by
\[
  \mathcal O_S[\mathcal D]^0
  :=
  \ker\bigl(\operatorname{Tr}_{\mathcal D/S}\bigr).
\]
Since $\mathcal D$ is $\Gamma$-stable, this submodule is preserved by
the induced $\Gamma$-action, and we denote its submodule of
$\Gamma$-invariants by
\[
  \mathcal O_S[\mathcal D]^{0,\Gamma}
  :=
  \bigl(\mathcal O_S[\mathcal D]^0\bigr)^\Gamma.
\]
\end{defn}

\begin{notation}\label{notation: cond of D}\rm
Let $\mathcal A'/S$ be an abelian scheme equipped with a
$\Gamma$-action, and let
$\delta:\mathcal A\to\mathcal A'$ be a $\Gamma$-equivariant étale
isogeny whose dual $\delta^\vee$ is étale.
Let $\mathcal D$ be either $\ker\delta$ or
$\ker\delta\setminus e(S)$.
We denote the closed immersion by
$i:\mathcal D\hookrightarrow\mathcal A$ and the finite étale
structure morphism by $\pi_{\mathcal D}:\mathcal D\to S$.
Set $\mathcal U_{\mathcal D}:=\mathcal A\setminus\mathcal D$,
with open immersion
$j:\mathcal U_{\mathcal D}\hookrightarrow\mathcal A$.
Both $\mathcal D$ and $\mathcal U_{\mathcal D}$ are
$\Gamma$-stable.
% https://q.uiver.app/#q=WzAsNSxbMCwwLCJcXG1hdGhjYWx7RH0iXSxbMSwwLCJcXG1hdGhjYWx7QX0iXSxbMiwwLCJcXG1hdGhjYWx7VX1fe1xcbWF0aGNhbHtEfX06PSBcXG1hdGhjYWx7QX1cXGJhY2tzbGFzaFxcbWF0aGNhbHtEfSJdLFszLDBdLFsxLDEsIlMiXSxbMCwxLCJpIl0sWzIsMSwiaiIsMl0sWzAsNCwiXFxwaV9cXG1hdGhjYWx7RH0iLDJdLFsxLDQsIlxccGkiLDJdLFsyLDQsIlxccGlfe1xcbWF0aGNhbHtVfV9cXG1hdGhjYWx7RH19Il1d
\[
\begin{tikzcd}
\mathcal{D}
\arrow[r,"i"]
\arrow[dr,swap,"\pi_{\mathcal{D}}"]
&
\mathcal{A}
\arrow[d,"\pi"]
&
\mathcal{U}_{\mathcal{D}}
:=
\mathcal{A}\setminus\mathcal{D}
\arrow[l,swap,"j"]
\arrow[dl,"\pi_{\mathcal{U}_{\mathcal{D}}}"]
\\
&
S
&
\end{tikzcd}
\]
\end{notation}

Since $\delta^\vee$ is étale, the splitting principle of
Corollary~\ref{cor: splitting principle} gives a canonical
$\Gamma$-equivariant isomorphism
\[
  i^*\widehat{\mathcal P}
  \cong
  \pi_{\mathcal D}^*
  \bigl(\widehat\pi_{\mathcal A'}^\vee\bigr)_*
  \mathcal O_{\widehat{\mathcal A'}^{\vee}}.
\]
The unit of this formal function algebra therefore gives a
canonical $\Gamma$-equivariant inclusion
$\mathcal O_{\mathcal D}\hookrightarrow i^*\widehat{\mathcal P}$.
On the other hand, equivariant localization and
Corollary~\ref{cor: vanishing of equiv coh} give the exact sequence
\begin{equation}\label{eq: localization for Eisenstein-Kronecker classes}
  \begin{aligned}
    0
    &\longrightarrow
    H^{d-1}\!\left(
      \mathcal U_{\mathcal D},\Gamma;
      j^{-1}\bigl(
        \widehat{\mathcal P}\otimes\Omega_{\mathcal A/S}^d
      \bigr)
    \right)\xlongrightarrow{\partial}
    H_{\mathcal D}^d\!\left(
      \mathcal A,\Gamma;
      \widehat{\mathcal P}\otimes\Omega_{\mathcal A/S}^d
    \right)
    \longrightarrow
    H^0(S,\mathcal O_S).
  \end{aligned}
\end{equation}
Here the last map is the natural map from cohomology with support
to global equivariant cohomology, followed by the degree-$d$
identification in Corollary~\ref{cor: vanishing of equiv coh}.
The following theorem relates this map to the trace on
$\mathcal O_S[\mathcal D]^\Gamma$.

\begin{thm}[Construction of the class
  {\cite[Theorem~2.20]{KS_2025}}]
  \label{thm: function to equiv coh class}
There is a canonical inclusion
\[
  \mathcal O_S[\mathcal D]^\Gamma
  \hookrightarrow
  H_{\mathcal D}^d\!\left(
    \mathcal A,\Gamma;
    \widehat{\mathcal P}\otimes\Omega_{\mathcal A/S}^d
  \right)
\]
whose composition with the last map in
\eqref{eq: localization for Eisenstein-Kronecker classes}
is the restriction of $\operatorname{Tr}_{\mathcal D/S}$.
Consequently, localization induces a canonical inclusion
\[
  EK_{\Gamma,\mathcal A}:
  \mathcal O_S[\mathcal D]^{0,\Gamma}
  \hookrightarrow
  H^{d-1}\!\left(
    \mathcal U_{\mathcal D},\Gamma;
    j^{-1}\bigl(
      \widehat{\mathcal P}\otimes\Omega_{\mathcal A/S}^d
    \bigr)
  \right),
\]
characterized by the requirement that its composition with
$\partial$ is the restriction of the preceding inclusion.
Moreover, the projections
$\widehat{\mathcal P}\to\mathcal P^{(n)}$ induce a canonical
isomorphism
\begin{equation}\label{eq: finite-level comparison for EK}
  \begin{aligned}
    &H^{d-1}\!\left(
      \mathcal U_{\mathcal D},\Gamma;
      j^{-1}\bigl(
        \widehat{\mathcal P}\otimes\Omega_{\mathcal A/S}^d
      \bigr)
    \right)\xrightarrow{\sim}
    \varprojlim_{n\geq0}
    H^{d-1}\!\left(
      \mathcal U_{\mathcal D},\Gamma;
      j^{-1}\bigl(
        \mathcal P^{(n)}\otimes\Omega_{\mathcal A/S}^d
      \bigr)
    \right).
  \end{aligned}
\end{equation}
\end{thm}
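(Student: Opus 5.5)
The argument has three parts: construct the inclusion into cohomology with support, derive $EK_{\Gamma,\mathcal A}$ from the localization sequence \eqref{eq: localization for Eisenstein-Kronecker classes}, and prove the finite-level comparison \eqref{eq: finite-level comparison for EK} by a Mittag-Leffler argument.

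\emph{Step 1: the inclusion with support.} Since $\mathcal D$ is finite étale over $S$, it is a disjoint union of sections locally, and its codimension in the smooth $\mathcal A$ is $d$. Hence the local cohomology sheaves $\mathcal H^q_{\mathcal D}(\mathcal F)$ of a locally free sheaf $\mathcal F$ vanish for $q\neq d$. The local spectral sequence then gives
\[
  H^d_{\mathcal D}\bigl(\mathcal A,\Gamma;\widehat{\mathcal P}\otimes\Omega^d_{\mathcal A/S}\bigr)\cong H^0\bigl(\mathcal D,\Gamma;\mathcal H^d_{\mathcal D}(\widehat{\mathcal P}\otimes\Omega^d_{\mathcal A/S})\bigr).
\]
Here we work level by level with $\mathcal P^{(n)}$ and pass to the limit. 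The equivariant version follows from the Hochschild--Serre spectral sequence, since the nonequivariant groups are concentrated in a single degree.

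Next we identify the local cohomology sheaf. Grothendieck's fundamental local isomorphism (the residue/trace for a regular immersion) gives
\[
  \mathcal H^d_{\mathcal D}(\Omega^d_{\mathcal A/S}\otimes\mathcal F)\supseteq i_*\bigl(i^*\mathcal F\otimes\Omega^0_{\mathcal D/S}\bigr)=i_*i^*\mathcal F
  \quad\text{(the socle)},
\]
using $\det(\mathcal N_{\mathcal D/\mathcal A})\otimes\Omega^d_{\mathcal A}|_{\mathcal D}\cong\Omega^0_{\mathcal D/S}$ because $\mathcal D$ is étale over $S$. Composing with the $\Gamma$-equivariant unit inclusion $\mathcal O_{\mathcal D}\hookrightarrow i^*\widehat{\mathcal P}$ coming from the splitting principle (Corollary~\ref{cor: splitting principle}) and taking $\Gamma$-invariants yields the inclusion $\mathcal O_S[\mathcal D]^\Gamma\hookrightarrow H^d_{\mathcal D}$. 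Injectivity follows because each step is injective. The socle inclusion is a split injection locally, and $\Gamma$-invariants are left exact.

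\emph{Step 2: the trace compatibility.} This is the main obstacle. We must show that the composite $\mathcal O_S[\mathcal D]\to H^d_{\mathcal D}\to H^d(\mathcal A,\widehat{\mathcal P}\otimes\Omega^d)\cong\mathcal O_S$ equals $\operatorname{Tr}_{\mathcal D/S}$. I would reduce to $\mathcal P^{(0)}=\mathcal O_{\mathcal A}$. The isomorphism of Theorem~\ref{thm: Vanishing of cohomology} is induced by the projection $\widehat{\mathcal P}\to\mathcal O_{\mathcal A}$ followed by the Grothendieck trace $R^d\pi_*\Omega^d_{\mathcal A/S}\cong\mathcal O_S$; I take this compatibility from the construction in \cite{KS_2025}. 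Moreover, the unit section of the formal function algebra maps to $1$ under this projection. So it suffices to compute $\mathcal O_{\mathcal D}\to H^d_{\mathcal D}(\Omega^d)\to R^d\pi_*\Omega^d\to\mathcal O_S$. By the compatibility of the Grothendieck trace with the residue symbol (the residue theorem, i.e.\ compatibility of traces under the composition $\mathcal D\to\mathcal A\to S$), this composite is $\pi_{\mathcal D*}$ followed by the étale trace, which is $\operatorname{Tr}_{\mathcal D/S}$. Restricting to $\Gamma$-invariants gives the claim.

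\emph{Step 3: construction of $EK_{\Gamma,\mathcal A}$.} By Corollary~\ref{cor: vanishing of equiv coh}, $H^{d-1}(\mathcal A,\Gamma;\widehat{\mathcal P}\otimes\Omega^d)=0$, so \eqref{eq: localization for Eisenstein-Kronecker classes} is left exact and $\partial$ is injective. A trace-zero invariant function maps into the kernel of the last map by Step 2, hence lies in the image of $\partial$. It therefore has a unique preimage, and this preimage defines the injective map $EK_{\Gamma,\mathcal A}$ with the stated characterization.

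\emph{Step 4: the finite-level comparison.} I run the same localization sequences for each $\mathcal P^{(n)}$ and compare them with the sequence for $\widehat{\mathcal P}$ via the projections. Cohomology commutes with the inverse limit up to a $\varprojlim^1$ term, by the Milnor sequence for the equivariant complexes. The transition maps $\mathcal P^{(n)}\to\mathcal P^{(n-1)}$ are surjective with kernel $\pi^*\operatorname{Sym}^n\omega_{\mathcal A^\vee/S}$. At each finite level, the groups with support and the global groups are computed as above. On the support side they are $H^0(\mathcal D,\Gamma;i^*\mathcal P^{(n)})$, whose transition maps are surjective by the finite-level splitting principle. On the global side, Theorem~\ref{thm: Vanishing of cohomology} at finite level gives inverse systems whose limit is concentrated in degree $d$, and whose degree-$(d-1)$ terms have zero transition maps in the pro-sense. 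I expect this pro-vanishing to be the delicate part of Step 4. From these inputs the $\varprojlim^1$ terms vanish by Mittag-Leffler, the five lemma applied to the limit of the localization sequences yields \eqref{eq: finite-level comparison for EK}.
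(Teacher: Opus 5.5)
The paper does not prove this statement; it is quoted from Kings--Sprang. The argument appears only in the finite-level diagram in the proof of Proposition~\ref{prop_8}: unit of the splitting, local-cohomology inclusion, augmentation and relative trace, then $\Gamma$-invariants and inverse limits.

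Your Steps 1--3 reproduce exactly this construction and are sound, with two small corrections. First, in Step 1 you only have vanishing of $H^q_{\mathcal D}$ \emph{below} degree $d$. For non-affine $S$ the groups $H^{d+p}_{\mathcal D}(\mathcal A,-)=H^p(\mathcal A,\mathcal H^d_{\mathcal D}(-))$ need not vanish. Vanishing below $d$ is nevertheless all Hochschild--Serre needs to give $H^d_{\mathcal D}(\mathcal A,\Gamma;-)=H^d_{\mathcal D}(\mathcal A,-)^\Gamma$. Second, passing from the levels $\mathcal P^{(n)}$ to $\widehat{\mathcal P}$ in $H^d_{\mathcal D}$ uses the Milnor sequence together with $H^{d-1}_{\mathcal D}=0$. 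The trace identity in Step 2 is the standard compatibility $\operatorname{Tr}_{\mathcal A/S}\circ(\text{Gysin for }\mathcal D\hookrightarrow\mathcal A)=\operatorname{Tr}_{\mathcal D/S}$, once the sign of the fundamental local isomorphism is normalized.

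Step 4, however, is not right as written, in two places.
\begin{enumerate}
\item $H^d_{\mathcal D}(\mathcal A,\Gamma;\mathcal P^{(n)}\otimes\Omega^d_{\mathcal A/S})$ is not $H^0(\mathcal D,\Gamma;i^*\mathcal P^{(n)})$. The latter is only the socle $\mathcal{E}xt^d(\mathcal O_{\mathcal D},-)$ you used in Step 1. The full sheaf $\mathcal H^d_{\mathcal D}$ contains all higher-order polar parts; for $d=1$ and $\mathcal D=e(S)$ with local equation $t$, it is $\mathcal O_{\mathcal A}[t^{-1}]/\mathcal O_{\mathcal A}$. So the surjectivity you derive on the support side has no basis.
\item The pro-vanishing of $H^q(\mathcal A,\Gamma;\mathcal P^{(n)}\otimes\Omega^d_{\mathcal A/S})$ for $q<d$ is not contained in Theorem~\ref{thm: Vanishing of cohomology} as stated, which concerns only $\widehat{\mathcal P}$. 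You would need the finite-level form of the Kings--Sprang vanishing theorem and would then have to push it through the Leray and Hochschild--Serre spectral sequences.
\end{enumerate}

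Neither ingredient is actually needed. The transition maps are surjective maps of quasi-coherent sheaves, so $\widehat{\mathcal P}=R\varprojlim_n\mathcal P^{(n)}$. Hence equivariant cohomology of $\widehat{\mathcal P}\otimes\Omega^d_{\mathcal A/S}$ on $\mathcal A$, and of its restriction to $\mathcal U_{\mathcal D}$, satisfy Milnor sequences. The argument then runs as follows.
\begin{enumerate}
\item On $\mathcal A$, Corollary~\ref{cor: vanishing of equiv coh} gives $H^{d-1}(\mathcal A,\Gamma;\widehat{\mathcal P}\otimes\Omega^d_{\mathcal A/S})=0$. By the Milnor sequence this already forces $\varprojlim^1_n H^{d-2}(\mathcal A,\Gamma;\mathcal P^{(n)}\otimes\Omega^d_{\mathcal A/S})=0$.
\item Since $H^q_{\mathcal D}$ vanishes for $q\le d-1$ at every level, restriction identifies $H^{d-2}$ on $\mathcal A$ with $H^{d-2}$ on $\mathcal U_{\mathcal D}$, compatibly in $n$.
\item Hence the $\varprojlim^1$-term in the Milnor sequence on $\mathcal U_{\mathcal D}$ vanishes, which is precisely \eqref{eq: finite-level comparison for EK}.
\end{enumerate}
With this route you need neither Mittag-Leffler on the support groups nor a five lemma for inverse limits of long exact sequences.
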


\begin{defn}[Equivariant coherent Eisenstein--Kronecker class]
  \label{def: Eisenstein-Kronecker class f}\rm
For $f\in\mathcal O_S[\mathcal D]^{0,\Gamma}$, the image
\[
  EK_{\Gamma,\mathcal A}(f)
  \in
  H^{d-1}\!\left(
    \mathcal U_{\mathcal D},\Gamma;
    j^{-1}\bigl(
      \widehat{\mathcal P}\otimes\Omega_{\mathcal A/S}^d
    \bigr)
  \right)
\]
of $f$ under the map of
Theorem~\ref{thm: function to equiv coh class}
is called the \textit{equivariant coherent Eisenstein--Kronecker
class associated to $f$}.
\end{defn}

We first record the compatibility of the coherent class with base
change, before specializing it at torsion sections.

\begin{prop}[Base change]\label{prop_8}
Let $T$ be a Noetherian scheme endowed with the trivial
$\Gamma$-action, and let $g:T\to S$ be a $\Gamma$-equivariant
morphism. Let $\mathcal A_T$ and $\mathcal D_T$ denote the base
changes of $\mathcal A$ and $\mathcal D$ along $g$, respectively,
and write $j_T:\mathcal U_{\mathcal D_T}\hookrightarrow\mathcal A_T$
for the base change of $j$.
For $f\in\mathcal O_S[\mathcal D]^{0,\Gamma}$, set $f_T:=g^*f$.
Then $f_T\in\mathcal O_T[\mathcal D_T]^{0,\Gamma}$, and the
canonical pullback morphism
\[
  \begin{aligned}
    g^*:\;
    &H^{d-1}\!\left(
      \mathcal U_{\mathcal D},\Gamma;
      j^{-1}\bigl(
        \widehat{\mathcal P}\otimes\Omega_{\mathcal A/S}^d
      \bigr)
    \right)
    \longrightarrow
    H^{d-1}\!\left(
      \mathcal U_{\mathcal D_T},\Gamma;
      j_T^{-1}\bigl(
        \widehat{\mathcal P}_T
        \otimes\Omega_{\mathcal A_T/T}^d
      \bigr)
    \right)
  \end{aligned}
\]
satisfies
\[
  g^*\bigl(EK_{\Gamma,\mathcal A}(f)\bigr)
  =
  EK_{\Gamma,\mathcal A_T}(f_T).
\]
\end{prop}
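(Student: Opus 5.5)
The argument compares the two characterizations of $EK$ through the localization sequence \eqref{eq: localization for Eisenstein-Kronecker classes}, which is injective on the left. We show that $g^*$ is compatible with every ingredient of the construction in Theorem~\ref{thm: function to equiv coh class}. The class $EK_{\Gamma,\mathcal A_T}(f_T)$ is uniquely determined by the requirement that $\partial_T$ maps it to the image of $f_T$ under the inclusion $\mathcal O_T[\mathcal D_T]^\Gamma\hookrightarrow H^d_{\mathcal D_T}$. It therefore suffices to prove two compatibilities:
\begin{itemize}
\item $\partial_T\circ g^*=g^*\circ\partial$;
\item the canonical inclusions of invariant functions commute with $g^*$.
\end{itemize}

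\emph{Trace zero and base change of the data.} Write $\mathcal A_T$, $\mathcal D_T$, $\delta_T$ for the base changes. Finite étale morphisms, étale isogenies, étale duals and $\Gamma$-stability are all preserved under base change, so $\mathcal D_T$ again satisfies Notation~\ref{notation: cond of D}. Because $\mathcal D\to S$ is finite étale, the trace commutes with base change: $\operatorname{Tr}_{\mathcal D_T/T}(g^*f)=g^*\operatorname{Tr}_{\mathcal D/S}(f)=0$. Since $\Gamma$ acts trivially on $T$, $g^*f$ is also $\Gamma$-invariant, so $f_T\in\mathcal O_T[\mathcal D_T]^{0,\Gamma}$.

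\emph{Base change of the coefficients and of the sequence.} There is a canonical $\Gamma$-equivariant identification
\[
g_{\mathcal A}^*\bigl(\widehat{\mathcal P}\otimes\Omega^d_{\mathcal A/S}\bigr)\to\widehat{\mathcal P}_T\otimes\Omega^d_{\mathcal A_T/T}.
\]
At each finite level this holds because $\mathcal P^{(n)}$ is finite locally free and is formed from the Poincaré bundle and infinitesimal neighbourhoods, both of which commute with base change. The completed statement follows by passing to the limit, using \eqref{eq: finite-level comparison for EK} to reduce the cohomology of $\widehat{\mathcal P}$ to the finite levels. Pullback of equivariant cohomology, with and without supports, is functorial and commutes with the connecting map of the localization triangle; this gives $\partial_T\circ g^*=g^*\circ\partial$. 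For the same reason the identifications of Corollary~\ref{cor: vanishing of equiv coh} are compatible with $g^*$ in degree $d$: the isomorphism $R^d\pi_*(\widehat{\mathcal P}\otimes\Omega^d)\cong\mathcal O_S$ of Theorem~\ref{thm: Vanishing of cohomology} is compatible with base change.

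\emph{Compatibility of the inclusion of functions.} Theorem~\ref{thm: function to equiv coh class} builds the inclusion from three ingredients:
\begin{itemize}
\item the splitting $i^*\widehat{\mathcal P}\cong\pi_{\mathcal D}^*(\widehat\pi^\vee_{\mathcal A'})_*\mathcal O$ of Corollary~\ref{cor: splitting principle}, which comes from the isogeny functoriality of Theorem~\ref{thm: Functoriality of Poincaré bundles};
\item the unit $\mathcal O_{\mathcal D}\hookrightarrow i^*\widehat{\mathcal P}$;
\item the Gysin or purity isomorphism $H^0(\mathcal D,i^*(\widehat{\mathcal P}\otimes\Omega^d))\cong H^d_{\mathcal D}(\mathcal A,\widehat{\mathcal P}\otimes\Omega^d)$ for the regular closed immersion $i$ of codimension $d$, whose normal bundle is identified with the tangent bundle.
\end{itemize}
Each of these is canonical, and hence compatible with base change. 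The morphisms $\varphi_\#^{(n)}$ are defined by universal constructions, namely dual isogenies and rigidifications, that commute with base change. The fundamental local class of a finite étale section of a smooth morphism is likewise stable under base change.

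Together these compatibilities show that $\partial_T(g^*EK_{\Gamma,\mathcal A}(f))$ is the image of $f_T$, and injectivity of $\partial_T$ gives the claimed equality. The main obstacle is the third step, namely that the canonical inclusion of Kings--Sprang commutes with pullback; in particular, the local cohomology class and the splitting must be checked to base change correctly at the completed level. I would first settle this at each finite level $\mathcal P^{(n)}$, where every sheaf is coherent and locally free, and then pass to the limit via \eqref{eq: finite-level comparison for EK}.
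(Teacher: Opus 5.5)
Your proposal is correct and follows the paper's proof. Both reduce the claim to injectivity of $\partial_T$ together with a finite-level diagram (invariant functions $\to H^d_{\mathcal D}\to H^d\to H^0(S,\mathcal O_S)$, with vertical maps $g^*$), whose squares commute by base-change compatibility of $\delta_\#^{(n)}$, the rigidifications, the local-cohomology inclusion and the trace; both then pass to $\Gamma$-invariants and the inverse limit via \eqref{eq: finite-level comparison for EK}. One small correction: the fundamental-class map $H^0\bigl(\mathcal D,i^*(\mathcal P^{(n)}\otimes\Omega^d_{\mathcal A/S})\bigr)\to H^d_{\mathcal D}\bigl(\mathcal A,\mathcal P^{(n)}\otimes\Omega^d_{\mathcal A/S}\bigr)$ is only an injection, not a purity isomorphism, since local cohomology along $\mathcal D$ also contains higher-order polar classes; your argument uses only its canonicity and its compatibility with base change, so this does not affect the conclusion.
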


\begin{proof}
The base changes of $\delta$ and $\delta^\vee$ are again étale,
so $\mathcal D_T$ satisfies the hypotheses of
Notation~\ref{notation: cond of D}.
Since the finite étale trace commutes with base change and the
pullback is $\Gamma$-equivariant, $f_T$ is trace zero and
$\Gamma$-invariant.
For every $n\geq0$, base change for the Poincaré bundle and relative
differentials gives canonical identifications
$g^*\mathcal P^{(n)}\cong\mathcal P_T^{(n)}$ and
$g^*\Omega_{\mathcal A/S}^d\cong\Omega_{\mathcal A_T/T}^d$,
where we also write $g^*$ for pullback along the induced
morphisms.
Under these identifications, the construction in the proof of
Theorem~\ref{thm: function to equiv coh class}
(see \cite[Section~2.6]{KS_2025}) gives a commutative diagram
\[
\begin{tikzcd}[column sep=large, row sep=large]
  {H^0(\mathcal D,\mathcal O_{\mathcal D})}
    \arrow[r,"g^*"]
    \arrow[d,hook]
  &
  {H^0(\mathcal D_T,\mathcal O_{\mathcal D_T})}
    \arrow[d,hook]
  \\
  {H_{\mathcal D}^d\!\left(
    \mathcal A,
    \mathcal P^{(n)}\otimes\Omega_{\mathcal A/S}^d
  \right)}
    \arrow[r,"g^*"]
    \arrow[d]
  &
  {H_{\mathcal D_T}^d\!\left(
    \mathcal A_T,
    \mathcal P_T^{(n)}\otimes\Omega_{\mathcal A_T/T}^d
  \right)}
    \arrow[d]
  \\
  {H^d\!\left(
    \mathcal A,
    \mathcal P^{(n)}\otimes\Omega_{\mathcal A/S}^d
  \right)}
    \arrow[r,"g^*"]
    \arrow[d]
  &
  {H^d\!\left(
    \mathcal A_T,
    \mathcal P_T^{(n)}\otimes\Omega_{\mathcal A_T/T}^d
  \right)}
    \arrow[d]
  \\
  {H^0(S,\mathcal O_S)}
    \arrow[r,"g^*"]
  &
  {H^0(T,\mathcal O_T)}.
\end{tikzcd}
\]
The upper square follows from the base-change compatibility of
$\delta_{\#}^{(n)}$, the rigidifications, and the local-cohomology
inclusion used in the construction.
The middle square is the naturality of the map from cohomology
with support to global cohomology, while the lower square follows
from the compatibility of the augmentation and the relative trace
with base change.

All these maps are $\Gamma$-equivariant and compatible with the
transition morphisms in $n$.
Taking $\Gamma$-invariants and inverse limits, and using the
canonical degree-$d$ local-cohomology identifications, gives
\[
\begin{tikzcd}[column sep=large, row sep=large]
  {\mathcal O_S[\mathcal D]^\Gamma}
    \arrow[r,hook]
    \arrow[d,"g^*"']
  &
  {H_{\mathcal D}^d\!\left(
    \mathcal A,\Gamma;
    \widehat{\mathcal P}\otimes\Omega_{\mathcal A/S}^d
  \right)}
    \arrow[d,"g^*"]
  \\
  {\mathcal O_T[\mathcal D_T]^\Gamma}
    \arrow[r,hook]
  &
  {H_{\mathcal D_T}^d\!\left(
    \mathcal A_T,\Gamma;
    \widehat{\mathcal P}_T\otimes\Omega_{\mathcal A_T/T}^d
  \right)}.
\end{tikzcd}
\]
The compatible finite-level pullback maps likewise define the
pullback in the statement through
\eqref{eq: finite-level comparison for EK}.
By this diagram and the naturality of
\eqref{eq: localization for Eisenstein-Kronecker classes},
the classes $g^*\bigl(EK_{\Gamma,\mathcal A}(f)\bigr)$ and
$EK_{\Gamma,\mathcal A_T}(f_T)$ have the same image under the
injective localization map $\partial_T$.
\end{proof}

Fix $f\in\mathcal O_S[\mathcal D]^{0,\Gamma}$.
Let $\mathcal B/S$ be an abelian scheme with a $\Gamma$-action, and
let $\varphi:\mathcal A\to\mathcal B$ be a $\Gamma$-equivariant
isogeny whose dual $\varphi^\vee$ is étale.
Let $x:S\to\mathcal A$ be a $\Gamma$-invariant $\varphi$-torsion
section factoring through $\mathcal U_{\mathcal D}$.
Using the canonical identification
$x^*\Omega_{\mathcal A/S}^d\cong\omega_{\mathcal A/S}^d$, pullback
along $x$ gives
\[
  x^*EK_{\Gamma,\mathcal A}(f)
  \in
  H^{d-1}\!\left(
    S,\Gamma;
    x^*\widehat{\mathcal P}
    \otimes
    \omega_{\mathcal A/S}^d
  \right).
\]
Under these hypotheses, the moment maps of
Definition~\ref{def: moment map x} are $\Gamma$-equivariant.
Tensoring them with the identity on $\omega_{\mathcal A/S}^d$
therefore induces maps on equivariant cohomology.

\begin{defn}[Eisenstein--Kronecker class at $x$]
  \label{def: Eisenstein-Kronecker class at x}\rm
For every $b\geq0$, the \textit{equivariant coherent
Eisenstein--Kronecker class associated to $f$ and specialized at $x$
in degree $b$} is defined by
\[
  \begin{aligned}
    EK_{\Gamma,\mathcal A}^b(f,x)
    &:=
    \operatorname{mom}_x^b
    \bigl(x^*EK_{\Gamma,\mathcal A}(f)\bigr)
    \in
    H^{d-1}\!\left(
      S,\Gamma;
      \operatorname{TSym}^b
      (\omega_{\mathcal A^\vee/S})
      \otimes
      \omega_{\mathcal A/S}^d
    \right).
  \end{aligned}
\]
\end{defn}

If $S=\operatorname{Spec}R$ is affine and the coefficient sheaf is finite locally free, the vanishing of higher cohomology for quasi-coherent sheaves identifies the target with group cohomology. Thus
\[
  EK_{\Gamma,\mathcal A}^b(f,x)
  \in
  H^{d-1}\!\left(
    \Gamma,
    H^0\!\left(
      S,
      \operatorname{TSym}^b
      (\omega_{\mathcal A^\vee/S})
      \otimes
      \omega_{\mathcal A/S}^d
    \right)
  \right).
\]
In this affine setting, we also write
$\operatorname{TSym}^b(\omega_{\mathcal A^\vee/S})
\otimes\omega_{\mathcal A/S}^d$
for its $R$-module of global sections, equipped with the induced
$\Gamma$-action.

\begin{cor}[Base change after specialization]
  \label{cor: base change of specialized EK classes}
Let $g:T\to S$ be as in Proposition~\ref{prop_8}, and put
$x_T:=g^*x$.
Then $x_T$ is a $\Gamma$-invariant $\varphi_T$-torsion section
factoring through $\mathcal U_{\mathcal D_T}$, where $\varphi_T$
denotes the base change of $\varphi$.
For every $b\geq0$, the canonical pullback satisfies
\[
  g^*\bigl(EK_{\Gamma,\mathcal A}^b(f,x)\bigr)
  =
  EK_{\Gamma,\mathcal A_T}^b(f_T,x_T).
\]
\end{cor}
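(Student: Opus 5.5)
The proof has two parts: first check that the hypotheses survive base change, then show that pullback along $g$ commutes with each step of the construction in Definition~\ref{def: Eisenstein-Kronecker class at x}. These steps are specialization along $x$, the splitting $\rho_x$, and the moment map $\operatorname{mom}_e^b$.

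\emph{Hypotheses.} Write $e_{\mathcal B}$ for the zero section of $\mathcal B$. Since $\varphi\circ x=e_{\mathcal B}$, pulling back gives $\varphi_T\circ x_T=e_{\mathcal B_T}$, so $x_T$ is a $\varphi_T$-torsion section. Because $\Gamma$ acts trivially on $T$ and $g$ is equivariant, $x_T$ is $\Gamma$-invariant. It factors through $\mathcal U_{\mathcal D_T}=\mathcal U_{\mathcal D}\times_S T$. Étaleness is stable under base change and formation of the dual commutes with base change, so $\varphi_T^\vee=(\varphi^\vee)_T$ is étale. Hence Corollary~\ref{cor: splitting principle} and Definition~\ref{def: moment map x} apply to $(\varphi_T,x_T)$.

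\emph{Compatibility.} By Proposition~\ref{prop_8}, $g^*EK_{\Gamma,\mathcal A}(f)=EK_{\Gamma,\mathcal A_T}(f_T)$. Pullback along sections is functorial and $x\circ g=g_{\mathcal A}\circ x_T$, where $g_{\mathcal A}\colon\mathcal A_T\to\mathcal A$ is the projection. Together with $x^*\Omega^d_{\mathcal A/S}\cong\omega^d_{\mathcal A/S}$, which commutes with base change, this gives $g^*x^*EK_{\Gamma,\mathcal A}(f)=x_T^*EK_{\Gamma,\mathcal A_T}(f_T)$ in $H^{d-1}(T,\Gamma;x_T^*\widehat{\mathcal P}_T\otimes\omega^d_{\mathcal A_T/T})$. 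I would work at each finite level $\mathcal P^{(n)}$ and pass to the limit using the compatibility of the finite-level moment maps with the transition maps. It then remains to show that the square
\[
g^*\circ\operatorname{mom}^b_{x,n}=\operatorname{mom}^b_{x_T,n}\circ g^*
\]
commutes as a map of $\Gamma$-equivariant sheaves. Equivariant cohomology is functorial in the coefficients and compatible with pullback, so commutativity of this square at the sheaf level yields the statement.

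\emph{Main step.} The main work is base-change compatibility of $\rho_x^{(n)}$ and of $\operatorname{mom}_{e,n}$.
\begin{itemize}
\item For $\rho_x^{(n)}$: it is built from $\varphi_\#^{(n)}$ (Theorem~\ref{thm: Functoriality of Poincaré bundles}) and the rigidification of $\mathcal P_{\mathcal B}$ along $e_{\mathcal B}$. The Poincaré bundle, its rigidifications, and the infinitesimal neighbourhoods $\mathcal A^{\vee(n)}$ all commute with base change. The morphism $\varphi_\#^{(n)}$ is canonical, being defined via the dual isogeny, so $g^*\varphi_\#^{(n)}=(\varphi_T)_\#^{(n)}$. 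The same reasoning was already used for $\delta_\#^{(n)}$ in the proof of Proposition~\ref{prop_8}.
\item For $\operatorname{mom}_{e,n}$: it is defined from the coproduct on $(\pi^{\vee(n)})_*\mathcal O$, which comes from the group law, together with the augmentation splitting of the first-order algebra. All of these are compatible with base change, since $\pi^{\vee(n)}$ is finite locally free. The formation of $\operatorname{TSym}^b$ of a finite locally free module also commutes with base change, because $\operatorname{TSym}^b(M)$ is dual to $\operatorname{Sym}^b(M^\vee)$.
\end{itemize}
Finally, everything is $\Gamma$-equivariant. The $\Gamma$-structures are themselves given by the functoriality isomorphisms $\gamma_\#$, which commute with base change for the same reason, so the equivariant maps on cohomology agree.
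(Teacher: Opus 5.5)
Your proposal is correct and follows essentially the paper's route: the hypotheses are preserved under base change, Proposition~\ref{prop_8} handles the basic class, and base-change compatibility of pullback along the section and of the moment maps gives the identity. You additionally spell out the finite-level details the paper leaves implicit, namely $\rho_x^{(n)}$ via $\varphi_\#^{(n)}$ and the rigidification, $\operatorname{mom}_{e,n}$ via the coproduct and augmentation, and $\operatorname{TSym}^b$ of locally free modules. (One small correction: the $\Gamma$-invariance of $x_T$ comes from base-changing the identities $\gamma\circ x=x$, not from the triviality of the action on $T$.)
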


\begin{proof}
The equivariance and torsion conditions, as well as the étaleness
of the dual isogeny, are preserved by base change.
Proposition~\ref{prop_8} identifies the pullback of the basic class
with $EK_{\Gamma,\mathcal A_T}(f_T)$.
Since pullback along the section and the moment maps commute with
base change, applying $\operatorname{mom}_{x_T}^b$ gives the
asserted identity.
\end{proof}

\begin{prop}[Restriction and corestriction]
  \label{prop:restriction-corestriction}
Assume that $S=\operatorname{Spec}R$ is affine, and let
$\Gamma'\subseteq\Gamma$ be a subgroup of finite index.
For the $\Gamma$-invariant data $f$ and $x$ fixed above and every
$b\geq0$, one has
\[
  \begin{aligned}
    \operatorname{res}_{\Gamma'}^\Gamma
    \bigl(EK_{\Gamma,\mathcal A}^b(f,x)\bigr)
    &=
    EK_{\Gamma',\mathcal A}^b(f,x),
    \qquad
    \operatorname{cor}_{\Gamma'}^\Gamma
    \bigl(EK_{\Gamma',\mathcal A}^b(f,x)\bigr)
    &=
    [\Gamma:\Gamma']\,
    EK_{\Gamma,\mathcal A}^b(f,x).
  \end{aligned}
\]
If $[\Gamma:\Gamma']$ is invertible in $R$, the restriction map is
injective and
\[
  EK_{\Gamma,\mathcal A}^b(f,x)
  =
  \frac{1}{[\Gamma:\Gamma']}
  \operatorname{cor}_{\Gamma'}^\Gamma
  \bigl(EK_{\Gamma',\mathcal A}^b(f,x)\bigr).
\]
In particular, when $\Gamma'$ is torsion-free, the class may be
computed after restriction to $\Gamma'$ and recovered by
corestriction after inverting the index.
\end{prop}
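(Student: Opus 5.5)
The plan is to treat both identities as formal consequences of how the construction behaves when the group is changed. The equality $\operatorname{cor}\circ\operatorname{res}=[\Gamma:\Gamma']$ for group cohomology then gives the corestriction formula and the injectivity statement.

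\emph{Step 1: the restriction identity at the level of the basic class.} Equivariant cohomology with and without supports, $H^\bullet(\mathcal A,\Gamma;-)$ and $H^\bullet_{\mathcal D}(\mathcal A,\Gamma;-)$, carries restriction maps to the $\Gamma'$-versions. These maps are compatible with the localization sequence \eqref{eq: localization for Eisenstein-Kronecker classes}. They are also compatible with the identification of Corollary~\ref{cor: vanishing of equiv coh}, since both identifications come from the same nonequivariant isomorphism of Theorem~\ref{thm: Vanishing of cohomology}.

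The inclusion $\mathcal O_S[\mathcal D]^\Gamma\hookrightarrow H^d_{\mathcal D}(\mathcal A,\Gamma;\widehat{\mathcal P}\otimes\Omega^d_{\mathcal A/S})$ of Theorem~\ref{thm: function to equiv coh class} is built from the unit of the splitting $i^*\widehat{\mathcal P}\cong\pi_{\mathcal D}^*(\widehat\pi^\vee_{\mathcal A'})_*\mathcal O$ and the local-cohomology class of $\mathcal D$. Neither ingredient depends on the group. Hence this inclusion commutes with restriction to $\Gamma'$ and with the inclusion $\mathcal O_S[\mathcal D]^\Gamma\subseteq\mathcal O_S[\mathcal D]^{\Gamma'}$, by the same kind of diagram chase as in the proof of Proposition~\ref{prop_8}. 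Because $\partial$ is injective for $\Gamma'$, we get $\operatorname{res}^\Gamma_{\Gamma'}EK_{\Gamma,\mathcal A}(f)=EK_{\Gamma',\mathcal A}(f)$.

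\emph{Step 2: passage to specialized classes.} Pullback along the $\Gamma$-invariant section $x$ commutes with restriction of the group. So does $\operatorname{mom}^b_x$, since it is a $\Gamma$-equivariant coefficient map whose definition does not involve $\Gamma$. This gives the first identity.

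To identify the targets with group cohomology, we use the affine-base remark: $H^{d-1}(S,\Gamma;M)=H^{d-1}(\Gamma,H^0(S,M))$, functorially in the group. This identification needs a justification, namely that the Hochschild–Serre spectral sequence for the trivial action on $S$ degenerates because higher quasi-coherent cohomology on affine $S$ vanishes. Under this identification, the equivariant restriction becomes ordinary group-cohomology restriction.

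\emph{Step 3: corestriction.} For $\Gamma'$ of finite index, group cohomology satisfies $\operatorname{cor}^\Gamma_{\Gamma'}\circ\operatorname{res}^\Gamma_{\Gamma'}=[\Gamma:\Gamma']$ on $H^\bullet(\Gamma,M)$ for any $\Gamma$-module $M$. Applying this with $M=\operatorname{TSym}^b(\omega_{\mathcal A^\vee/S})\otimes\omega^d_{\mathcal A/S}$ and using Step 2 gives the second identity.

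If $[\Gamma:\Gamma']\in R^\times$, multiplication by the index is invertible on the $R$-module $H^{d-1}(\Gamma,M)$. Then $\operatorname{res}$ is injective and the displayed formula follows. The final sentence, about torsion-free $\Gamma'$, is simply this statement specialized to that case.

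The main obstacle is Step 1: checking that the canonical inclusion of Theorem~\ref{thm: function to equiv coh class} and the localization map are natural for change of group. This is where the equivariant formalism of \cite[Section~2]{KS_2025} has to be opened up. I would handle it by working at finite level $\mathcal P^{(n)}$, using the comparison \eqref{eq: finite-level comparison for EK}, and writing the equivariant cohomology as hypercohomology of $\Gamma$-invariants of an equivariant injective resolution. Restriction to $\Gamma'$ is then the inclusion of invariants, which visibly commutes with every map in the construction.
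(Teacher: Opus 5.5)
Your proposal is correct and follows the paper's proof: the restriction identity comes from naturality, under change of acting group, of the basic class, the pullback along $x$, and the moment maps. The corestriction and injectivity statements then follow from identifying the targets with group cohomology over the affine base and applying $\operatorname{cor}^\Gamma_{\Gamma'}\circ\operatorname{res}^\Gamma_{\Gamma'}=[\Gamma:\Gamma']$. Your Step 1 spells out the naturality that the paper only asserts, using injectivity of $\partial$ for $\Gamma'$, and this is a valid way to make it precise.
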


\begin{proof}
The construction of the basic coherent class, its pullback along
$x$, and the moment maps are natural under restriction of the
acting group. This gives the restriction identity.
Since $S$ is affine, the relevant equivariant cohomology is group
cohomology of the coefficient module, where the standard relation
$\operatorname{cor}_{\Gamma'}^\Gamma
\circ\operatorname{res}_{\Gamma'}^\Gamma
=[\Gamma:\Gamma']\operatorname{id}$
holds.
Applying this relation gives the corestriction formula.
If the index is invertible in $R$, the same relation proves the
injectivity of restriction and yields the recovery formula.
\end{proof}

\subsection{The Serre tensor setting and the rank-two boundary decomposition}

We retain the CM and Serre tensor notation of Section~\ref{Section: Serre tensor constructions of CM abelian schemes}. Let
$R\subset\C$ be an $\mathcal O_{K^{\mathrm{Gal}}}[1/d_K]$-algebra,
set $S=\Spec R$, and let $\mathcal A_0/S$ be an abelian scheme of
relative dimension $g$ with CM by $\mathcal O_K$. The Serre tensor
abelian scheme
\[
  \mathcal A
  :=
  \mathcal O_L\otimes_{\mathcal O_K}\mathcal A_0
\]
has relative dimension $d=gn$. We retain the action of
$\GL_{\mathcal O_K}(\mathcal O_L)
=\operatorname{Aut}_{\mathcal O_K}(\mathcal O_L)$
on the first tensor factor and fix a subgroup
$\Gamma\subset\GL_{\mathcal O_K}(\mathcal O_L)$.

Throughout this subsection, we specialize to $g=1$ and $n=2$, so that
$d=2$. Thus $K$ is imaginary quadratic, $\mathcal A_0/S$ is an elliptic
scheme, and $\mathcal A/S$ is an abelian surface. Using the fixed
$\mathcal O_K$-basis of $\mathcal O_L$, we identify
$\GL_{\mathcal O_K}(\mathcal O_L)$ with $\GL_2(\mathcal O_K)$ and
assume that $\Gamma$ has finite index. This is the setting of the
Bianchi boundary calculation, where the unipotent cusp groups are
lattices of rank two. We study the boundary decomposition of the
group cohomology containing the Eisenstein--Kronecker classes recalled
in the preceding subsection; the arithmetic specialization at torsion
sections will be treated separately below.

The cusps are indexed by the finite set
$\Gamma\backslash\mathbb P^1(K)$. For a representative
$c\in\mathbb P^1(K)$, choose $\sigma_c\in\GL_2(K)$ such that
$\sigma_c(\infty)=c$, and set
\[
  P_c:=\sigma_cP_\infty\sigma_c^{-1},
  \qquad
  N_c:=\sigma_cN_\infty\sigma_c^{-1},
\]
where $P_\infty\subset\GL_2(K)$ is the upper triangular parabolic
subgroup and
\[
  N_\infty
  =
  \left\{
    n(a)=
    \begin{pmatrix}
      1 & a\\
      0 & 1
    \end{pmatrix}
    \,\middle|\,
    a\in K
  \right\}.
\]
Then $P_c$ is the parabolic subgroup stabilizing $c$, and $N_c$ is
its unipotent radical. Both depend only on $c$, although the choice
of $\sigma_c$ is not canonical. Write
$\Gamma_c:=\Gamma\cap P_c$ for the cusp stabilizer and
$\Gamma_{N,c}:=\Gamma\cap N_c$ for its unipotent subgroup.
For $b\geq 0$, the inclusion $\Gamma_c\hookrightarrow\Gamma$ induces
the restriction morphism
\[
  \operatorname{res}_c:
  H^1\left(
    \Gamma,
    \mathrm{TSym}^b(\omega_{\mathcal A^\vee/R})
    \otimes\omega_{\mathcal A/R}^2
  \right)
  \longrightarrow
  H^1\left(
    \Gamma_c,
    \mathrm{TSym}^b(\omega_{\mathcal A^\vee/R})
    \otimes\omega_{\mathcal A/R}^2
  \right),
\]
which we regard as the restriction of a group cohomology class to
the cusp $c$.

Since $N_c$ is normal in $P_c$, there is an exact sequence
\[
  1
  \longrightarrow
  \Gamma_{N,c}
  \longrightarrow
  \Gamma_c
  \longrightarrow
  \Gamma_c/\Gamma_{N,c}
  \longrightarrow
  1.
\]
The associated Hochschild--Serre spectral sequence is
\[
  E_2^{p,q}
  =
  H^p\left(
    \Gamma_c/\Gamma_{N,c},
    H^q\left(
      \Gamma_{N,c},
      \mathrm{TSym}^b(\omega_{\mathcal A^\vee/R})
      \otimes\omega_{\mathcal A/R}^2
    \right)
  \right)
  \Longrightarrow
  H^{p+q}\left(
    \Gamma_c,
    \mathrm{TSym}^b(\omega_{\mathcal A^\vee/R})
    \otimes\omega_{\mathcal A/R}^2
  \right).
\]
For $\gamma\in\Gamma_c$, the diagonal entries of
$\sigma_c^{-1}\gamma\sigma_c$ lie in $\mathcal O_K^\times$:
they belong to $K$, and both they and their inverses are integral
over $\mathcal O_K$, being eigenvalues of $\gamma$ and
$\gamma^{-1}$, respectively. The diagonal map therefore embeds
$\Gamma_c/\Gamma_{N,c}$ into
$\mathcal O_K^\times\times\mathcal O_K^\times$, which is finite
because $K$ is imaginary quadratic. Assume that the order of
$\Gamma_c/\Gamma_{N,c}$ is invertible in $R$. Its higher group
cohomology with coefficients in an $R$-module then vanishes, so the
spectral sequence degenerates and restriction induces a canonical
isomorphism
\[
  H^1\left(
    \Gamma_c,
    \mathrm{TSym}^b(\omega_{\mathcal A^\vee/R})
    \otimes\omega_{\mathcal A/R}^2
  \right)
  \cong
  H^1\left(
    \Gamma_{N,c},
    \mathrm{TSym}^b(\omega_{\mathcal A^\vee/R})
    \otimes\omega_{\mathcal A/R}^2
  \right)^{\Gamma_c/\Gamma_{N,c}}.
\]
Thus the restriction of an Eisenstein--Kronecker class to $c$ may
be studied in the cohomology of $\Gamma_{N,c}$, followed by taking
$\Gamma_c/\Gamma_{N,c}$-invariants.

At the standard cusp $c=\infty$, take $\sigma_\infty=1$.
Under the identification $n(a)\mapsto a$, the group
$\Gamma_{N,\infty}$ is a finite-index sublattice of
$\mathcal O_K$, hence a free abelian group of rank two. In
particular, at full level $\Gamma=\GL_2(\mathcal O_K)$, one has
\[
  \Gamma_{N,\infty}\cong(\mathcal O_K,+),
  \qquad
  \Gamma_\infty/\Gamma_{N,\infty}
  \cong
  \mathcal O_K^\times\times\mathcal O_K^\times.
\]
After localizing $R$ so that the lattice index
$[\mathcal O_K:\Gamma_{N,\infty}]$ and the order of
$\Gamma_\infty/\Gamma_{N,\infty}$ are invertible, we work over
$R[1/b!]$. The boundary calculation at $\infty$ is then reduced to
the group
\(
  H^1\left(
    \Gamma_{N,\infty},
    \mathrm{TSym}^b(\omega_{\mathcal A^\vee/R[1/b!]})
    \otimes\omega_{\mathcal A/R[1/b!]}^2
  \right).
\)
We now compute this unipotent cohomology explicitly.

\begin{prop}\label{prop_9}
Let $S=\Spec R$, where $R$ is an
$\mathcal O_{K^{\mathrm{Gal}}}[1/d_K]$-algebra, and let $b\geq 0$.
Assume that, under the identification $n(a)\mapsto a$,
$\Gamma_{N,\infty}$ is a finite-index sublattice of $\mathcal O_K$
whose index is invertible in $R$. After base change to $R[1/b!]$,
the CM decomposition
\[
  \mathfrak n_{R[1/b!]}
  =
  \mathfrak n_\sigma\oplus\mathfrak n_{\bar\sigma}
\]
of the unipotent Lie algebra
$\mathfrak n_{R[1/b!]}=\mathcal O_K\otimes_{\mathbb Z}R[1/b!]$
induces a natural isomorphism
\[
\begin{aligned}
  H^1\left(
    \Gamma_{N,\infty},
    \mathrm{TSym}^b\left(\omega_{\mathcal A^\vee/R[1/b!]}\right)
    \otimes\omega_{\mathcal A/R[1/b!]}^2
  \right)
  &\cong
  \left(
    \mathrm{TSym}^b\left(\omega_{\mathcal A^\vee/R[1/b!]}\right)
    \otimes\omega_{\mathcal A/R[1/b!]}^2
  \right)^{\Gamma_{N,\infty}}
  \otimes_{R[1/b!]}\mathfrak n_\sigma^\vee\\
  &\qquad\qquad\oplus
  \left(
    \mathrm{TSym}^b\left(\omega_{\mathcal A^\vee/R[1/b!]}\right)
    \otimes\omega_{\mathcal A/R[1/b!]}^2
  \right)_{\Gamma_{N,\infty}}
  \otimes_{R[1/b!]}\mathfrak n_{\bar\sigma}^\vee,
\end{aligned}
\]
where $\mathfrak n_{\bar\sigma}$ is the summand acting nontrivially
on $\omega_{\mathcal A^\vee/R[1/b!]}$.
\end{prop}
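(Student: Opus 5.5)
We compute the first cohomology of the rank-two lattice $N:=\Gamma_{N,\infty}$ with coefficients in $M:=\mathrm{TSym}^b(\omega_{\mathcal A^\vee/R'})\otimes\omega^2_{\mathcal A/R'}$, where $R'=R[1/b!]$. The plan is to pass to a unipotent Lie-algebra action on $M$ and then split everything along the CM eigenspaces of $\mathcal O_K\otimes R'$.

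The first step is to describe the action of $N$ on $M$ explicitly. The line $\omega^2_{\mathcal A/R'}$ is a determinant, and unipotent elements have determinant one, so $N$ acts trivially on it. In the Serre bases of Definition~\ref{def: algebraic bases}, $\omega_{\mathcal A^\vee/R'}\cong\omega_{\mathcal A_0^\vee/R'}(\bar\sigma)\otimes_{\mathcal O_K}\mathcal O_L$. Here $n(a)$ acts on the second factor by the matrix $\begin{pmatrix}1&a\\0&1\end{pmatrix}$, and on scalars $a\in\mathcal O_K$ acts through the weight $\bar\sigma(a)$. Hence $n(a)$ acts on $\mathrm{TSym}^b$ of this rank-two module by $\exp(\bar\sigma(a)X)$, where $X$ is the nilpotent raising operator. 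This step uses divided powers, and it is the reason for inverting $b!$: with $b!$ invertible, $\mathrm{TSym}^b\cong\mathrm{Sym}^b$ and the exponential makes sense. Extending $a\mapsto\bar\sigma(a)X$ $R'$-linearly gives an action of $\mathfrak n_{R'}=\mathcal O_K\otimes R'$ on $M$. Through the idempotent decomposition $\mathfrak n_{R'}=\mathfrak n_\sigma\oplus\mathfrak n_{\bar\sigma}$ (available because $d_K$ is invertible and $R$ contains $\mathcal O_{K^{\mathrm{Gal}}}$), the summand $\mathfrak n_\sigma$ acts by zero and $\mathfrak n_{\bar\sigma}$ acts by the nilpotent operator $X$.

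The second step computes the cohomology. Since $N\cong\Z^2$, $H^1(N,M)$ is computed by the Koszul complex $M\to M\otimes\mathrm{Hom}(N,\Z)\to M\otimes\wedge^2\mathrm{Hom}(N,\Z)$, with differentials built from $u_i-1$ for generators $u_1,u_2$. The index $[\mathcal O_K:N]$ is invertible, so $N\otimes R'=\mathfrak n_{R'}$. The operators $u_i-1=\exp(\bar\sigma(u_i)X)-1$ equal $\bar\sigma(u_i)X$ times a unit, because the correction factor is unipotent. Therefore the group Koszul complex is isomorphic to the Lie-algebra Chevalley--Eilenberg complex of the abelian Lie algebra $\mathfrak n_{R'}$ acting on $M$; I would either write down this isomorphism explicitly or invoke a comparison theorem for unipotent lattices with $b!$ inverted. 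With the splitting $\mathfrak n^\vee=\mathfrak n_\sigma^\vee\oplus\mathfrak n_{\bar\sigma}^\vee$, this complex is the tensor product of two one-variable complexes: $M\xrightarrow{0}M\otimes\mathfrak n_\sigma^\vee$ for $\mathfrak n_\sigma$, and $M\xrightarrow{X}M\otimes\mathfrak n_{\bar\sigma}^\vee$ for $\mathfrak n_{\bar\sigma}$. A Künneth computation in degree one gives
\[\ker X\otimes\mathfrak n_\sigma^\vee\;\oplus\;\operatorname{coker}X\otimes\mathfrak n_{\bar\sigma}^\vee.\]
Finally, $\ker X=M^{\mathfrak n}=M^{N}$ and $\operatorname{coker}X=M_{\mathfrak n}=M_{N}$, the second again using that $u_i-1$ equals $X$ up to units and scalars. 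Naturality follows because every identification used is canonical once the CM type is fixed.

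The main obstacle is the comparison between group cohomology and Lie-algebra cohomology over $R'$ rather than over a field. The scalars $\bar\sigma(u_i)$ need not be units; only the lattice index and $b!$ have been inverted. So I would have to check carefully that the change of basis from $(u_1,u_2)$ to the eigenbasis of $\mathfrak n_{R'}$ is invertible over $R'$. This should follow from the invertibility of $d_K$ together with that of the index.
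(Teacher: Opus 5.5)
Your proposal is correct and follows essentially the same route as the paper. Both arguments use the Koszul complex of the rank-two lattice, write $n(u_i)-1$ as the infinitesimal operator $\bar\sigma(u_i)X$ times a commuting unit (you via $\exp$, the paper via $\log$, each needing $b!$ inverted), pass to the Chevalley--Eilenberg complex of $\mathfrak n_{R[1/b!]}$ in the CM eigenbasis, and read off $\ker X\otimes\mathfrak n_\sigma^\vee\oplus\operatorname{coker}X\otimes\mathfrak n_{\bar\sigma}^\vee$.

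The obstacle you flag is settled exactly as you anticipate. The matrix with rows $\bigl(\sigma(u_1),\sigma(u_2)\bigr)$ and $\bigl(\bar\sigma(u_1),\bar\sigma(u_2)\bigr)$ has squared determinant equal, up to sign, to $[\mathcal O_K:\Gamma_{N,\infty}]^2d_K$, so it is invertible over $R[1/b!]$. This also makes $\bigl(\bar\sigma(u_1),\bar\sigma(u_2)\bigr)$ unimodular, so the possibly non-unit scalars do not affect the identification of $\ker X$ and $\operatorname{coker}X$ with the invariants and coinvariants.
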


\begin{proof}
If $b=0$, then
$\mathrm{TSym}^0(\omega_{\mathcal A^\vee/R})=R$.
Moreover, $\Gamma_{N,\infty}$ acts trivially on
$\omega_{\mathcal A/R}^2$, since its elements have determinant one.
Thus
\[
  H^1\left(
    \Gamma_{N,\infty},\omega_{\mathcal A/R}^2
  \right)
  \cong
  \operatorname{Hom}_{\mathbb Z}\left(
    \Gamma_{N,\infty},\omega_{\mathcal A/R}^2
  \right).
\]
The invertibility of the lattice index and of $d_K$ gives
\[
  \Gamma_{N,\infty}\otimes_{\mathbb Z}R
  \cong
  \mathcal O_K\otimes_{\mathbb Z}R
  \cong
  \mathfrak n_\sigma\oplus\mathfrak n_{\bar\sigma}.
\]
Taking $R$-linear duals and tensoring with
$\omega_{\mathcal A/R}^2$ proves the assertion in this case, since
both the invariants and the coinvariants equal
$\omega_{\mathcal A/R}^2$. We may therefore assume that $b\geq 1$.

Since $d_K$ is invertible in $R$, the embeddings
$\sigma,\bar\sigma$ induce an isomorphism
$\mathcal O_K\otimes_{\mathbb Z}R[1/b!]
\cong R[1/b!]\oplus R[1/b!]$, and hence the decomposition
\[
  \mathfrak n_{R[1/b!]}
  =
  \mathfrak n_\sigma\oplus\mathfrak n_{\bar\sigma}.
\]
Under the CM-type convention fixed above, $\mathfrak n_\sigma$
acts trivially on $\omega_{\mathcal A^\vee/R[1/b!]}$.
Zariski-locally, a generator of $\mathfrak n_{\bar\sigma}$ acts
through a nilpotent endomorphism $N$, and one may choose a basis
$\omega_1,\omega_2$ such that
$N(\omega_1)=0$ and $N(\omega_2)=\omega_1$.
The induced endomorphism on
$\mathrm{TSym}^b(\omega_{\mathcal A^\vee/R[1/b!]})$
satisfies $N^{b+1}=0$. Since $\Gamma_{N,\infty}$ acts trivially on
$\omega_{\mathcal A/R[1/b!]}^2$, we also write $N$ for its induced
endomorphism on
$\mathrm{TSym}^b(\omega_{\mathcal A^\vee/R[1/b!]})
\otimes\omega_{\mathcal A/R[1/b!]}^2$.

Choose a $\mathbb Z$-basis $a_1,a_2$ of $\Gamma_{N,\infty}$.
Its group cohomology is computed by the Koszul complex
\[
\begin{aligned}
  0\longrightarrow
  \mathrm{TSym}^b\left(\omega_{\mathcal A^\vee/R[1/b!]}\right)
  \otimes\omega_{\mathcal A/R[1/b!]}^2
  \longrightarrow\;&
  \left(
    \mathrm{TSym}^b\left(\omega_{\mathcal A^\vee/R[1/b!]}\right)
    \otimes\omega_{\mathcal A/R[1/b!]}^2
  \right)^{\oplus 2}\\
  &\qquad\qquad\longrightarrow
  \mathrm{TSym}^b\left(\omega_{\mathcal A^\vee/R[1/b!]}\right)
  \otimes\omega_{\mathcal A/R[1/b!]}^2
  \longrightarrow 0,
\end{aligned}
\]
whose differentials are
\[
  h\longmapsto
  \left(
    \bigl(n(a_1)-1\bigr)h,
    \bigl(n(a_2)-1\bigr)h
  \right)
\]
and
\[
  (h_1,h_2)\longmapsto
  \bigl(n(a_1)-1\bigr)h_2
  -
  \bigl(n(a_2)-1\bigr)h_1.
\]
The operators $n(a_i)-1$ are nilpotent of order at most $b+1$,
so their logarithms are defined over $R[1/b!]$ by
\[
  \log n(a_i)
  =
  \sum_{j=1}^{b}
  \frac{(-1)^{j+1}}{j}
  \bigl(n(a_i)-1\bigr)^j,
  \qquad i=1,2.
\]
The inverse relation is the finite exponential polynomial
\[
  n(a_i)-1
  =
  \sum_{j=1}^{b}
  \frac{1}{j!}\bigl(\log n(a_i)\bigr)^j.
\]
The polynomials relating these operators have invertible linear
terms. Thus each $n(a_i)-1$ is the product of $\log n(a_i)$ with
an invertible polynomial in the latter. These polynomials commute
with both logarithms, so the Koszul complexes associated with
$n(a_1)-1,n(a_2)-1$ and with $\log n(a_1),\log n(a_2)$ are
isomorphic over $R[1/b!]$.

The change from the lattice basis $a_1,a_2$ to the CM decomposition
is given by the matrix
\[
  \begin{pmatrix}
    \sigma(a_1) & \sigma(a_2)\\
    \bar\sigma(a_1) & \bar\sigma(a_2)
  \end{pmatrix}.
\]
The square of its determinant is, up to sign, the discriminant of
the lattice $\Gamma_{N,\infty}$, which differs from $d_K$ by the
square of the index $[\mathcal O_K:\Gamma_{N,\infty}]$.
Both factors are invertible in $R$, so this matrix is invertible
over $R[1/b!]$. After this change of basis, the logarithmic
operators become the infinitesimal actions $0$ and $N$ of
$\mathfrak n_\sigma$ and $\mathfrak n_{\bar\sigma}$, respectively.
The preceding logarithmic identification therefore gives the
van Est comparison
\[
\begin{aligned}
  &H^1\left(
    \Gamma_{N,\infty},
    \mathrm{TSym}^b\left(\omega_{\mathcal A^\vee/R[1/b!]}\right)
    \otimes\omega_{\mathcal A/R[1/b!]}^2
  \right)\cong
  H^1\left(
    \mathfrak n_{R[1/b!]},
    \mathrm{TSym}^b\left(\omega_{\mathcal A^\vee/R[1/b!]}\right)
    \otimes\omega_{\mathcal A/R[1/b!]}^2
  \right).
\end{aligned}
\]

The Lie algebra cohomology on the right is computed by the
Koszul complex
\[
\begin{aligned}
  0\longrightarrow
  \mathrm{TSym}^b\left(\omega_{\mathcal A^\vee/R[1/b!]}\right)
  \otimes\omega_{\mathcal A/R[1/b!]}^2
  \xrightarrow{d_0}\;&
  \left(
    \mathrm{TSym}^b\left(\omega_{\mathcal A^\vee/R[1/b!]}\right)
    \otimes\omega_{\mathcal A/R[1/b!]}^2
  \right)
  \otimes
  \left(
    \mathfrak n_\sigma^\vee\oplus\mathfrak n_{\bar\sigma}^\vee
  \right)\\
  &\quad\xrightarrow{d_1}
  \left(
    \mathrm{TSym}^b\left(\omega_{\mathcal A^\vee/R[1/b!]}\right)
    \otimes\omega_{\mathcal A/R[1/b!]}^2
  \right)
  \otimes
  \left(
    \mathfrak n_\sigma^\vee\wedge\mathfrak n_{\bar\sigma}^\vee
  \right)
  \longrightarrow 0.
\end{aligned}
\]
Choose local generators
$n_\sigma\in\mathfrak n_\sigma^\vee$ and
$n_{\bar\sigma}\in\mathfrak n_{\bar\sigma}^\vee$, with
$n_{\bar\sigma}$ dual to the generator defining $N$.
The differentials are then
\[
  d_0(h)=N(h)\otimes n_{\bar\sigma}
\]
and
\[
  d_1\left(
    h_\sigma\otimes n_\sigma
    +
    h_{\bar\sigma}\otimes n_{\bar\sigma}
  \right)
  =
  -N(h_\sigma)\otimes n_\sigma\wedge n_{\bar\sigma}.
\]
Consequently, the first cohomology of this complex is
\[
  \ker(N)\otimes_{R[1/b!]}\mathfrak n_\sigma^\vee
  \oplus
  \operatorname{coker}(N)\otimes_{R[1/b!]}
  \mathfrak n_{\bar\sigma}^\vee.
\]
The logarithmic comparison and the invertible change of basis
identify the common kernel of $n(a_1)-1$ and $n(a_2)-1$ with
$\ker(N)$. Hence
\[
  \ker(N)
  =
  \left(
    \mathrm{TSym}^b\left(\omega_{\mathcal A^\vee/R[1/b!]}\right)
    \otimes\omega_{\mathcal A/R[1/b!]}^2
  \right)^{\Gamma_{N,\infty}}.
\]
Similarly, the submodule generated by the images of
$n(a_1)-1$ and $n(a_2)-1$ equals the image of $N$, so
\[
  \operatorname{coker}(N)
  =
  \left(
    \mathrm{TSym}^b\left(\omega_{\mathcal A^\vee/R[1/b!]}\right)
    \otimes\omega_{\mathcal A/R[1/b!]}^2
  \right)_{\Gamma_{N,\infty}}.
\]
These identifications are independent of the chosen local
generators and therefore glue to the asserted natural isomorphism.
\end{proof}

\begin{rem}\rm
The decomposition of Proposition~\ref{prop_9} extends to every cusp
$c$ after a suitable localization of $R$. Indeed, conjugation by
$\sigma_c$ identifies $N_\infty$ with $N_c$, and
\(
  \sigma_c^{-1}\Gamma_{N,c}\sigma_c
\)
is a rank-two lattice in $N_\infty(K)\cong(K,+)$. After rescaling
by a suitable element of $K^\times$, it becomes a finite-index
sublattice of $\mathcal O_K$. Localize $R$ by inverting this index
and the denominators needed for $\sigma_c$, its inverse, and the
rescaling, and retain the notation $R$ for the resulting ring.
Over $R[1/b!]$, conjugation transports the CM decomposition
$\mathfrak n_{R[1/b!]}=\mathfrak n_\sigma\oplus
\mathfrak n_{\bar\sigma}$, the nilpotent action on the coefficient
module, and the associated group and Lie algebra Koszul complexes.
Thus the argument of Proposition~\ref{prop_9} gives the corresponding
decomposition of the first cohomology of $\Gamma_{N,c}$.

If $\sigma_c'$ is another choice with $\sigma_c'(\infty)=c$, then
$\sigma_c^{-1}\sigma_c'\in P_\infty(K)$. Since $P_\infty$ normalizes
$N_\infty$ and its adjoint action preserves the two CM summands,
conjugation by $\sigma_c^{-1}\sigma_c'$ identifies the corresponding
lattices and Koszul complexes after a common localization. Hence
the transported decompositions are canonically identified.
\end{rem}

We carry out the following coordinate calculation at $\infty$,
working over a base on which the local bases and generators used
in the proof of Proposition~\ref{prop_9} are defined, and retaining
the notation $R[1/b!]$. At an arbitrary cusp $c$, the resulting
generators are understood via the preceding transport.
The endomorphism $N$ induces
\[
  N_b:
  \mathrm{TSym}^b\left(\omega_{\mathcal A^\vee/R[1/b!]}\right)
  \longrightarrow
  \mathrm{TSym}^b\left(\omega_{\mathcal A^\vee/R[1/b!]}\right),
\]
with $N_b^{b+1}=0$. With respect to the basis $\omega_1,\omega_2$
fixed in that proof, its action is
\[
  N_b\left(\omega_1^{[i]}\omega_2^{[b-i]}\right)
  =
  (i+1)\omega_1^{[i+1]}\omega_2^{[b-i-1]}
  \qquad (0\leq i<b),
\]
and $N_b(\omega_1^{[b]})=0$. Since $1,\ldots,b$ are invertible in
$R[1/b!]$, these formulas give
\[
  \ker(N_b)=R[1/b!]\omega_1^{[b]},
  \qquad
  \operatorname{coker}(N_b)
  =
  R[1/b!]\left[\omega_2^{[b]}\right].
\]

Choose the generator $\omega(\mathcal A)^{\otimes 2}$ of
$\omega_{\mathcal A/R[1/b!]}^2$, and retain
$n_\sigma,n_{\bar\sigma}$ from the proof of
Proposition~\ref{prop_9}. Since the unipotent group acts trivially
on $\omega_{\mathcal A/R[1/b!]}^2$, the proposition and the preceding
calculation show that the invariant contribution
\(
  \left(
    \mathrm{TSym}^b\left(\omega_{\mathcal A^\vee/R[1/b!]}\right)
    \otimes\omega_{\mathcal A/R[1/b!]}^2
  \right)^{\Gamma_{N,\infty}}
  \otimes_{R[1/b!]}\mathfrak n_\sigma^\vee
\)
is free of rank one, generated by
\(
  \left[
    \omega_1^{[b]}
    \otimes\omega(\mathcal A)^{\otimes 2}
    \otimes n_\sigma
  \right].
\)
Similarly, the coinvariant contribution
\(
  \left(
    \mathrm{TSym}^b\left(\omega_{\mathcal A^\vee/R[1/b!]}\right)
    \otimes\omega_{\mathcal A/R[1/b!]}^2
  \right)_{\Gamma_{N,\infty}}
  \otimes_{R[1/b!]}\mathfrak n_{\bar\sigma}^\vee
\)
is free of rank one, generated by
\(
  \left[
    \omega_2^{[b]}
    \otimes\omega(\mathcal A)^{\otimes 2}
    \otimes n_{\bar\sigma}
  \right].
\)

\begin{notation}\rm
For a cusp $c$, fix the two generators transported from $\infty$
as above. They define $R[1/b!]$-linear projections
\[
  \mathrm{pr}_A,\mathrm{pr}_B:
  H^1\left(
    \Gamma_{N,c},
    \mathrm{TSym}^b\left(\omega_{\mathcal A^\vee/R[1/b!]}\right)
    \otimes\omega_{\mathcal A/R[1/b!]}^2
  \right)
  \longrightarrow R[1/b!].
\]
More precisely, every class $\phi'$ in this group has a unique
expression
\[
  \phi'
  =
  A\left[
    \omega_1^{[b]}
    \otimes\omega(\mathcal A)^{\otimes 2}
    \otimes n_\sigma
  \right]
  +
  B\left[
    \omega_2^{[b]}
    \otimes\omega(\mathcal A)^{\otimes 2}
    \otimes n_{\bar\sigma}
  \right],
\]
with $A,B\in R[1/b!]$, where the displayed generators are understood
at $c$ via transport. The projections are characterized by
\[
  \mathrm{pr}_A(\phi')=A,
  \qquad
  \mathrm{pr}_B(\phi')=B.
\]
These projections depend on the chosen generators of the two
rank-one summands.
\end{notation}

\begin{rem}\rm
Assume, as above, that the order of $\Gamma_c/\Gamma_{N,c}$ is
invertible in $R[1/b!]$. If $\phi'$ is invariant under this quotient,
the Hochschild--Serre identification established above identifies
it with a unique class in
\[[\phi']_c\in
H^1\bigl(
\Gamma_c,
\mathrm{TSym}^b(\omega_{\mathcal A^\vee/R[1/b!]})
\otimes\omega_{\mathcal A/R[1/b!]}^2
\bigr).
\]
The invariance condition concerns the entire expression for
$\phi'$, including the action on its two generators, rather than
the scalar coordinates $A$ and $B$ considered independently of
those generators. In particular, the projections are not
automatically $\Gamma_c/\Gamma_{N,c}$-invariant scalar-valued maps;
they have this property when the quotient acts trivially on the
chosen generators.
\end{rem}

\subsection{Specialization at \texorpdfstring{$\mathfrak F$}{F}-torsion sections}

We retain the CM and Serre tensor notation of the preceding subsection.
Let $R$ be an $\mathcal O_{K^{\mathrm{Gal}}}[1/d_K]$-algebra and set
$S=\Spec R$. Fix a nonzero integral ideal
$\mathfrak F\subset\mathcal O_L$; in the arithmetic applications below,
we usually take $\mathfrak F=\mathfrak f\mathcal O_L$ for an integral
ideal $\mathfrak f\subset\mathcal O_K$. Let
$\mathfrak C\subsetneq\mathcal O_L$ be a nonzero integral ideal coprime
to $\mathfrak F$ such that $[\mathfrak C]$ is $\Gamma$-equivariant and
$N_{L/\mathbb Q}(\mathfrak C)\in R^\times$. Put
$\mathcal D:=\mathcal A[\mathfrak C]$ and
$\mathcal U_{\mathcal D}:=\mathcal A\setminus\mathcal D$, so that
$\mathcal D$ is a finite étale $\Gamma$-stable subscheme satisfying the
hypotheses of Section~\ref{Section: The equivariant coherent Eisenstein--Kronecker classes}. Let
$x\colon S\to\mathcal U_{\mathcal D}$ be a $\Gamma$-invariant
$\mathfrak F$-torsion section. We denote by
$[\mathfrak F]\colon\mathcal A\to
\mathcal B:=\mathcal A(\mathfrak F^{-1})$
the isogeny induced by
$\mathcal O_L\hookrightarrow\mathfrak F^{-1}$, and assume that it is
$\Gamma$-equivariant. Unlike $\mathfrak C$, we do not require
$N_{L/\mathbb Q}(\mathfrak F)$ to be invertible in $R$; thus neither
$[\mathfrak F]$ nor its dual is assumed to be étale. Instead of using a
torsion splitting on $\mathcal A$, we therefore use isogeny
functoriality and the identity $[\mathfrak F]\circ x=e_{\mathcal B}$
to specialize at the unit section of $\mathcal B$.

\begin{prop}\label{prop_10}
Let $\mathfrak F$, $\mathfrak C$, and $x$ be as above, and put
$\mathcal B=\mathcal A(\mathfrak F^{-1})$. For every $b\geq 0$ and
$f\in R[\mathcal D]^{0,\Gamma}$, there is a canonical class
\[
\prescript{}{[\mathfrak F]}{EK}_{\Gamma,\mathcal B}^b(f,x)
\in H^{d-1}\left(\Gamma,\mathrm{TSym}^b(\omega_{\mathcal B^\vee/R})\otimes_R\omega_{\mathcal B/R}^d\right).
\]
If $g=1$ and $n=2$, then, with the projections of the preceding
subsection applied to $\mathcal B$, one has
\[
\mathrm{pr}_i\left(\prescript{}{[\mathfrak F]}{EK}_{\Gamma,\mathcal B}^b(f,x)\right)\in R[1/b!],
\qquad i\in\{A,B\}.
\]
\end{prop}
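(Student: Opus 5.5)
We define the class in three steps: specialize $EK_{\Gamma,\mathcal A}(f)$ along $x$, push it to $\mathcal B$ along $[\mathfrak F]$, and apply the moment map at the unit section of $\mathcal B$, which needs no torsion splitting.

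\textbf{Step 1 (pullback along $x$).} Since $x$ factors through $\mathcal U_{\mathcal D}$ and is $\Gamma$-invariant, pullback along $x$ gives, as before Definition~\ref{def: Eisenstein-Kronecker class at x},
\[
  x^*EK_{\Gamma,\mathcal A}(f)\in H^{d-1}\bigl(S,\Gamma;x^*\widehat{\mathcal P}_{\mathcal A}\otimes\omega^d_{\mathcal A/S}\bigr).
\]

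\textbf{Step 2 (transport to $\mathcal B$).} Theorem~\ref{thm: Functoriality of Poincaré bundles} gives the canonical morphism $[\mathfrak F]_\#:\widehat{\mathcal P}_{\mathcal A}\to[\mathfrak F]^*\widehat{\mathcal P}_{\mathcal B}$. This morphism exists without any étaleness, because étaleness is needed only for it to be an isomorphism. It is $\Gamma$-equivariant because $[\mathfrak F]$ is equivariant and the functoriality is compatible with composition of isogenies, in particular with the automorphisms $\gamma\in\Gamma$. Pulling back along $x$ and using $[\mathfrak F]\circ x=e_{\mathcal B}$, we get a $\Gamma$-equivariant map
\[
  x^*\widehat{\mathcal P}_{\mathcal A}\longrightarrow e_{\mathcal B}^*\widehat{\mathcal P}_{\mathcal B}.
\]

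\textbf{Step 3 (differentials and moment map).} Pullback of top invariant differentials along the isogeny is a map $[\mathfrak F]^*:\omega^d_{\mathcal B/S}\to\omega^d_{\mathcal A/S}$, which is not invertible in general, so it goes in the wrong direction for us. Since both sides are $\Gamma$-equivariant invertible $R$-modules, we instead use the Serre tensor identification $\omega_{\mathcal B/R}\cong\omega_{\mathcal A_0/R}\otimes_{\mathcal O_K}\mathfrak F^\vee$, which is canonical up to the $\mathcal O_L$-structure. Tensoring with $\mathfrak F\subset\mathcal O_L$ gives a canonical equivariant map $\omega^d_{\mathcal A}\to\omega^d_{\mathcal B}$.

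We then apply $\operatorname{mom}^b_{e_{\mathcal B}}$. It is defined integrally via the canonical augmentation splitting at the unit section and is $\Gamma$-equivariant. Composing all of this, we define
\[
  \prescript{}{[\mathfrak F]}{EK}^b_{\Gamma,\mathcal B}(f,x):=\operatorname{mom}^b_{e_{\mathcal B}}\bigl([\mathfrak F]_\#\,x^*EK_{\Gamma,\mathcal A}(f)\bigr).
\]
Since $S$ is affine and the coefficients are finite locally free, this class lies in group cohomology. This gives canonicity.

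\textbf{Integrality of the projections.} By construction, the class lies in $H^1$ of $\Gamma$ with coefficients in an $R$-module. Restrict it to $\Gamma_c$ and then to $\Gamma_{N,c}$, and base change to $R[1/b!]$. Proposition~\ref{prop_9}, applied to $\mathcal B$ (itself a Serre tensor construction of $\mathcal A_0(\mathfrak f^{-1})$ with the same $\Gamma$-action) and transported to the cusp $c$, writes the resulting class uniquely as $A[\cdots]+B[\cdots]$ in the free rank-one $R[1/b!]$-summands. Hence $A,B\in R[1/b!]$.

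\textbf{Main obstacle.} The delicate point is Step 3: we must make sure the comparison of the line bundles $\omega^d$ and the target $\mathrm{TSym}^b(\omega_{\mathcal B^\vee})$ is integral and $\Gamma$-equivariant without inverting $N(\mathfrak F)$. We would handle this through the explicit Serre tensor descriptions of Construction~\ref{example_A}.
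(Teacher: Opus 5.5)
The gap is in Step 3, the step you flag as the main obstacle, and the fix you sketch does not work. The rest matches the paper. Steps 1 and 2, the use of $\operatorname{mom}^b_{e_{\mathcal B}}$, and your argument for the projections all agree with it. Pulling back along $x$ before or after applying $[\mathfrak F]_\#$ is immaterial. The factor $\mathrm{TSym}^b(\omega_{\mathcal B^\vee/R})$ needs no comparison, since the moment map at $e_{\mathcal B}$ already lands there.

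Since $\mathcal B=\mathfrak F^{-1}\otimes_{\mathcal O_K}\mathcal A_0$, the Serre tensor formula gives $\omega_{\mathcal B/R}\cong\omega_{\mathcal A_0/R}\otimes_{\mathcal O_K}(\mathfrak F^{-1})^\vee$, not $\omega_{\mathcal A_0/R}\otimes_{\mathcal O_K}\mathfrak F^\vee$. The module you wrote describes the invariant differentials of $\mathcal A(\mathfrak F)=\mathfrak F\otimes_{\mathcal O_L}\mathcal A$, a different abelian scheme. Because $(\mathfrak F^{-1})^\vee\subseteq\mathcal O_L^\vee$, the inclusion $\mathcal O_L\subseteq\mathfrak F^{-1}$ only induces $\omega_{\mathcal B/R}\to\omega_{\mathcal A/R}$, which is $[\mathfrak F]^*$ again, in the wrong direction. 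So you have not produced the $\Gamma$-equivariant integral morphism $\Omega^d_{\mathcal A/R}\to[\mathfrak F]^*\Omega^d_{\mathcal B/R}$ that the construction needs. Without it the class is not defined.

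The missing idea is to pass to the dual abelian schemes, where the isogeny reverses direction. The dual isogeny $[\mathfrak F]^\vee\colon\mathcal B^\vee\to\mathcal A^\vee$ gives $[\mathfrak F]^{\vee*}\colon\omega_{\mathcal A^\vee/R}\to\omega_{\mathcal B^\vee/R}$. Take its top exterior power and transport it through the canonical identifications $\omega^d_{\mathcal A/R}\cong\omega^d_{\mathcal A^\vee/R}$ and $\omega^d_{\mathcal B/R}\cong\omega^d_{\mathcal B^\vee/R}$, which come from cup product and relative Serre duality. This yields
\[
\Omega^d_{\mathcal A/R}\cong\pi_{\mathcal A}^*\omega^d_{\mathcal A^\vee/R}\longrightarrow\pi_{\mathcal A}^*\omega^d_{\mathcal B^\vee/R}\cong[\mathfrak F]^*\Omega^d_{\mathcal B/R}.
\]
This map is defined without inverting $N_{L/\Q}(\mathfrak F)$. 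It is $\Gamma$-equivariant because $[\mathfrak F]$ is, and because automorphisms, having degree one, preserve the Serre-duality trace.

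To see why it is the right map: the trace satisfies $\operatorname{tr}_{\mathcal A}\circ[\mathfrak F]^*=\deg[\mathfrak F]\cdot\operatorname{tr}_{\mathcal B}$ on $R^d\pi_*\Omega^d$. Hence its composite with $\bigwedge^d[\mathfrak F]^*$ is multiplication by $\deg[\mathfrak F]$, so it is the integral substitute for $\deg[\mathfrak F]\,(\bigwedge^d[\mathfrak F]^*)^{-1}$.

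The paper's construction is then as follows. Tensor this map with $[\mathfrak F]^{(m)}_\#$ at each finite level. Pull back along $x$ using $[\mathfrak F]\circ x=e_{\mathcal B}$. Apply $\operatorname{mom}^b_{e_{\mathcal B}}$, which factors through any level $m\geq b$. With this replacement for your Step 3, the rest of your argument goes through.
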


\begin{proof}
Let $j\colon\mathcal U_{\mathcal D}\hookrightarrow\mathcal A$ denote
the open immersion. By Theorem~\ref{thm: function to equiv coh class}, we may regard $EK_{\Gamma,\mathcal A}(f)$ as an
element of
\[
EK_{\Gamma,\mathcal A}(f)\in\varprojlim_m H^{d-1}\left(\mathcal U_{\mathcal D},\Gamma;
j^{-1}\left(\mathcal P_{\mathcal A}^{(m)}\otimes\Omega_{\mathcal A/R}^d\right)\right).
\]
For each $m\geq 0$, the functoriality of the Poincaré bundle gives a
$\Gamma$-equivariant morphism
$[\mathfrak F]_\#^{(m)}\colon
\mathcal P_{\mathcal A}^{(m)}
\to[\mathfrak F]^*\mathcal P_{\mathcal B}^{(m)}$,
compatible with the transition maps in $m$.
Pullback by the dual isogeny also gives
$[\mathfrak F]^{\vee *}\colon
\omega_{\mathcal A^\vee/R}\to\omega_{\mathcal B^\vee/R}$.
The canonical determinant identifications
$\omega_{\mathcal A/R}^d\cong\omega_{\mathcal A^\vee/R}^d$
and
$\omega_{\mathcal B/R}^d\cong\omega_{\mathcal B^\vee/R}^d$,
obtained from cup product and relative Serre duality, therefore yield
a $\Gamma$-equivariant morphism
\[
\Omega_{\mathcal A/R}^d
\cong\pi_{\mathcal A}^*\omega_{\mathcal A^\vee/R}^d
\xrightarrow{\;\pi_{\mathcal A}^*(\bigwedge^d[\mathfrak F]^{\vee *})\;}
\pi_{\mathcal A}^*\omega_{\mathcal B^\vee/R}^d
\cong[\mathfrak F]^*\Omega_{\mathcal B/R}^d.
\]
We also denote this top-degree morphism by
$[\mathfrak F]^{\vee *}$. Tensoring it with
$[\mathfrak F]_\#^{(m)}$ and using compatibility with the transition
maps gives
\[
[\mathfrak F]^{\vee *}[\mathfrak F]_\#EK_{\Gamma,\mathcal A}(f)
\in\varprojlim_m H^{d-1}\left(\mathcal U_{\mathcal D},\Gamma;
j^{-1}[\mathfrak F]^*\left(\mathcal P_{\mathcal B}^{(m)}\otimes\Omega_{\mathcal B/R}^d\right)\right).
\]
Since $[\mathfrak F]\circ x=e_{\mathcal B}$, rigidification at the
unit section identifies
$x^*[\mathfrak F]^*\widehat{\mathcal P}_{\mathcal B}$
with
$e_{\mathcal B}^*\widehat{\mathcal P}_{\mathcal B}
\cong(\widehat\pi_{\mathcal B}^\vee)_*
\mathcal O_{\widehat{\mathcal B}^\vee}$.
The unit-section moment map recalled in Definition~\ref{def: moment map x} thus gives
\[
\operatorname{mom}_{e_{\mathcal B}}^b\colon
x^*[\mathfrak F]^*\widehat{\mathcal P}_{\mathcal B}
\cong e_{\mathcal B}^*\widehat{\mathcal P}_{\mathcal B}
\longrightarrow\mathrm{TSym}^b(\omega_{\mathcal B^\vee/R}).
\]
Likewise,
$x^*[\mathfrak F]^*\Omega_{\mathcal B/R}^d
\cong e_{\mathcal B}^*\Omega_{\mathcal B/R}^d
\cong\omega_{\mathcal B/R}^d$.
The degree-$b$ moment map factors through the finite-level
rigidification for every $m\geq b$, so applying pullback and the
moment map to the preceding inverse system is independent of the
chosen level. Since $S=\Spec R$ is affine, equivariant cohomology
over $S$ with these coefficients identifies with group cohomology,
and we define
\[
\prescript{}{[\mathfrak F]}{EK}_{\Gamma,\mathcal B}^b(f,x)
:=
\left(\operatorname{mom}_{e_{\mathcal B}}^b\otimes
\mathrm{id}_{\omega_{\mathcal B/R}^d}\right)
x^*\left([\mathfrak F]^{\vee *}[\mathfrak F]_\#
EK_{\Gamma,\mathcal A}(f)\right)\in H^{d-1}\left(\Gamma,\mathrm{TSym}^b(\omega_{\mathcal B^\vee/R})\otimes_R\omega_{\mathcal B/R}^d\right).
\]
\end{proof}

\begin{prop}\label{prop: F-torsion restriction}
Assume that $g=1$ and $n=2$, so that $d=2$, and let
$\Gamma_c\subseteq\Gamma$ be the cusp stabilizer fixed in the
preceding subsection. Under the above equivariance hypotheses,
the coefficient morphism of
Proposition~\ref{prop_10} is compatible with
restriction from $\Gamma$ to $\Gamma_c$. More precisely, for every
$b\geq 0$, the following diagram commutes:
\[
\begin{tikzcd}[column sep=large, row sep=large]
{H^1\bigl(
\Gamma,\mathrm{TSym}^b(\omega_{\mathcal A^\vee/R})
\otimes_R\omega_{\mathcal A/R}^d
\bigr)}
\arrow[r, "{\mathrm{res}_c}"]
\arrow[d, "{[\mathfrak F]_\#}"']
&
{H^1\bigl(
\Gamma_c,\mathrm{TSym}^b(\omega_{\mathcal A^\vee/R})
\otimes_R\omega_{\mathcal A/R}^d
\bigr)}
\arrow[d, "{[\mathfrak F]_\#}"]
\\
{H^1\bigl(
\Gamma,\mathrm{TSym}^b(\omega_{\mathcal B^\vee/R})
\otimes_R\omega_{\mathcal B/R}^d
\bigr)}
\arrow[r, "{\mathrm{res}_c}"']
&
{H^1\bigl(
\Gamma_c,\mathrm{TSym}^b(\omega_{\mathcal B^\vee/R})
\otimes_R\omega_{\mathcal B/R}^d
\bigr)}.
\end{tikzcd}
\]
Here the vertical arrows denote the maps induced by the coefficient
morphism, including its top-differential factor. In particular, for
every $f\in R[\mathcal D]^{0,\Gamma}$, one has
\[
\mathrm{res}_c\left(
\prescript{}{[\mathfrak F]}{EK}_{\Gamma,\mathcal B}^b(f,x)
\right)
=
\prescript{}{[\mathfrak F]}{EK}_{\Gamma_c,\mathcal B}^b(f,x).
\]
\end{prop}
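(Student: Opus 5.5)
The plan is to treat the diagram as an instance of the naturality of group cohomology in the pair (group, coefficient module), and then to deduce the identity for the specialized classes from the restriction compatibility of the construction in Proposition~\ref{prop_10}.

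\textbf{Step 1: the coefficient morphism is a $\Gamma$-module map.} I would first record that the vertical map is induced by a single $R$-linear homomorphism
\[
\theta\colon \mathrm{TSym}^b(\omega_{\mathcal A^\vee/R})\otimes_R\omega_{\mathcal A/R}^d\longrightarrow \mathrm{TSym}^b(\omega_{\mathcal B^\vee/R})\otimes_R\omega_{\mathcal B/R}^d .
\]
Here $\theta$ is $\mathrm{TSym}^b$ of $[\mathfrak F]^{\vee*}$, tensored with the top-differential morphism $\bigwedge^d[\mathfrak F]^{\vee*}$ transported through the determinant identifications. Since $[\mathfrak F]$ is $\Gamma$-equivariant, so is its dual, hence so is $[\mathfrak F]^{\vee*}$, and therefore so is $\theta$. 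The determinant identifications $\omega^d_{\mathcal A/R}\cong\omega^d_{\mathcal A^\vee/R}$ and $\omega^d_{\mathcal B/R}\cong\omega^d_{\mathcal B^\vee/R}$ are canonical, coming from cup product and Serre duality. They are functorial for automorphisms, so they are $\Gamma$-equivariant as well.

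\textbf{Step 2: commutativity of the square.} Restriction to $\Gamma_c$ is induced on standard cochains by precomposition with $\Gamma_c\hookrightarrow\Gamma$, while the coefficient map postcomposes cochains with $\theta$. These two operations commute already at the cochain level, so the square commutes. No hypothesis on invertibility of $N_{L/\mathbb Q}(\mathfrak F)$ is needed here.

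\textbf{Step 3: the specialized classes.} Every ingredient in the definition of $\prescript{}{[\mathfrak F]}{EK}_{\Gamma,\mathcal B}^b(f,x)$ is natural under restricting the acting group from $\Gamma$ to $\Gamma_c$:
\begin{itemize}
\item the class $EK_{\Gamma,\mathcal A}(f)$ itself, by the argument of Proposition~\ref{prop:restriction-corestriction}: $f$ and $x$ are $\Gamma_c$-invariant, and the localization sequence together with the characterization via $\partial$ is compatible with restriction;
\item the functoriality maps $[\mathfrak F]_\#^{(m)}$ and $[\mathfrak F]^{\vee*}$;
\item pullback along $x$, the rigidification at $e_{\mathcal B}$, and $\operatorname{mom}^b_{e_{\mathcal B}}$.
\end{itemize}
Restricting the equivariant structure then yields exactly the construction for $\Gamma_c$. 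Finally, the identification of equivariant cohomology over affine $S$ with group cohomology is compatible with restriction.

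\textbf{Main obstacle.} The step I expect to need the most care is the compatibility of $EK_{\Gamma,\mathcal A}(f)$ with restriction at the level of the inverse limit in \eqref{eq: finite-level comparison for EK}. I would check this at each finite level $m\ge b$. There, the restriction maps commute with $\partial$ and with the inclusion of $\mathcal O_S[\mathcal D]^\Gamma\subseteq \mathcal O_S[\mathcal D]^{\Gamma_c}$. The statement then follows from the injectivity of $\partial$ for $\Gamma_c$.
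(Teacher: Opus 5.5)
Your proposal is correct and follows the same route as the paper. The coefficient morphism is $\Gamma$-equivariant, hence $\Gamma_c$-equivariant, and naturality of restriction in the coefficient module gives the square. The identity for the specialized classes then follows because each ingredient of the construction in Proposition~\ref{prop_10} is compatible with restricting the acting group.

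Your extra treatment of $EK_{\Gamma,\mathcal A}(f)$ itself, which the paper leaves implicit, is sound provided you use injectivity of $\partial$ for the completed coefficients, because at a fixed finite level $m$ the map $\partial$ need not be injective. Corollary~\ref{cor: vanishing of equiv coh} holds for any acting group, so it gives this injectivity for $\Gamma_c$ even though $\Gamma_c$ has infinite index in $\Gamma$. For the same reason, only the argument of Proposition~\ref{prop:restriction-corestriction}, not its statement, applies.
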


\begin{proof}
By the construction in Proposition~\ref{prop_10},
the morphisms $[\mathfrak F]_\#$ and $[\mathfrak F]^{\vee *}$ induce
a homomorphism of coefficient modules
\[
\mathrm{TSym}^b(\omega_{\mathcal A^\vee/R})\otimes_R\omega_{\mathcal A/R}^d
\longrightarrow
\mathrm{TSym}^b(\omega_{\mathcal B^\vee/R})\otimes_R\omega_{\mathcal B/R}^d.
\]
Since $[\mathfrak F]$ is $\Gamma$-equivariant, this homomorphism is
$\Gamma$-equivariant and hence also $\Gamma_c$-equivariant.
Naturality of restriction in group cohomology with respect to
equivariant homomorphisms of coefficient modules gives
\[
\mathrm{res}_c\circ[\mathfrak F]_\#
=
[\mathfrak F]_\#\circ\mathrm{res}_c,
\]
which proves the commutativity of the diagram. The finite-level
Poincaré morphisms, pullback along $x$, and the moment map at
$e_{\mathcal B}$ are likewise compatible with restriction.
Applying these compatibilities to the construction of
$\prescript{}{[\mathfrak F]}{EK}_{\Gamma,\mathcal B}^b(f,x)$
in Proposition~\ref{prop_10} proves the
asserted identity for the specialized classes.
\end{proof}

\section{Analytic realization of the Eisenstein--Kronecker classes}

In this section, we study the complex analytic realization of the equivariant
Eisenstein--Kronecker classes constructed in the previous section. We first
develop an equivariant Dolbeault and current model and construct explicit
Fourier representatives. We then specialize to the case $g=1$ and $n=2$,
analyze the restriction to the Borel--Serre boundary and its Fourier constant
terms, compare the resulting expressions with the algebraic Hodge bases, and
finally evaluate the class along the quadratic torus to recover the
Kings--Sprang formula.

\subsection{Analytic realization and the equivariant current model}

We now construct an analytic model for the equivariant
Eisenstein--Kronecker classes of the preceding section, following the
strategy of \cite[Section~3.2]{KS_2025}. Since the group $\Gamma$ need
not be commutative, we use the symmetric space attached to
$\operatorname{Res}_{K/\Q}\GL_n$ in the Borel construction. We first
fix the analytic uniformization, coordinates, and equivariant quotient
used throughout this section.

\begin{notation}\rm
We retain the CM and Serre tensor notation of the preceding sections
and work over $S=\Spec\C$. Thus $[K:\Q]=2g$, $[L:K]=n$, and
$\mathcal A=\mathcal O_L\otimes_{\mathcal O_K}\mathcal A_0$ has
dimension $d=gn$. Write
$\Sigma_K=\{\sigma_1,\ldots,\sigma_g\}$ for the CM type of
$\mathcal A_0$. For each $i$, put
$\C_{\sigma_i}:=\mathcal O_L\otimes_{\mathcal O_K,\sigma_i}\C$.
Using the CM-type decomposition of
$\operatorname{Lie}(\mathcal A_0/\C)$, we fix identifications
\[
\operatorname{Lie}(\mathcal A/\C)
\cong
\mathcal O_L\otimes_{\mathcal O_K}
\operatorname{Lie}(\mathcal A_0/\C)
\cong
\bigoplus_{i=1}^g\C_{\sigma_i}.
\]
A choice of a $K$-basis of $L$ identifies each $\C_{\sigma_i}$
with $\C^n$. We write
$Z_i=(z_{i,1},\ldots,z_{i,n})^t\in\C^n$ for the corresponding
coordinate vector, and hence
\[
z=(Z_1,\ldots,Z_g)
\in
\operatorname{Lie}(\mathcal A/\C)
\cong
\bigoplus_{i=1}^g\C^n
\cong
\C^d.
\]
The exponential map identifies
$H_1(\mathcal A(\C),\Z)$ with a lattice
$\Lambda\subset\C^d$, namely its kernel. We therefore obtain the
analytic uniformization
\[
\mathcal A(\C)\cong\C^d/\Lambda.
\]

Let $\Gamma\subset\GL_{\mathcal O_K}(\mathcal O_L)$ be the group
acting on the Serre tensor construction. The chosen $K$-basis
realizes $\GL_{\mathcal O_K}(\mathcal O_L)$ as the subgroup of
$\GL_n(K)$ preserving the lattice $\mathcal O_L$. Here
$\mathcal O_L$ is only assumed to be finite projective over
$\mathcal O_K$: an identification with $\GL_n(\mathcal O_K)$
would additionally require an $\mathcal O_K$-basis of
$\mathcal O_L$. For $\gamma\in\Gamma$, write
$\gamma_\infty=(\gamma_1,\ldots,\gamma_g)$, where
$\gamma_i=\gamma\otimes_{\mathcal O_K,\sigma_i}1$ acts on
$\C_{\sigma_i}$. In the chosen coordinates,
$\gamma_i=\sigma_i(\gamma)\in\GL_n(\C)$, and the induced action
on the Lie algebra is
\[
\gamma\cdot(Z_1,\ldots,Z_g)
=
(\gamma_1Z_1,\ldots,\gamma_gZ_g).
\]
Indeed, functoriality of the Serre tensor construction gives an
automorphism of $\mathcal A$ whose differential acts through
$\gamma$ on the first tensor factor. The exponential map intertwines
this differential with the action on $\mathcal A(\C)$, so
$\gamma_\infty\Lambda=\Lambda$. Consequently, the linear action
descends to $\mathcal A(\C)=\C^d/\Lambda$ by
$[z]\mapsto[\gamma_\infty z]$. Changing the $K$-basis changes
only the coordinate description of this action.

Set $G=\operatorname{Res}_{K/\Q}\GL_n$ and
$G_\infty=G(\R)$. The CM type determines an identification
$G_\infty\cong\prod_{i=1}^g\GL_n(\C)$, under which we regard
$\Gamma$ as a subgroup through $\gamma\mapsto\gamma_\infty$.
We fix the maximal compact subgroup
$K_\infty=\prod_{i=1}^gU(n)$ and put
$S_\infty=G_\infty/K_\infty$. The action of $\Gamma$ on
$S_\infty$ is induced by left multiplication:
\[
\gamma\cdot gK_\infty=\gamma_\infty gK_\infty.
\]
\end{notation}

Let $H_0$ denote the standard positive-definite Hermitian form on
each copy of $\C^n$, and let $^*$ denote conjugate transpose.
The symmetric space has the real-analytic realization
\[
S_\infty\xrightarrow{\sim}(\operatorname{Herm}_n^+)^g,
\qquad
gK_\infty\longmapsto H_g:=(g^{-1})^*H_0g^{-1}.
\]
This map is well defined because $K_\infty$ preserves $H_0$.
Every positive-definite Hermitian form is obtained from $H_0$
by such a change of basis, and the stabilizer of $H_0$ is
precisely $K_\infty$, which gives the stated identification.
Under this realization, the $\Gamma$-action becomes
\[
\gamma\cdot H_g
=
H_{\gamma_\infty g}
=
(\gamma_\infty^{-1})^*H_g\gamma_\infty^{-1}.
\]
In particular, the simultaneous action on vectors and Hermitian
forms is isometric:
\[
H_{\gamma_\infty g}(\gamma_\infty v,\gamma_\infty w)
=
H_g(v,w)
\qquad
(v,w\in\C^d).
\]
This compatibility will be used when defining the norms and
contraction operators in the explicit analytic construction.
Moreover, $\operatorname{Herm}_n^+$ is an open convex cone in
the real vector space of Hermitian matrices. Each factor is
therefore contractible, and hence so is $S_\infty$.

The archimedean image of
$\operatorname{End}_{\mathcal O_K}(\mathcal O_L)$ is a lattice
in $\prod_{i=1}^gM_n(\C)$, so $\Gamma$ is discrete in
$G_\infty$. Since $K_\infty$ is compact, its action on
$S_\infty=G_\infty/K_\infty$ is properly discontinuous.
For $gK_\infty\in S_\infty$, the stabilizer is
\[
\operatorname{Stab}_\Gamma(gK_\infty)
=
\Gamma\cap gK_\infty g^{-1}.
\]
This intersection is finite because $\Gamma$ is discrete and
$gK_\infty g^{-1}$ is compact. For the construction below,
we assume that $\Gamma$ is torsion-free, so all these
stabilizers are trivial. The diagonal action on
$\mathcal A(\C)\times S_\infty$ is then free and properly
discontinuous, and is given by
\[
\gamma\cdot([z],gK_\infty)
=
([\gamma_\infty z],\gamma_\infty gK_\infty).
\]
We define
\[
X_\Gamma
:=
\mathcal A(\C)\times_\Gamma S_\infty
=
\Gamma\backslash\bigl(\mathcal A(\C)\times S_\infty\bigr).
\]
This is a smooth real-analytic manifold, and its natural projection
to $\Gamma\backslash S_\infty$ is a real-analytic fiber bundle
with fiber $\mathcal A(\C)$. Since $S_\infty$ is contractible,
the covering $S_\infty\to\Gamma\backslash S_\infty$ is a
model for the universal $\Gamma$-bundle, and $X_\Gamma$ is
the corresponding Borel construction.

\begin{rem}\rm
If $\Gamma$ has torsion, the stabilizers remain finite, and the
quotients above are naturally interpreted as orbifold quotients.
Alternatively, in the arithmetic setting considered here, one may
pass to a torsion-free subgroup $\Gamma'\subset\Gamma$ of
finite index and use the restriction and corestriction maps
established in the preceding section. Since our analytic
coefficients are over $\C$, a $\Gamma$-equivariant class is
recovered from its restriction to $\Gamma'$ by normalized
corestriction.
\end{rem}

We regard the analytic coefficient sheaves as $\Gamma$-equivariant
sheaves of complex vector spaces, equipped with isomorphisms
$\rho_\gamma:\gamma^{-1}\mathcal F\xrightarrow{\sim}\mathcal F$
satisfying the cocycle condition. This category is abelian and has
enough injective objects by
\cite[Propositions~5.1.1 and~5.1.2]{tohuku_Grothendieck}.
As before, $H^i(X,\Gamma;\mathcal F)$ denotes the right
derived functor of
$\mathcal F\mapsto H^0(X,\mathcal F)^\Gamma$.
All sheaves on a product with $S_\infty$ will be considered
with respect to the diagonal $\Gamma$-action.

\begin{prop}\label{prop: equiv coh via Borel}
Let $X$ be a smooth manifold with a $\Gamma$-action, and let
$\mathcal F$ be a $\Gamma$-equivariant sheaf of complex
vector spaces on $X$. Consider the projection and quotient maps
\[
X\xleftarrow{\mathrm{pr}_X}X\times S_\infty
\xrightarrow{q}X\times_\Gamma S_\infty.
\]
For a $\Gamma$-equivariant sheaf $\mathcal G$ on
$X\times S_\infty$, let $q_*^\Gamma\mathcal G$ denote
the sheaf on the quotient defined by
\[
H^0(V,q_*^\Gamma\mathcal G)
=
H^0(q^{-1}(V),\mathcal G)^\Gamma
\]
for every open subset $V\subset X\times_\Gamma S_\infty$.
Then, for every $i\geq0$, there is a canonical isomorphism
\[
H^i(X,\Gamma;\mathcal F)
\cong
H^i\bigl(
X\times_\Gamma S_\infty,
q_*^\Gamma\mathrm{pr}_X^{-1}\mathcal F
\bigr).
\]
These isomorphisms are natural in $\mathcal F$ and compatible
with restriction to $\Gamma$-stable open subsets of $X$.
\end{prop}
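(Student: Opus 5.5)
The plan is to compare both sides with the equivariant cohomology of the product $X\times S_\infty$, and use contractibility of $S_\infty$ on one side and freeness of the action on the other. Throughout, $\Gamma$ is torsion-free as fixed above, so $\Gamma$ acts freely and properly discontinuously on $X\times S_\infty$. I use this through the second coordinate only: the stabilizer of $(x,s)$ lies in $\operatorname{Stab}_\Gamma(s)=1$.

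\emph{Step 1 (homotopy invariance).} I would first show that
\[
\mathrm{pr}_X^{-1}\colon H^i(X,\Gamma;\mathcal F)\longrightarrow H^i(X\times S_\infty,\Gamma;\mathrm{pr}_X^{-1}\mathcal F)
\]
is an isomorphism. Both sides are the derived functors of $\Gamma$-invariant global sections, composed as $R\Gamma^\Gamma\circ R\Gamma(X,-)$ and $R\Gamma^\Gamma\circ R\Gamma(X,-)\circ R\mathrm{pr}_{X*}$, respectively. It is therefore enough to check that the adjunction $\mathcal F\to R\mathrm{pr}_{X*}\mathrm{pr}_X^{-1}\mathcal F$ is an isomorphism of $\Gamma$-equivariant complexes.

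The underlying non-equivariant statement is the standard fact that, for a contractible, locally contractible space $S_\infty$ (here a convex open cone), the stalks of $R^q\mathrm{pr}_{X*}\mathrm{pr}_X^{-1}\mathcal F$ are $\mathcal F_x$ for $q=0$ and vanish for $q>0$. This follows from proper base change for the locally compact product together with $H^q(S_\infty,\text{const})=0$. The morphism is $\Gamma$-equivariant by naturality. One also has to know that $\mathrm{pr}_{X*}$ sends equivariant injectives to objects acyclic for the invariant-sections functor. I would handle this by Grothendieck's spectral sequence $H^p(\Gamma,H^q(X,-))\Rightarrow H^{p+q}(X,\Gamma;-)$ from the cited Tôhoku reference, applied on both sides. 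Since that spectral sequence is natural, the non-equivariant isomorphism upgrades to the equivariant one.

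\emph{Step 2 (free quotient).} Next I would show that for any $\Gamma$-equivariant sheaf $\mathcal G$ on $Y:=X\times S_\infty$,
\[
H^i(Y,\Gamma;\mathcal G)\cong H^i(Y/\Gamma,q_*^\Gamma\mathcal G).
\]
Because the action is free and properly discontinuous, $q$ is a covering map. On evenly covered opens $V$, with $q^{-1}(V)=\bigsqcup_\gamma\gamma V'$, one has $H^0(q^{-1}(V),\mathcal G)^\Gamma\cong H^0(V',\mathcal G)$. From this, $q_*^\Gamma$ and $q^{-1}$ give an equivalence between $\Gamma$-equivariant sheaves on $Y$ and sheaves on $Y/\Gamma$. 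An exact equivalence preserves injectives. It also intertwines $H^0(Y,-)^\Gamma$ with $H^0(Y/\Gamma,-)$, so the derived functors agree.

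Applying Step 2 with $\mathcal G=\mathrm{pr}_X^{-1}\mathcal F$ and composing with Step 1 gives the asserted isomorphism. Naturality in $\mathcal F$ is clear, since every map used is a natural transformation. For compatibility with restriction to a $\Gamma$-stable open $U\subset X$, note that $U\times S_\infty$ is a $\Gamma$-stable open of $Y$, and both steps commute with restriction to it.

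\textbf{Main obstacle.} I expect Step 1 to be the main difficulty, specifically checking that homotopy invariance holds equivariantly rather than just on underlying sheaves. I would try the spectral-sequence comparison first. If that runs into trouble with acyclicity, the fallback is to use that $\Gamma$-equivariant injectives restrict to flabby sheaves, and that $\mathrm{pr}_{X*}$ of a flabby sheaf is flabby.
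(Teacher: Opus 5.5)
Your proposal is correct and follows essentially the same route as the paper. The paper uses the same two steps in the opposite order: first the inverse equivalences $q^{-1}$ and $q_*^\Gamma$ for the free, properly discontinuous action; then homotopy invariance via $\mathcal F\xrightarrow{\sim}R(\mathrm{pr}_X)_*\mathrm{pr}_X^{-1}\mathcal F$ and a comparison of the Grothendieck spectral sequences $H^a(\Gamma,H^b(-))\Rightarrow H^{a+b}(-,\Gamma;-)$.

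One small correction: the vanishing of $R^q(\mathrm{pr}_X)_*\mathrm{pr}_X^{-1}\mathcal F$ for $q>0$ does not follow from proper base change. The projection $X\times S_\infty\to X$ is not proper, and base change would instead compute $R(\mathrm{pr}_X)_!$, whose fibres are $H^\bullet_c(S_\infty)$. The correct justification is the homotopy lemma for products with the contractible convex cone $S_\infty$, which is what the paper invokes.
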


\begin{proof}
The diagonal action on $X\times S_\infty$ is free and
properly discontinuous. Over an evenly covered open subset of the
quotient, the inverse image is a disjoint union of
$\Gamma$-translates of a single sheet. A $\Gamma$-equivariant
sheaf on this inverse image is determined by its restriction to
that sheet, and the equivariant structure identifies the
restrictions on the remaining sheets. It follows that $q^{-1}$
and $q_*^\Gamma$ are inverse equivalences between sheaves on
$X\times_\Gamma S_\infty$ and $\Gamma$-equivariant sheaves
on $X\times S_\infty$. In particular, $q_*^\Gamma$ is
exact and preserves injective objects. Taking derived functors
of the equality of global-section functors therefore gives
\[
H^i(X\times S_\infty,\Gamma;\mathcal G)
\cong
H^i(X\times_\Gamma S_\infty,q_*^\Gamma\mathcal G).
\]

We next compare the product with $X$. Under the Hermitian
realization, the contraction
$H\mapsto(1-t)H+tH_0$ of $S_\infty$ induces a homotopy
over $X$ from the identity of $X\times S_\infty$ to the
section $x\mapsto(x,H_0)$ composed with $\mathrm{pr}_X$.
The coefficient sheaf $\mathrm{pr}_X^{-1}\mathcal F$
is pulled back from $X$ throughout this homotopy. The product
homotopy lemma for inverse-image coefficients thus gives
\[
\mathcal F
\xrightarrow{\sim}
R(\mathrm{pr}_X)_*
\mathrm{pr}_X^{-1}\mathcal F,
\qquad
R^j(\mathrm{pr}_X)_*
\mathrm{pr}_X^{-1}\mathcal F=0
\quad(j>0).
\]
Although the chosen contraction need not be $\Gamma$-equivariant,
the adjunction morphism in this formula is canonical and
$\Gamma$-equivariant. The ordinary Leray spectral sequence
therefore gives $\Gamma$-equivariant isomorphisms
$H^b(X,\mathcal F)\cong
H^b(X\times S_\infty,\mathrm{pr}_X^{-1}\mathcal F)$.
We now compare the fundamental spectral sequence
\[
E_2^{a,b}
=
H^a\bigl(\Gamma,H^b(X,\mathcal F)\bigr)
\Longrightarrow
H^{a+b}(X,\Gamma;\mathcal F)
\]
from \cite[Theorem~5.2.1]{tohuku_Grothendieck}
with the corresponding spectral sequence for
$X\times S_\infty$. Pullback identifies their $E_2$-terms,
and hence their abutments. Applying the descent comparison above
to $\mathcal G=\mathrm{pr}_X^{-1}\mathcal F$ completes
the argument.
\end{proof}

In particular, this comparison applies to
$X=\mathcal A(\C)$ and to the $\Gamma$-stable open subset
$\mathcal U_{\mathcal D}(\C)$. For the latter, the coefficients
used below are
$j^{-1}(\mathcal P^{(m),\mathrm{an}}
\otimes\Omega_{\mathcal A(\C)}^d)$ for $m\geq0$, where
$j:\mathcal U_{\mathcal D}(\C)\hookrightarrow\mathcal A(\C)$
is the open immersion. Naturality makes the resulting Borel
models compatible with the transition morphisms in $m$.
With this equivariant analytic model fixed, we turn to the
translation-invariant $C^\infty$-trivialization and the
tautological form used in the realization of the completed
Poincaré bundle.

Write
\[
\mathrm{pr}_{\mathcal A}:
\mathcal A(\C)\times S_\infty\longrightarrow\mathcal A(\C),
\qquad
\mathrm{pr}_{S_\infty}:
\mathcal A(\C)\times S_\infty\longrightarrow S_\infty
\]
for the two projections. The product structure gives a canonical
decomposition of real tangent bundles
\[
T_{\mathcal A(\C)\times S_\infty}
\cong
\mathrm{pr}_{\mathcal A}^*T_{\mathcal A(\C)}
\oplus
\mathrm{pr}_{S_\infty}^*T_{S_\infty}.
\]
Set $\mathcal H_{\Z}:=H_1(\mathcal A(\C),\Z)$ and
$\mathcal H_{\R}:=\mathcal H_{\Z}\otimes_{\Z}\R$.
Under the uniformization $\mathcal A(\C)\cong\C^d/\Lambda$,
the universal covering identifies $\mathcal H_{\Z}$ with
$\Lambda$. Since $\Lambda$ is a full lattice in the underlying
real vector space of $\C^d$, its real span identifies
$\mathcal H_{\R}$ with $T_0\mathcal A(\C)$. Translation therefore
gives the canonical $C^\infty$-trivialization
\[
T_{\mathcal A(\C)}
\xrightarrow{\sim}
\mathcal A(\C)\times\mathcal H_{\R},
\qquad
\xi_a\longmapsto
\bigl(a,d(t_{-a})_a(\xi_a)\bigr),
\]
where $t_{-a}$ denotes translation by $-a$. We regard
$\mathcal H_{\R}$ as a constant vector bundle on
$\mathcal A(\C)$ and use its pullback on the product.

\begin{defn}[Tautological differential form]
\label{def: Tautological differential form}\rm
The \emph{tautological differential form} on $\mathcal A(\C)$
is the $\mathcal H_{\R}$-valued form
$\nu_{\mathcal A}\in
\mathcal E^1(\mathcal A(\C),\mathcal H_{\R})$ defined by
\[
(\nu_{\mathcal A})_{[z]}(\xi)
:=
d(t_{-[z]})_{[z]}(\xi).
\]
On $\mathcal A(\C)\times S_\infty$, we put
$\nu:=\mathrm{pr}_{\mathcal A}^*\nu_{\mathcal A}$.
Thus $\nu$ is the identity in the torus direction under the
translation-invariant trivialization, depends only on the
$\mathcal A(\C)$-coordinate, and vanishes on
$\mathrm{pr}_{S_\infty}^*T_{S_\infty}$.
\end{defn}

\begin{notation}\label{notation: decomp of H_C}\rm
Set $\mathcal H_{\C}:=\mathcal H_{\R}\otimes_{\R}\C$.
The Betti--de Rham comparison identifies $\mathcal H_{\C}$ with
$\operatorname{Hom}_{\C}
(H^1_{\mathrm{dR}}(\mathcal A/\C),\C)$.
We use the homological Hodge decomposition
$\mathcal H_{\C}
=\mathcal H^{-1,0}\oplus\mathcal H^{0,-1}$, with
\[
\mathcal H^{-1,0}\cong T_0^{1,0}\mathcal A(\C),
\qquad
\mathcal H^{0,-1}\cong T_0^{0,1}\mathcal A(\C).
\]
By Proposition~\ref{prop_5} and Remark~\ref{rem_2}, this
decomposition agrees with the CM-type decomposition:
$\mathcal H^{-1,0}=\mathcal H_{\C}(\Sigma_L)$ and
$\mathcal H^{0,-1}=\mathcal H_{\C}(\overline{\Sigma}_L)$.
In particular, the latter component identifies with
$\omega_{\mathcal A^\vee/\C}$.
Write $z_1,\ldots,z_d$ for the coordinates
$z_{1,1},\ldots,z_{g,n}$ fixed above, and let
$e_1,\ldots,e_d$ and $\bar e_1,\ldots,\bar e_d$ denote the
bases of $\mathcal H^{-1,0}$ and $\mathcal H^{0,-1}$
corresponding to $\partial/\partial z_i$ and
$\partial/\partial\bar z_i$, respectively. Our duality
conventions are
$dz_j(e_i)=\delta_{ij}$ and
$d\bar z_j(\bar e_i)=\delta_{ij}$.
\end{notation}

\begin{defn}\label{def: cplx decomp tautological diff form}\rm
The complexification of the tautological form decomposes as
$\nu_{\mathcal A,\C}
=\nu_{\mathcal A}^{1,0}+\nu_{\mathcal A}^{0,1}$, where
\[
\nu_{\mathcal A}^{1,0}
=
\sum_{i=1}^d e_i\,dz_i,
\qquad
\nu_{\mathcal A}^{0,1}
=
\sum_{i=1}^d\bar e_i\,d\bar z_i.
\]
We denote their pullbacks to the product by
$\nu^{1,0}:=\mathrm{pr}_{\mathcal A}^*
\nu_{\mathcal A}^{1,0}$ and
$\nu^{0,1}:=\mathrm{pr}_{\mathcal A}^*
\nu_{\mathcal A}^{0,1}$.
The bidegrees here refer only to the complex structure in the
$\mathcal A(\C)$-direction; both forms vanish in the
$S_\infty$-direction.
\end{defn}

The translation-invariant trivialization and the tautological
forms are compatible with the $\Gamma$-action. Indeed, since
$\gamma\in\Gamma$ acts by an automorphism of the complex Lie
group $\mathcal A(\C)$ fixing the origin, one has
\[
t_{-\gamma[z]}\circ\gamma
=
\gamma\circ t_{-[z]}.
\]
Differentiating at $[z]$ gives
\[
d(t_{-\gamma[z]})_{\gamma[z]}\circ d\gamma_{[z]}
=
d\gamma_0\circ d(t_{-[z]})_{[z]}.
\]
Under $\mathcal H_{\R}\cong T_0\mathcal A(\C)$, the
differential $d\gamma_0$ is the real-linear action induced by
$\gamma_\infty$. Consequently,
$\gamma^*\nu_{\mathcal A}=\gamma_\infty\nu_{\mathcal A}$,
and equivariance of $\mathrm{pr}_{\mathcal A}$ gives
$\gamma^*\nu=\gamma_\infty\nu$.
Here pullback on the left acts on the differential-form
component, whereas $\gamma_\infty$ on the right acts on the
coefficient space. Since $\gamma$ is holomorphic, its action
on $\mathcal H_{\C}$ preserves the homological Hodge
decomposition. The two Hodge components are therefore
separately equivariant, both on $\mathcal A(\C)$ and on the
product; in particular,
\[
\gamma^*\nu^{1,0}=\gamma_\infty\nu^{1,0},
\qquad
\gamma^*\nu^{0,1}=\gamma_\infty\nu^{0,1}.
\]
We use the same notation $\gamma_\infty$ for the complexified
action on $\mathcal H_{\C}$; on $\mathcal H^{0,-1}$ its
matrix in the basis $\bar e_1,\ldots,\bar e_d$ is the complex
conjugate of its matrix on $\mathcal H^{-1,0}$.

We now describe the holomorphic structure of the finite-level
Poincaré bundles in terms of $\nu_{\mathcal A}^{0,1}$.
For every $m\geq0$, put
\[
\mathcal P^{(m),\mathrm{sm}}
:=
\mathcal P^{(m),\mathrm{an}}
\otimes_{\mathcal O_{\mathcal A(\C)}^{\mathrm{an}}}
\mathcal C^\infty_{\mathcal A(\C)},
\qquad
\widehat{\mathcal P}^{\mathrm{sm}}
:=
\varprojlim_m\mathcal P^{(m),\mathrm{sm}}.
\]
Similarly, set
$\mathcal H^{\mathrm{sm}}
:=\mathcal H_{\C}\otimes_{\C}
\mathcal C^\infty_{\mathcal A(\C)}$ and, for
$J=\Sigma_L$ or $\overline{\Sigma}_L$, write
$\mathcal H(J)^{\mathrm{sm}}
:=\mathcal H_{\C}(J)\otimes_{\C}
\mathcal C^\infty_{\mathcal A(\C)}$.
The Hodge decomposition then gives
$\mathcal H^{\mathrm{sm}}
=\mathcal H(\Sigma_L)^{\mathrm{sm}}
\oplus\mathcal H(\overline{\Sigma}_L)^{\mathrm{sm}}$.
The holomorphic structure on $\mathcal P^{(m),\mathrm{an}}$
determines an integrable $(0,1)$-connection
\[
\nabla'':
\mathcal P^{(m),\mathrm{sm}}
\longrightarrow
\mathcal P^{(m),\mathrm{sm}}
\otimes_{\mathcal C^\infty_{\mathcal A(\C)}}
\mathcal E_{\mathcal A(\C)}^{0,1}.
\]
It annihilates holomorphic sections and satisfies the Leibniz
rule
$\nabla''(fs)=f\nabla''s+s\otimes\bar\partial f$
for a smooth function $f$ and a smooth local section $s$.
Extending by the graded Leibniz rule gives a differential on
$\mathcal P^{(m),\mathrm{sm}}
\otimes_{\mathcal C^\infty_{\mathcal A(\C)}}
\mathcal E_{\mathcal A(\C)}^{0,\bullet}$, with
$(\nabla'')^2=0$.
The transition morphisms
$\mathcal P^{(m),\mathrm{an}}\to
\mathcal P^{(m-1),\mathrm{an}}$
and the $\Gamma$-linearizations inherited from the algebraic
construction are holomorphic. Their smooth realizations
therefore commute with $\nabla''$.

\begin{thm}\label{thm: compare P_n and TSym with connection}
For every $m\geq0$, there is a $\Gamma$-equivariant
isomorphism of smooth modules with integrable
$(0,1)$-connections
\[
\alpha_m:
\left(\mathcal P^{(m),\mathrm{sm}},\nabla''\right)
\xlongrightarrow{\sim}
\left(
\bigoplus_{k=0}^m
\mathrm{TSym}^k
\bigl(\mathcal H(\overline{\Sigma}_L)^{\mathrm{sm}}\bigr),
\bar\partial+\nu_{\mathcal A}^{0,1}
\right).
\]
Under the identification
$\mathrm{TSym}^k
(\mathcal H(\overline{\Sigma}_L)^{\mathrm{sm}})
\cong
\mathrm{TSym}^k(\mathcal H_{\C}(\overline{\Sigma}_L))
\otimes_{\C}\mathcal C^\infty_{\mathcal A(\C)}$,
the operator $\bar\partial$ acts on the smooth coefficient
functions:
$\bar\partial(v\otimes f)=v\otimes\bar\partial f$.
The form $\nu_{\mathcal A}^{0,1}$ acts by tensor-symmetric
multiplication, raising the symmetric degree by one, with
products of degree greater than $m$ set equal to zero.
Thus
\[
(\bar\partial+\nu_{\mathcal A}^{0,1})
(s_0,\ldots,s_m)
=
\bigl(
\bar\partial s_k+\nu_{\mathcal A}^{0,1}s_{k-1}
\bigr)_{k=0}^m,
\qquad s_{-1}:=0.
\]
The isomorphisms $\alpha_m$ are compatible with the
transition maps: for $m\geq1$, the morphism
$\mathcal P^{(m),\mathrm{sm}}\to
\mathcal P^{(m-1),\mathrm{sm}}$
corresponds to projection onto the summands of symmetric
degree at most $m-1$.
Moreover, under
$e^*\mathcal H(\overline{\Sigma}_L)^{\mathrm{sm}}
\cong\omega_{\mathcal A^\vee/\C}$, their pullbacks along
the unit section agree with the finite-level moment maps:
\[
e^*\alpha_m
=
\mathrm{mom}_{e,m}^{\mathrm{sm}}:
e^*\mathcal P^{(m),\mathrm{sm}}
\xlongrightarrow{\sim}
\bigoplus_{k=0}^m
\mathrm{TSym}^k(\omega_{\mathcal A^\vee/\C}).
\]
\end{thm}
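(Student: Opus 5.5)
The plan is to prove the statement first for the universal cover $\C^d$, where the formal completion of $\mathcal A^\vee$ along the unit section can be written down explicitly, and then descend along $\Lambda$. Under Remark~\ref{rem_2} and Notation~\ref{notation: decomp of H_C}, the dual abelian variety is $\mathcal A^\vee(\C)\cong\overline{\mathcal H^{0,-1}}{}^\vee$ modulo the dual lattice. More concretely, $\operatorname{Pic}^0$ is the space of $\bar\partial$-connections $\bar\partial+\bar\alpha$ on the trivial bundle with $\bar\alpha\in H^{0,1}$, taken modulo gauge by characters of $\Lambda$. The Poincaré bundle $\mathcal P$ restricted to $\mathcal A(\C)\times\mathcal A^{\vee(m)}$ is therefore the trivial $C^\infty$ bundle $\mathcal C^\infty_{\mathcal A(\C)}\otimes\mathcal O_{\mathcal A^{\vee(m)}}$ with $\bar\partial$-operator $\bar\partial+\sum_i\bar t_i\,d\bar z_i$. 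Here $\bar t_i$ are the linear coordinates on $\operatorname{Lie}(\mathcal A^\vee)=H^{0,1}$, and they are dual to $\bar e_i$, so $\mathcal H(\overline\Sigma_L)=\omega_{\mathcal A^\vee/\C}$ is spanned by the $\bar e_i$.

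I would realize $\mathcal P$ on $\mathcal A\times\mathcal A^\vee$ as the quotient of $\C^d\times\C^d$ by the Appell--Humbert factor of automorphy. I would then choose the $C^\infty$ (non-holomorphic) trivialization that multiplies by $\exp$ of the hermitian pairing $\pi H(z,\bar t)$-type term. This makes the factor of automorphy trivial along $\Lambda$, and it turns the holomorphic structure into $\bar\partial+\langle\bar t,d\bar z\rangle$. The $m$-th infinitesimal neighbourhood is $\mathcal O_{\mathcal A^{\vee(m)}}=\bigoplus_{k\le m}\operatorname{Sym}^k(\omega_{\mathcal A^\vee})^\vee$ in the coordinates $\bar t$. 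Dualizing via divided powers, $(\operatorname{Sym}^k)^\vee=\mathrm{TSym}^k(\mathcal H(\overline\Sigma_L))$ since we are over $\C$. Under this duality, multiplication by $\sum\bar t_i\,d\bar z_i$ becomes multiplication by $\sum\bar e_i\,d\bar z_i=\nu^{0,1}_{\mathcal A}$ on tensor-symmetric powers, with degrees above $m$ truncated. This defines $\alpha_m$, and the displayed formula for $\bar\partial+\nu^{0,1}_{\mathcal A}$ is then immediate.

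Compatibility with the transition maps holds because restriction from $\mathcal A^{\vee(m)}$ to $\mathcal A^{\vee(m-1)}$ kills the $\bar t$-monomials of degree $m$, which dually is the projection onto degrees $\le m-1$. For the unit section, the chosen trivialization is the identity at $z=0$ and compatible with the rigidification $e^*\mathcal P^{(m)}\cong\mathcal O_{\mathcal A^{\vee(m)}}$. It remains to check that the identification $\mathcal O_{\mathcal A^{\vee(m)}}\to\bigoplus\mathrm{TSym}^k(\omega_{\mathcal A^\vee})$ agrees with $\mathrm{mom}_{e,m}$. This holds because the formal group of $\mathcal A^\vee$ is identified with the additive group $\widehat{\mathbb G}_a^d$ by the exponential in coordinates $\bar t$. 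Its coproduct is then $\bar t\mapsto\bar t\otimes1+1\otimes\bar t$, and the iterated coproduct with the augmentation splitting is exactly the divided-power dual. For $\Gamma$-equivariance, I would use $\gamma^*\nu^{0,1}=\gamma_\infty\nu^{0,1}$ together with the isometry property of $\gamma_\infty$ on the Hermitian form. These show that the trivializing factor is $\Gamma$-invariant, so $\alpha_m$ intertwines the linearization $\gamma^{-1}_\#$ of the equivariant-structure corollary (built from Theorem~\ref{thm: Functoriality of Poincaré bundles} applied to $\gamma$) with the action of $\gamma_\infty$ on $\mathcal H(\overline\Sigma_L)$.

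I expect the main obstacle to be this last step: identifying the isogeny-functoriality isomorphism $\gamma_\#$ on $\mathcal P^{(m)}$ with the naive action. I would first try to reduce it to uniqueness. Any two $\Gamma$-linearizations of the holomorphic bundle $\mathcal P^{(m),\mathrm{an}}$ compatible with the rigidification along $e$ differ by an automorphism that is holomorphic on the compact $\mathcal A(\C)$, fibrewise unipotent, and trivial at $e$. The filtration with graded pieces $\pi^*\operatorname{Sym}^k$ then forces such an automorphism to be the identity, by induction on $m$ using $H^0(\mathcal A,\mathcal O)=\C$.
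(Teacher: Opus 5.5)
Your proposal is correct in substance, but it takes a genuinely different route from the paper.

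The paper never writes down the Poincar\'e bundle and works only at level one.
\begin{itemize}
\item It takes a smooth splitting of $0\to\mathcal H(\overline{\Sigma}_L)^{\mathrm{sm}}\to\mathcal P^{(1),\mathrm{sm}}\to\mathcal C^\infty_{\mathcal A(\C)}\to0$ normalized at $e$.
\item It uses the universal property of $\mathcal P$ (the extension class is the identity) to correct the off-diagonal Dolbeault term to $\nu^{0,1}_{\mathcal A}$.
\item This normalized splitting is unique: two such splittings differ by a holomorphic, hence constant, section vanishing at $e$. Uniqueness gives $\Gamma$-equivariance immediately.
\item Higher levels come from Corollary~\ref{cor_5}, $\mathcal P^{(m)}\cong\mathrm{TSym}^m(\mathcal P^{(1)})$, with the connection computed on the elements $u^{[m-k]}s_k$.
\item $e^*\alpha_m=\mathrm{mom}_{e,m}$ holds because both are built from the same level-one splitting by the same iterated comultiplication.
\end{itemize}
You instead get every level at once from the Appell--Humbert model. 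In the standard normalization, the factor $\exp(\pi w(\lambda))$ along $\Lambda\times 0$ is removed by $\exp(\pi w(v))$. This function is antilinear in $v$, holomorphic in $w$ (so it survives reduction modulo $(w)^{m+1}$), and equal to $1$ at $v=0$. As a result, the transition maps and the unit-section statement become almost tautological, since the formal group of $\mathcal A^\vee(\C)$ is additive in linear coordinates, and Corollary~\ref{cor_5} is bypassed. The cost is threefold: you must import the analytic description of the normalized, rigidified Poincar\'e bundle; you must match your coordinates $\bar t_i$, absorbing the constant $\pi$, to the deformation-theoretic identification $\mathrm{Lie}(\mathcal A^\vee)\cong H^{0,1}$; and you need a separate uniqueness argument identifying $\gamma_\#^{-1}$ with the naive action, which the paper gets from uniqueness of its splitting.

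Several slips need fixing.
\begin{itemize}
\item The $\bar t_i$ are the coordinates dual to the basis $[d\bar z_i]$ of $\mathrm{Lie}(\mathcal A^\vee)=H^{0,1}\cong(\mathcal H^{0,-1})^\vee$. So they are elements of $\omega_{\mathcal A^\vee}$ equal to $\bar e_i$, not dual to them, and $\mathcal O_{\mathcal A^{\vee(m)}}=\bigoplus_{k\le m}\mathrm{Sym}^k(\omega_{\mathcal A^\vee})$ with no dual.
\item The passage to $\bigoplus_k\mathrm{TSym}^k$ must be the symmetrization algebra isomorphism $\bar t^{\beta}\mapsto\beta!\,\bar e^{[\beta]}$, not a dualization. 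A duality would turn multiplication by $\bar t_i$ into contraction, and it is exactly symmetrization that equals $\mathrm{mom}_{e,m}$ for the additive formal group.
\item The $\Gamma$-invariance of $\exp(\pi w(v))$ holds because $w(v)$ is unchanged under $(v,w)\mapsto(\gamma_\infty v,\,w\circ\gamma_\infty^{-1})$. It does not come from an isometry of a Hermitian form: $\gamma_\infty$ does not preserve $H_0$, so a factor built from a polarization would not be invariant.
\item In the uniqueness step, state explicitly that both linearizations are semilinear over the same action of $\gamma$ on $\mathcal O_{\mathcal A^{\vee(m)}}$. Their ratio is then an automorphism of the line bundle $\mathcal P|_{\mathcal A\times\mathcal A^{\vee(m)}}$, i.e.\ a unit of $H^0(\mathcal A\times\mathcal A^{\vee(m)},\mathcal O)=\mathcal O_{\mathcal A^{\vee(m)}}$, and the rigidification at $e$ forces it to be $1$. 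Without this linearity, your claim that the difference preserves the filtration (is fibrewise unipotent) is unjustified.
\end{itemize}
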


\begin{proof}
At level zero, the identification
$\mathcal P^{(0),\mathrm{an}}
\cong\mathcal O_{\mathcal A(\C)}^{\mathrm{an}}$
gives $\alpha_0$.
At the first infinitesimal level, the defining exact
sequence of the Poincaré bundle becomes
\[
0\longrightarrow
\mathcal H(\overline{\Sigma}_L)^{\mathrm{sm}}
\xrightarrow{\iota}
\mathcal P^{(1),\mathrm{sm}}
\xrightarrow{\epsilon}
\mathcal C^\infty_{\mathcal A(\C)}
\longrightarrow0.
\]
This sequence of smooth vector bundles admits a smooth
splitting. Choose a splitting $s$ whose pullback along $e$
agrees with the splitting induced by the rigidification;
such a normalization can be obtained by adjusting any
smooth splitting by a constant section of
$\mathcal H(\overline{\Sigma}_L)^{\mathrm{sm}}$.
Under the resulting identification
$\mathcal P^{(1),\mathrm{sm}}
\cong
\mathcal C^\infty_{\mathcal A(\C)}
\oplus\mathcal H(\overline{\Sigma}_L)^{\mathrm{sm}}$,
the Dolbeault operator has the form
\[
\nabla''(f,h)
=
\bigl(\bar\partial f,\bar\partial h+f\beta\bigr),
\]
where $\beta$ is an
$\mathcal H_{\C}(\overline{\Sigma}_L)$-valued
$(0,1)$-form. Integrability implies
$\bar\partial\beta=0$, and its Dolbeault class is the
extension class of $\mathcal P^{(1),\mathrm{an}}$.
By the universal property of the Poincaré bundle, this
extension class corresponds to the identity under the
canonical identifications
\[
\begin{aligned}
H^1\bigl(
\mathcal A(\C),\mathcal O_{\mathcal A(\C)}^{\mathrm{an}}
\bigr)
\otimes_{\C}\omega_{\mathcal A^\vee/\C}
&\cong
\operatorname{Lie}(\mathcal A^\vee/\C)
\otimes_{\C}\omega_{\mathcal A^\vee/\C}\cong
\operatorname{End}_{\C}(\omega_{\mathcal A^\vee/\C}).
\end{aligned}
\]
Indeed, the first infinitesimal neighborhood of the origin
in $\mathcal A^\vee$ parametrizes the universal first-order
deformation of the trivial line bundle.
Under the translation-invariant description of Dolbeault
cohomology, the classes of $d\bar z_i$ are dual to the
basis $\bar e_i$ of
$\mathcal H_{\C}(\overline{\Sigma}_L)
\cong\omega_{\mathcal A^\vee/\C}$.
Consequently,
$\nu_{\mathcal A}^{0,1}
=\sum_i\bar e_i\,d\bar z_i$
is the translation-invariant representative of this
identity class.
It follows that
$\beta-\nu_{\mathcal A}^{0,1}=\bar\partial h$
for a smooth section $h$ of
$\mathcal H(\overline{\Sigma}_L)^{\mathrm{sm}}$.
Viewing $h(e)$ as a constant section, replace $s$ by the
smooth splitting $s'$ determined by
\[
s'(1):=s(1)-\iota\bigl(h-h(e)\bigr).
\]
This modification preserves the normalization at $e$ and
changes the off-diagonal term to
$\beta-\bar\partial(h-h(e))
=\nu_{\mathcal A}^{0,1}$.
It therefore gives the required isomorphism $\alpha_1$.

The normalized splitting is unique. Two splittings with
this off-diagonal term differ by a holomorphic section of
the constant bundle
$\mathcal H(\overline{\Sigma}_L)^{\mathrm{sm}}$.
Such a section is constant because $\mathcal A(\C)$ is
compact, and the normalization forces its value at $e$
to be zero.
The level-one extension and its rigidification are
$\Gamma$-equivariant, as is
$\nu_{\mathcal A}^{0,1}$.
Hence the $\gamma$-translate of the normalized splitting
satisfies the same conditions for every $\gamma\in\Gamma$.
Uniqueness shows that the splitting, and therefore
$\alpha_1$, is $\Gamma$-equivariant.

For arbitrary $m$, Corollary~\ref{cor_5} gives an
isomorphism
\[
\mathcal P^{(m),\mathrm{sm}}
\xlongrightarrow{\sim}
\mathrm{TSym}^m_{\mathcal C^\infty_{\mathcal A(\C)}}
\bigl(\mathcal P^{(1),\mathrm{sm}}\bigr),
\]
since $m!$ is invertible over $\C$.
This map is induced by iterated comultiplication.
The comultiplication is holomorphic and functorial for the
$\Gamma$-action, so the isomorphism is
$\Gamma$-equivariant and compatible with the induced
$(0,1)$-connections.
Applying $\mathrm{TSym}^m$ to $\alpha_1$ and using the
canonical decomposition
\[
\mathrm{TSym}^m
\left(
\mathcal C^\infty_{\mathcal A(\C)}
\oplus\mathcal H(\overline{\Sigma}_L)^{\mathrm{sm}}
\right)
\cong
\bigoplus_{k=0}^m
\mathrm{TSym}^k
\bigl(\mathcal H(\overline{\Sigma}_L)^{\mathrm{sm}}\bigr)
\]
defines $\alpha_m$.
To describe the induced connection, write $u=(1,0)$ for
the generator of the first summand at level one.
The degree-$k$ summand in the displayed decomposition is
embedded by tensor-symmetric multiplication with
$u^{[m-k]}$.
Since $\nabla''u=\nu_{\mathcal A}^{0,1}$, the tensor
product connection gives, for $k<m$,
\[
\nabla''\bigl(u^{[m-k]}s_k\bigr)
=
u^{[m-k]}\bar\partial s_k
+
u^{[m-k-1]}\nu_{\mathcal A}^{0,1}s_k,
\]
with the second term absent when $k=m$.
Thus $\bar\partial$ preserves the symmetric degree,
whereas the remaining term is multiplication by
$\nu_{\mathcal A}^{0,1}$.
Compatibility of comultiplication with truncation
identifies the transition from level $m$ to level $m-1$
with applying the augmentation $\epsilon$ to one tensor
factor. Since $\epsilon(u)=1$ and $\epsilon$ vanishes on
$\mathcal H(\overline{\Sigma}_L)^{\mathrm{sm}}$, this
map is the identity on symmetric degrees at most $m-1$
and annihilates degree $m$.

Finally, the normalization of the level-one splitting
identifies its pullback with the canonical decomposition
$e^*\mathcal P^{(1),\mathrm{sm}}
\cong\C\oplus\omega_{\mathcal A^\vee/\C}$.
Under the rigidification, the pullback of the Poincaré
comultiplication is the formal-group coproduct used to
define the moment maps.
Both $e^*\alpha_m$ and
$\mathrm{mom}_{e,m}^{\mathrm{sm}}$ are therefore obtained
from this level-one identification by the same iterated
comultiplication and tensor-symmetric decomposition.
\end{proof}

\begin{cor}\label{cor_7}
For every $m\geq0$, inclusion of holomorphic sections
into smooth sections induces a natural
$\Gamma$-equivariant quasi-isomorphism
\[
\bigl(\mathcal P^{(m),\mathrm{an}}\bigr)[0]
\xlongrightarrow{\sim}
\left(
\mathcal P^{(m),\mathrm{sm}}
\otimes_{\mathcal C^\infty_{\mathcal A(\C)}}
\mathcal E_{\mathcal A(\C)}^{0,\bullet},
\nabla''
\right).
\]
For $0\leq p\leq d$, tensoring with the sheaf of
holomorphic $p$-forms gives a natural
$\Gamma$-equivariant quasi-isomorphism
\[
\left(
\mathcal P^{(m),\mathrm{an}}
\otimes_{\mathcal O_{\mathcal A(\C)}^{\mathrm{an}}}
\Omega_{\mathcal A(\C)}^p
\right)[0]
\xlongrightarrow{\sim}
\left(
\mathcal P^{(m),\mathrm{sm}}
\otimes_{\mathcal C^\infty_{\mathcal A(\C)}}
\mathcal E_{\mathcal A(\C)}^{p,\bullet},
\nabla''
\right).
\]
These quasi-isomorphisms are compatible with the
transition morphisms in $m$.
\end{cor}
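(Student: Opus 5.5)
The plan is to reduce everything to the Dolbeault lemma via the isomorphism $\alpha_m$ of Theorem~\ref{thm: compare P_n and TSym with connection}, and then argue by induction on $m$ using the infinitesimal filtration. First, the map is well defined and a morphism of complexes. Holomorphic sections of $\mathcal P^{(m),\mathrm{an}}$ are exactly the smooth sections annihilated by $\nabla''$, so the inclusion lands in the kernel of the degree-zero differential. It is $\Gamma$-equivariant because the $\Gamma$-linearizations are holomorphic and their smooth realizations commute with $\nabla''$, as noted before the theorem. It is compatible with transitions because the transition maps are holomorphic.

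It remains to show that the augmented complex
\[
0\to\mathcal P^{(m),\mathrm{an}}\to\mathcal P^{(m),\mathrm{sm}}\otimes\mathcal E^{0,\bullet}_{\mathcal A(\C)}
\]
is exact as a complex of sheaves; this is a local statement. I would do it by induction on $m$. The defining exact sequence
\[
0\to\pi^*\mathrm{Sym}^m(\omega_{\mathcal A^\vee})\to\mathcal P^{(m)}\to\mathcal P^{(m-1)}\to0
\]
consists of holomorphic maps. Tensoring with the flat sheaves $\mathcal E^{0,q}$ over $\mathcal O^{\mathrm{an}}$ (smooth forms are flat over holomorphic functions, or more simply, since these are locally free, the sequence of smooth bundles remains exact) gives a short exact sequence of Dolbeault complexes. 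This sequence is compatible with the augmentations. The first term is a trivial holomorphic bundle, and Dolbeault–Grothendieck gives its resolution. The five lemma, or the long exact cohomology sheaf sequence, then closes the induction. The base case $m=0$ is the usual Dolbeault resolution of $\mathcal O^{\mathrm{an}}$.

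Alternatively, one can argue directly with $\alpha_m$. The connection $\bar\partial+\nu^{0,1}_{\mathcal A}$ is upper triangular with respect to the symmetric-degree filtration, and its graded pieces are $\bar\partial$ on trivial bundles. A filtration spectral sequence therefore yields the same conclusion.

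For the twisted version, $\Omega^p_{\mathcal A(\C)}$ is locally free, in fact holomorphically trivial, spanned by the $dz_I$. Tensoring the resolution with it gives the complex $\mathcal P^{(m),\mathrm{sm}}\otimes\mathcal E^{p,\bullet}$, with $\nabla''$ extended by the Leibniz rule, where $\bar\partial(dz_I)=0$. Exactness is preserved because the resolution is locally a finite direct sum of copies of the untwisted one. Equivariance again follows because $\gamma$ acts holomorphically on $\Omega^p$.

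The main obstacle is bookkeeping rather than analysis: one must make sure that the smooth realization of the holomorphic filtration matches the differential on each graded piece, so that the short exact sequence really is a sequence of complexes. Here the triangular form supplied by Theorem~\ref{thm: compare P_n and TSym with connection} is exactly what is needed.
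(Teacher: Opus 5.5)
Your proof is correct, but it takes a longer route than the paper. The paper does not filter at all. It uses that $\mathcal P^{(m),\mathrm{an}}$ is already a finite locally free $\mathcal O^{\mathrm{an}}_{\mathcal A(\C)}$-module, so in a local holomorphic frame $\nabla''$ is just $\bar\partial$ applied componentwise, and the Dolbeault--Grothendieck lemma gives exactness at once. The twist by $\Omega^p_{\mathcal A(\C)}$, the $\Gamma$-equivariance and the compatibility with transitions are then handled exactly as you handle them.

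Your induction over the infinitesimal filtration, or the finite-filtration spectral sequence through $\alpha_m$, instead reduces to graded pieces that are globally trivial bundles with constant frames. This is valid. The short exact sequences of locally free sheaves are locally split, so they stay exact after tensoring with $\mathcal E^{0,q}$; your appeal to flatness of smooth functions over holomorphic ones, which is a deep theorem of Malgrange, is unnecessary. The inclusion and projection are holomorphic bundle maps and therefore commute with the Dolbeault operators, so your ``main obstacle'' is automatic.

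However, the induction rests on the same input, local freeness, that already gives local holomorphic frames for $\mathcal P^{(m),\mathrm{an}}$ itself, so it gains nothing for this statement. The paper's argument works verbatim for any holomorphic vector bundle and does not need Theorem~\ref{thm: compare P_n and TSym with connection}. The one merit of your $\alpha_m$ variant is that it exhibits the resolution directly in the explicit model $\bigl(\bigoplus_k\mathrm{TSym}^k,\bar\partial+\nu^{0,1}_{\mathcal A}\bigr)$ that is used later for the Fourier currents.
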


\begin{proof}
The sheaf of $\nabla''$-horizontal sections of
$\mathcal P^{(m),\mathrm{sm}}$ is precisely
$\mathcal P^{(m),\mathrm{an}}$.
In a local holomorphic frame, the corresponding
Dolbeault complex is the ordinary Dolbeault complex
applied componentwise, so the Dolbeault lemma gives
the first quasi-isomorphism.
Since $\Omega_{\mathcal A(\C)}^p$ is locally free over
$\mathcal O_{\mathcal A(\C)}^{\mathrm{an}}$,
tensoring with it preserves quasi-isomorphisms.
The canonical identification
\[
\Omega_{\mathcal A(\C)}^p
\otimes_{\mathcal O_{\mathcal A(\C)}^{\mathrm{an}}}
\mathcal E_{\mathcal A(\C)}^{0,q}
\cong
\mathcal E_{\mathcal A(\C)}^{p,q}
\]
identifies the resulting complex with the one in the
second assertion.
All these maps are natural for holomorphic bundle
morphisms and for pullback by the holomorphic
automorphisms in $\Gamma$.
They therefore commute with the transition morphisms
and with the $\Gamma$-linearizations.
\end{proof}

We now combine the ordinary Dolbeault resolution of
Corollary~\ref{cor_7} with the de Rham complex along $S_\infty$
to compute equivariant cohomology on the Borel model.
For the projection
$\mathrm{pr}_{\mathcal A}:
\mathcal A(\C)\times S_\infty\to\mathcal A(\C)$,
the relative de Rham complex
$\mathcal E^\bullet_{\mathcal A(\C)\times S_\infty/\mathcal A(\C)}$
consists of smooth complex-valued forms in the $S_\infty$-direction,
with coefficients depending smoothly on both factors; its differential
is $d_{S_\infty}$.
Its degree-zero kernel consists of functions locally independent
of the $S_\infty$-coordinate and is therefore
$\mathrm{pr}_{\mathcal A}^{-1}\mathcal C^\infty_{\mathcal A(\C)}$.
Under the Hermitian realization, $S_\infty$ is an open convex cone.
The de Rham homotopy associated with its straight-line contraction
acts on smooth families parametrized by $\mathcal A(\C)$ and
contracts the positive relative degrees.
Locally on product neighborhoods, this gives the relative
Poincaré lemma and hence a quasi-isomorphism
\begin{equation}\label{eq_15}
\bigl(
\mathrm{pr}_{\mathcal A}^{-1}
\mathcal C^\infty_{\mathcal A(\C)}
\bigr)[0]
\xlongrightarrow{\sim}
\left(
\mathcal E^\bullet_{\mathcal A(\C)\times S_\infty/\mathcal A(\C)},
d_{S_\infty}
\right).
\end{equation}
Although a chosen contracting homotopy need not be
$\Gamma$-equivariant, the augmentation and the relative differential
are natural for the diagonal action, so this quasi-isomorphism
is $\Gamma$-equivariant.

\begin{defn}[Mixed bidegree]\label{def_1}\rm
On $\mathcal A(\C)\times S_\infty$, define
\[
\begin{aligned}
&\mathcal E^{1,0}_{\mathcal A(\C)\times S_\infty}
:=
\mathrm{pr}_{\mathcal A}^{*}
\mathcal E^{1,0}_{\mathcal A(\C)},\qquad
&\mathcal E^{0,1}_{\mathcal A(\C)\times S_\infty}
:=
\mathrm{pr}_{\mathcal A}^{*}
\mathcal E^{0,1}_{\mathcal A(\C)}
\oplus
\mathrm{pr}_{S_\infty}^{*}\mathcal E^1_{S_\infty},
\end{aligned}
\]
where forms on $S_\infty$ are taken with complex coefficients.
For $p,q\geq0$, put
\[
\mathcal E^{p,q}_{\mathcal A(\C)\times S_\infty}
:=
\bigwedge^p
\mathcal E^{1,0}_{\mathcal A(\C)\times S_\infty}
\otimes
\bigwedge^q
\mathcal E^{0,1}_{\mathcal A(\C)\times S_\infty}.
\]
Here the pullbacks, exterior powers, and tensor products are
taken in the category of smooth bundles and their sheaves of
sections.
Thus forms pulled back from $S_\infty$ are placed in
bidegree $(0,1)$: the second degree combines the
anti-holomorphic degree on $\mathcal A(\C)$ with the real
de Rham degree on $S_\infty$.
\end{defn}

The diagonal $\Gamma$-action preserves this decomposition,
since it acts holomorphically on $\mathcal A(\C)$ and smoothly
on $S_\infty$, without mixing the two factors.
The mixed form sheaves are therefore $\Gamma$-equivariant.
Moreover, pullback commutes with the Dolbeault differential on
$\mathcal A(\C)$ and the de Rham differential on $S_\infty$.
Together with the equivariant smooth realization of
Theorem~\ref{thm: compare P_n and TSym with connection}, these
operators give the equivariant mixed complex used below.
We write $\nabla_{\mathcal A}''$ for the Dolbeault operator on
$\mathcal A(\C)$ and $\nabla''$ for the total operator on the
product, with the totalization signs specified in the proof.

\begin{thm}[Equivariant Dolbeault resolution]
\label{thm: Dolbeault realization}
For every $m\geq0$, there is a natural $\Gamma$-equivariant
quasi-isomorphism of complexes of sheaves on
$\mathcal A(\C)\times S_\infty$:
\[
\left(
\mathrm{pr}_{\mathcal A}^{-1}
\bigl(
\mathcal P^{(m),\mathrm{an}}
\otimes
\Omega_{\mathcal A(\C)}^d
\bigr)
\right)[0]
\xlongrightarrow{\sim}
\left(
\mathrm{pr}_{\mathcal A}^{*}
\mathcal P^{(m),\mathrm{sm}}
\otimes
\mathcal E_{\mathcal A(\C)\times S_\infty}^{d,\bullet},
\nabla''
\right).
\]
For every $i\geq0$, the quasi-isomorphism induces a canonical
identification
\begin{equation*}
  \scalebox{0.99}{$\displaystyle
H^i\left(
\mathcal U_{\mathcal D}(\C),\Gamma;
j^{-1}\bigl(
\mathcal P^{(m),\mathrm{an}}
\otimes\Omega_{\mathcal A(\C)}^d
\bigr)
\right)\cong
H^i\left(
H^0\left(
\mathcal U_{\mathcal D}(\C)\times S_\infty,
(j\times\mathrm{id}_{S_\infty})^{-1}
\left(\mathrm{pr}_{\mathcal A}^{*}\mathcal P^{(m),\mathrm{sm}}
\otimes
\mathcal E_{\mathcal A(\C)\times S_\infty}^{d,\bullet}\right)
\right)^\Gamma,
\nabla''
\right).
  $}
\end{equation*}
These quasi-isomorphisms and cohomological identifications are
compatible with the transition morphisms in $m$.

Under the inverse-limit comparison for the completed coefficient
system,
\begin{equation}\label{eq: completed analytic inverse limit}
\begin{aligned}
&H^i\left(
\mathcal U_{\mathcal D}(\C),\Gamma;
j^{-1}\bigl(
\widehat{\mathcal P}^{\mathrm{an}}
\otimes\Omega_{\mathcal A(\C)}^d
\bigr)
\right)\xrightarrow{\sim}
\varprojlim_m
H^i\left(
\mathcal U_{\mathcal D}(\C),\Gamma;
j^{-1}\bigl(
\mathcal P^{(m),\mathrm{an}}
\otimes\Omega_{\mathcal A(\C)}^d
\bigr)
\right),
\end{aligned}
\end{equation}
every class in the completed group is represented by a compatible
system of smooth $\Gamma$-equivariant
$\nabla''$-closed $(d,i)$-forms with values in
$\mathrm{pr}_{\mathcal A}^{*}\mathcal P^{(m),\mathrm{sm}}$,
modulo compatible systems that are exact in the corresponding
complexes of $\Gamma$-invariant sections.
\end{thm}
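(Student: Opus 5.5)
The argument has three parts. First build the resolution on the product, then descend it to the Borel quotient, and finally pass to the inverse limit in $m$.

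\emph{The resolution.} Pull back along $\mathrm{pr}_{\mathcal A}$ the $(d,\bullet)$-Dolbeault resolution of Corollary~\ref{cor_7}, namely $\bigl(\mathcal P^{(m),\mathrm{sm}}\otimes\mathcal E^{d,\bullet}_{\mathcal A(\C)},\nabla''_{\mathcal A}\bigr)$. Take its tensor product over $\mathrm{pr}_{\mathcal A}^{-1}\mathcal C^\infty_{\mathcal A(\C)}$ with the relative de Rham complex \eqref{eq_15}. By Definition~\ref{def_1}, the total complex of the resulting double complex is
\[
\mathrm{pr}_{\mathcal A}^*\mathcal P^{(m),\mathrm{sm}}\otimes\mathcal E^{d,\bullet}_{\mathcal A(\C)\times S_\infty},
\]
with total differential
\[
\nabla''=\nabla''_{\mathcal A}+(-1)^{q_{\mathcal A}}d_{S_\infty},
\]
where $q_{\mathcal A}$ is the antiholomorphic degree on $\mathcal A(\C)$. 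The two operators commute because $\nabla''_{\mathcal A}$ involves only the $\mathcal A(\C)$-variables, with coefficients constant in the $S_\infty$-direction, so $\nabla''$ squares to zero.

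To see that the natural inclusion of $\mathrm{pr}_{\mathcal A}^{-1}(\mathcal P^{(m),\mathrm{an}}\otimes\Omega^d)$ is a quasi-isomorphism, filter by the $S_\infty$-degree. On a product neighbourhood $U\times B$ with $B$ convex, the relative Poincaré lemma \eqref{eq_15} holds with smooth parameters in the sheaves $\mathcal P^{(m),\mathrm{sm}}\otimes\mathcal E^{d,q}$, which are locally free over $\mathcal C^\infty$. It collapses each column to $\mathrm{pr}_{\mathcal A}^{-1}$ of the $\mathcal A$-Dolbeault complex, and Corollary~\ref{cor_7} then finishes. A spectral sequence of a bounded double complex, or a direct zig-zag argument, suffices here since all degrees are bounded.

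Equivariance needs no equivariant homotopy. The augmentation maps are natural for the diagonal action, and the $\Gamma$-structures on $\mathcal P^{(m),\mathrm{sm}}$ commute with $\nabla''_{\mathcal A}$ by Theorem~\ref{thm: compare P_n and TSym with connection}. Quasi-isomorphism is a non-equivariant property, so the map is a $\Gamma$-equivariant quasi-isomorphism.

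\emph{Cohomological identification.} First apply Proposition~\ref{prop: equiv coh via Borel} with $X=\mathcal U_{\mathcal D}(\C)$. This identifies the equivariant cohomology with the cohomology on $X_\Gamma$ of
\[
q_*^\Gamma\,\mathrm{pr}^{-1}\bigl(j^{-1}(\mathcal P^{(m),\mathrm{an}}\otimes\Omega^d)\bigr).
\]
Since $q_*^\Gamma$ is an exact equivalence, it carries the resolution above, restricted along $j\times\mathrm{id}$, to a resolution on the quotient. Each term there is a module over $\mathcal C^\infty_{X_\Gamma}$, since $\Gamma$-invariant smooth functions descend. The terms are therefore fine, hence acyclic, because the quotient is a paracompact manifold with partitions of unity. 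The cohomology is thus computed by global sections, which are the $\Gamma$-invariant sections upstairs by the definition of $q_*^\Gamma$.

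Compatibility in $m$ follows from naturality. The transition maps are projections onto lower symmetric degree under $\alpha_m$, and they commute with $\nabla''$ and with all the augmentations.

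\emph{Completed statement.} The isomorphism \eqref{eq: completed analytic inverse limit} is the analytic counterpart of \eqref{eq: finite-level comparison for EK}. I expect this to be the main obstacle. I would prove it via the Milnor sequence for $R\varprojlim$, for which two inputs are needed:
\begin{itemize}
\item $\widehat{\mathcal P}^{\mathrm{an}}=R\varprojlim\mathcal P^{(m),\mathrm{an}}$. The transition maps are surjective with locally free kernels, so the system is Mittag-Leffler on every Stein open.
\item The $\varprojlim^1$-term of $H^{i-1}$ vanishes. For this, use that the transition maps on the complexes of invariant global sections are split surjective degreewise: under $\alpha_m$ they are projections onto direct summands. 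Therefore $\varprojlim$ of the complexes computes $R\varprojlim$.
\end{itemize}
For the $\varprojlim^1$-vanishing, it is simplest to transport the vanishing already established algebraically, via Theorem~\ref{thm: function to equiv coh class} and GAGA-type comparison on the proper $\mathcal A(\C)$ together with the localization sequence. Alternatively, one can argue that each $H^{i-1}$ at finite level is finite-dimensional, so the inverse system is Mittag-Leffler.

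Given these, a completed class is a compatible family of finite-level classes. Using the split surjectivity, representatives can be lifted inductively to a compatible system of closed $\Gamma$-invariant forms. Two such systems representing the same class differ by a compatible system of exact forms, again by choosing primitives inductively; the $\varprojlim^1$-vanishing is what makes these inductive choices possible.
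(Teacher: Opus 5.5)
Your finite-level argument is the paper's. You tensor the pulled-back Dolbeault resolution of Corollary~\ref{cor_7} with the relative de Rham complex \eqref{eq_15}, put the sign $(-1)^q$ on $d_{S_\infty}$, collapse the columns by the relative Poincar\'e lemma, and descend through Proposition~\ref{prop: equiv coh via Borel} to $\Gamma$-invariant global sections of fine sheaves. Your inductive lifting of closed representatives along the degreewise surjective transitions is also exactly what the paper does. One slip: for the columns to collapse first you must filter by the $\mathcal A(\C)$-degree $q$, so that the $E_0$-differential is $d_{S_\infty}$. Filtering by the $S_\infty$-degree computes $\nabla''_{\mathcal A}$-cohomology first.

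The divergence is in the completed part, and the extra argument you add there does not hold up. The paper does not prove \eqref{eq: completed analytic inverse limit}; it invokes it as the analytic counterpart of \eqref{eq: finite-level comparison for EK}. The representation statement is then formal: two compatible systems of closed forms give the same element of $\varprojlim_m H^i$ exactly when their difference is exact at each finite level. No $\varprojlim^1$ input is needed.

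Your Milnor-sequence reduction of the comparison to $\varprojlim^1_m H^{i-1}=0$ is fine, but neither justification you offer works in the stated generality.
\begin{itemize}
\item Finite-dimensionality fails already for $i=d$ and trivial $\Gamma$. The localization sequence maps $H^{d-1}(\mathcal U_{\mathcal D}(\C),\mathcal P^{(m),\mathrm{an}}\otimes\Omega^d_{\mathcal A(\C)})$ onto the kernel of $H^d_{\mathcal D}\to H^d$. The local cohomology at the points of $\mathcal D$ is infinite-dimensional, while $H^d(\mathcal A(\C),-)$ is finite-dimensional.
\item For $i\le d-1$ the argument does go through, because only $H^b(\mathcal U_{\mathcal D}(\C),-)=H^b(\mathcal A(\C),-)$ with $b\le d-2$ enters. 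Even then it needs $H^\bullet(\Gamma,-)$ to be finite-dimensional on finite-dimensional modules. That holds for arithmetic $\Gamma$, but not for an arbitrary subgroup.
\item The GAGA route can at most transport the algebraic comparison of Theorem~\ref{thm: function to equiv coh class}, which is stated only in degree $d-1$. It would also require comparing the algebraic and analytic pro-systems and their local cohomology along $\mathcal D$.
\end{itemize}

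Finally, reading ``compatible system of exact forms'' as requiring compatible primitives asks for more than the statement does. By your own Milnor sequence, that stronger version is equivalent to the comparison itself. Either take \eqref{eq: completed analytic inverse limit} as given, as the paper does, or restrict your proof of it to $i\le d-1$ and arithmetic $\Gamma$.
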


\begin{proof}
Corollary~\ref{cor_7}, with $p=d$, gives the
$\Gamma$-equivariant quasi-isomorphism
\[
\left(
\mathcal P^{(m),\mathrm{an}}
\otimes\Omega_{\mathcal A(\C)}^d
\right)[0]
\xlongrightarrow{\sim}
\left(
\mathcal P^{(m),\mathrm{sm}}
\otimes
\mathcal E_{\mathcal A(\C)}^{d,\bullet},
\nabla_{\mathcal A}''
\right).
\]
Applying the exact inverse-image functor
$\mathrm{pr}_{\mathcal A}^{-1}$ gives a resolution of
$\mathrm{pr}_{\mathcal A}^{-1}
(\mathcal P^{(m),\mathrm{an}}\otimes
\Omega_{\mathcal A(\C)}^d)$
by the pulled-back Dolbeault complex.
We resolve each of its terms in the $S_\infty$-direction
using \eqref{eq_15}.
The resulting double complex has terms
\[
C^{q,r}
:=
\mathrm{pr}_{\mathcal A}^{*}
\mathcal P^{(m),\mathrm{sm}}
\otimes
\mathrm{pr}_{\mathcal A}^{*}
\mathcal E_{\mathcal A(\C)}^{d,q}
\otimes
\mathrm{pr}_{S_\infty}^{*}
\mathcal E_{S_\infty}^{r},
\]
where $q$ is the anti-holomorphic degree on $\mathcal A(\C)$
and $r$ is the real de Rham degree on $S_\infty$.
All tensor products in this double complex are over
$\mathcal C^\infty_{\mathcal A(\C)\times S_\infty}$.
The horizontal differential is
$d_h:=\mathrm{pr}_{\mathcal A}^{*}\nabla_{\mathcal A}''$,
acting in the $\mathcal A(\C)$-direction, while the vertical
differential on $C^{q,r}$ is
$d_v:=(-1)^q d_{S_\infty}$.
Thus the two degrees increase as follows:
% https://q.uiver.app/#q=WzAsMTIsWzEsMiwiXFxtYXRocm17cHJ9X3tcXG1hdGhjYWx7QX19XipcXG1hdGhjYWx7UH1eeyhuKSxcXG1hdGhybXtzbX19XFxvdGltZXMgXFxtYXRocm17cHJ9X3tcXG1hdGhjYWx7QX19XipcXG1hdGhjYWx7RX1ee2QscX1fe1xcbWF0aGNhbHtBfShcXEMpfVxcb3RpbWVzXFxtYXRocm17cHJ9X3tTX1xcaW5mdHl9XipcXG1hdGhjYWx7RX1ee3IrMX1fe1NfXFxpbmZ0eX0iXSxbMSw0LCJcXG1hdGhybXtwcn1fe1xcbWF0aGNhbHtBfX1eKlxcbWF0aGNhbHtQfV57KG4pLFxcbWF0aHJte3NtfX1cXG90aW1lcyBcXG1hdGhybXtwcn1fe1xcbWF0aGNhbHtBfX1eKlxcbWF0aGNhbHtFfV57ZCxxfV97XFxtYXRoY2Fse0F9KFxcQyl9XFxvdGltZXNcXG1hdGhybXtwcn1fe1NfXFxpbmZ0eX1eKlxcbWF0aGNhbHtFfV57cn1fe1NfXFxpbmZ0eX0iXSxbMiwyLCJcXG1hdGhybXtwcn1fe1xcbWF0aGNhbHtBfX1eKlxcbWF0aGNhbHtQfV57KG4pLFxcbWF0aHJte3NtfX1cXG90aW1lcyBcXG1hdGhybXtwcn1fe1xcbWF0aGNhbHtBfX1eKlxcbWF0aGNhbHtFfV57ZCxxKzF9X3tcXG1hdGhjYWx7QX0oXFxDKX1cXG90aW1lc1xcbWF0aHJte3ByfV97U19cXGluZnR5fV4qXFxtYXRoY2Fse0V9XntyKzF9X3tTX1xcaW5mdHl9Il0sWzIsNCwiXFxtYXRocm17cHJ9X3tcXG1hdGhjYWx7QX19XipcXG1hdGhjYWx7UH1eeyhuKSxcXG1hdGhybXtzbX19XFxvdGltZXMgXFxtYXRocm17cHJ9X3tcXG1hdGhjYWx7QX19XipcXG1hdGhjYWx7RX1ee2QscSsxfV97XFxtYXRoY2Fse0F9KFxcQyl9XFxvdGltZXNcXG1hdGhybXtwcn1fe1NfXFxpbmZ0eX1eKlxcbWF0aGNhbHtFfV57cn1fe1NfXFxpbmZ0eX0iXSxbMSwwLCJcXHZkb3RzIl0sWzIsMCwiXFx2ZG90cyJdLFsxLDYsIlxcdmRvdHMiXSxbMiw2LCJcXHZkb3RzIl0sWzMsMiwiXFxjZG90cyJdLFszLDQsIlxcY2RvdHMiXSxbMCwyLCJcXGNkb3RzIl0sWzAsNCwiXFxjZG90cyJdLFsxLDAsImRfdiJdLFsxLDMsImRfaCJdLFszLDIsImRfdiJdLFswLDIsImRfaCJdLFswLDQsImRfdiJdLFsyLDUsImRfdiJdLFszLDksImRfaCJdLFsyLDgsImRfaCJdLFs3LDMsImRfdiJdLFs2LDEsImRfdiJdLFsxMCwwLCJkX2giXSxbMTEsMSwiZF9oIl1d
\[\begin{tikzcd}
	& \vdots & \vdots & \\
	\\
	\cdots & {\mathrm{pr}_{\mathcal{A}}^*\mathcal{P}^{(n),\mathrm{sm}}\otimes \mathrm{pr}_{\mathcal{A}}^*\mathcal{E}^{d,q}_{\mathcal{A}(\C)}\otimes\mathrm{pr}_{S_\infty}^*\mathcal{E}^{r+1}_{S_\infty}} & {\mathrm{pr}_{\mathcal{A}}^*\mathcal{P}^{(n),\mathrm{sm}}\otimes \mathrm{pr}_{\mathcal{A}}^*\mathcal{E}^{d,q+1}_{\mathcal{A}(\C)}\otimes\mathrm{pr}_{S_\infty}^*\mathcal{E}^{r+1}_{S_\infty}} & \cdots \\
	\\
	\cdots & {\mathrm{pr}_{\mathcal{A}}^*\mathcal{P}^{(n),\mathrm{sm}}\otimes \mathrm{pr}_{\mathcal{A}}^*\mathcal{E}^{d,q}_{\mathcal{A}(\C)}\otimes\mathrm{pr}_{S_\infty}^*\mathcal{E}^{r}_{S_\infty}} & {\mathrm{pr}_{\mathcal{A}}^*\mathcal{P}^{(n),\mathrm{sm}}\otimes \mathrm{pr}_{\mathcal{A}}^*\mathcal{E}^{d,q+1}_{\mathcal{A}(\C)}\otimes\mathrm{pr}_{S_\infty}^*\mathcal{E}^{r}_{S_\infty}} & \cdots \\
	\\
	& \vdots & \vdots
	\arrow["{d_h}", from=3-1, to=3-2]
	\arrow["{d_v}", from=3-2, to=1-2]
	\arrow["{d_h}", from=3-2, to=3-3]
	\arrow["{d_v}", from=3-3, to=1-3]
	\arrow["{d_h}", from=3-3, to=3-4]
	\arrow["{d_h}", from=5-1, to=5-2]
	\arrow["{d_v}", from=5-2, to=3-2]
	\arrow["{d_h}", from=5-2, to=5-3]
	\arrow["{d_v}", from=5-3, to=3-3]
	\arrow["{d_h}", from=5-3, to=5-4]
	\arrow["{d_v}", from=7-2, to=5-2]
	\arrow["{d_v}", from=7-3, to=5-3]
\end{tikzcd}\]
The unsigned operators in the two directions commute:
$\nabla_{\mathcal A}''$ is pulled back from $\mathcal A(\C)$,
so its coefficients are independent of $S_\infty$.
Consequently, on $C^{q,r}$,
\[
d_hd_v+d_vd_h
=
(-1)^q d_h d_{S_\infty}
+
(-1)^{q+1}d_{S_\infty}d_h
=
0.
\]
The displayed squares therefore anticommute.
Together with $d_h^2=d_v^2=0$, this defines the total differential
$\nabla'':=d_h+d_v$.
The coefficient linearization commutes with $d_h$ by
Theorem~\ref{thm: compare P_n and TSym with connection},
and $d_v$ commutes with $\Gamma$ by naturality of the
de Rham differential.
Hence the double complex and its total differential are
$\Gamma$-equivariant.

For fixed $q$, the relative Poincaré lemma applies to the
smooth coefficient bundle
$\mathcal P^{(m),\mathrm{sm}}\otimes
\mathcal E_{\mathcal A(\C)}^{d,q}$.
Indeed, in a local smooth frame on $\mathcal A(\C)$,
the contraction in the $S_\infty$-direction acts componentwise
and leaves the coefficient frame fixed.
It follows that
\[
\mathcal H^r(C^{q,\bullet},d_v)
\cong
\begin{cases}
\mathrm{pr}_{\mathcal A}^{-1}
\left(
\mathcal P^{(m),\mathrm{sm}}
\otimes
\mathcal E_{\mathcal A(\C)}^{d,q}
\right),
& r=0,\\[4pt]
0,
& r>0.
\end{cases}
\]
The vertical-cohomology spectral sequence of the bounded
double complex therefore has only its row $r=0$ remaining.
The differential on that row is the inverse image of
$\nabla_{\mathcal A}''$.
Thus the natural map from the pulled-back Dolbeault complex
to $\operatorname{Tot}C^{\bullet,\bullet}$ is a
quasi-isomorphism.
By Definition~\ref{def_1}, the mixed forms decompose as
\[
\mathcal E_{\mathcal A(\C)\times S_\infty}^{d,k}
\cong
\bigoplus_{q+r=k}
\mathrm{pr}_{\mathcal A}^{*}
\mathcal E_{\mathcal A(\C)}^{d,q}
\otimes
\mathrm{pr}_{S_\infty}^{*}
\mathcal E_{S_\infty}^{r}.
\]
Tensoring this decomposition with the coefficient bundle gives
\[
\operatorname{Tot}^k C^{\bullet,\bullet}
\cong
\mathrm{pr}_{\mathcal A}^{*}
\mathcal P^{(m),\mathrm{sm}}
\otimes
\mathcal E_{\mathcal A(\C)\times S_\infty}^{d,k}
=
\mathrm{pr}_{\mathcal A}^{*}
\mathcal P^{(n),\mathrm{sm}}
\otimes
\mathcal E_{\mathcal A(\C)\times S_\infty}^{d,k}.
\]
Under this identification, the total differential is precisely
the mixed operator $\nabla''$.
Composing with the inverse image of the ordinary Dolbeault
resolution gives the asserted quasi-isomorphism on the product.

We next restrict to
$\mathcal U_{\mathcal D}(\C)\times S_\infty$.
The projection and restriction maps form the Cartesian square
\[
\begin{tikzcd}[column sep=4.5em, row sep=3em]
\mathcal U_{\mathcal D}(\C)\times S_\infty
  \arrow[r,"\mathrm{pr}_{\mathcal U_{\mathcal D}}"]
  \arrow[d,"j\times\mathrm{id}_{S_\infty}"']
&
\mathcal U_{\mathcal D}(\C)
  \arrow[d,"j"]
\\
\mathcal A(\C)\times S_\infty
  \arrow[r,"\mathrm{pr}_{\mathcal A}"']
&
\mathcal A(\C).
\end{tikzcd}
\]
In particular,
$(j\times\mathrm{id}_{S_\infty})^{-1}
\mathrm{pr}_{\mathcal A}^{-1}
=
\mathrm{pr}_{\mathcal U_{\mathcal D}}^{-1}j^{-1}$.
Since inverse image is exact, restriction gives the
$\Gamma$-equivariant quasi-isomorphism
\[
\begin{aligned}
&
\left(
\mathrm{pr}_{\mathcal U_{\mathcal D}}^{-1}
j^{-1}
\bigl(
\mathcal P^{(m),\mathrm{an}}
\otimes\Omega_{\mathcal A(\C)}^d
\bigr)
\right)[0]
\xlongrightarrow{\sim}
\left(
(j\times\mathrm{id}_{S_\infty})^{-1}
\left(
\mathrm{pr}_{\mathcal A}^{*}
\mathcal P^{(m),\mathrm{sm}}
\otimes
\mathcal E_{\mathcal A(\C)\times S_\infty}^{d,\bullet}
\right),
\nabla''
\right).
\end{aligned}
\]
Let
$q:\mathcal U_{\mathcal D}(\C)\times S_\infty
\to\mathcal U_{\mathcal D}(\C)\times_\Gamma S_\infty$
be the quotient map.
Proposition~\ref{prop: equiv coh via Borel} identifies
equivariant cohomology with the cohomology of the descended
coefficient sheaf:
\[
\begin{aligned}
&H^i\left(
\mathcal U_{\mathcal D}(\C),\Gamma;
j^{-1}\bigl(
\mathcal P^{(m),\mathrm{an}}
\otimes\Omega_{\mathcal A(\C)}^d
\bigr)
\right)
\cong
H^i\left(
\mathcal U_{\mathcal D}(\C)\times_\Gamma S_\infty,
q_*^\Gamma
\mathrm{pr}_{\mathcal U_{\mathcal D}}^{-1}
j^{-1}\bigl(
\mathcal P^{(m),\mathrm{an}}
\otimes\Omega_{\mathcal A(\C)}^d
\bigr)
\right).
\end{aligned}
\]
The restricted resolution is $\Gamma$-equivariant.
Since the action is free and properly discontinuous,
$q_*^\Gamma$ is exact, as in the proof of that proposition,
and the resolution descends to the quotient.
Each term of the descended complex is the sheaf of smooth
sections of a vector bundle.
Partitions of unity on the quotient make these sheaves fine,
so the hypercohomology of the descended resolution is computed
by its complex of global sections.
By the definition of $q_*^\Gamma$, those global sections are
exactly the $\Gamma$-invariant sections upstairs.
This gives
\begin{equation*}
  \scalebox{0.97}{$\displaystyle
H^i\left(
  \mathcal U_{\mathcal D}(\C),
  \Gamma;
  j^{-1}
  \bigl(
    \mathcal P^{(m),\mathrm{an}}
    \otimes
    \Omega_{\mathcal A(\C)}^d
  \bigr)
\right)
\cong
H^i\left(
  \left(
    H^0\left(
      \mathcal U_{\mathcal D}(\C)\times S_\infty,
      (j\times\mathrm{id}_{S_\infty})^{-1}
      \left(
        \mathrm{pr}_{\mathcal A}^{*}
        \mathcal P^{(m),\mathrm{sm}}
        \otimes
        \mathcal E_{\mathcal A(\C)\times S_\infty}^{d,\bullet}
      \right)
    \right)^\Gamma,
    \nabla''
  \right)
\right).
  $}
\end{equation*}

All constructions commute with the transition morphisms
$\mathcal P^{(m+1),\mathrm{sm}}\to
\mathcal P^{(m),\mathrm{sm}}$.
For the Dolbeault differential this follows from
Corollary~\ref{cor_7}; the relative de Rham differential acts
only in the $S_\infty$-direction, and restriction and descent
are functorial.
The finite-level cohomological identifications therefore form
an isomorphism of inverse systems.
Moreover, Theorem~\ref{thm: compare P_n and TSym with connection}
identifies the transition maps with projection onto the
summands of symmetric degree at most $m$.
They consequently induce degreewise surjective maps on the
complexes of $\Gamma$-invariant global forms.

This surjectivity allows compatible cohomology classes to be
represented by compatible closed forms.
Indeed, let $t_m$ denote the transition from level $m+1$
to level $m$, and suppose that a closed representative
$\omega_m$ has already been chosen.
Choose a closed representative $\beta_{m+1}$ of the next
class.
Compatibility of the classes means that
$t_m\beta_{m+1}-\omega_m=\nabla''\eta_m$
for an invariant form $\eta_m$ of degree $(d,i-1)$.
Lift $\eta_m$ to an invariant form
$\widetilde\eta_{m+1}$ at level $m+1$ and set
\[
\omega_{m+1}
:=
\beta_{m+1}-\nabla''\widetilde\eta_{m+1}.
\]
Then $\omega_{m+1}$ is closed, represents the chosen class,
and satisfies $t_m\omega_{m+1}=\omega_m$.
For $i=0$, compatibility already holds for the closed
representatives, since there are no boundaries in degree zero.
Two compatible systems give the same element of the inverse
limit of cohomology groups exactly when their difference is
exact in the invariant-section complex at every finite level.
Using \eqref{eq: completed analytic inverse limit} gives
the completed description.
\end{proof}

\begin{defn}[Mixed currents]\rm
Since $\dim_{\R}S_\infty=gn^2$, the real dimension of
$\mathcal A(\C)\times S_\infty$ is $2d+gn^2$.
For $0\leq p\leq d$ and $0\leq q\leq d+gn^2$, the sheaf
$\mathcal D^{p,q}_{\mathcal A(\C)\times S_\infty}$
of $(p,q)$-currents is defined on an open subset
$W\subset\mathcal A(\C)\times S_\infty$ by
\[
H^0\left(
W,\mathcal D^{p,q}_{\mathcal A(\C)\times S_\infty}
\right)
:=
\left(
\mathcal E_c^{d-p,d+gn^2-q}(W)
\right)^*,
\]
where $^*$ denotes the continuous complex-linear dual of
the space of compactly supported smooth forms, equipped
with its usual test-function topology.
Restriction of currents is defined by extension by zero
of compactly supported test forms.
The differentials on currents are induced by duality from
the differentials on test forms, with the signs prescribed
by Stokes' formula and the totalization convention above.
\end{defn}

\begin{rem}\label{rem_3}\rm
Integration defines a morphism from smooth forms to currents:
\[
\iota:
\mathcal E^{p,q}_{\mathcal A(\C)\times S_\infty}
\longrightarrow
\mathcal D^{p,q}_{\mathcal A(\C)\times S_\infty},
\qquad
\omega\longmapsto
\left(
\eta\longmapsto
\int_{\mathcal A(\C)\times S_\infty}\eta\wedge\omega
\right),
\]
where the integral is taken over the support of the test form.
Let $d_0''$ denote the mixed differential for the trivial
coefficient bundle, namely the totalization of the ordinary
Dolbeault and de Rham differentials.
The duality convention makes $\iota$ a morphism of complexes,
and it induces a quasi-isomorphism
\[
\left(
\mathcal E^{d,\bullet}_{\mathcal A(\C)\times S_\infty},
d_0''
\right)
\xlongrightarrow{\sim}
\left(
\mathcal D^{d,\bullet}_{\mathcal A(\C)\times S_\infty},
d_0''
\right).
\]

The integration morphism is also $\Gamma$-equivariant.
The action on $\mathcal A(\C)$ preserves its complex
orientation, while the action on $S_\infty$ is induced
by the connected group
$G_\infty=\prod_{i=1}^g\GL_n(\C)$ and preserves its
orientation.
Equivariance of $\iota$ therefore follows from change
of variables under the diagonal action.
\end{rem}

We extend $\nabla''$ to currents with values in
$\mathrm{pr}_{\mathcal A}^{*}\mathcal P^{(m),\mathrm{sm}}$
by distributional differentiation.
In a local frame, its smooth coefficient matrices act
on currents by multiplication, and its differential
operators act by duality.
This construction is independent of the frame and
commutes with integration.

\begin{cor}\label{cor_8}
For every $n\geq0$, there is a natural $\Gamma$-equivariant
quasi-isomorphism
\[
\left(
  \mathrm{pr}_{\mathcal A}^{-1}
  \bigl(
    \mathcal P^{(n),\mathrm{an}}
    \otimes\Omega_{\mathcal A(\C)}^d
  \bigr)
\right)[0]
\xlongrightarrow{\sim}
\left(
  \mathrm{pr}_{\mathcal A}^{*}
  \mathcal P^{(n),\mathrm{sm}}
  \otimes
  \mathcal D_{\mathcal A(\C)\times S_\infty}^{d,\bullet},
  \nabla''
\right),
\]
compatible with the transition morphisms in $n$. Under the inverse-limit identification of
Theorem~\ref{thm: Dolbeault realization}, for every $i\geq0$, every class in
\(
H^i\left(
  \mathcal U_{\mathcal D}(\C),\Gamma;
  j^{-1}\bigl(
    \widehat{\mathcal P}^{\mathrm{an}}
    \otimes\Omega_{\mathcal A(\C)}^d
  \bigr)
\right)
\)
can be represented by a compatible system of $\Gamma$-equivariant
$\nabla''$-closed $(d,i)$-currents on
$\mathcal U_{\mathcal D}(\C)\times S_\infty$ with values in
$\mathrm{pr}_{\mathcal A}^{*}\mathcal P^{(n),\mathrm{sm}}$,
modulo compatible systems of $\nabla''$-exact currents in the
$\Gamma$-invariant complex at each level.
\end{cor}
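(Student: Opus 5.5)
The plan is to compose the equivariant Dolbeault resolution of Theorem~\ref{thm: Dolbeault realization} with the integration morphism $\iota$ of Remark~\ref{rem_3}, twisted by the coefficient bundle $\mathrm{pr}_{\mathcal A}^{*}\mathcal P^{(n),\mathrm{sm}}$. The first thing to check is that $\iota$ intertwines $\nabla''$ on forms with the distributional $\nabla''$ on currents. Work in a local smooth frame of $\mathcal P^{(n),\mathrm{sm}}$; by Theorem~\ref{thm: compare P_n and TSym with connection} one may take the global frame given by a basis of $\bigoplus_{k\le n}\mathrm{TSym}^k(\mathcal H_{\C}(\overline\Sigma_L))$. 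In this frame,
\[
\nabla''=d_0''+\nu^{0,1}_{\mathcal A},
\]
where $\nu^{0,1}_{\mathcal A}$ acts through a constant nilpotent matrix of translation-invariant $(0,1)$-forms. Then $\iota$ commutes with $d_0''$ by Remark~\ref{rem_3}, and it commutes with multiplication by smooth forms by the definition of that action on currents. The resulting map of complexes
\[
\bigl(\mathrm{pr}_{\mathcal A}^{*}\mathcal P^{(n),\mathrm{sm}}\otimes\mathcal E^{d,\bullet},\nabla''\bigr)\to\bigl(\mathrm{pr}_{\mathcal A}^{*}\mathcal P^{(n),\mathrm{sm}}\otimes\mathcal D^{d,\bullet},\nabla''\bigr)
\]
is $\Gamma$-equivariant, because $\iota$ is equivariant (Remark~\ref{rem_3}) and $\alpha_n$ is equivariant. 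It is also compatible with the transition maps, which are projections onto the lower symmetric degrees in this frame.

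Next I will show this map is a quasi-isomorphism. Filter both complexes by symmetric degree. Multiplication by $\nu^{0,1}_{\mathcal A}$ raises the degree by one, so on the associated graded only $d_0''$ survives, applied componentwise to trivial coefficients. On each graded piece Remark~\ref{rem_3} gives a quasi-isomorphism. The filtration is finite, so a five-lemma induction on $k$, using the short exact sequences of truncations, gives the quasi-isomorphism. Composing with Theorem~\ref{thm: Dolbeault realization} then yields the first assertion.

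For the cohomological statement I will repeat the descent argument of Theorem~\ref{thm: Dolbeault realization}. Restrict to $\mathcal U_{\mathcal D}(\C)\times S_\infty$ and push down by $q_*^\Gamma$, which is exact since the action is free. The main obstacle is that the descended current sheaves must be acyclic, so that hypercohomology is computed by $\Gamma$-invariant global currents. I would argue that currents are modules over smooth functions and hence fine: partitions of unity on the quotient pull back to $\Gamma$-invariant partitions of unity upstairs. Equivalently, the invariant current complex is quasi-isomorphic to the invariant smooth-form complex through the two fine resolutions of the same sheaf. Finally, compatible representatives are obtained by the same lifting argument as in Theorem~\ref{thm: Dolbeault realization}. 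Surjectivity of the transition maps on invariant currents holds because they are coordinate projections in the global frame. Combined with the inverse-limit identification \eqref{eq: completed analytic inverse limit}, this describes classes modulo compatible $\nabla''$-exact systems.
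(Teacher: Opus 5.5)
Your proposal is correct and follows essentially the paper's proof: compose the Dolbeault resolution of Theorem~\ref{thm: Dolbeault realization} with the integration comparison of Remark~\ref{rem_3}, then rerun that theorem's fine-sheaf descent to $\Gamma$-invariant global sections and its compatible-lifting argument. The only variation is in the coefficient-twisted comparison. You work in the global $C^\infty$ frame coming from $\alpha_n$, where $\nabla''=d_0''+\nu^{0,1}_{\mathcal A}$, so you need a finite filtration by symmetric degree and a five-lemma induction. The paper instead works in local holomorphic frames, in which $\nabla''$ is already $d_0''$ componentwise, so the comparison is immediately a finite direct sum of scalar ones.
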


\begin{proof}
By Remark~\ref{rem_3}, integration gives a $\Gamma$-equivariant
quasi-isomorphism from smooth forms to currents.
The same holds with coefficients in
$\mathrm{pr}_{\mathcal A}^{*}\mathcal P^{(n),\mathrm{sm}}$:
in local holomorphic frames pulled back from $\mathcal A(\C)$,
the comparison reduces to a finite direct sum of scalar comparisons.
Composing with the resolution in
Theorem~\ref{thm: Dolbeault realization} gives the first assertion.
The current sheaves descend to fine sheaves on the Borel quotient,
so the global-sections argument of that theorem applies.
Compatibility with the transition morphisms in $n$ then gives
the completed description.
\end{proof}

The equivariant Eisenstein--Kronecker classes may therefore
be studied through this current model.

\subsection{Fourier construction of the Eisenstein--Kronecker current}

To construct an explicit representative in the equivariant current model
established above, we first introduce the Fourier data and
Eisenstein--Kronecker series associated with the varying Hermitian
structures. For $g\in S_\infty$, we use the same notation $H_g$ for the
positive definite Hermitian form and its matrix in the chosen
coordinates, so that $H_g(v,w)=v^*H_gw$. Write
$\|v\|_g^2:=H_g(v,v)$ and
$\la v,w\ra_g:=\operatorname{Im}H_g(v,w)$. The latter is a
non-degenerate alternating $\R$-bilinear form, with respect to which
the dual lattice is
\[
  \Lambda_g^*
  :=
  \bigl\{
    l\in\C^d:\la l,\Lambda\ra_g\subset\Z
  \bigr\}.
\]
The $\Gamma$-equivariance of the Hermitian forms gives
\[
  \la\gamma_\infty v,\gamma_\infty w\ra_{\gamma\cdot g}
  =
  \la v,w\ra_g.
\]
Since $\gamma_\infty\Lambda=\Lambda$, it follows that
$\gamma_\infty\Lambda_g^*=\Lambda_{\gamma\cdot g}^*$.
For the standard Hermitian form $H_0$, we write
$\la\cdot,\cdot\ra:=\la\cdot,\cdot\ra_{H_0}$ and
$\Lambda^*:=\Lambda_{H_0}^*$.
For $l_g\in\Lambda_g^*$, the functional
$w\mapsto\la l_g,w\ra_g$ is $\Z$-valued on $\Lambda$, and hence defines
the smooth character
\[
  \chi_{l_g}:\C^d/\Lambda\longrightarrow S^1\subset\C^\times,
  \qquad
  [w]\longmapsto
  \exp\bigl(2\pi i\la l_g,w\ra_g\bigr).
\]
Finally, $\operatorname{covol}_g(\Lambda)$ denotes the covolume of
$\Lambda$ for the Riemannian metric induced by $H_g$.

Under the identification
$\Lambda\cong H_1(\mathcal A(\C),\Z)$, the alternating form associated
with $H_0$ agrees with the polarization pairing after removing the
Tate twist. More precisely, with the convention
$\Z(1)=2\pi i\,\Z$, the Tate-twisted polarization pairing satisfies
\[
  \la v,w\ra_{\mathrm{pol}}
  =
  2\pi i\,\la v,w\ra,
  \qquad v,w\in\Lambda.
\]
Thus identifying $\Z(1)$ with $\Z$ by $2\pi i\,m\mapsto m$ recovers
the restriction of $\la\cdot,\cdot\ra$ to $\Lambda$; compare
\cite[Section~2.1.1]{Levin_2000}.
This normalization accounts for the factors $2\pi i$ in the Fourier
characters and in the differential forms and residue currents used
below. For general $g$, the corresponding dual lattice and characters
are defined using the transported alternating form
$\la\cdot,\cdot\ra_g$.

The analytic functions entering the Fourier construction are the
following Eisenstein--Kronecker series, considered in the framework
of \cite[Section~3.3]{KS_2025}.

\begin{defn}[Eisenstein--Kronecker series]\rm
Let $\mu\in\mathbb N^d$, let $\Lambda\subset\C^d$ be a full lattice,
and let $z,w\in\C^d$. Let $H=H_g$ be a positive definite Hermitian
form as above, with associated alternating form
$\la\cdot,\cdot\ra$. We define
\[
  K^\mu(H,z,w,s,\Lambda)
  :=
  \sum_{\lambda\in\Lambda\setminus\{-z\}}
  \frac{
    (\overline z+\overline\lambda)^\mu
  }{
    |\!|z+\lambda|\!|_H^{2s}
  }
  e^{2\pi i\la\lambda,w\ra},
\]
where $|\!|z|\!|_H^2:=H(z,z)$ and the term $\lambda=-z$ is omitted
whenever $z\in-\Lambda$.
\end{defn}

These series are special cases of the generalized Epstein zeta
functions considered in \cite[Chapter~I, Section~5]{Siegel_1980}.
They converge absolutely for
$\operatorname{Re}(s)>d+\frac{|\mu|}{2}$, locally uniformly on compact
subsets of this domain with $z\notin-\Lambda$.
Indeed, the exponential factor has absolute value one. For $z$ in a
fixed compact set and all but finitely many $\lambda\in\Lambda$,
the norms $|\!|z+\lambda|\!|_H$ and $|\!|\lambda|\!|_H$ are uniformly
comparable, while
$|(\overline z+\overline\lambda)^\mu|
\ll(1+|\!|\lambda|\!|_H)^{|\mu|}$.
The tail is therefore bounded by a constant multiple of
\(
  \sum_{\lambda\in\Lambda\setminus\{0\}}
  |\!|\lambda|\!|_H^{|\mu|-2\operatorname{Re}(s)}.
\)
Since $\Lambda$ has real rank $2d$, this converges when
$2\operatorname{Re}(s)-|\mu|>2d$, and the same estimate gives local
uniform convergence. At $z\in-\Lambda$, absolute convergence holds
with the omitted-term convention in the definition.

\begin{prop}[Meromorphic continuation and functional equation]
\label{prop:functional equation of Eisenstein-Kronecker series}
The completed Eisenstein--Kronecker series
$\Gamma(s)K^\mu(H,z,w,s,\Lambda)/\pi^s$ admits a meromorphic
continuation to $\C$, whose only possible poles are simple poles
at $s=0$ and $s=d$. Define
\[
  \delta_{\mu,z}
  :=
  \begin{cases}
    1,& z\in\Lambda\text{ and }|\mu|=0,\\
    0,& \text{otherwise},
  \end{cases}
  \qquad
  \delta_{\mu,w}
  :=
  \begin{cases}
    1,& w\in\Lambda_H^*\text{ and }|\mu|=0,\\
    0,& \text{otherwise}.
  \end{cases}
\]
The residues are
\[
  \operatorname*{Res}_{s=0}
  \frac{\Gamma(s)K^\mu(H,z,w,s,\Lambda)}{\pi^s}
  =
  -\delta_{\mu,z}e^{2\pi i\la-z,w\ra}
\]
and
\[
  \operatorname*{Res}_{s=d}
  \frac{\Gamma(s)K^\mu(H,z,w,s,\Lambda)}{\pi^s}
  =
  \frac{\delta_{\mu,w}}{\mathrm{covol}_H(\Lambda)}.
\]
Moreover, the functional equation is
\[
  \frac{\Gamma(s)K^\mu(H,z,w,s,\Lambda)}{\pi^s}
  =
  \frac{e^{2\pi i\la z,w\ra}}{\mathrm{covol}_H(\Lambda)}
  \frac{
    \Gamma(d+|\mu|-s)
    K^\mu(H,w,z,d+|\mu|-s,\Lambda_H^*)
  }{
    \pi^{d+|\mu|-s}
  }.
\]
\end{prop}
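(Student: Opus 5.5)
The plan is the classical Riemann–Hecke argument: a Mellin transform of theta series combined with Poisson summation for the lattice $\Lambda$ relative to the form $H$. For $\operatorname{Re}(s)$ large, I use $\Gamma(s)\pi^{-s}|\!|v|\!|_H^{-2s}=\int_0^\infty e^{-\pi t|\!|v|\!|_H^2}t^{s}\,\frac{dt}{t}$ termwise. Absolute convergence, established above, justifies exchanging sum and integral, and gives
\[
\frac{\Gamma(s)K^\mu(H,z,w,s,\Lambda)}{\pi^s}=\int_0^\infty \Theta^\mu(t;z,w)\,t^{s}\frac{dt}{t},
\qquad
\Theta^\mu(t;z,w):=\sum_{\lambda\in\Lambda}(\overline z+\overline\lambda)^\mu e^{-\pi t|\!|z+\lambda|\!|_H^2}e^{2\pi i\la\lambda,w\ra},
\]
where the term $\lambda=-z$ (present only if $z\in-\Lambda$) is handled separately. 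That term contributes $0$ unless $|\mu|=0$, in which case it is a constant $e^{2\pi i\la -z,w\ra}$. I then split the integral at $t=1$. On $[1,\infty)$ the integrand decays exponentially once the omitted term is removed, so that piece is entire in $s$.

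The key step is a theta transformation formula for $t\mapsto 1/t$. I apply Poisson summation on $\Lambda$ to the function $v\mapsto(\overline{z+v})^\mu e^{-\pi t|\!|z+v|\!|_H^2}e^{2\pi i\la v,w\ra}$, with Fourier duality given by the pairing $e^{2\pi i\la\cdot,\cdot\ra}$. The dual lattice is then $\Lambda_H^*$, and the Poisson formula acquires the factor $1/\mathrm{covol}_H(\Lambda)$. I would normalize the Gaussian so that, using that $\la\cdot,\cdot\ra=\operatorname{Im}H$ is a symplectic form compatible with $H$ (the relevant real-linear map is $v\mapsto iv$ up to $H$-adjoint), $e^{-\pi|\!|v|\!|_H^2}$ is self-dual for this pairing.

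The antiholomorphic polynomial factor $\overline v^{\,\mu}$ is the delicate point. It is harmonic with respect to the Laplacian of $H$, being antiholomorphic, so the Hecke–Bochner identity applies: the Fourier transform of $\overline v^{\,\mu}e^{-\pi t|\!|v|\!|^2}$ is $c\, t^{-d-|\mu|}\,\overline{u}^{\,\mu}e^{-\pi|\!|u|\!|^2/t}$, with $c$ a constant (a power of $\pm i$) that I expect to be absorbed by the conventions. The shifts by $z$ and the character in $w$ turn into the character $e^{2\pi i\la\cdot,z\ra}$ and the shift by $w$. Together with $e^{2\pi i\la z,w\ra}$, the result is
\[
\Theta^\mu_\Lambda(t;z,w)=\frac{e^{2\pi i\la z,w\ra}}{\mathrm{covol}_H(\Lambda)}\,t^{-d-|\mu|}\,\Theta^\mu_{\Lambda_H^*}(1/t;w,z).
\]
This is the main obstacle. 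I would first verify it for $d=1$ and $\mu=(k)$ by explicit differentiation in $\overline z$: apply $(\partial_{\bar z})^k$ to the $|\mu|=0$ identity, which lowers the power of $t$ by $k$. I would then treat general $\mu$ by applying the operators $\partial_{\bar z_i}$ in this way, checking signs against the residue claim.

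Substituting $t\mapsto1/t$ in the integral over $(0,1]$ converts it into the integral over $[1,\infty)$ for $K^\mu(H,w,z,\cdot,\Lambda_H^*)$ with exponent $d+|\mu|-s$. The two omitted terms produce elementary integrals. The term $-\delta_{\mu,z}e^{2\pi i\la -z,w\ra}\int_0^1 t^{s-1}dt$ gives $-\delta_{\mu,z}e^{2\pi i\la -z,w\ra}/s$. The dual-side term, present when $w\in\Lambda_H^*$ and $|\mu|=0$, gives $\frac{\delta_{\mu,w}}{\mathrm{covol}_H(\Lambda)}\cdot\frac{1}{s-d}$. The remaining sum of two entire integrals yields the meromorphic continuation, with poles only at $s=0$ and $s=d$ and the stated residues. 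The resulting expression is visibly symmetric under $(\Lambda,z,w,s)\leftrightarrow(\Lambda_H^*,w,z,d+|\mu|-s)$ up to the factor $e^{2\pi i\la z,w\ra}/\mathrm{covol}_H(\Lambda)$. This uses $(\Lambda_H^*)_H^*=\Lambda$ and $\mathrm{covol}_H(\Lambda)\mathrm{covol}_H(\Lambda_H^*)=1$, and the functional equation follows.
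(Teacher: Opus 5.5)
Your route (Mellin transform of the theta series, splitting at $t=1$, and theta inversion by Poisson summation against $e^{2\pi i\langle\cdot,\cdot\rangle}$ with dual lattice $\Lambda_H^*$) is the paper's route. It does give the continuation and both residues. The gap is the constant in the theta inversion, which carries the whole content of the functional equation. You leave the Hecke--Bochner constant open (``absorbed by the conventions'') and then fix the phase as $e^{2\pi i\langle z,w\rangle}$; neither survives a check. Write $\theta_t^\mu(H,z,w,\Lambda)$ for your $\Theta^\mu_\Lambda(t;z,w)$.

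\emph{Phase.} For $\lambda_0\in\Lambda$ one has $\theta_t^\mu(H,\lambda_0,w,\Lambda)=e^{-2\pi i\langle\lambda_0,w\rangle}\theta_t^\mu(H,0,w,\Lambda)$. On the other side, $\theta_{1/t}^\mu(H,w,\lambda_0,\Lambda_H^*)=\theta_{1/t}^\mu(H,w,0,\Lambda_H^*)$ because $\langle l,\lambda_0\rangle\in\mathbb Z$. So the phase must be $e^{2\pi i\langle w,z\rangle}$, whatever the other conventions. Your own residue at $s=d$ needs exactly this: the dual omitted term is $e^{2\pi i\langle -w,z\rangle}$, and with your phase the residue would be $e^{4\pi i\langle z,w\rangle}\delta_{\mu,w}/\mathrm{covol}_H(\Lambda)$.

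\emph{Sign.} The constant is not absorbed. With $H(v,w)=v^*Hw$ one has $\langle l,v\rangle=\operatorname{Re}H(il,v)$. So for $\hat f(l)=\int f(v)e^{-2\pi i\langle l,v\rangle}\,dv$, the Hecke--Bochner factor $i^{-|\mu|}$ meets $(\overline{il})^{\mu}=(-i)^{|\mu|}\,\overline{l}^{\,\mu}$, and the product is $(-1)^{|\mu|}$.

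Your differentiation check is the cleanest way to see this, but it must be done in $z$, not $\bar z$; $\partial_{\bar z}$ produces holomorphic factors $z+\lambda$. Put $D_k:=\sum_j((H^t)^{-1})_{kj}\partial_{z_j}$. Then
\[
D^\mu e^{-\pi t\|z+\lambda\|_H^2}=(-\pi t)^{|\mu|}(\overline z+\overline\lambda)^\mu e^{-\pi t\|z+\lambda\|_H^2},
\qquad
D^\mu e^{2\pi i\langle w+l,z\rangle}=\pi^{|\mu|}(\overline w+\overline l)^\mu e^{2\pi i\langle w+l,z\rangle}.
\]
Apply $D^\mu$ to the Gaussian case
\[
\theta^0_t(H,z,w,\Lambda)=\mathrm{covol}_H(\Lambda)^{-1}\,t^{-d}\sum_{l\in\Lambda_H^*}e^{-\pi\|w+l\|_H^2/t}\,e^{2\pi i\langle w+l,z\rangle}.
\]
This gives
\[
  \theta^\mu_t(H,z,w,\Lambda)
  =
  (-1)^{|\mu|}\,
  \frac{e^{2\pi i\langle w,z\rangle}}{\mathrm{covol}_H(\Lambda)}\,
  t^{-d-|\mu|}\,
  \theta^\mu_{1/t}(H,w,z,\Lambda_H^*).
\]
With this formula the rest of your argument goes through: $(\Lambda_H^*)_H^*=\Lambda$, $\mathrm{covol}_H(\Lambda)\,\mathrm{covol}_H(\Lambda_H^*)=1$, and the symmetry step all hold. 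It yields the continuation and the residues exactly as stated.

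However, the functional equation it proves carries the factor $(-1)^{|\mu|}e^{2\pi i\langle w,z\rangle}/\mathrm{covol}_H(\Lambda)$, not $e^{2\pi i\langle z,w\rangle}/\mathrm{covol}_H(\Lambda)$. So the displayed identity cannot be proved as written. The paper's sketch has the same defect: its theta inversion has $e^{2\pi i\langle w,z\rangle}$ and no sign, yet the stated equation has $e^{2\pi i\langle z,w\rangle}$. Your version matches the statement only because your inversion formula was chosen to fit it.
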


\begin{proof}
We follow the method of \cite[Proposition~3.13]{KS_2025}, based on
Poisson summation and the Mellin transform of the associated theta
function. For $t>0$, set
\[
  \theta_t^\mu(H,z,w,\Lambda)
  :=
  \sum_{\lambda\in\Lambda}
  (\overline z+\overline\lambda)^\mu
  e^{-\pi t|\!|z+\lambda|\!|_H^2}
  e^{2\pi i\la\lambda,w\ra}.
\]
The Gaussian factor gives absolute and local uniform convergence
on $\R_{>0}\times\C^d\times\C^d$.
With respect to the Riemannian measure induced by $H$, Poisson
summation uses the dual lattice $\Lambda_H^*$ determined by
$\la\cdot,\cdot\ra$ and contributes the factor
$\mathrm{covol}_H(\Lambda)^{-1}$. Applied to the
polynomial-weighted Gaussian, it gives the theta transformation
\[
  \theta_t^\mu(H,z,w,\Lambda)
  =
  t^{-d-|\mu|}
  \frac{e^{2\pi i\la w,z\ra}}{\mathrm{covol}_H(\Lambda)}
  \theta_{t^{-1}}^\mu(H,w,z,\Lambda_H^*).
\]

In the region of absolute convergence, Euler's integral gives
\[
  \int_0^\infty
  e^{-\pi t|\!|z+\lambda|\!|_H^2}t^s\frac{dt}{t}
  =
  \frac{\Gamma(s)}{\pi^s|\!|z+\lambda|\!|_H^{2s}}
  \qquad(\lambda\neq-z).
\]
Termwise integration therefore yields
\[
  \frac{\Gamma(s)K^\mu(H,z,w,s,\Lambda)}{\pi^s}
  =
  \int_0^\infty
  \left(
    \theta_t^\mu(H,z,w,\Lambda)
    -
    \delta_{\mu,z}e^{2\pi i\la-z,w\ra}
  \right)t^s\frac{dt}{t}.
\]
Split this integral at $t=1$. After removal of the constant term,
the theta series decays exponentially as $t\to\infty$, so the
integral over $[1,\infty)$ is entire in $s$. On $(0,1]$, apply the
theta transformation and substitute $t\mapsto t^{-1}$.
Removing the constant term of the dual theta series again leaves
an exponentially decaying integrand on $[1,\infty)$.
The only non-decaying contributions are
\(
  -\frac{\delta_{\mu,z}e^{2\pi i\la-z,w\ra}}{s}
  +
  \frac{\delta_{\mu,w}}{\mathrm{covol}_H(\Lambda)}
  \frac{1}{s-d}.
\)
Here the phase of the dual constant term cancels the prefactor
$e^{2\pi i\la w,z\ra}$ in the theta transformation.
These terms give the possible simple poles and their residues.
The same transformation interchanges the two Mellin integrals
under $s\mapsto d+|\mu|-s$ and
$(z,w,\Lambda)\mapsto(w,z,\Lambda_H^*)$.
\end{proof}

\begin{cor}\label{cor:special value EK series at zero}
The Eisenstein--Kronecker series itself is holomorphic at $s=0$ and
satisfies
\[
  K^\mu(H,z,w,0,\Lambda)
  =
  -\delta_{\mu,z}e^{2\pi i\la-z,w\ra}.
\]
Its only possible pole is at $s=d$, where
\[
  \operatorname*{Res}_{s=d}K^\mu(H,z,w,s,\Lambda)
  =
  \frac{\pi^d}{\Gamma(d)}
  \frac{\delta_{\mu,w}}{\mathrm{covol}_H(\Lambda)}.
\]
In particular, if $|\mu|>0$, both residues in
Proposition~\ref{prop:functional equation of Eisenstein-Kronecker series}
vanish.
\end{cor}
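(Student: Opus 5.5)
The plan is to read Corollary~\ref{cor:special value EK series at zero} off the completed function
\[
  \Lambda^\mu(s):=\frac{\Gamma(s)K^\mu(H,z,w,s,\Lambda)}{\pi^s},
\]
using the meromorphic continuation and residue formulas of Proposition~\ref{prop:functional equation of Eisenstein-Kronecker series}. First, I write
\[
  K^\mu(H,z,w,s,\Lambda)=\frac{\pi^s}{\Gamma(s)}\,\Lambda^\mu(s).
\]
The factor $\pi^s/\Gamma(s)$ is entire, because $1/\Gamma$ is entire. Hence $K^\mu$ continues meromorphically to $\C$. Its only possible poles lie among those of $\Lambda^\mu$, namely the simple poles at $s=0$ and $s=d$.

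At $s=0$, I use that $1/\Gamma(s)=s+O(s^2)$ has a simple zero. Multiplying it against the at most simple pole of $\Lambda^\mu$ gives a holomorphic function. Its value is
\[
  \pi^0\cdot\lim_{s\to0}s\,\Lambda^\mu(s)=\operatorname*{Res}_{s=0}\Lambda^\mu(s)=-\delta_{\mu,z}e^{2\pi i\la-z,w\ra}.
\]
This proves the first claim.

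At $s=d$, I note that $d\geq1$, so $\pi^s/\Gamma(s)$ is holomorphic and nonvanishing there, with value $\pi^d/\Gamma(d)$. The residue of $K^\mu$ at $s=d$ is therefore $\pi^d/\Gamma(d)$ times the residue of $\Lambda^\mu$ at $s=d$. This gives the stated formula, and it also shows that $s=d$ is the only remaining possible pole of $K^\mu$.

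For the final assertion, if $|\mu|>0$ then $\delta_{\mu,z}=\delta_{\mu,w}=0$ directly from their definitions, so both residues in Proposition~\ref{prop:functional equation of Eisenstein-Kronecker series} vanish. No real obstacle is expected. The only point needing care is that the meromorphic continuation of $K^\mu$ is defined as $\pi^s\Gamma(s)^{-1}$ times the continued completed function, and that this agrees with the original series on its region of absolute convergence. That agreement holds by construction.
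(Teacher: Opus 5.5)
Your proposal is correct and follows essentially the same argument as the paper. You divide the completed series by $\Gamma(s)/\pi^s$. The simple zero of $1/\Gamma$ at $s=0$ turns the residue of the completed series into the value of $K^\mu$ there, and the holomorphy and nonvanishing of $\pi^s/\Gamma(s)$ at $s=d$ rescale the residue by $\pi^d/\Gamma(d)$.
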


\begin{proof}
Since $\Gamma(s)=s^{-1}+O(1)$ at $s=0$, the residue of the completed
series gives the regularity and value of $K^\mu$ there.
At $s=d$, the factor $\Gamma(s)/\pi^s$ is holomorphic and non-zero,
so its value converts the residue of the completed series into
the displayed residue of $K^\mu$.
\end{proof}

For $H=H_g$, we have $\Lambda_H^*=\Lambda_g^*$ and
$\mathrm{covol}_H(\Lambda)=\operatorname{covol}_g(\Lambda)$.
These series, with their varying Hermitian structure, will occur
in the Fourier expansion of the Eisenstein--Kronecker current
constructed next.

% Normalized volume and Fourier completeness.

We now prepare the Fourier-mode homotopy used to construct the current.
For $g\in S_\infty$, let
$\omega_g:=\pi i\la\nu,\nu\ra_g
=2\pi i\la\nu^{1,0},\nu^{0,1}\ra_g$
be the Kähler form associated with $\la\cdot,\cdot\ra_g$.
In the chosen coordinates, it is given by
\[
  \omega_g
  =
  -\pi\sum_{i,j=1}^d
  (H_g)_{ji}\,dz_i\wedge d\overline{z}_j.
\]
The covolume satisfies
$\mathrm{covol}_g(\Lambda)
=(2\pi i)^{-d}\int_{\C^d/\Lambda}\omega_g^d/d!$.
We therefore define the translation-invariant fiberwise
$(d,d)$-form
\[
  \vol_g
  :=
  \frac{\omega_g^d}{d!\,\mathrm{covol}_g(\Lambda)},
  \qquad
  \int_{\mathcal A(\C)}\vol_g=(2\pi i)^d.
\]
Its coordinate expression is
\[
  \begin{aligned}
    \vol_g
    &=
    \frac{(-\pi)^d\det(H_g)}{\mathrm{covol}_g(\Lambda)}
    \bigwedge_{i=1}^d
    \bigl(dz_i\wedge d\overline{z}_i\bigr)
    =
    \frac{
      \pi^d(-1)^{d(d+1)/2}\det(H_g)
    }{
      \mathrm{covol}_g(\Lambda)
    }
    dz_1\wedge\cdots\wedge dz_d
    \wedge
    d\overline{z}_1\wedge\cdots\wedge d\overline{z}_d.
  \end{aligned}
\]
The second equality uses the sign $(-1)^{d(d-1)/2}$ obtained by
moving all the holomorphic differentials to the front.
For general $g$, the form $\omega_g$ need not be $2\pi i$-integral
with respect to $\Lambda$, since this would additionally require
$\la\Lambda,\Lambda\ra_g\subseteq\Z$.
The covolume normalization does not require this integrality condition
and gives the normalization needed for Fourier inversion; compare
\cite[Sections~3.1.3 and~3.2.1]{Levin_2000}.

\begin{prop}\label{prop_3}
For every $g\in S_\infty$, the Fourier characters satisfy
\[
  \sum_{l_g\in\Lambda_g^*}\chi_{l_g}\vol_g
  =
  (2\pi i)^d\delta_0
\]
in the sense of fiberwise $(d,d)$-currents, where $\delta_0$ denotes
the fiberwise Dirac current supported on the zero section
$e(S_\infty)$.
\end{prop}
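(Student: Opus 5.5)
The plan is to reduce the statement to classical Fourier inversion on the real torus $\C^d/\Lambda$, applied fiberwise for each fixed $g\in S_\infty$. Both sides are currents in the torus direction depending on the parameter $g$. I would therefore first prove the identity for each fixed $g$ by pairing with a smooth test function $\varphi$ on $\mathcal A(\C)=\C^d/\Lambda$. Afterwards I would check that the dependence on $g$ is smooth, so that the identity holds as fiberwise currents on the product.

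For fixed $g$, I would use that $\la\cdot,\cdot\ra_g$ is a nondegenerate real alternating form on the underlying real space $V\cong\R^{2d}$. The map $l\mapsto\la l,\cdot\ra_g$ is then an $\R$-linear isomorphism from $V$ to $V^*$. It carries $\Lambda_g^*$ bijectively onto the dual lattice $\Lambda^\vee=\{\xi\in V^*:\xi(\Lambda)\subset\Z\}$. Hence $\{\chi_{l_g}\}_{l_g\in\Lambda_g^*}$ is exactly the full character group of $V/\Lambda$, each character appearing once.

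Next I would use the normalization of $\vol_g$ recorded just before the statement: it is translation invariant and satisfies $\int_{\mathcal A(\C)}\vol_g=(2\pi i)^d$. Thus $(2\pi i)^{-d}\vol_g$ is the Haar probability measure on the torus. The classical Fourier inversion formula for smooth $\varphi$ is
\[
  \varphi(0)=\sum_{\xi\in\Lambda^\vee}\int_{V/\Lambda}\varphi\,e^{2\pi i\xi}\,d\mu_{\mathrm{Haar}}.
\]
Its Fourier series converges absolutely, because the coefficients of a smooth function decay rapidly. Translating, this gives
\[
  \sum_{l_g}\int\varphi\,\chi_{l_g}\vol_g=(2\pi i)^d\varphi(0),
\]
which is the asserted identity against $(0,0)$-test forms. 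The sign in the exponent is irrelevant, since $l\mapsto-l$ permutes $\Lambda_g^*$. Since $\vol_g$ has top fiber degree, pairing with general fiberwise test forms reduces to this scalar case.

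The only real point is the parameter dependence, and the main obstacle is making the convergence uniform in $g$. I would take a test form on $\mathcal A(\C)\times S_\infty$ with compact support and write $\Lambda_g^*=M_g\Lambda^*$ for a matrix $M_g$ depending smoothly on $g$. Integration by parts in the torus variable, using powers of the Laplacian, bounds the $l_g$-th Fourier coefficient by $C_N(1+\|l_g\|)^{-N}$. The constant $C_N$ can be taken uniform for $g$ in the compact support, since $H_g$, $\det H_g$ and $\operatorname{covol}_g(\Lambda)$ vary continuously there. Dominated convergence then lets me exchange the sum over $l_g$ with integration over $S_\infty$. This yields the identity as currents with the Dirac current $\delta_0$ along $e(S_\infty)$.
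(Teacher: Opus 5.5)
Your proposal is correct and follows the paper's argument. For fixed $g$, the $\chi_{l_g}$ are exactly the unitary characters of $\C^d/\Lambda$, and $(2\pi i)^{-d}\vol_g$ is the normalized Haar measure, so Fourier inversion at the origin gives the identity. Your extra step on uniformity in $g$ goes beyond the paper, which only proves the fixed-$g$ statement, but it is a harmless strengthening that justifies reading the identity as a current on the product.
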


\begin{proof}
Fix $g\in S_\infty$. By the definition of $\Lambda_g^*$, the
characters $\chi_{l_g}$ are precisely the unitary characters of
$\mathcal A(\C)=\C^d/\Lambda$, and
$(2\pi i)^{-d}\vol_g$ is its normalized Haar measure.
Thus Fourier inversion at the origin gives, for every smooth test
function $f$,
\[
  \sum_{l_g\in\Lambda_g^*}
  \int_{\mathcal A(\C)}
  f(u)\chi_{l_g}(u)\vol_g
  =
  (2\pi i)^d f(0).
\]
\end{proof}

% Fourier vector fields, equivariance, and Lie derivative nilpotence.

For $l_g\in\Lambda_g^*$, define
\[
  \tilde{l}_g
  :=
  [\overline{e}_1,\ldots,\overline{e}_d]\,\overline{l}_g
  =
  \sum_{i=1}^d(\overline{l}_g)_i\,\overline{e}_i.
\]
This is a section of
$\mathrm{pr}_{\mathcal A}^*\mathcal H^{0,-1}$
on $\mathcal A(\C)\times S_\infty$.
Using the identification of $\mathcal H^{0,-1}$ with the
anti-holomorphic vertical tangent bundle, we regard $\tilde{l}_g$
as a vertical anti-holomorphic vector field and denote contraction
along it by $\iota_{\tilde{l}_g}$.
For the mixed differential
$d''=\overline{\partial}_{\mathcal A(\C)}+d_{S_\infty}$,
the corresponding Lie derivative is defined by Cartan's formula
\[
  L_{\tilde{l}_g}
  :=
  \iota_{\tilde{l}_g}d''
  +
  d''\iota_{\tilde{l}_g}.
\]

\begin{lem}\label{lem:equivariance-lie-derivative}
The Fourier vector fields are compatible with the $\Gamma$-action:
under the identification
$l_g\mapsto\gamma_\infty l_g$
of $\Lambda_g^*$ with $\Lambda_{\gamma\cdot g}^*$, one has
$\gamma_*\tilde{l}_g=\tilde{l}_{\gamma\cdot g}$.
Consequently,
\[
  \gamma^*\iota_{\tilde{l}_{\gamma\cdot g}}
  =
  \iota_{\tilde{l}_g}\gamma^*,
  \qquad
  \gamma^*L_{\tilde{l}_{\gamma\cdot g}}
  =
  L_{\tilde{l}_g}\gamma^*.
\]
\end{lem}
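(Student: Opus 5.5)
The plan is to verify the identity $\gamma_*\tilde l_g=\tilde l_{\gamma\cdot g}$ by a direct coordinate computation, and then to deduce both operator identities formally from naturality of contraction and of $d''$.

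\emph{Step 1: the vector fields.} Since $\gamma$ acts on $\mathcal A(\C)\times S_\infty$ diagonally and linearly in the torus direction, its differential on the vertical tangent bundle is the constant map $\gamma_\infty$ acting on $\mathcal H_{\C}$. On $\mathcal H^{0,-1}$, in the basis $\bar e_1,\ldots,\bar e_d$, this map has the matrix $\overline{\gamma_\infty}$, as recorded before Theorem~\ref{thm: compare P_n and TSym with connection}. Therefore
\[
\gamma_*\tilde l_g=[\bar e_1,\ldots,\bar e_d]\,\overline{\gamma_\infty}\,\overline{l_g}
=[\bar e_1,\ldots,\bar e_d]\,\overline{\gamma_\infty l_g}
=\tilde{(\gamma_\infty l_g)},
\]
where the last vector field is evaluated at the point $\gamma\cdot(z,g)$, whose $S_\infty$-coordinate is $\gamma\cdot g$. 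By the equality $\gamma_\infty\Lambda_g^*=\Lambda_{\gamma\cdot g}^*$ established above, the element $\gamma_\infty l_g$ is exactly the label in $\Lambda_{\gamma\cdot g}^*$ corresponding to $l_g$. This gives $\gamma_*\tilde l_g=\tilde l_{\gamma\cdot g}$.

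One subtlety needs care. The field $\tilde l_g$ is a section over the whole product, with $g$ varying along $S_\infty$. The family $\{\tilde l_g\}$ is indexed by $l_g\in\Lambda_g^*$, and this index set moves with $g$. I would therefore read the statement with $l_g$ parametrized continuously, for instance as $l_g=H_g^{-1}$ applied to a fixed dual element. The claim is then that $\gamma$ pushes the family labelled by $l$ at $g$ forward to the family labelled by $\gamma_\infty l$ at $\gamma\cdot g$.

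\emph{Step 2: the operator identities.} For a diffeomorphism $\gamma$ and a vector field $X$, one has the standard identity $\gamma^*(\iota_{\gamma_*X}\eta)=\iota_X\gamma^*\eta$. Taking $X=\tilde l_g$ and applying Step 1 gives the first identity. Next, $d''=\bar\partial_{\mathcal A(\C)}+d_{S_\infty}$ commutes with $\gamma^*$. This holds because $\gamma$ is holomorphic on $\mathcal A(\C)$, acts smoothly on $S_\infty$, and does not mix the two factors, so it preserves the mixed bidegrees of Definition~\ref{def_1}. The Cartan formula then yields
\[
\gamma^*L_{\tilde l_{\gamma\cdot g}}=\gamma^*\iota d''+\gamma^*d''\iota=\iota_{\tilde l_g}d''\gamma^*+d''\iota_{\tilde l_g}\gamma^*=L_{\tilde l_g}\gamma^*.
\]

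\emph{Main obstacle.} The delicate point is the conjugation convention in Step 1. The action on $\mathcal H^{0,-1}$ is complex conjugate to the action on $\mathcal H^{-1,0}$. Since $\tilde l_g$ is built from $\overline{l_g}$, the two conjugations cancel, so the push-forward is $\gamma_\infty l_g$ itself rather than $\overline{\gamma_\infty}\,l_g$. I would also check that the identification of $\mathcal H^{0,-1}$ with antiholomorphic vertical vectors, namely $\bar e_i\leftrightarrow\partial/\partial\bar z_i$, is compatible with the real-linear differential $d\gamma_0$. This compatibility holds because $d\gamma_0$ is the complexification of a real-linear map.
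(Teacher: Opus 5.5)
Your proof is correct and follows the paper's own argument. The paper likewise pushes the basis forward via $\gamma_*\overline{e}_i=\sum_j\overline{(\gamma_\infty)}_{ji}\overline{e}_j$ to get $\gamma_*\tilde l_g=\tilde l_{\gamma\cdot g}$, and then invokes naturality of contraction and Cartan's formula, using that $d''$ commutes with the $\Gamma$-action.

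One small slip is in your side remark on the continuous parametrization. If $l_g=H_g^{-1}l$ with $l\in\Lambda^*$ fixed, then $\gamma_\infty H_g^{-1}=H_{\gamma\cdot g}^{-1}(\gamma_\infty^*)^{-1}$. So the pushed-forward family is labelled by $(\gamma_\infty^*)^{-1}l\in\Lambda^*$, not by $\gamma_\infty l$, which generally does not lie in $\Lambda^*$. At the point $\gamma\cdot g$ this label still gives exactly $\gamma_\infty l_g$, as required.
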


\begin{proof}
The basis transforms by
$\gamma_*\overline{e}_i
=\sum_{j=1}^d\overline{(\gamma_\infty)}_{ji}\overline{e}_j$,
so
\[
  \gamma_*\tilde{l}_g
  =
  \sum_{j=1}^d
  \bigl(\overline{\gamma_\infty l_g}\bigr)_j\overline{e}_j
  =
  \tilde{l}_{\gamma\cdot g}.
\]
Naturality of contraction gives the first operator identity.
The second follows from Cartan's formula, since $d''$ commutes
with the $\Gamma$-action.
\end{proof}

\begin{prop}\label{prop_1}
For $l_g\in\Lambda_g^*$, one has
\[
  L_{\tilde{l}_g}^k\vol_g=0
  \quad(k>d),
  \qquad
  \iota_{\tilde{l}_g}L_{\tilde{l}_g}^k\vol_g=0
  \quad(k\geq d).
\]
For $k=d-1$, the contraction is given explicitly by
\[
  \begin{aligned}
    \iota_{\tilde{l}_g}L_{\tilde{l}_g}^{d-1}\vol_g
    =
    \frac{
      \pi^d(-1)^{d(d-1)/2}(d-1)!\det(H_g)
    }{
      \mathrm{covol}_g(\Lambda)
    }
    \bigwedge_{j=1}^d dz_j
    \wedge
    \sum_{i=1}^d
    (-1)^{i-1}(\overline{l}_g)_i\,
    d_{S_\infty}(\overline{l}_g)_1
    \wedge\cdots\wedge
    \widehat{d_{S_\infty}(\overline{l}_g)_i}
    \wedge\cdots\wedge
    d_{S_\infty}(\overline{l}_g)_d.
  \end{aligned}
\]
\end{prop}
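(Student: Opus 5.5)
The computation is local in coordinates and reduces to exterior algebra. Write $c_g:=\pi^d(-1)^{d(d+1)/2}\det(H_g)/\mathrm{covol}_g(\Lambda)$, so that $\vol_g=c_g\,dz_1\wedge\cdots\wedge dz_d\wedge d\bar z_1\wedge\cdots\wedge d\bar z_d$. Here $c_g$ and the components $(\bar l_g)_i$ are functions on $S_\infty$ only; they are constant along $\mathcal A(\C)$. Since $\tilde l_g=\sum_i(\bar l_g)_i\bar e_i$ and $d\bar z_j(\bar e_i)=\delta_{ij}$, contraction $\iota_{\tilde l_g}$ is a graded derivation of degree $-1$. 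It kills the $dz_j$ and all forms pulled back from $S_\infty$, and it sends $d\bar z_i\mapsto(\bar l_g)_i$.

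Key step 1: compute $L:=L_{\tilde l_g}$ on generators. As a commutator of derivations, $L$ is a derivation of degree $0$. For a function $f$ on $S_\infty$, $Lf=\iota(d_{S_\infty}f)=0$. The differentials $dz_j$ and $d\bar z_j$ are $d''$-closed, so
\[
L(dz_j)=0,\qquad L(d\bar z_j)=d''\bigl((\bar l_g)_j\bigr)=d_{S_\infty}(\bar l_g)_j=:\beta_j.
\]
One also gets $L\beta_j=d''\iota\beta_j+\iota d''\beta_j=0$. This uses that $\beta_j$ is a form on $S_\infty$, so $\iota\beta_j=0$, and that $d''\beta_j=0$. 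Hence
\[
L^k\vol_g=c_g\,k!\,dz_1\wedge\cdots\wedge dz_d\wedge\sum_{|I|=k}\epsilon_I\cdots,
\]
where the inner sum replaces $k$ of the $d\bar z_i$ by $\beta_i$. More precisely, $L^k$ acts on $d\bar z_1\wedge\cdots\wedge d\bar z_d$ as the $k$-th power of a derivation. Each application changes one factor $d\bar z_i$ into $\beta_i$, and a factor already changed is annihilated. Consequently $L^k\vol_g=0$ for $k>d$.

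Step 2: apply $\iota$. For $k\ge d$ no $d\bar z$ factor survives: for $k>d$ the form already vanishes, and for $k=d$ only $dz$'s and $\beta$'s remain. Since $\iota$ kills both, we get $\iota L^k\vol_g=0$. For $k=d-1$, every term has exactly one surviving $d\bar z_i$ in position $i$. The sum over ordered choices gives the factor $(d-1)!$ for each $i$. Contracting, $\iota$ must pass the $d$ holomorphic forms $dz_j$ and then $i-1$ odd $\beta$'s before reaching $d\bar z_i$. This produces the sign $(-1)^{d}(-1)^{i-1}$ and replaces $d\bar z_i$ by $(\bar l_g)_i$.

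Step 3 is the main obstacle: the sign bookkeeping. Combining $(-1)^{d(d+1)/2}$ from $c_g$ with the $(-1)^d$ from passing the $dz$'s gives $(-1)^{d(d-1)/2}$, matching the claim. The residual $(-1)^{i-1}$ must then be reconciled with the derivation ordering convention on the $\beta$'s. This needs the convention that $\iota$ acts from the left with the Koszul sign. It also needs care that $d''$ on mixed forms carries the totalization sign $(-1)^q$ on the $S_\infty$-part. I would check the case $d=1$ directly, where the formula reads $\iota\vol_g=-\pi\det(H_g)\,\mathrm{covol}_g^{-1}\,dz_1\,(\bar l_g)_1$, to fix conventions before doing the general count.
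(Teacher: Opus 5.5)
Your proposal is correct and is essentially the paper's argument. In both, Cartan's formula on generators makes $L_{\tilde l_g}$ a degree-zero derivation that sends $d\overline z_i\mapsto d_{S_\infty}(\overline l_g)_i$ and kills $dz_j$, the $S_\infty$-coefficients, and its own output; nilpotence and the $(d-1)!$ formula then follow by counting. The only difference is that the paper writes $\vol_g$ as a product of the commuting even factors $dz_i\wedge d\overline z_i$ and moves the $dz_j$ to the front at the end, while your normal ordering shortens the sign count.

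Your Step~3 is in fact already finished:
\begin{itemize}
\item all $\epsilon_I=1$, because the replacement happens in place;
\item the $(-1)^{i-1}$ that $\iota_{\tilde l_g}$ produces when passing $d_{S_\infty}(\overline l_g)_1,\dots,d_{S_\infty}(\overline l_g)_{i-1}$ is exactly the sign in the statement;
\item $(-1)^{d(d+1)/2+d}=(-1)^{d(d-1)/2}$;
\item no totalization sign can enter, since on the generators $d''$ is nonzero only on functions on $S_\infty$.
\end{itemize}

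Your $d=1$ sanity check, however, is misstated. The formula gives $\iota_{\tilde l_g}\vol_g=+\pi\det(H_g)\,\mathrm{covol}_g(\Lambda)^{-1}(\overline l_g)_1\,dz_1$, with a plus sign, which agrees with your general computation.
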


\begin{proof}
Since $\overline{e}_i$ corresponds to
$\partial/\partial\overline{z}_i$, contraction by $\tilde{l}_g$
annihilates $dz_i$ and sends $d\overline{z}_i$ to
$(\overline{l}_g)_i$.
The graded Leibniz rule therefore gives
$\iota_{\tilde{l}_g}(dz_i\wedge d\overline{z}_i)
=-(\overline{l}_g)_i\,dz_i$.
As $d''(dz_i\wedge d\overline{z}_i)=0$ and
$(\overline{l}_g)_i$ depends only on $S_\infty$, Cartan's formula
yields
\[
  L_{\tilde{l}_g}
  \bigl(dz_i\wedge d\overline{z}_i\bigr)
  =
  -d''(\overline{l}_g)_i\wedge dz_i
  =
  -d_{S_\infty}(\overline{l}_g)_i\wedge dz_i.
\]
A second application of $L_{\tilde{l}_g}$ gives zero:
contraction vanishes on $dz_i$ and on forms pulled back from
$S_\infty$, while $d_{S_\infty}^2=0$.
Moreover, the coefficient
$(-\pi)^d\det(H_g)/\mathrm{covol}_g(\Lambda)$ of $\vol_g$
depends only on $S_\infty$, so its Lie derivative along
$\tilde{l}_g$ vanishes.
Since $L_{\tilde{l}_g}$ is a derivation of degree zero, each of the
$d$ factors $dz_i\wedge d\overline{z}_i$ can be acted on
nontrivially at most once.
This gives $L_{\tilde{l}_g}^k\vol_g=0$ for $k>d$.
For $k=d$, every anti-holomorphic vertical differential has been
replaced by a differential from $S_\infty$, so contraction also
vanishes.

For $k=d-1$, exactly one factor remains undifferentiated.
For each choice of this factor, the other $d-1$ factors can be
differentiated in $(d-1)!$ orders.
In the $i$-th summand, contraction replaces the remaining factor
by $-(\overline{l}_g)_i\,dz_i$, while each differentiated factor
is $-d_{S_\infty}(\overline{l}_g)_j\wedge dz_j$.
These $d$ minus signs cancel the sign in $(-\pi)^d$.
Moving all the $dz_j$ to the front contributes
$(-1)^{d(d-1)/2+i-1}$, giving the coefficient and sign in the
displayed formula.
\end{proof}

% Fourier Cartan operator and its finite-filtration inverse.

At a fixed truncation level $n\geq0$, set
\[
  A_{l_g}
  :=
  2\pi i\,d''\la l_g,\cdot\ra_g+\nu^{0,1}
  \in
  \mathcal E^1_{X_\infty}
  \otimes
  \mathrm{TSym}^{\leq n}
  \bigl(\mathrm{pr}_{\mathcal A}^*\mathcal H^{0,-1}\bigr).
\]
The corresponding Fourier-mode Cartan operator on
$\mathcal E^\bullet_{X_\infty}
\otimes
\mathrm{TSym}^{\leq n}
(\mathrm{pr}_{\mathcal A}^*\mathcal H^{0,-1})$
is the degree-one operator
\[
  C_{l_g}(\omega)
  :=
  d''\omega+A_{l_g}\wedge\omega.
\]
The same operator acts on the corresponding complex of currents.

\begin{lem}\label{lem:cartan-operator-invertible}
Let $l_g\in\Lambda_g^*$ with $l_g\neq0$.
Then
\(
  C_{l_g}\circ\iota_{\tilde{l}_g}
  +
  \iota_{\tilde{l}_g}\circ C_{l_g}
\)
is invertible on the $l_g$-th Fourier coefficient subspace of
$(d,d)$-currents with coefficients in
$\mathrm{TSym}^{\leq n}
(\mathrm{pr}_{\mathcal A}^*\mathcal H^{0,-1})$.
\end{lem}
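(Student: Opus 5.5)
We write the anticommutator as a Lie-derivative-type operator and split off an invertible scalar part from a nilpotent remainder. By Cartan's formula,
\[
  C_{l_g}\iota_{\tilde l_g}+\iota_{\tilde l_g}C_{l_g}
  =
  L_{\tilde l_g}
  +
  \bigl(A_{l_g}\wedge\iota_{\tilde l_g}+\iota_{\tilde l_g}(A_{l_g}\wedge\,\cdot\,)\bigr).
\]
Since $A_{l_g}$ is a $1$-form, the bracketed term is multiplication by the function $\iota_{\tilde l_g}A_{l_g}$, with values in $\mathrm{TSym}^{\leq n}$. We compute this function. First, $\iota_{\tilde l_g}\nu^{0,1}=\sum_i(\overline l_g)_i\bar e_i=\tilde l_g$, which raises symmetric degree by one. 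Second, $\iota_{\tilde l_g}\bigl(2\pi i\,d''\la l_g,\cdot\ra_g\bigr)$ is the scalar $2\pi i\,\la l_g,\cdot\ra_g$ applied along the anti-holomorphic direction $\tilde l_g$. Using $\la v,w\ra_g=\operatorname{Im}H_g(v,w)$ and the fact that $\tilde l_g$ is the $(0,1)$-vector attached to $\overline l_g$, we expect this scalar to equal $c\,\|l_g\|_g^2$ for an explicit nonzero constant $c$ (of the form $\pm\pi$). This is the standard Levin computation \cite[Section~3.4]{Levin_2000}, and the first step is to pin the constant down from the coordinate expression of $\omega_g$. We thus obtain
\[
  C\iota+\iota C
  =
  c\|l_g\|_g^2+\tilde l_g\cdot{}+L_{\tilde l_g}.
\]

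Next we restrict to the $l_g$-th Fourier subspace, i.e.\ currents of the form $\chi_{l_g}\cdot T$. The factor $d''\chi_{l_g}=2\pi i\,\chi_{l_g}d''\la l_g,\cdot\ra_g$ has already been absorbed into $A_{l_g}$, so on the coefficient $T$ the operator $L_{\tilde l_g}$ acts only on the form part. By the argument of Proposition~\ref{prop_1}, it differentiates each factor $dz_i\wedge d\bar z_i$ at most once, and it does not raise degree in $S_\infty$ beyond the available forms. We then filter by the pair (symmetric degree in $\mathrm{TSym}^{\leq n}$, number of remaining $d\bar z$'s). Multiplication by $\tilde l_g$ strictly increases the first index, which is truncated above $n$. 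The operator $L_{\tilde l_g}$ strictly decreases the number of anti-holomorphic vertical differentials, although its action on the coefficient functions has to be controlled. So $N:=\tilde l_g\cdot{}+L_{\tilde l_g}$ should be nilpotent with respect to a finite filtration. Since $l_g\neq0$ and $H_g$ is positive definite, $c\|l_g\|_g^2$ is a nowhere-vanishing smooth function on $S_\infty$. The inverse is then the finite Neumann series $\sum_{k\geq0}(-1)^k(c\|l_g\|_g^2)^{-k-1}N^k$, where the scalar and $N$ are ordered carefully because they need not commute.

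The main obstacle is justifying nilpotence of $L_{\tilde l_g}$ on general $(d,d)$-currents rather than only on $\vol_g$, because $L_{\tilde l_g}$ also differentiates coefficient distributions along $\tilde l_g$ in the fiber direction. We plan to handle this by working with the $l_g$-Fourier component. Its coefficient is a current on $S_\infty$ with constant dependence in the torus direction, so the fiber derivatives vanish and only the degree-lowering algebraic part of $L_{\tilde l_g}$, as in Proposition~\ref{prop_1}, survives. If this reduction fails, we would instead prove invertibility via a graded filtration in which $L_{\tilde l_g}$ and $[\,L_{\tilde l_g},(c\|l_g\|_g^2)^{-1}]$ are both strictly filtration-raising. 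This is checked using $d_{S_\infty}$ of the scalar, which is a form from $S_\infty$ and therefore raises the $S_\infty$-degree.
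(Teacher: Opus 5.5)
Your proposal is correct and follows essentially the paper's argument. Cartan's formula reduces the operator to $L_{\tilde l_g}+\iota_{\tilde l_g}A_{l_g}$, with $\iota_{\tilde l_g}A_{l_g}=-\pi\|l_g\|_g^2+\tilde l_g$, so your constant is $c=-\pi$. On the $l_g$-th Fourier subspace the coefficients depend only on $S_\infty$, so $L_{\tilde l_g}$ merely replaces a $d\overline z_i$ by $d_{S_\infty}(\overline l_g)_i$ and is therefore nilpotent. The paper first inverts $\iota_{\tilde l_g}A_{l_g}$ explicitly and then inverts the factor $1+(\iota_{\tilde l_g}A_{l_g})^{-1}L_{\tilde l_g}$ by a finite geometric series, which is equivalent to your single Neumann series. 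Your worry about ordering, and the commutator fallback, are unnecessary: since $\tilde l_g$ is vertical, $L_{\tilde l_g}$ annihilates functions on $S_\infty$, including the coefficients of $\tilde l_g$. It therefore commutes both with the scalar $\|l_g\|_g^2$ and with multiplication by $\tilde l_g$.
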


\begin{proof}
Cartan's formula and the graded Leibniz rule give
\[
  C_{l_g}\circ\iota_{\tilde{l}_g}
  +
  \iota_{\tilde{l}_g}\circ C_{l_g}
  =
  L_{\tilde{l}_g}+\iota_{\tilde{l}_g}A_{l_g},
\]
where the last term acts by multiplication.
To compute it, write
\[
  \la l_g,z\ra_g
  =
  \frac{l_g^*H_gz-z^*H_gl_g}{2i},
  \qquad
  \frac{\partial}{\partial\overline{z}_j}
  \la l_g,z\ra_g
  =
  -\frac{1}{2i}(H_gl_g)_j.
\]
Since $\tilde{l}_g$ is vertical, its contraction with the
$S_\infty$-component of $d''\la l_g,\cdot\ra_g$ vanishes.
Consequently,
\[
  \iota_{\tilde{l}_g}d''\la l_g,\cdot\ra_g
  =
  -\frac{1}{2i}l_g^*H_gl_g
  =
  -\frac{1}{2i}|\!|l_g|\!|_g^2.
\]
The tautological form satisfies
$\iota_{\tilde{l}_g}\nu^{0,1}=\overline{l}_g$,
where the right-hand side is regarded as an element of
$\mathrm{TSym}^1(\mathrm{pr}_{\mathcal A}^*\mathcal H^{0,-1})$.
Thus
$\iota_{\tilde{l}_g}A_{l_g}
=
-\pi|\!|l_g|\!|_g^2+\overline{l}_g$.

For $l_g\neq0$, the function $\pi|\!|l_g|\!|_g^2$ is nowhere
vanishing on $S_\infty$.
The positive symmetric-degree term is nilpotent in the truncated
coefficient algebra, so $\iota_{\tilde{l}_g}A_{l_g}$ is invertible,
with
\[
  \bigl(\iota_{\tilde{l}_g}A_{l_g}\bigr)^{-1}
  =
  -\sum_{b=0}^n
  \frac{
    b!\,\tilde{l}_g^{[b]}
  }{
    \bigl(\pi|\!|l_g|\!|_g^2\bigr)^{b+1}
  }.
\]

On the Fourier coefficient subspace, the coefficients depend
only on $S_\infty$.
The operator $L_{\tilde{l}_g}$ replaces an anti-holomorphic
vertical differential $d\overline{z}_i$ by
$d_{S_\infty}(\overline{l}_g)_i$ and vanishes on coefficients
and differential forms pulled back from $S_\infty$.
It therefore increases the $S_\infty$-differential degree by one
while decreasing the vertical anti-holomorphic degree by one.
Since the latter is at most $d$, this operator is nilpotent.
Multiplication by
$\bigl(\iota_{\tilde{l}_g}A_{l_g}\bigr)^{-1}$
preserves these exterior degrees, so
$\bigl(\iota_{\tilde{l}_g}A_{l_g}\bigr)^{-1}
L_{\tilde{l}_g}$
is nilpotent as well.
In the factorization
\[
  L_{\tilde{l}_g}+\iota_{\tilde{l}_g}A_{l_g}
  =
  \bigl(\iota_{\tilde{l}_g}A_{l_g}\bigr)
  \left(
    1+
    \bigl(\iota_{\tilde{l}_g}A_{l_g}\bigr)^{-1}
    L_{\tilde{l}_g}
  \right),
\]
the first factor has the inverse computed above, and the second
is inverted by a finite geometric series.
\end{proof}

This finite-filtration argument is a version of the
Lie-derivative method of
\cite[Sections~3.2--3.4]{Levin_2000}; compare also the Fourier-mode
Cartan argument in \cite[Theorem~3.20]{KS_2025}.
We will use this inverse, together with contraction by
$\tilde{l}_g$, to solve the current equation on each nonzero
Fourier mode.

The inverse in Lemma~\ref{lem:cartan-operator-invertible} gives the
following solution of the Fourier-mode equation.

\begin{prop}\label{prop_2}\label{prop:explicit-fourier-mode-current}
Let $l_g\in\Lambda_g^*\setminus\{0\}$ and $t\in\mathcal A(\C)$.
At the fixed truncation level $n$, the $(d,d-1)$-current
\[
  \phi_{l_g,t}
  :=
  \iota_{\tilde{l}_g}
  \left(
    C_{l_g}\circ\iota_{\tilde{l}_g}
    +
    \iota_{\tilde{l}_g}\circ C_{l_g}
  \right)^{-1}
  \bigl(\chi_{l_g}(-t)\vol_g\bigr)
\]
with coefficients in
$\mathrm{TSym}^{\leq n}
(\mathrm{pr}_{\mathcal A}^*\mathcal H^{0,-1})$
satisfies
\[
  d''\phi_{l_g,t}
  +
  A_{l_g}\wedge\phi_{l_g,t}
  =
  \chi_{l_g}(-t)\vol_g.
\]
Moreover, it admits the finite expansion
\[
  \phi_{l_g,t}
  =
  -
  \sum_{b=0}^n
  \sum_{j=0}^{d-1}
  \frac{\Gamma(b+j+1)}{j!}
  \frac{
    \tilde{l}_g^{[b]}
  }{
    \bigl(\pi|\!|l_g|\!|_g^2\bigr)^{b+j+1}
  }
  \iota_{\tilde{l}_g}
  L_{\tilde{l}_g}^j
  \bigl(\chi_{l_g}(-t)\vol_g\bigr).
\]
\end{prop}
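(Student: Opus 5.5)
Write $P:=C_{l_g}\circ\iota_{\tilde l_g}+\iota_{\tilde l_g}\circ C_{l_g}$ and $\eta:=\chi_{l_g}(-t)\vol_g$; the plan treats the differential equation and the explicit expansion separately.

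\emph{The equation.} The first step is to record two facts on the $l_g$-th Fourier coefficient subspace. First, $C_{l_g}^2=0$ there: the identity $(\nabla'')^2=0$ of Theorem~\ref{thm: compare P_n and TSym with connection}, twisted by the character $\chi_{l_g}$, gives $C_{l_g}^2=d''A_{l_g}+A_{l_g}\wedge A_{l_g}$. Here $d''\nu^{0,1}=0$, $d''d''\la l_g,\cdot\ra_g=0$, and $A_{l_g}\wedge A_{l_g}=0$ because the coefficients commute in $\mathrm{TSym}$ while the $1$-forms anticommute. Second, $C_{l_g}$ commutes with $P$, which is then formal: $C_{l_g}P=C_{l_g}\iota C_{l_g}=PC_{l_g}$. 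Consequently $C_{l_g}$ commutes with $P^{-1}$, which exists by Lemma~\ref{lem:cartan-operator-invertible}.

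Next observe that $C_{l_g}\eta=0$, since $\eta$ has top vertical degree $(d,d)$. The only possible contribution is the $d_{S_\infty}$ part, and the form $\chi_{l_g}(-t)\vol_g$ is exactly the Fourier mode that the construction treats as $C_{l_g}$-closed. This is the step I would verify most carefully: one must check that $A_{l_g}\wedge\eta$ and $d_{S_\infty}\eta$ cancel in bidegree $(d,d+1)$. Granting it,
\[
C_{l_g}\phi_{l_g,t}=C_{l_g}\iota P^{-1}\eta=P P^{-1}\eta-\iota C_{l_g}P^{-1}\eta=\eta-\iota P^{-1}C_{l_g}\eta=\eta .
\]

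\emph{The expansion.} Use the factorization in the proof of Lemma~\ref{lem:cartan-operator-invertible}, $P=M(1+M^{-1}L)$ with $M=\iota_{\tilde l_g}A_{l_g}=-(\pi\|l_g\|_g^2-\overline l_g)$ and $L=L_{\tilde l_g}$. Since $M$ has coefficients depending only on $S_\infty$ and on the constant symmetric tensor $\tilde l_g$, while $L$ kills such functions ($d_{S_\infty}\|l_g\|^2$ terms aside), the two commute on the relevant forms. Hence
\[
P^{-1}=\sum_{j\ge0}(-1)^jM^{-j-1}L^j .
\]
By Proposition~\ref{prop_1} the sum stops at $j=d-1$ once $\iota$ is applied. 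Expanding $(-M)^{-j-1}=\sum_b\binom{b+j}{b}\,b!\,\tilde l_g^{[b]}/(\pi\|l_g\|^2)^{b+j+1}$ via $\tilde l^b=b!\,\tilde l^{[b]}$ gives $(b+j)!/j!=\Gamma(b+j+1)/j!$ together with the overall sign $-1$, and truncating at $b\le n$ yields the displayed formula.

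The main obstacle is the commutation of $M^{-1}$ with $L$. Strictly, $L$ acts on the $S_\infty$-dependent coefficient $\|l_g\|_g^2$ (through $d''$ in the $S_\infty$ direction), so it does not commute with $M^{-1}$. I would check the actual definition of $L$ on Fourier modes: it involves only the contraction and $d''$, and the coefficients are $d''$-differentiated. If the commutation fails, I would instead verify the claimed $\phi_{l_g,t}$ directly by applying $C_{l_g}$ termwise, using $\iota L^j\eta$ and the derivative $d_{S_\infty}(\pi\|l_g\|^2)^{-k}$. The needed identity telescopes in $j$ and $b$, reflecting $\Gamma(s+1)=s\Gamma(s)$.
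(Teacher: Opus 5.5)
Your outline is the paper's own proof: $C_{l_g}^2=0$, so $C_{l_g}$ commutes with $P^{-1}$; the source is closed; then the factorization $P=M(1+M^{-1}L_{\tilde l_g})$ and the binomial series. However, the two facts that carry the argument are left open, and for the second you assert the opposite of what is true.

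First, you only grant $C_{l_g}\eta=0$, and you expect a cancellation between $A_{l_g}\wedge\eta$ and $d_{S_\infty}\eta$. Appealing to what ``the construction treats as closed'' is circular. In fact both terms vanish separately. Parametrize the mode by $l\in\Lambda^*$ via $l_g=H_g^{-1}l$. Then $\langle l_g,w\rangle_g=\operatorname{Im}(l^*w)$ does not depend on $g$. Hence $\chi_{l_g}(-t)$ is constant along $S_\infty$, and $d''\langle l_g,\cdot\rangle_g$ has no $S_\infty$-component. So $A_{l_g}$ is a pure $d\overline z$-form, and $A_{l_g}\wedge\eta=0$ by top anti-holomorphic degree. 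Likewise, $\vol_g$ is a translation-invariant $(d,d)$-form with integral $(2\pi i)^d$ for every $g$, so it is independent of $g$; equivalently, $\operatorname{covol}_g(\Lambda)=\det(H_g)\operatorname{covol}_{H_0}(\Lambda)$. Therefore $d''\eta=0$.

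Second, your claim that $L_{\tilde l_g}$ does not commute with $M^{-1}$, because it ``acts on $\|l_g\|_g^2$ through $d''$ in the $S_\infty$ direction'', is false. $L_{\tilde l_g}$ is a degree-zero derivation, and on a function $h$ it acts by $\iota_{\tilde l_g}d''h$. If $h$ depends only on $S_\infty$, then $d''h=d_{S_\infty}h$ is pulled back from $S_\infty$ and is killed by contraction with the vertical field $\tilde l_g$. In the constant frame $\overline e^{[\beta]}$, the coefficients of $M=-\pi\|l_g\|_g^2+\tilde l_g$ are functions on $S_\infty$. Hence $L_{\tilde l_g}(M\omega)=M\,L_{\tilde l_g}\omega$, which is exactly what validates $P^{-1}=\sum_{j}(-1)^jM^{-j-1}L_{\tilde l_g}^j$. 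The paper records this as $L_{\tilde l_g}$ annihilating coefficients that depend only on $S_\infty$. Your telescoping fallback is therefore unnecessary, and as written it is not an argument.

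Once these two observations are supplied, the rest of your computation is correct: the cutoff at $j=d-1$ from Proposition~\ref{prop_1}, the coefficient $(b+j)!/j!=\Gamma(b+j+1)/j!$, and the overall sign $-1$.
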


\begin{proof}
One has $\iota_{\tilde{l}_g}^2=0$. The graded Leibniz rule and the
flatness of $\nabla''$ give
\[
  d''A_{l_g}+A_{l_g}\wedge A_{l_g}
  =
  d''\nu^{0,1}+\nu^{0,1}\wedge\nu^{0,1}
  =
  0,
\]
so $C_{l_g}^2=0$. Consequently, both $C_{l_g}$ and
$\iota_{\tilde{l}_g}$ commute with
$C_{l_g}\circ\iota_{\tilde{l}_g}
+\iota_{\tilde{l}_g}\circ C_{l_g}$
and hence with its inverse, which exists by
Lemma~\ref{lem:cartan-operator-invertible}.
For a fixed Fourier character, $\chi_{l_g}(-t)$ and $\vol_g$ are
independent of $g$ under the chosen trivialization. Since $\vol_g$
has top vertical anti-holomorphic degree, one has
\[
  d''\bigl(\chi_{l_g}(-t)\vol_g\bigr)=0,
  \qquad
  A_{l_g}\wedge\chi_{l_g}(-t)\vol_g=0.
\]
Thus the source term is $C_{l_g}$-closed, and the commutation with
the inverse yields
\[
  \begin{aligned}
    C_{l_g}\phi_{l_g,t}
    &=
    \left(
      C_{l_g}\circ\iota_{\tilde{l}_g}
      +
      \iota_{\tilde{l}_g}\circ C_{l_g}
    \right)
    \left(
      C_{l_g}\circ\iota_{\tilde{l}_g}
      +
      \iota_{\tilde{l}_g}\circ C_{l_g}
    \right)^{-1}
    \bigl(\chi_{l_g}(-t)\vol_g\bigr)
    =
    \chi_{l_g}(-t)\vol_g.
  \end{aligned}
\]

To obtain the explicit formula, use the finite geometric expansion
of the inverse from the first part of the proof:
\[
  \phi_{l_g,t}
  =
  \sum_{j=0}^{d-1}
  (-1)^j
  \bigl(\iota_{\tilde{l}_g}A_{l_g}\bigr)^{-(j+1)}
  \iota_{\tilde{l}_g}
  L_{\tilde{l}_g}^j
  \bigl(\chi_{l_g}(-t)\vol_g\bigr).
\]
Here $L_{\tilde{l}_g}$ annihilates coefficients depending only on
$S_\infty$, and the upper bound $d-1$ follows from
Proposition~\ref{prop_1}, since
$\iota_{\tilde{l}_g}L_{\tilde{l}_g}^j\vol_g=0$ for $j\geq d$.
In the coefficient algebra, $\tilde{l}_g$ is the degree-one
element with coordinate vector $\overline l_g$, so
\[
  \iota_{\tilde{l}_g}A_{l_g}
  =
  -\pi|\!|l_g|\!|_g^2+\tilde{l}_g.
\]
The finite binomial expansion in
$\mathrm{TSym}^{\leq n}
(\mathrm{pr}_{\mathcal A}^*\mathcal H^{0,-1})$
therefore gives
\[
  \bigl(\iota_{\tilde{l}_g}A_{l_g}\bigr)^{-(j+1)}
  =
  (-1)^{j+1}
  \sum_{b=0}^n
  \frac{(j+b)!}{j!}
  \frac{
    \tilde{l}_g^{[b]}
  }{
    \bigl(\pi|\!|l_g|\!|_g^2\bigr)^{b+j+1}
  }.
\]
Substitution combines the signs to $-1$ and gives the coefficient
$\Gamma(b+j+1)/j!$, since $(b+j)!=\Gamma(b+j+1)$.
\end{proof}

We now assemble these solutions over all nonzero Fourier modes.

% Single point current and distributional 
\begin{defn}\label{def:single-point-ek-current}\rm
For $n\geq0$ and $t\in\mathcal A(\C)$, define
\[
  \phi_{g,t}^{(n)}
  :=
  \sum_{l_g\in\Lambda_g^*\setminus\{0\}}
  \chi_{l_g}(z)\phi_{l_g,t}.
\]
Via the $\mathcal C^\infty$-trivialization of
$\mathcal P^{(n),\mathrm{sm}}$, we regard this as a
$(d,d-1)$-current
\[
  \phi_{g,t}^{(n)}
  \in
  H^0\left(
    \mathcal A(\C)\times S_\infty,
    \mathcal D^{d,d-1}_{X_\infty}
    \otimes
    \mathrm{pr}_{\mathcal A}^*
    \mathcal P^{(n),\mathrm{sm}}
  \right).
\]
\end{defn}

The Fourier series is interpreted in the sense of currents.
Locally on $S_\infty$, its coefficients have at most polynomial
growth in $l_g$, and hence define a distributional current;
compare \cite[Definition~3.15]{KS_2025}.
The character identity
$\chi_{l_g}(z)\chi_{l_g}(-t)=\chi_{l_g}(z-t)$
expresses the translation in each Fourier mode.

\[
  \phi_{g,t}^{(n)}
  =
  -
  \sum_{b=0}^n
  \sum_{j=0}^{d-1}
  \frac{\Gamma(b+j+1)}{j!}
  \sum_{l_g\in\Lambda_g^*\backslash\{0\}}
  \frac{
    \tilde{l}_g^{[b]}
  }{
    \bigl(\pi|\!|l_g|\!|_g^2\bigr)^{b+j+1}
  }
  \iota_{\tilde{l}_g}
  L_{\tilde{l}_g}^j
  \bigl(\chi_{l_g}(z-t)\vol_g\bigr).
\]

The omission of the zero Fourier mode accounts for the volume
term in the following differential equation.

\begin{prop}\label{prop:single-point-current-equation}
For every $n\geq0$ and $t\in\mathcal A(\C)$, the current
$\phi_{g,t}^{(n)}$ satisfies
\[
  \nabla''\phi_{g,t}^{(n)}
  =
  (2\pi i)^d\delta_t-\vol_g,
\]
where $\delta_t$ denotes the fiberwise $(d,d)$-Dirac current
concentrated at $t$.
\end{prop}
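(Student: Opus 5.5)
The plan is to verify the equation mode by mode in the Fourier expansion along the fibres, using Proposition~\ref{prop:explicit-fourier-mode-current} for the nonzero modes and Proposition~\ref{prop_3} to identify the right-hand side.

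The first step is to identify $\nabla''$ on each Fourier mode. By Theorem~\ref{thm: compare P_n and TSym with connection}, under the $\mathcal C^\infty$-trivialization of $\mathcal P^{(n),\mathrm{sm}}$ the operator $\nabla''$ is $d''+\nu^{0,1}$ acting by tensor-symmetric multiplication, truncated in degree $>n$. Both $\chi_{l_g}(z)$ and $l_g$ depend on $g\in S_\infty$. Write $\chi_{l_g}(z)=\exp(2\pi i\la l_g,z\ra_g)$, which is legitimate since $l_g$ varies smoothly with $g$; the transport is through $\gamma_\infty$, and locally one parametrizes $l_g$ by a fixed lattice element of $\Lambda^*$ under the identification. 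Then the mixed differential gives
\[
  d''\chi_{l_g}(z)=2\pi i\,\chi_{l_g}(z)\,d''\la l_g,\cdot\ra_g .
\]
This includes the $S_\infty$-derivative of the pairing. Hence
\[
  \nabla''\bigl(\chi_{l_g}(z)\phi_{l_g,t}\bigr)
  =\chi_{l_g}(z)\bigl(d''\phi_{l_g,t}+A_{l_g}\wedge\phi_{l_g,t}\bigr)
  =\chi_{l_g}(z)\chi_{l_g}(-t)\vol_g
  =\chi_{l_g}(z-t)\vol_g ,
\]
where the second equality is Proposition~\ref{prop:explicit-fourier-mode-current}. One subtlety needs checking here: whether $d''\la l_g,\cdot\ra_g$ really has this meaning when $l_g$ itself moves with $g$. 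I would fix a $g$-independent labelling of $\Lambda_g^*$; since $\la l_g,\lambda\ra_g\in\Z$ is constant on $\Lambda$, the phase is consistent.

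The second step is to sum over $l_g\neq 0$. The series $\phi^{(n)}_{g,t}$ converges as a current because its coefficients have polynomial growth in $l_g$, and in fact they decay, since $\|l_g\|_g^{-2(b+j+1)}$ times polynomial factors from $\iota L^j$. Distributional differentiation therefore commutes with the sum, by continuity of $\nabla''$ on currents and local uniformity in $S_\infty$. This gives
\[
  \nabla''\phi^{(n)}_{g,t}=\sum_{l_g\neq0}\chi_{l_g}(z-t)\vol_g .
\]

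The third step uses the translate of Proposition~\ref{prop_3} by $t$, which holds because translation preserves $\vol_g$:
\[
  \sum_{l_g}\chi_{l_g}(z-t)\vol_g=(2\pi i)^d\delta_t .
\]
Subtracting the $l_g=0$ term, namely $\chi_0\vol_g=\vol_g$, gives the claim. For a fixed $g$ this is the fibrewise identity, but the currents live on the product, so the identity must also hold after pairing with test forms involving $S_\infty$-directions. Since both sides contain only fibre-top-degree terms in the anti-holomorphic direction, pairing against forms of complementary degree reduces to integrating the fibrewise identity over $S_\infty$, using Fubini.

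The main obstacle is the justification of the termwise differentiation and the precise meaning of the fibrewise Dirac current on the product. I would handle both by pairing with a test form $\eta$ and expanding $\eta$ in fibrewise Fourier series. Rapid decay of those Fourier coefficients, uniformly on compact subsets of $S_\infty$, makes all the sums absolutely convergent.
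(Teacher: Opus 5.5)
Your proposal is correct and follows the paper's proof: the Leibniz rule identifies $\nabla''$ on each mode with $\chi_{l_g}(z)\,C_{l_g}$, Proposition~\ref{prop:explicit-fourier-mode-current} is applied termwise, and then the zero mode is restored and the $t$-translate of Proposition~\ref{prop_3} is invoked. Your additional justifications make explicit what the paper leaves implicit. These are the termwise differentiation via pairing with test forms (using rapid decay of their fibrewise Fourier coefficients), the Fubini reduction for the fibrewise Dirac current on the product, and the $g$-independent labelling of $\Lambda_g^*$ (under $l\mapsto H_g^{-1}l$ the phase is in fact independent of $g$).
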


\begin{proof}
Under the fixed $\mathcal C^\infty$-trivialization,
$\nabla''=d''+\nu^{0,1}\wedge-$.
Since
$d''\chi_{l_g}(z)
=2\pi i\,d''\la l_g,\cdot\ra_g\,\chi_{l_g}(z)$,
the Leibniz rule identifies the action on each Fourier mode as
\[
  \nabla''
  \bigl(\chi_{l_g}(z)\phi_{l_g,t}\bigr)
  =
  \chi_{l_g}(z)C_{l_g}\phi_{l_g,t}.
\]
Applying the differential termwise in the sense of currents and
using Proposition~\ref{prop_2}, we obtain
\[
  \begin{aligned}
    \nabla''\phi_{g,t}^{(n)}
    =
    \sum_{l_g\in\Lambda_g^*\backslash\{0\}}
    \chi_{l_g}(z)C_{l_g}\phi_{l_g,t}
    =
    \sum_{l_g\in\Lambda_g^*\backslash\{0\}}
    \chi_{l_g}(z-t)\vol_g
    =
    \sum_{l_g\in\Lambda_g^*}
    \chi_{l_g}(z-t)\vol_g-\vol_g
    =
    (2\pi i)^d\delta_t-\vol_g.
  \end{aligned}
\]
The penultimate equality restores the zero Fourier mode, for which
$\chi_0=1$; the last is Proposition~\ref{prop_3} after translation
by $t$.
\end{proof}

% Weighted current construction:

We now combine the single-point currents using the trace-zero function
defining the Eisenstein--Kronecker class. Let $\mathcal D$ be the finite
$\Gamma$-stable torsion subscheme fixed in
Section~\ref{Section: The equivariant coherent Eisenstein--Kronecker classes}.
Over $\C$, we regard $f\in\C[\mathcal D]^{0,\Gamma}$ as a
$\Gamma$-invariant function on $\mathcal D(\C)$ satisfying
$\sum_{t\in\mathcal D(\C)}f(t)=0$.
For this analytic construction, only the finiteness and
$\Gamma$-stability of $\mathcal D$ are needed; in particular, it applies
to the arithmetic choice $\mathcal D=\mathcal A[\mathfrak C]$.
With $\delta_t$ normalized by $\la\delta_t,h\ra=h(t)$ for smooth
test functions $h$, put
$\delta_f:=\sum_{t\in\mathcal D(\C)}f(t)\delta_t$.
Let $\tau_t$ denote the translation $z\mapsto z-t$ on
$\mathcal A(\C)$, acting trivially on $S_\infty$.
In the fixed $\mathcal C^\infty$-trivialization, define
\[
  \phi_{g,f}^{(n)}
  :=
  \sum_{t\in\mathcal D(\C)}f(t)\tau_t^*\phi_{g,0}^{(n)}
  =
  \sum_{t\in\mathcal D(\C)}f(t)\phi_{g,t}^{(n)}.
\]
The second equality follows from the translation identity for the
Fourier characters established above.

% Differential equation, equivariance, and transition compatibility:

\begin{thm}\label{thm:differential-equation-ek-current}
For every $n\geq0$, the current $\phi_{g,f}^{(n)}$ is
$\Gamma$-equivariant and satisfies
\[
  \nabla''\phi_{g,f}^{(n)}=(2\pi i)^d\delta_f.
\]
These currents are compatible with the transition morphisms
$\mathcal P^{(n),\mathrm{sm}}\to
\mathcal P^{(n-1),\mathrm{sm}}$ for $n\geq1$.
Hence they define a $\Gamma$-equivariant pro-system
$\phi_{g,f}=(\phi_{g,f}^{(n)})_{n\geq0}$ satisfying
$\nabla''\phi_{g,f}=(2\pi i)^d\delta_f$.
\end{thm}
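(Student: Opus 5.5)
The proof has three independent parts: the differential equation, $\Gamma$-equivariance, and compatibility with the transition morphisms.

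\emph{Differential equation.} Since $\phi_{g,f}^{(n)}=\sum_t f(t)\phi_{g,t}^{(n)}$ is a finite sum, $\nabla''$ passes through it, and Proposition~\ref{prop:single-point-current-equation} gives
\[
  \nabla''\phi_{g,f}^{(n)}
  =\sum_{t\in\mathcal D(\C)}f(t)\bigl((2\pi i)^d\delta_t-\vol_g\bigr)
  =(2\pi i)^d\delta_f-\Bigl(\sum_t f(t)\Bigr)\vol_g .
\]
The trace-zero condition kills the volume term. This step is immediate.

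\emph{Transition compatibility.} By Theorem~\ref{thm: compare P_n and TSym with connection}, the transition $\mathcal P^{(n),\mathrm{sm}}\to\mathcal P^{(n-1),\mathrm{sm}}$ is projection onto symmetric degrees at most $n-1$. In the explicit expansion of $\phi_{g,t}^{(n)}$ (after Definition~\ref{def:single-point-ek-current}), the only dependence on $n$ is the range $0\le b\le n$ of the $\tilde l_g^{[b]}$-summation, and the coefficients of each fixed $b$ do not depend on $n$. Equivalently, the inverse of $\iota_{\tilde l_g}A_{l_g}$ in Lemma~\ref{lem:cartan-operator-invertible} truncates compatibly. So projection sends $\phi_{g,t}^{(n)}$ to $\phi_{g,t}^{(n-1)}$. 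Summing against $f$ gives the pro-system, and the differential equation at each level gives it for $\phi_{g,f}$.

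\emph{Equivariance.} This is the main step. I would first show that each single-mode current transforms correctly and then reindex the Fourier sum. Fix $\gamma\in\Gamma$. We have $\gamma_\infty\Lambda_g^*=\Lambda^*_{\gamma\cdot g}$, and $\chi_{\gamma_\infty l_g}(\gamma z)=\chi_{l_g}(z)$ with respect to $\la\cdot,\cdot\ra_{\gamma\cdot g}$ by isometry. Lemma~\ref{lem:equivariance-lie-derivative} gives $\gamma^*\iota_{\tilde l_{\gamma g}}=\iota_{\tilde l_g}\gamma^*$ and $\gamma^*L_{\tilde l_{\gamma g}}=L_{\tilde l_g}\gamma^*$, and the norm satisfies $\|\gamma_\infty l_g\|_{\gamma g}=\|l_g\|_g$. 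In addition:
\begin{itemize}
  \item $\gamma^*\vol_{\gamma g}=\vol_g$, because $\omega_g$ is transported by $\gamma$ and $\mathrm{covol}$ is invariant since $\gamma_\infty\Lambda=\Lambda$;
  \item $\gamma^*\nu^{0,1}=\gamma_\infty\nu^{0,1}$, hence $\gamma^*A_{\gamma_\infty l_g}=\gamma_\infty A_{l_g}$ with the coefficient action;
  \item $\gamma_\infty\tilde l_g^{[b]}=\tilde l_{\gamma g}^{[b]}$.
\end{itemize}
Plugging these into the explicit formula of Proposition~\ref{prop:explicit-fourier-mode-current}, with $\gamma_\infty$ acting on the coefficients in $\mathrm{TSym}^{\le n}(\mathcal H^{0,-1})$, gives
\[
  \gamma^*\bigl(\chi_{\gamma_\infty l_g}(z)\,\phi_{\gamma_\infty l_g,\gamma t}\bigr)
  =\gamma_\infty\bigl(\chi_{l_g}(z)\,\phi_{l_g,t}\bigr),
\]
using $\chi_{\gamma l}(-\gamma t)=\chi_l(-t)$.

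Summing over $l_g\ne0$ is a bijective reindexing, so $\gamma^*\phi_{\cdot,\gamma t}^{(n)}=\gamma_\infty\phi_{\cdot,t}^{(n)}$. Next, $\gamma$ permutes $\mathcal D(\C)$ and $f(\gamma t)=f(t)$ by $\Gamma$-invariance. Reindexing the $t$-sum then gives $\gamma^*\phi_{g,f}^{(n)}=\gamma_\infty\phi_{g,f}^{(n)}$.

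The remaining check is that, in the $C^\infty$-trivialization of Theorem~\ref{thm: compare P_n and TSym with connection}, the equivariant structure of $\mathcal P^{(n),\mathrm{sm}}$ is exactly this coefficient action of $\gamma_\infty$. That holds because $\alpha_n$ is $\Gamma$-equivariant and $\mathcal H^{0,-1}$ carries the $\gamma_\infty$-action. Convergence issues are harmless: the reindexing is done mode by mode on distributions with polynomially bounded coefficients, and pullback by $\gamma$ is continuous on currents.
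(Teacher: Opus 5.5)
Your proposal is correct and follows the paper's proof essentially step by step. The trace-zero condition kills the volume term after summing the single-point equation. Equivariance comes from the isometry identities, Lemma~\ref{lem:equivariance-lie-derivative}, and reindexing the Fourier and torsion sums. Transition compatibility comes from the $n$-independence of the coefficients in the finite expansion. Your version is slightly more explicit than the paper's in tracking the coefficient action of $\gamma_\infty$ on $\mathrm{TSym}^{\le n}(\mathcal H^{0,-1})$, which the paper absorbs into $\gamma^*$.
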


\begin{proof}
Summing the equation of
Proposition~\ref{prop:single-point-current-equation}
with coefficients $f(t)$ gives
\[
  \nabla''\phi_{g,f}^{(n)}
  =
  (2\pi i)^d\delta_f
  -
  \left(\sum_{t\in\mathcal D(\C)}f(t)\right)\vol_g
  =
  (2\pi i)^d\delta_f.
\]

For $\gamma\in\Gamma$, the map $l_g\mapsto\gamma_\infty l_g$
identifies $\Lambda_g^*$ with $\Lambda_{\gamma\cdot g}^*$.
Under this identification,
$\|\gamma_\infty l_g\|_{\gamma\cdot g}=\|l_g\|_g$ and
$\gamma^*\vol_{\gamma\cdot g}=\vol_g$.
The Fourier characters are compatible with the simultaneous action
on the Fourier variable and the argument:
\[
  \chi_{\gamma_\infty l_g}
  (\gamma\cdot z-\gamma\cdot t)
  =
  \chi_{l_g}(z-t).
\]
Together with the compatibility of contraction and Lie differentiation
in Lemma~\ref{lem:equivariance-lie-derivative}, these identities applied
to the single-mode formula in
Proposition~\ref{prop:explicit-fourier-mode-current} and the defining
Fourier sum give
$\gamma^*\phi_{\gamma\cdot g,\gamma\cdot t}^{(n)}
=\phi_{g,t}^{(n)}$.
Since $\mathcal D$ is $\Gamma$-stable and $f(\gamma\cdot t)=f(t)$,
reindexing the finite sum by $t\mapsto\gamma\cdot t$ yields
$\gamma^*\phi_{\gamma\cdot g,f}^{(n)}=\phi_{g,f}^{(n)}$.

Under the $\mathcal C^\infty$-trivialization, the transition morphism
$\mathcal P^{(n),\mathrm{sm}}\to
\mathcal P^{(n-1),\mathrm{sm}}$
is the truncation of the coefficient algebra from
$\mathrm{TSym}^{\leq n}$ to $\mathrm{TSym}^{\leq n-1}$.
The finite expansion in
Proposition~\ref{prop:explicit-fourier-mode-current}
is compatible with this truncation, so
$\phi_{g,f}^{(n)}$ maps to $\phi_{g,f}^{(n-1)}$.
\end{proof}

To compare the singular term in this equation with the local
cohomology class used in the algebraic construction, we record
the residue normalization.

% Residue current normalization and compatibility with the trace:

\begin{lem}[Residue current and trace normalization]
\label{lem:trace-residue-normalization}
For $t\in\mathcal D(\C)$, the current $(2\pi i)^d\delta_t$,
with coefficient given by the unit under the canonical splitting
of the completed Poincaré bundle along $\mathcal D$, represents
the local cohomology class corresponding to $1$ in the
$t$-component of $\C[\mathcal D]$.
Consequently, the assignment
$[t]\mapsto[(2\pi i)^d\delta_t]$ induces the canonical inclusion
\[
  \operatorname{cl}_{\mathcal D}:
  \C[\mathcal D]^\Gamma
  \hookrightarrow
  H^d_{\mathcal D(\C)}\left(
    \mathcal A(\C),\Gamma;
    \widehat{\mathcal P}^{\mathrm{an}}
    \otimes\Omega^d_{\mathcal A(\C)}
  \right).
\]
Its composite with the natural map to global cohomology and the
canonical identification
\[
  H^d\left(
    \mathcal A(\C),\Gamma;
    \widehat{\mathcal P}^{\mathrm{an}}
    \otimes\Omega^d_{\mathcal A(\C)}
  \right)
  \cong\C
\]
is the trace map
$\sum_{t\in\mathcal D(\C)}a_t[t]\mapsto
\sum_{t\in\mathcal D(\C)}a_t$.
In particular, $\operatorname{cl}_{\mathcal D}(f)$ lies in the
kernel of the map to global cohomology.
\end{lem}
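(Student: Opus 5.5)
The proof reduces to a single point $t$ and compares the Dirac current with the algebraic local class. I would carry it out in three steps.

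\emph{Step 1: local cohomology at $t$.} Since $\mathcal D(\C)$ is a finite discrete set of points, excision gives
\[
H^d_{\mathcal D(\C)}\bigl(\mathcal A(\C),\Gamma;\widehat{\mathcal P}^{\mathrm{an}}\otimes\Omega^d\bigr)
\]
as the $\Gamma$-invariants of $\bigoplus_{t}H^d_{\{t\}}$. We may work at finite level $\mathcal P^{(m),\mathrm{an}}$ and pass to the limit by the compatibility in Theorem~\ref{thm: Dolbeault realization}. Near $t$, the splitting principle (Corollary~\ref{cor: splitting principle}) trivializes $\widehat{\mathcal P}$ along $\mathcal D$. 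The analytic splitting $\alpha_m$ of Theorem~\ref{thm: compare P_n and TSym with connection} differs from it on a small ball $B_t$ by $\bar\partial$-exact corrections. Modulo these, the unit section $u$ is a holomorphic frame extending the canonical unit. Hence $H^d_{\{t\}}(B_t,\mathcal P^{(m)}\otimes\Omega^d)$ reduces, on the unit component, to the scalar local cohomology $H^d_{\{t\}}(\C^d,\Omega^d)$. Its canonical generator, the one used in \cite[Section~2.6]{KS_2025} for the inclusion $\mathcal O_S[\mathcal D]\hookrightarrow H^d_{\mathcal D}$, is the Grothendieck residue symbol
\[
\left[\frac{dz_1\wedge\cdots\wedge dz_d}{(z_1-t_1)\cdots(z_d-t_d)}\right].
\]

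\emph{Step 2: the residue normalization.} In the Dolbeault--current model (Corollary~\ref{cor_8} restricted to $B_t$), this generator is represented by a Bochner--Martinelli type current $\beta$ on $B_t\setminus\{t\}$, with $d''\beta=(2\pi i)^d\delta_t$. This is the classical identity $\operatorname{Res}_t[\cdot]=1$, equivalently $\int \bar\partial\bigl(\text{BM kernel}\bigr)=(2\pi i)^d$. The boundary map $H^{d-1}(B_t\setminus\{t\})\to H^d_{\{t\}}$ in the current model sends $[\beta]$ to the class of $d''\beta$. It follows that $(2\pi i)^d\delta_t\cdot u$ represents the class of $1$ in the $t$-component.

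This is the main obstacle. One has to check the sign conventions for currents (Stokes, totalization, the order $\eta\wedge\omega$ in Remark~\ref{rem_3}) against the sign of the algebraic residue in \cite{KS_2025}. I would try first to compute directly for $d=1$, where $\bar\partial(1/(z-t))=\pi\delta_t\,d\bar z\wedge\cdots$ after suitable normalization, and then take tensor products for general $d$. The $S_\infty$-direction plays no role, because everything is pulled back along $\mathrm{pr}_{\mathcal A}$. The $\Gamma$-equivariance of $t\mapsto\delta_t$ comes from $\Gamma$-stability of $\mathcal D$ and orientation preservation (Remark~\ref{rem_3}). Summing over $t$ with $\Gamma$-invariant coefficients then gives $\operatorname{cl}_{\mathcal D}$.

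\emph{Step 3: the trace.} The map to global cohomology sends $\delta_t$ to its class in $H^d(\mathcal A(\C),\widehat{\mathcal P}\otimes\Omega^d)$. By Theorem~\ref{thm: Vanishing of cohomology}, only the degree-zero graded piece contributes, which is $H^d(\mathcal A,\Omega^d)\cong\C$ via the trace/integration map normalized by $(2\pi i)^{-d}\int$. Proposition~\ref{prop:single-point-current-equation} shows that $(2\pi i)^d\delta_t$ is cohomologous to $\vol_g$. Moreover $(2\pi i)^{-d}\int\vol_g=1$, and this is independent of $t$. Therefore the composite is $\sum a_t[t]\mapsto\sum a_t$, provided one checks that this integration normalization agrees with the identification in Corollary~\ref{cor: vanishing of equiv coh}. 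Doing so is again the Serre-duality trace compared with $(2\pi i)^{-d}\int$.

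The final assertion follows because $f$ has trace zero.
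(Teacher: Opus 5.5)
Your local identification (Steps 1--2) is the paper's, up to the choice of representative. The paper writes the Grothendieck symbol $\bigl[du_1\wedge\cdots\wedge du_d/(u_1\cdots u_d)\bigr]$ as the ordered product of one-variable residue currents, $\bigwedge_i\bigl(\overline\partial(1/u_i)\wedge du_i\bigr)=(2\pi i)^d\delta_t$, which is exactly your ``$d=1$, then tensor'' check. A Bochner--Martinelli kernel gives the same class, but it needs the $(d-1)!$ \v{C}ech--Dolbeault bookkeeping that the product formula avoids.

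Where you genuinely differ is the trace statement. The paper computes nothing analytically there. Once $[(2\pi i)^d\delta_t]$ is identified with the analytification of the algebraic local class, Theorem~\ref{thm: function to equiv coh class} (from \cite[Section~2.6]{KS_2025}) says the composite is the trace, so each residue-one class goes to $1$. You instead recompute it through Proposition~\ref{prop:single-point-current-equation} and $\int\vol_g=(2\pi i)^d$, which usefully ties the residue normalization to the Fourier normalization of $\vol_g$. However, the caveat you leave open is that the identification of Corollary~\ref{cor: vanishing of equiv coh} is $(2\pi i)^{-d}\int$ on $H^d(\mathcal A(\C),\Omega^d)$ in these conventions. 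Given your Step~2, that is equivalent to ``a residue-one local class has trace $1$.'' So your Step~3 is a consistency check resting on the same input the paper cites, not an independent proof of it.

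Two smaller corrections.

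In Step~1, the canonical splitting exists only along $\mathcal D$, so comparing it with $\alpha_m$ ``on a small ball'' has no meaning. Also, $u$ is not holomorphic, since $\nabla''u=\nu^{0,1}$. What you actually need is only that the residue symbol is killed by the maximal ideal at $t$. Then $s\otimes[\mathrm{res}]$ depends on $s(t)$ alone, and any local holomorphic extension of the canonical unit will do.

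In Step~3, the coefficient of $\delta_t$ in Proposition~\ref{prop:single-point-current-equation} is the $\alpha_m$-unit $u(t)$, not a priori the canonical-splitting unit. Moreover, $\phi_{g,t}$ is $\Gamma$-equivariant only for $\Gamma$-fixed $t$, so the relation $[(2\pi i)^d\delta_t]=[\vol_g]$ holds equivariantly only for $\Gamma$-invariant combinations. Both issues disappear if you first apply the augmentation $\widehat{\mathcal P}\to\mathcal O_{\mathcal A}$, through which, as you note, the identification with $\C$ factors. Both units map to $1$ there, and $\int(2\pi i)^d\delta_t=(2\pi i)^d$ directly, with no need for $\vol_g$.
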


\begin{proof}
Fix $t\in\mathcal D(\C)$, choose a lift to $\C^d$, and take
holomorphic coordinates $u_i=z_i-t_i$ on a sufficiently small
polydisc centered at $t$.
The standard Grothendieck residue class
$\left[du_1\wedge\cdots\wedge du_d/(u_1\cdots u_d)\right]$
has residue $1$ at $t$.
In the Dolbeault-current realization, it is represented by the
ordered product of the one-variable residue currents.
With our normalization,
\[
  \overline{\partial}\left(\frac{1}{u_i}\right)\wedge du_i
  =
  2\pi i\,\delta_{u_i=0}.
\]
Taking the ordered product over all coordinates therefore gives
\[
  \bigwedge_{i=1}^d
  \left(
    \overline{\partial}\left(\frac{1}{u_i}\right)\wedge du_i
  \right)
  =
  (2\pi i)^d\delta_t.
\]
The canonical splitting of the completed Poincaré bundle along
$\mathcal D$ identifies this residue-one class with the image of
$[t]\in\C[\mathcal D]$ under the inclusion into local cohomology.
This construction is compatible with the $\Gamma$-action and hence
induces $\operatorname{cl}_{\mathcal D}$ on $\Gamma$-invariants;
in particular,
$\operatorname{cl}_{\mathcal D}(f)
=[(2\pi i)^d\delta_f]$.
By Theorem~\ref{thm: function to equiv coh class}, the composite with the
map to global cohomology is the trace map; see
\cite[Section~2.6]{KS_2025}. It sends each residue-one local class to
$1$. Thus $\operatorname{cl}_{\mathcal D}(f)$ maps to
$\sum_{t\in\mathcal D(\C)}f(t)=0$.
\end{proof}

The differential equation and this residue normalization now
identify the Fourier current with the analytic realization of the
equivariant coherent Eisenstein--Kronecker class.

% Identification with the Eisenstein--Kronecker class:

\begin{thm}\label{prop:functional equation for currents}
The restriction
\[
  \left.\phi_{g,f}\right|_{\mathcal U_{\mathcal D}(\C)\times S_\infty}
  =
  \left(
    \left.\phi_{g,f}^{(n)}\right|_{\mathcal U_{\mathcal D}(\C)\times S_\infty}
  \right)_{n\geq0}
\]
is a compatible pro-system of smooth, $\Gamma$-equivariant,
$\nabla''$-closed $(d,d-1)$-forms.
Under the Dolbeault realization established above, it defines a class
\[
  [\phi_{g,f}]_{\mathrm{coh}}
  \in
  H^{d-1}\left(
    \mathcal U_{\mathcal D}(\C),\Gamma;
    j^{-1}\bigl(
      \widehat{\mathcal P}^{\mathrm{an}}
      \otimes\Omega^d_{\mathcal A(\C)}
    \bigr)
  \right).
\]
This class coincides with the equivariant coherent
Eisenstein--Kronecker class:
\[
  [\phi_{g,f}]_{\mathrm{coh}}=EK_{\Gamma,\mathcal A}(f).
\]
\end{thm}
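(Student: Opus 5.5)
The proof splits into two parts: first showing that the restricted pro-system is a legitimate smooth closed representative, and then identifying its class through the localization sequence \eqref{eq: localization for Eisenstein-Kronecker classes}, whose connecting map $\partial$ is injective.

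\textbf{Smoothness and closedness.} By Theorem~\ref{thm:differential-equation-ek-current}, $\nabla''\phi^{(n)}_{g,f}=(2\pi i)^d\delta_f$. This current is supported on $\mathcal D(\C)\times S_\infty$, so each $\phi^{(n)}_{g,f}$ is $\nabla''$-closed on $\mathcal U_{\mathcal D}(\C)\times S_\infty$. For smoothness, I would first treat the $z$-direction. In each vertical bidegree, the coefficient of $\phi^{(n)}_{g,f}$ is, by the explicit formula after Definition~\ref{def:single-point-ek-current}, a Fourier series whose coefficients are polynomial in $\overline l_g$ divided by powers of $\|l_g\|_g^2$. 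Each such series is exactly a special value, at $s=b+j+1$, of a dual Eisenstein--Kronecker series $K^\mu(H_g,\cdot,z-t,s,\Lambda_g^*)$. Applying the functional equation of Proposition~\ref{prop:functional equation of Eisenstein-Kronecker series} converts it into a series over $\Lambda$ centred at $z-t$, which is real-analytic in $z\notin t+\Lambda$. For the dependence on $g$, I would use the theta-integral representation: it gives local uniform convergence in $H_g$, hence joint smoothness on $\mathcal U_{\mathcal D}(\C)\times S_\infty$. When the series is not absolutely convergent, one continues analytically in $s$; Corollary~\ref{cor:special value EK series at zero} shows that the only possible pole, at $s=d$ with $|\mu|=0$, does not occur here because the $l_g=0$ mode has been removed. $\Gamma$-equivariance and compatibility with the transition maps follow from Theorem~\ref{thm:differential-equation-ek-current}. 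Theorem~\ref{thm: Dolbeault realization} and Corollary~\ref{cor_8} then provide the class $[\phi_{g,f}]_{\mathrm{coh}}$.

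\textbf{Identification with the algebraic class.} Since $\partial$ is injective, it suffices to compute $\partial[\phi_{g,f}]_{\mathrm{coh}}$ in the current model and show it equals $\operatorname{cl}_{\mathcal D}(f)$. The connecting map $H^{d-1}(\mathcal U_{\mathcal D})\to H^d_{\mathcal D}(\mathcal A)$ is computed as follows. One represents local cohomology by the cone of restriction from currents on $\mathcal A(\C)\times S_\infty$ to forms on the complement. One then extends the closed form on $\mathcal U_{\mathcal D}$ to a current on the whole space — here the global current $\phi_{g,f}$ itself — and takes $\nabla''$ of that extension. This yields the class of $(2\pi i)^d\delta_f$ with support in $\mathcal D$, which is $\operatorname{cl}_{\mathcal D}(f)$ by Lemma~\ref{lem:trace-residue-normalization}. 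Two further points must be checked in this step.

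\emph{Unit coefficient.} In the $\mathcal C^\infty$-trivialization of Theorem~\ref{thm: compare P_n and TSym with connection}, the coefficient of $\delta_t$ is the degree-zero unit. This must agree with the unit under the canonical splitting of $i^*\widehat{\mathcal P}$ along $\mathcal D$. I would deduce this from the normalization of $\alpha_m$ at $e$, together with the translation-invariance of the trivialization: on torsion points, the splitting principle of Corollary~\ref{cor: splitting principle} is induced by the isogeny functoriality, which respects the smooth splitting because $\nu^{0,1}$ is translation invariant.

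\emph{Sign conventions.} The signs of the current differential and of the connecting map must match the algebraic localization sequence. I would fix these once by checking the case $\mathcal D=\{t\}$ against the Grothendieck residue in Lemma~\ref{lem:trace-residue-normalization}.

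Finally, compatibility with the algebraic-to-analytic comparison (GAGA for the equivariant cohomology with support, via the Borel model of Proposition~\ref{prop: equiv coh via Borel}) ensures that the algebraic $\partial$ and the analytic one agree. Passing to the inverse limit via \eqref{eq: completed analytic inverse limit} then gives $[\phi_{g,f}]_{\mathrm{coh}}=EK_{\Gamma,\mathcal A}(f)_\C$.

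\textbf{Main obstacle.} I expect the hardest step to be the rigorous smoothness away from $\mathcal D$, jointly in $S_\infty$. The Fourier series converge only distributionally, so smoothness has to be extracted from the functional equation uniformly in $g$. I would try the theta-integral representation first, since it gives uniform control in $H_g$ directly. The unit-coefficient check above is the second delicate point.
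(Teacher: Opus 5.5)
Your proposal is correct and follows the paper's proof. Closedness on $\mathcal U_{\mathcal D}(\C)\times S_\infty$ comes from $\nabla''\phi_{g,f}=(2\pi i)^d\delta_f$, and then $\partial[\phi_{g,f}]_{\mathrm{coh}}$ is computed by taking $\phi_{g,f}$ itself as the current extension, identifying $[(2\pi i)^d\delta_f]$ with $\operatorname{cl}_{\mathcal D}(f)$ via Lemma~\ref{lem:trace-residue-normalization}, and concluding by injectivity of $\partial$. Your functional-equation argument for smoothness and your unit-coefficient and sign checks only make explicit points the paper asserts without detail, with one small correction: there is no pole at $s=b+j+1$ because $|\mu|=b+j+1\geq1$, not because the zero mode was omitted.
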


\begin{proof}
By Theorem~\ref{thm:differential-equation-ek-current},
$\nabla''\phi_{g,f}^{(n)}=(2\pi i)^d\delta_f$.
Since $\delta_f$ is supported on $\mathcal D(\C)$, the restriction
of each $\phi_{g,f}^{(n)}$ to
$\mathcal U_{\mathcal D}(\C)\times S_\infty$ is $\nabla''$-closed.
The defining Fourier expansion is smooth away from
$\mathcal D(\C)$, and equivariance and compatibility with the
transition morphisms were proved above.
Corollary~\ref{cor_8}, together with the inverse-limit identification
of Theorem~\ref{thm: function to equiv coh class}, therefore gives
the class $[\phi_{g,f}]_{\mathrm{coh}}$ in the stated cohomology group.

In the current realization, the localization connecting morphism
$\partial$ of
\eqref{eq: localization for Eisenstein-Kronecker classes}
is computed by extending a closed form on
$\mathcal U_{\mathcal D}(\C)\times S_\infty$ to a current on
$\mathcal A(\C)\times S_\infty$ and applying $\nabla''$.
The pro-current $\phi_{g,f}$ supplies such an extension, so
Lemma~\ref{lem:trace-residue-normalization} gives
\[
  \partial\bigl([\phi_{g,f}]_{\mathrm{coh}}\bigr)
  =
  [\nabla''\phi_{g,f}]
  =
  [(2\pi i)^d\delta_f]
  =
  \operatorname{cl}_{\mathcal D}(f).
\]
The injectivity of the localization morphism and the characterization
of the Eisenstein--Kronecker class in
Theorem~\ref{thm: function to equiv coh class} imply
\(
  [\phi_{g,f}]_{\mathrm{coh}}=EK_{\Gamma,\mathcal A}(f).
\)
\end{proof}

\subsection{The Bianchi realization in dimension two}

We now specialize the preceding construction to the case $g=1$ and
$n=2$. Thus $K/\Q$ is imaginary quadratic, $L/K$ is quadratic, and
$d=2$. Under the Hermitian realization
$S_\infty=\GL_2(\C)/U(2)\cong\mathrm{Herm}_2^+$, the determinant-one
part is identified with
\[
  \mathrm{PGL}_2(\C)/\mathrm{PU}(2)
  \cong
  \mathrm{SL}_2(\C)/\mathrm{SU}(2)
  \cong
  \mathbb H^3,
\]
where $\mathbb H^3=\{(u,\xi)\in\C\times\R_{>0}\}$ is the Bianchi
upper half-space. Hence
$S_\infty\cong\mathbb H^3\times\R_{>0}$, with the second factor
recorded by $\det(H_g)$. More precisely, the Hermitian form associated
with $g\in S_\infty$ has the unique expression
\[
  H_g
  =
  \sqrt{\det(H_g)}
  \begin{bmatrix}
    \dfrac{1}{\xi}
    &
    -\dfrac{u}{\xi}
    \\[6pt]
    -\dfrac{\overline u}{\xi}
    &
    \dfrac{|u|^2+\xi^2}{\xi}
  \end{bmatrix}.
\]

\begin{lem}
With respect to the fixed $\mathcal O_K$-basis of $\mathcal O_L$, let
\[
  \gamma
  =
  \begin{bmatrix}
    \gamma_{11} & \gamma_{12} \\
    \gamma_{21} & \gamma_{22}
  \end{bmatrix}
  \in
  \Gamma
  \subset
  \GL_{\mathcal O_K}(\mathcal O_L)
  \cong
  \GL_2(\mathcal O_K).
\]
The action on Hermitian forms is given by
\[
  H_{\gamma\cdot g}
  =
  (\gamma^{-1})^*H_g\gamma^{-1}.
\]
Since $\det(\gamma)\in\mathcal O_K^\times$ and $K$ is imaginary
quadratic, the determinant coordinate satisfies
\[
  \det(H_{\gamma\cdot g})
  =
  |\det(\gamma)|^{-2}\det(H_g)
  =
  \det(H_g).
\]
Writing $(u',\xi')$ for the Bianchi coordinates of
$H_{\gamma\cdot g}$, one has
\begin{align*}
  \xi'
  &=
  \frac{\xi}
  {|\gamma_{21}u+\gamma_{22}|^2+|\gamma_{21}|^2\xi^2},
  \\
  u'
  &=
  \frac{
    (\gamma_{11}u+\gamma_{12})
    \overline{(\gamma_{21}u+\gamma_{22})}
    +
    \gamma_{11}\overline{\gamma_{21}}\xi^2
  }{
    |\gamma_{21}u+\gamma_{22}|^2+|\gamma_{21}|^2\xi^2
  }.
\end{align*}
The corresponding complexified coframe transforms by
\[
  \gamma^*
  \begin{bmatrix}
    \dfrac{du}{\xi} \\[4pt]
    \dfrac{d\overline u}{\xi} \\[4pt]
    \dfrac{d\xi}{\xi}
  \end{bmatrix}
  =
  \begin{bmatrix}
    \dfrac{du'}{\xi'} \\[4pt]
    \dfrac{d\overline{u'}}{\xi'} \\[4pt]
    \dfrac{d\xi'}{\xi'}
  \end{bmatrix}
  =
  M
  \begin{bmatrix}
    \dfrac{du}{\xi} \\[4pt]
    \dfrac{d\overline u}{\xi} \\[4pt]
    \dfrac{d\xi}{\xi}
  \end{bmatrix},
\]
where
\[
  \begin{aligned}
    M
    &=
    \frac{1}{
      |\gamma_{21}u+\gamma_{22}|^2+|\gamma_{21}|^2\xi^2
    }
    \begin{bmatrix}
      \det(\gamma)
      \overline{(\gamma_{21}u+\gamma_{22})}^{\,2}
      &
      -\det(\gamma)\overline{\gamma_{21}}^{\,2}\xi^2
      &
      2\det(\gamma)\overline{\gamma_{21}}\,
      \overline{(\gamma_{21}u+\gamma_{22})}\xi
      \\[6pt]
      -\overline{\det(\gamma)}\gamma_{21}^2\xi^2
      &
      \overline{\det(\gamma)}(\gamma_{21}u+\gamma_{22})^2
      &
      2\overline{\det(\gamma)}\gamma_{21}
      (\gamma_{21}u+\gamma_{22})\xi
      \\[6pt]
      -\gamma_{21}\overline{(\gamma_{21}u+\gamma_{22})}\xi
      &
      -(\gamma_{21}u+\gamma_{22})\overline{\gamma_{21}}\xi
      &
      |\gamma_{21}u+\gamma_{22}|^2-|\gamma_{21}|^2\xi^2
    \end{bmatrix}.
  \end{aligned}
\]
This matrix belongs to $SO(B)(\C)$, where
\[
  B
  =
  \begin{pmatrix}
    0 & \frac12 & 0 \\
    \frac12 & 0 & 0 \\
    0 & 0 & 1
  \end{pmatrix},
  \qquad
  M^tBM=B,
  \qquad
  \det(M)=1.
\]
The determinant-direction form is invariant:
\[
  \gamma^*
  \left(
    \frac{d\det(H_g)}{\det(H_g)}
  \right)
  =
  \frac{d\det(H_g)}{\det(H_g)}.
\]
\end{lem}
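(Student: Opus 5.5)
The argument is a direct matrix computation organized around the parametrization of $H_g$. The formula $H_{\gamma\cdot g}=(\gamma^{-1})^*H_g\gamma^{-1}$ is the specialization of the action $\gamma\cdot H_g=(\gamma_\infty^{-1})^*H_g\gamma_\infty^{-1}$ recorded in the Hermitian realization of $S_\infty$, with $g=1$ so that $\gamma_\infty=\sigma(\gamma)$. Taking determinants gives $\det(H_{\gamma\cdot g})=|\det\gamma|^{-2}\det(H_g)$. Since $\det\gamma\in\mathcal O_K^\times$ is a root of unity for $K$ imaginary quadratic, this equals $\det(H_g)$. The invariance of $d\det(H_g)/\det(H_g)$ follows at once.

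For the Bianchi coordinates, I will use the factorization
\[
\frac{H_g}{\sqrt{\det H_g}}=P^*P,\qquad P=\begin{bmatrix}\xi^{-1/2}&-u\,\xi^{-1/2}\\0&\xi^{1/2}\end{bmatrix},
\]
which one checks directly against the displayed normal form. Then $H_{\gamma\cdot g}/\sqrt{\det}=(P\gamma^{-1})^*(P\gamma^{-1})$. Since the determinant factor is unchanged, $u'$ and $\xi'$ can be read off from the entries of the normalized form. The $(1,1)$ entry gives $1/\xi'$, and the $(1,2)$ entry gives $-u'/\xi'$. With $\gamma^{-1}=\det(\gamma)^{-1}\begin{bmatrix}\gamma_{22}&-\gamma_{12}\\-\gamma_{21}&\gamma_{11}\end{bmatrix}$ and $|\det\gamma|=1$, the first column of $P\gamma^{-1}$ has squared norm
\[
\frac{|\gamma_{22}+\gamma_{21}u|^2+|\gamma_{21}|^2\xi^2}{\xi}.
\]
This yields the formula for $\xi'$. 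The $(1,2)$ entry similarly produces the stated numerator for $u'$, up to the unimodular factor $\det\gamma$, which cancels because $\gamma$ and $\overline\gamma$ appear in conjugate pairs. This is the usual Möbius action of $\mathrm{SL}_2(\C)$ on $\mathbb H^3$, twisted by the identification $H\mapsto(\gamma^{-1})^*H\gamma^{-1}$. Alternatively, I could check the formulas on the generators
\[
\begin{pmatrix}a&0\\0&d\end{pmatrix},\qquad \begin{pmatrix}1&b\\0&1\end{pmatrix},\qquad \begin{pmatrix}0&1\\1&0\end{pmatrix},
\]
using that both sides define group actions.

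For the coframe, write $D:=|\gamma_{21}u+\gamma_{22}|^2+|\gamma_{21}|^2\xi^2$ and differentiate $u'$ and $\xi'$. Rather than grinding the full Jacobian, I will again reduce to generators. For diagonal and unipotent $\gamma$, $M$ is diagonal or the identity: $du'/\xi'=(a/d)\,du/\xi$, and $a/d=\det(\gamma)\overline{d}^{\,2}$ because $|d|=1$. For the inversion-type element, the computation is the standard one on $\mathbb H^3$. The general matrix $M$ then follows from the cocycle property, since $\gamma\mapsto M(\gamma,(u,\xi))$ is a cocycle because pullback is functorial. It remains to check that the displayed closed formula satisfies the same cocycle identity, or else to verify it directly by differentiating the closed-form $u'$ and $\xi'$. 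I expect this to be the main obstacle, mostly bookkeeping.

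Finally, $M^tBM=B$ expresses that the quadratic form
\[
\frac{du\,d\overline u+d\xi^2}{\xi^2}=\Bigl(\frac{du}{\xi}\Bigr)\Bigl(\frac{d\overline u}{\xi}\Bigr)+\Bigl(\frac{d\xi}{\xi}\Bigr)^2,
\]
which is the hyperbolic metric, is $\gamma$-invariant. This holds because $\gamma$ acts by isometries of $\mathbb H^3$. I can also verify it from the entries: the row relations reduce to $|\det\gamma|=1$ and the identity $D^2=(|c u+d|^2-|c|^2\xi^2)^2+4|c|^2|cu+d|^2\xi^2$. The condition $\det M=1$ follows from $\det M=\pm1$ together with connectedness. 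The set $\Gamma$ is not connected, but $M$ extends continuously to $\mathrm{GL}_2(\C)$ with $|\det|=1$, which is connected, and $M=I$ at the identity. The same conclusion also follows from orientation preservation, as in Remark~\ref{rem_3}.
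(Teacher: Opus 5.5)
Your proposal is correct and follows essentially the same route as the paper: read off $u',\xi'$ from $(\gamma^{-1})^*H_g\gamma^{-1}$, obtain $M$ by differentiating, and deduce $M^tBM=B$ and $\det M=1$ from the isometric, orientation-preserving Möbius action. Your factorization $H_g/\sqrt{\det H_g}=P^*P$ is a clean way to organize the first step, and the resulting formulas check out.

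One caution: if you use the generator/cocycle shortcut for $M$, run it in $\{g\in\GL_2(\C):|\det g|=1\}$, where the diagonal case gives $a\overline d$ rather than $a/d$. For non-Euclidean $K$, the group $\GL_2(\mathcal O_K)$ is not generated by diagonal and elementary matrices, so restricting to it would not suffice; the direct differentiation, as in the paper, is short and avoids this issue.
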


\begin{proof}
The Hermitian realization of $S_\infty$ gives
\[
  H_{\gamma\cdot g}
  =
  ((\gamma g)^{-1})^*H_0(\gamma g)^{-1}
  =
  (\gamma^{-1})^*H_g\gamma^{-1}.
\]
Taking determinants yields the factor $|\det(\gamma)|^{-2}$,
which equals $1$ because $\det(\gamma)$ is a unit of the imaginary
quadratic field $K$. Substituting the Bianchi expression for $H_g$
into $(\gamma^{-1})^*H_g\gamma^{-1}$ gives the displayed formulas
for $u'$ and $\xi'$. These are the usual Möbius transformation
formulas on $\mathbb H^3$, and differentiating them gives the
matrix $M$. The hyperbolic metric is
\[
  ds_{\mathbb H^3}^2
  =
  \frac{du\,d\overline u+d\xi^2}{\xi^2},
\]
whose Gram matrix in the complexified coframe
$du/\xi,d\overline u/\xi,d\xi/\xi$ is $B$. Since the Möbius
action of $\mathrm{PGL}_2(\C)$ is isometric and
orientation-preserving, its coframe transformation satisfies
$M^tBM=B$ and $\det(M)=1$. Finally, the equality
$\det(H_{\gamma\cdot g})=\det(H_g)$ implies
\[
  \gamma^*
  \left(
    \frac{d\det(H_g)}{\det(H_g)}
  \right)
  =
  \frac{d\det(H_{\gamma\cdot g})}{\det(H_{\gamma\cdot g})}
  =
  \frac{d\det(H_g)}{\det(H_g)}.
\]
\end{proof}

For $u=x+iy$, the real coframe $dx/\xi,dy/\xi,d\xi/\xi$ is
orthonormal for the hyperbolic metric, and its transformation
matrix belongs to $SO(3,\R)$. The terminology
``$\Gamma$-equivariant coframe'' does not mean that
$du/\xi,d\overline u/\xi,d\xi/\xi$ are individually
$\Gamma$-invariant: their span is preserved, with action described
by the automorphy matrix $M$, whereas
$d\det(H_g)/\det(H_g)$ is invariant. In this sense,
\[
  \left\{
    \frac{du}{\xi},
    \frac{d\overline u}{\xi},
    \frac{d\xi}{\xi},
    \frac{d\det(H_g)}{\det(H_g)}
  \right\}
\]
is a $\Gamma$-equivariant complexified coframe on $S_\infty$.
We next use these coordinates to describe the variation of the
dual lattice vectors and compute the specialized moment form.

Recall that $x$ is a $\Gamma$-invariant torsion section with image
in $\mathcal U_{\mathcal D}(\C)$.

\begin{prop}\label{prop: explicit moment class in dimension two}
For every $b\geq0$, the differential form
\[
  \psi_g^b(f,x)
  \in
  H^0\left(
    S_\infty,
    \mathrm{TSym}^b(\omega_{\mathcal A^\vee/\C})
    \otimes
    \omega_{\mathcal A/\C}^2
    \otimes
    \mathcal E^1_{S_\infty}
  \right)^\Gamma
\]
given by
\begin{align*}
  \psi_g^b(f,x)
  &=
  \det(H_g)
  \sum_{t\in\mathcal D(\C)}f(t)
  \sum_{|\beta|=b}
  \Bigg[
    \Bigg(
      \frac{1}{\xi}
      K^{\beta+(2,0)}(H_g,x-t,0,2,\Lambda)
      -
      2\frac{\overline u}{\xi}
      K^{\beta+(1,1)}(H_g,x-t,0,2,\Lambda)
  \\
  &\qquad\qquad\qquad\qquad
      +
      \frac{\overline u^{\,2}}{\xi}
      K^{\beta+(0,2)}(H_g,x-t,0,2,\Lambda)
    \Bigg)
    \frac{du}{\xi}
    -
    \xi K^{\beta+(0,2)}(H_g,x-t,0,2,\Lambda)
    \frac{d\overline u}{\xi}
  \\
  &\qquad\qquad\qquad
    +
    \Big(
      -2K^{\beta+(1,1)}(H_g,x-t,0,2,\Lambda)
      +
      2\overline u
      K^{\beta+(0,2)}(H_g,x-t,0,2,\Lambda)
    \Big)
    \frac{d\xi}{\xi}
  \Bigg]
 \otimes
  \overline e^{[\beta]}
  \otimes
  \bigwedge_{i=1}^2dz_i
\end{align*}
is closed, and its cohomology class satisfies
\[
  \operatorname{mom}_x^b
  \bigl(x^*EK_{\Gamma,\mathcal A}(f)\bigr)
  =
  [\psi_g^b(f,x)]
  \in
  H^1\left(
    \Gamma,
    \mathrm{TSym}^b(\omega_{\mathcal A^\vee/\C})
    \otimes
    \omega_{\mathcal A/\C}^2
  \right).
\]
\end{prop}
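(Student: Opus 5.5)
The proof specializes Theorem~\ref{prop:functional equation for currents} to $d=2$, pulls it back along $x$, and applies the moment map. By that theorem, $EK_{\Gamma,\mathcal A}(f)_\C$ is represented on $\mathcal U_{\mathcal D}(\C)\times S_\infty$ by the smooth pro-system $\phi_{g,f}$. Because $x$ is $\Gamma$-invariant and factors through $\mathcal U_{\mathcal D}$, the map $x\times\mathrm{id}\colon S_\infty\to\mathcal U_{\mathcal D}(\C)\times S_\infty$ is $\Gamma$-equivariant. Hence $x^*\phi_{g,f}$ is a $\Gamma$-invariant, $d_{S_\infty}$-closed form. It has bidegree $(2,1)$ in the mixed grading, and since $x^*dz_i$ is just the fibre value, only the part of $\phi_{g,f}$ with no $d\overline z_i$ factor survives; that part is the $j=d-1=1$ term. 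By Proposition~\ref{prop: equiv coh via Borel}, the de Rham complex of $\Gamma$-invariant forms on the contractible $S_\infty$ computes group cohomology. Together with Theorem~\ref{thm: compare P_n and TSym with connection}, which identifies $e^*\alpha_m$ with the moment map, and the torsion splitting at $x$, this shows that $\operatorname{mom}^b_x(x^*EK)$ is represented by the degree-$b$ component of $x^*\phi^{(n)}_{g,f}$ for any $n\ge b$.

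The one point needing care is that pulling back along $x$ rather than $e$ introduces translation by $x$. I would handle it by translating the $C^\infty$-trivialization: the rigidification-normalized trivialization at $x$ differs from the one at $e$ by the character $\chi$ evaluated at $x$, and the splitting principle (Corollary~\ref{cor: splitting principle}) says this factor is trivial on the relevant Fourier modes. The same argument as in the last paragraph of the proof of Theorem~\ref{thm: compare P_n and TSym with connection} then gives the moment identification.

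Next I compute the form explicitly. By Proposition~\ref{prop_1} with $d=2$, only $j=1$ contributes, and
\[
\iota_{\tilde l_g}L_{\tilde l_g}\vol_g=-\frac{\pi^2\det(H_g)}{\mathrm{covol}_g(\Lambda)}\,dz_1\wedge dz_2\wedge\bigl((\overline l_g)_1\,d(\overline l_g)_2-(\overline l_g)_2\,d(\overline l_g)_1\bigr).
\]
Substituting this into the expansion of Definition~\ref{def:single-point-ek-current} gives a lattice sum over $\Lambda_g^*\setminus\{0\}$ with weights $\tilde l_g^{[b]}/(\pi\|l_g\|_g^2)^{b+2}$ and character $\chi_{l_g}(x-t)$. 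I then parametrize $l_g=H_g^{-1}$ (times a fixed matrix) applied to $l\in\Lambda^*$, so that $\Lambda_g^*$ is expressed through $\Lambda$ and $H_g$, and compute $d_{S_\infty}\overline l_g$ in the Bianchi coordinates $(u,\xi,\det H_g)$ from the explicit formula for $H_g$. The $d\det(H_g)$ contributions should cancel in the antisymmetric combination, because $\det(H_g)$ enters homogeneously. This leaves the coframe $du/\xi$, $d\overline u/\xi$, $d\xi/\xi$, with polynomial coefficients in $\overline u,\xi$ of degree $\le 2$ multiplying monomials of degree $2$ in $\overline l$. Finally I apply the functional equation of Proposition~\ref{prop:functional equation of Eisenstein-Kronecker series}, exchanging $(z,w,\Lambda_g^*)$ with $(w,z,\Lambda)$, to convert the $\Lambda_g^*$-sums of the shape $\overline l^{\beta+\mu}/\|l\|^{2(b+2)}\,\chi_l(x-t)$ into $K^{\beta+\mu}(H_g,x-t,0,2,\Lambda)$. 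The shift $s\mapsto d+|\mu|-s$ with $|\mu|=b+2$ sends $s=b+2$ to $s=2$, and the factors $\Gamma(b+2)$, $\pi^{b+2}$ and the covolume cancel against the prefactors. Since $|\mu|>0$, Corollary~\ref{cor:special value EK series at zero} rules out residue terms, so the excluded zero mode causes no correction.

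Closedness does not need a separate computation: $x^*\phi_{g,f}$ is $\nabla''$-closed, and the pullback of $\nu^{0,1}$ to $S_\infty$ vanishes, so the form is $d$-closed. $\Gamma$-invariance follows from Theorem~\ref{thm:differential-equation-ek-current}.

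The main obstacle is the coordinate bookkeeping in the final step: expressing $d(\overline l_g)_i$ in the Bianchi coframe, checking that the $d\det H_g$ terms cancel, and matching the exponents $\beta+(2,0)$, $\beta+(1,1)$, $\beta+(0,2)$ with the $1/\xi$, $\overline u/\xi$, $\overline u^2/\xi$ coefficients. A secondary obstacle is interchanging the Fourier sum with pullback along $x$. This I would justify by first working at $\operatorname{Re}(s)$ large, where the sums converge absolutely, and then continuing analytically using Proposition~\ref{prop:functional equation of Eisenstein-Kronecker series}.
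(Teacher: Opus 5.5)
Your proposal follows the paper's proof essentially step for step: represent the class by $\phi_{g,f}$, truncate at $n\ge b$, keep only the $j=1$ term after pulling back along $x$, apply Proposition~\ref{prop_1}, parametrize $l_g=H_g^{-1}l$ and let the $d\det(H_g)$ terms cancel, then pass from $s=b+2$ on $\Lambda_g^*$ to $s=2$ on $\Lambda$ by the functional equation. The one divergence is your paragraph on translation by $x$, where the mechanism you state is inaccurate: no character factor arises at all, because the smooth trivialization $\alpha_n$ is global on $\mathcal A(\C)$ and the isogeny map $\varphi_\#$ is the constant map in these trivializations, so $x^*\alpha_n=\mathrm{mom}_{x,n}$ directly, which is what the paper uses implicitly.
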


\begin{proof}
By Theorem~\ref{prop:functional equation for currents}, the
pro-system $\phi_{g,f}$ represents $EK_{\Gamma,\mathcal A}(f)$.
Since the degree-$b$ moment factors through every finite
truncation level $n\geq b$, the specialized moment is represented
by $\operatorname{mom}_{x,n}^b(x^*\phi_{g,f}^{(n)})$.
In dimension $d=2$, the degree in the current resolution is one.
Specialization along $x$ kills the antiholomorphic fibre
differentials, so only the component containing one
$S_\infty$-differential survives:
\[
  x^*\left(
    \iota_{\widetilde l_g}
    L_{\widetilde l_g}^{k}\vol_g
  \right)
  =
  0
  \qquad\text{for }k\neq1.
\]
Evaluating this component and applying the moment map gives
\[
  \begin{aligned}
    &\frac{\Gamma(b+2)\pi^2\det(H_g)}
    {\operatorname{covol}_g(\Lambda)}
    \sum_{t\in\mathcal D(\C)}f(t)
    \sum_{l_g\in\Lambda_g^*\setminus\{0\}}
    \frac{\widetilde l_g^{[b]}}
    {(\pi\|l_g\|_g^2)^{b+2}}
    \chi_{l_g}(x-t)
   \otimes
    \bigwedge_{i=1}^2dz_i
    \otimes
    \left(
      (\overline l_g)_1
      d_{S_\infty}(\overline l_g)_2
      -
      (\overline l_g)_2
      d_{S_\infty}(\overline l_g)_1
    \right).
  \end{aligned}
\]

To compute the differential factor, use the identification
$\Lambda^*\xrightarrow{\sim}\Lambda_g^*$,
$l\mapsto l_g:=H_g^{-1}l$.
For $l=(l_1,l_2)^t$, inversion of the Bianchi matrix gives
\[
  (l_g)_1
  =
  \frac{(|u|^2+\xi^2)l_1+ul_2}
  {\sqrt{\det(H_g)}\,\xi},
  \qquad
  (l_g)_2
  =
  \frac{\overline u l_1+l_2}
  {\sqrt{\det(H_g)}\,\xi}.
\]
In particular,
$(l_g)_1-u(l_g)_2=\xi l_1/\sqrt{\det(H_g)}$.
Keeping $l\in\Lambda^*$ fixed and differentiating the complex
conjugates with respect to $S_\infty$ yields
\begin{align*}
  d_{S_\infty}(\overline l_g)_1
  &=
  \frac{
    \overline u
    \bigl(
      (\overline l_g)_1-\overline u(\overline l_g)_2
    \bigr)
  }{\xi}
  \frac{du}{\xi}
  +
  \xi(\overline l_g)_2
  \frac{d\overline u}{\xi}
  +
  \bigl(
    (\overline l_g)_1-2\overline u(\overline l_g)_2
  \bigr)
  \frac{d\xi}{\xi}
  -
  \frac12(\overline l_g)_1
  \frac{d\det(H_g)}{\det(H_g)},
  \\
  d_{S_\infty}(\overline l_g)_2
  &=
  \frac{
    (\overline l_g)_1-\overline u(\overline l_g)_2
  }{\xi}
  \frac{du}{\xi}
  -
  (\overline l_g)_2
  \frac{d\xi}{\xi}
  -
  \frac12(\overline l_g)_2
  \frac{d\det(H_g)}{\det(H_g)}.
\end{align*}
In the alternating combination occurring above, the two
contributions involving $d\det(H_g)/\det(H_g)$ cancel, leaving
\[
  \begin{aligned}
    (\overline l_g)_1
    d_{S_\infty}(\overline l_g)_2
    -
    (\overline l_g)_2
    d_{S_\infty}(\overline l_g)_1
    =
    \frac{
      \bigl(
        (\overline l_g)_1-\overline u(\overline l_g)_2
      \bigr)^2
    }{\xi}
    \frac{du}{\xi}
    -
    \xi(\overline l_g)_2^2
    \frac{d\overline u}{\xi}
   -
    2(\overline l_g)_2
    \bigl(
      (\overline l_g)_1-\overline u(\overline l_g)_2
    \bigr)
    \frac{d\xi}{\xi}.
  \end{aligned}
\]

The tensor-symmetric coefficient expands as
\[
  \widetilde l_g
  =
  [\overline e_1,\overline e_2]\,\overline l_g,
  \qquad
  \widetilde l_g^{[b]}
  =
  \sum_{|\beta|=b}
  (\overline l_g)^\beta\,
  \overline e^{[\beta]}.
\]
Substituting this expansion and the preceding differential
identity into the moment expression produces the three
monomials $(\overline l_g)^{\beta+\varepsilon}$, where
$\varepsilon\in\{(2,0),(1,1),(0,2)\}$.
By the definition of the Eisenstein--Kronecker series, the
corresponding lattice sums are
\[
  \sum_{l_g\in\Lambda_g^*\setminus\{0\}}
  \frac{(\overline l_g)^{\beta+\varepsilon}}
  {(\pi\|l_g\|_g^2)^{b+2}}
  \chi_{l_g}(x-t)
  =
  \frac{
    K^{\beta+\varepsilon}
    (H_g,0,x-t,b+2,\Lambda_g^*)
  }{\pi^{b+2}}.
\]
Since $d=2$ and $|\varepsilon|=2$,
Proposition~\ref{prop:functional equation of Eisenstein-Kronecker series}
gives
\[
  \frac{
    K^{\beta+\varepsilon}(H_g,x-t,0,2,\Lambda)
  }{\pi^2}
  =
  \frac{
    \Gamma(b+2)
    K^{\beta+\varepsilon}
    (H_g,0,x-t,b+2,\Lambda_g^*)
  }{
    \operatorname{covol}_g(\Lambda)\pi^{b+2}
  }.
\]
Applying this identity to the three terms cancels the common
factor $\Gamma(b+2)\pi^2/\operatorname{covol}_g(\Lambda)$ and
gives the stated expression for $\psi_g^b(f,x)$.

Finally, pullback along $x$ and the moment map are
$\Gamma$-equivariant, so $\psi_g^b(f,x)$ is
$\Gamma$-invariant. Since the image of $x$ is disjoint from
$\mathcal D$, the residue current vanishes after specialization.
Thus the resulting form is closed and represents the
specialized moment identified at the beginning of the proof.
\end{proof}

\subsection{Boundary restriction and Fourier constant terms}

We now study the Borel--Serre boundary restriction of the class represented
by $\psi_g^b(f,x)$. We retain the setting $g=1$, $n=2$, and $d=2$, with
$K$ imaginary quadratic and $\Gamma\subseteq\GL_2(\mathcal O_K)$ the
finite-index subgroup fixed above. Since
$\det(\gamma)\in\mathcal O_K^\times$ for every $\gamma\in\Gamma$, one has
$|\det(\gamma)|=1$. Thus the $\Gamma$-action on
$S_\infty\cong\mathbb H^3\times\R_{>0}$ is trivial on the second factor,
and
\[
  \Gamma\backslash S_\infty
  \cong
  (\Gamma\backslash\mathbb H^3)\times\R_{>0}.
\]
The contractibility of $\R_{>0}$ shows that restriction to any fixed
determinant slice is a homotopy equivalence. By Proposition
\ref{prop: explicit moment class in dimension two}, restriction to a
determinant slice identifies the Eisenstein--Kronecker moment class with
\[
  \mathrm{mom}_x^b\bigl(x^*EK_{\Gamma,\mathcal A}(f)\bigr)
  =
  [\psi_g^b(f,x)]
  \in
  H^1\left(\Gamma\backslash\mathbb H^3,\mathrm{TSym}^b(\omega_{\mathcal A^\vee/\C})
  \otimes
  \omega_{\mathcal A/\C}^2\right).
\]

\begin{rem}\rm
For the geometric discussion, we may replace $\Gamma$ by a torsion-free
subgroup $\Gamma'\subseteq\Gamma$ of finite index. By Proposition
\ref{prop:restriction-corestriction}, restriction carries the
Eisenstein--Kronecker class to the corresponding class for $\Gamma'$,
and on cohomology with complex coefficients one has
\[
  \operatorname{cor}_{\Gamma'}^\Gamma
  \circ
  \operatorname{res}_{\Gamma'}^\Gamma
  =
  [\Gamma:\Gamma']\operatorname{id}.
\]
Since the index is invertible over $\C$, normalized corestriction
recovers the original class. We therefore assume $\Gamma$ torsion-free
when discussing the quotient spaces as manifolds. For groups with
torsion, the corresponding cohomology is understood in the equivariant
sense introduced above.
\end{rem}

The visual boundary of $\mathbb H^3$ is $\mathbb P^1(\C)$, with the
standard cusp $\infty$ corresponding to $\xi\to+\infty$ in the
upper half-space model. The proper $K$-rational parabolic subgroups of
$\GL_2$ are precisely the stabilizers of $K$-lines in $K^2$, and hence
are parametrized by $\mathbb P^1(K)$. The cusps of
$\Gamma\backslash\mathbb H^3$ are therefore indexed by the finite set
$\Gamma\backslash\mathbb P^1(K)$. For
$\Gamma=\GL_2(\mathcal O_K)$, this set is naturally identified with
$\operatorname{Cl}(K)$; see
\cite[Chapter~7, Theorem~2.4]{EGM98}.

Fix a representative $c\in\mathbb P^1(K)$ and choose
$\sigma_c\in\GL_2(K)$ with $\sigma_c(\infty)=c$. Let $P_\infty$ be the
upper-triangular parabolic subgroup and $N_\infty$ its unipotent radical,
so that
\[
  P_\infty(K)
  =
  \left\{
    \begin{bmatrix}
      a&z\\
      0&d
    \end{bmatrix}
    \,\middle|\,
    a,d\in K^\times,\ z\in K
  \right\},
  \qquad
  N_\infty(K)
  =
  \left\{
    n(a)=
    \begin{bmatrix}
      1&a\\
      0&1
    \end{bmatrix}
    \,\middle|\,
    a\in K
  \right\}.
\]
Put $P_c:=\sigma_cP_\infty\sigma_c^{-1}$ and
$N_c:=\sigma_cN_\infty\sigma_c^{-1}$, and write
$\Gamma_c:=\Gamma\cap P_c(K)$ and
$\Gamma_{N,c}:=\Gamma\cap N_c(K)$. The groups $P_c$ and $N_c$ depend
only on $c$, although the chosen identification with the standard
parabolic data depends on $\sigma_c$. Conjugation by $\sigma_c$
identifies $N_\infty(\C)\cong(\C,+)$ with $N_c(\C)$, under which
$\Gamma_{N,c}$ is a rank-two lattice. Consequently,
\[
  T_c
  :=
  \Gamma_{N,c}\backslash N_c(\C)
  \cong
  \Gamma_{N,c}\backslash\C
\]
is a compact real two-dimensional torus, where the last expression uses
this identification of the unipotent group with $\C$.

% Boundary geometry:
\begin{prop}\label{prop: Borel--Serre cusp components}
Let $e_c$ be the Borel--Serre boundary component corresponding to $c$.
Then
\[
  e_c
  \cong
  (\Gamma_c/\Gamma_{N,c})\backslash T_c,
\]
where $\Gamma_c/\Gamma_{N,c}$ is finite.
\end{prop}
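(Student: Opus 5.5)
The plan is to apply the general Borel--Serre description of boundary faces to the rank-one group $\mathrm{Res}_{K/\Q}\GL_2$, working on the determinant-one factor $\mathbb H^3$. First conjugate by $\sigma_c$ to reduce to the standard cusp. Since $\sigma_c^{-1}\Gamma_c\sigma_c\subset P_\infty(K)$ and $\sigma_c^{-1}\Gamma_{N,c}\sigma_c\subset N_\infty(K)$, it suffices to treat $c=\infty$ with $\Gamma$ replaced by the conjugate arithmetic group $\sigma_c^{-1}\Gamma\sigma_c$. The conjugate group is commensurable with $\GL_2(\mathcal O_K)$, so the lattice and finiteness statements transfer.

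Next, describe the boundary face concretely. For the minimal parabolic $P_\infty$ of the $\Q$-rank-one group, the geodesic action of $A_{P}\cong\R_{>0}$ on $\mathbb H^3$ is $\xi\mapsto s\xi$. Its quotient $e(P_\infty)=\mathbb H^3/A_P$ is identified with $\C\cong N_\infty(\C)$ via $(u,\xi)\mapsto u$, equivalently with the horosphere $\{\xi=\xi_0\}$. Because $N_\infty(\C)$ acts simply transitively on this horosphere by $u\mapsto u+a$, $e(P_\infty)$ is an $N_\infty(\C)$-torsor. The Borel--Serre face is $e_\infty=\Gamma_\infty\backslash e(P_\infty)$, which is the standard fact that boundary components are indexed by $\Gamma\backslash\mathbb P^1(K)$ with face $\Gamma_c\backslash e(P_c)$. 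Quotienting first by the normal subgroup $\Gamma_{N,\infty}$ gives $\Gamma_{N,\infty}\backslash\C=T_\infty$. The residual action is by $\Gamma_\infty/\Gamma_{N,\infty}$, so $e_\infty\cong(\Gamma_\infty/\Gamma_{N,\infty})\backslash T_\infty$. One checks that this residual action is well defined: an element $\begin{bmatrix}\alpha&z\\0&\delta\end{bmatrix}$ acts on $u$ by $u\mapsto(\alpha u+z)/\delta$. It preserves the horosphere only up to the factor $|\alpha/\delta|$, which equals $1$ because $\alpha,\delta\in\mathcal O_K^\times$. So the action descends to an affine action on $\C$ that normalizes the lattice and hence acts on $T_\infty$.

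Finiteness and the lattice property were essentially established earlier. The diagonal map embeds $\Gamma_c/\Gamma_{N,c}$ into $\mathcal O_K^\times\times\mathcal O_K^\times$, which is finite because $K$ is imaginary quadratic. The group $\sigma_c^{-1}\Gamma_{N,c}\sigma_c$ is a rank-two lattice in $K\subset\C$, as noted in the remark after Proposition~\ref{prop_9}. Hence $T_c$ is compact.

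The main obstacle is the bookkeeping for the extra $\R_{>0}$ determinant factor and for torsion. For the former, restrict to a determinant slice, as in the preceding discussion; $\Gamma$ acts trivially on that factor, so it is only a product and does not affect the boundary face. For torsion, either pass to a torsion-free finite-index subgroup, per the preceding remark, or interpret the quotient by the finite group $\Gamma_c/\Gamma_{N,c}$ as an orbifold quotient. Beyond this, the independence of the choice of $\sigma_c$ follows as in the earlier remark, since different choices differ by elements of $P_\infty(K)$.
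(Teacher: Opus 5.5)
Your proposal is correct and follows essentially the same route as the paper. On the hyperbolic factor the Levi part contributes only a point, so the Borel--Serre face at $c$ is modeled by $N_c(\C)$; dividing by $\Gamma_c$ through its normal subgroup $\Gamma_{N,c}$ gives $(\Gamma_c/\Gamma_{N,c})\backslash T_c$, and finiteness comes from the diagonal embedding into $\mathcal O_K^\times\times\mathcal O_K^\times$. Your explicit check that $|\alpha/\delta|=1$, which makes the horospherical model of $e(P_\infty)$ equivariant for $\Gamma_\infty$, is a useful detail that the paper leaves implicit.
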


\begin{proof}
After passing to the hyperbolic factor, the symmetric space attached
to the Levi quotient of $P_c$ is a point. The Borel--Serre face
associated with $P_c$ is therefore modeled by $N_c(\C)$; see
\cite[Sections~2.1--2.3]{Harder87}. Its arithmetic quotient is obtained
by dividing by $\Gamma_c$. The normal subgroup $\Gamma_{N,c}$ acts
by translations, giving $T_c$, and the remaining action is that of
$\Gamma_c/\Gamma_{N,c}$. After conjugation by $\sigma_c$, diagonal
projection identifies this quotient with an arithmetic subgroup of the
diagonal torus of $\GL_2(K)$. Such a subgroup is finite because
$\mathcal O_K^\times$ is finite.
\end{proof}

For example, at the standard cusp and for
$\Gamma=\GL_2(\mathcal O_K)$, one has
$\Gamma_{N,\infty}=\{n(a)\mid a\in\mathcal O_K\}$ and
$T_\infty=\mathcal O_K\backslash\C$. Moreover,
$\Gamma_\infty/\Gamma_{N,\infty}
\cong\mathcal O_K^\times\times\mathcal O_K^\times$, whose action on
$T_\infty$ is given by
$(\varepsilon_1,\varepsilon_2)\cdot[u]
=[\varepsilon_1\varepsilon_2^{-1}u]$.

Let $j_{\Gamma\backslash\mathbb H^3}$ denote the inclusion of the
Borel--Serre boundary into
$\overline{\Gamma\backslash\mathbb H^3}^{\,\mathrm{BS}}$, and let
$j_c:e_c\hookrightarrow
\partial\overline{\Gamma\backslash\mathbb H^3}^{\,\mathrm{BS}}$
be the inclusion of the component at $c$. Since the inclusion of the interior into the Borel--Serre
compactification is a homotopy equivalence, the composite
\begin{equation*}
  \scalebox{0.92}{$\displaystyle
H^1\left(\Gamma\backslash\mathbb H^3,\mathrm{TSym}^b(\omega_{\mathcal A^\vee/\C})
  \otimes
  \omega_{\mathcal A/\C}^2\right)
  \cong
  H^1\left(
    \overline{\Gamma\backslash\mathbb H^3}^{\,\mathrm{BS}},\mathrm{TSym}^b(\omega_{\mathcal A^\vee/\C})
  \otimes
  \omega_{\mathcal A/\C}^2
  \right)
  \xrightarrow{\,j_{\Gamma\backslash\mathbb H^3}^*\,}
  H^1\left(
    \partial\overline{\Gamma\backslash\mathbb H^3}^{\,\mathrm{BS}},\mathrm{TSym}^b(\omega_{\mathcal A^\vee/\C})
  \otimes
  \omega_{\mathcal A/\C}^2
  \right)
  $}
\end{equation*}
defines the boundary restriction map $\mathrm{res}_{\partial}$.
For each cusp $c$, we put
\[
  \mathrm{res}_c
  :=
  j_c^*\circ\mathrm{res}_{\partial}
  :
  H^1\left(\Gamma\backslash\mathbb H^3,\mathrm{TSym}^b(\omega_{\mathcal A^\vee/\C})
  \otimes
  \omega_{\mathcal A/\C}^2\right)
  \longrightarrow
  H^1\left(e_c,\mathrm{TSym}^b(\omega_{\mathcal A^\vee/\C})
  \otimes
  \omega_{\mathcal A/\C}^2\right).
\]
The cohomology of each boundary component can be computed using its
unipotent subgroup.

% Cohomological reduction:
\begin{prop}\label{prop: cusp cohomology via unipotent subgroup}
There are canonical identifications
\begin{equation*}
  \scalebox{0.99}{$\displaystyle
H^1\left(e_c,\mathrm{TSym}^b(\omega_{\mathcal A^\vee/\C})
  \otimes
  \omega_{\mathcal A/\C}^2\right)
  \cong
  H^1\left(\Gamma_c,\mathrm{TSym}^b(\omega_{\mathcal A^\vee/\C})
  \otimes
  \omega_{\mathcal A/\C}^2\right)
  \xrightarrow{\;\sim\;}
  H^1\left(\Gamma_{N,c},\mathrm{TSym}^b(\omega_{\mathcal A^\vee/\C})
  \otimes
  \omega_{\mathcal A/\C}^2\right)^{\Gamma_c/\Gamma_{N,c}},
  $}
\end{equation*}
where the second map is induced by restriction to $\Gamma_{N,c}$.
\end{prop}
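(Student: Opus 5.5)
The first identification is topological and the second is the Hochschild--Serre degeneration already established in the rank-two boundary subsection; the plan is to carry them out in that order.

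For the first identification, use Proposition~\ref{prop: Borel--Serre cusp components}, which gives $e_c\cong(\Gamma_c/\Gamma_{N,c})\backslash T_c$ with $T_c=\Gamma_{N,c}\backslash N_c(\C)$. Since $N_c(\C)\cong\C$ is contractible and $\Gamma_{N,c}$ acts freely by translations, $T_c$ is a $K(\Gamma_{N,c},1)$. The finite group $\Gamma_c/\Gamma_{N,c}$ acts on it, so $e_c$ is the quotient of the contractible space $N_c(\C)$ by $\Gamma_c$.

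\begin{itemize}
\item When $\Gamma$ is torsion-free (which the preceding remark allows after corestriction), $\Gamma_c$ acts freely on $N_c(\C)$. Then $e_c$ is a $K(\Gamma_c,1)$.
\item The local system $\mathrm{TSym}^b(\omega_{\mathcal A^\vee/\C})\otimes\omega_{\mathcal A/\C}^2$ on $e_c$ is the one attached to the $\Gamma_c$-module obtained by restricting the $\Gamma$-module.
\item This gives $H^1(e_c,-)\cong H^1(\Gamma_c,-)$.
\item In the equivariant (orbifold) interpretation, the same identification follows from the Borel-model comparison of Proposition~\ref{prop: equiv coh via Borel}, applied with $X$ a point and $N_c(\C)$ in place of $S_\infty$. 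Finite stabilizers contribute nothing over $\C$.
\end{itemize}

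I would also check that this identification is compatible with $\mathrm{res}_c$. The boundary face is a deformation retract of a neighbourhood of the cusp, namely a horoball quotient $\Gamma_c\backslash\{\xi>T\}$, which is homotopy equivalent to $e_c$. The horoball is $\Gamma_c$-equivariantly homotopy equivalent to $N_c(\C)$.

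For the second identification, consider the extension $1\to\Gamma_{N,c}\to\Gamma_c\to\Gamma_c/\Gamma_{N,c}\to1$ and its Hochschild--Serre spectral sequence. Since $\Gamma_c/\Gamma_{N,c}$ is finite and its order is invertible in $\C$, we have $E_2^{p,q}=0$ for $p>0$. Hence $H^1(\Gamma_c,M)\cong E_2^{0,1}=H^1(\Gamma_{N,c},M)^{\Gamma_c/\Gamma_{N,c}}$. The edge map of the spectral sequence is restriction to $\Gamma_{N,c}$, which is exactly the map in the statement. This is the same argument already given before Proposition~\ref{prop_9}, here simply run over $\C$.

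The main obstacle I expect is the first step in the presence of torsion, and the claim that the topological boundary cohomology agrees with group cohomology canonically rather than just abstractly. I would handle this by reducing to a torsion-free finite-index $\Gamma'$ via normalized corestriction, as in the preceding remark. I would then check that the resulting identification is natural in $\Gamma$ via transfer, because the boundary components for $\Gamma'$ cover those for $\Gamma$ with finite fibres.
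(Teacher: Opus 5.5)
Your proposal is correct and matches the paper's proof. Contractibility of the Borel--Serre face $N_c(\C)$ identifies $H^1(e_c,-)$ with $H^1(\Gamma_c,-)$, and the Hochschild--Serre spectral sequence for $1\to\Gamma_{N,c}\to\Gamma_c\to\Gamma_c/\Gamma_{N,c}\to1$ degenerates because the finite quotient has no higher cohomology with complex coefficients, so its edge map, which is restriction, is an isomorphism onto the invariants. Your extra handling of torsion (passing to a torsion-free subgroup, or using the equivariant/orbifold interpretation) agrees with the paper's standing convention.
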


\begin{proof}
The Borel--Serre face $N_c(\C)\cong\C$ is contractible, so the Borel
construction identifies its quotient cohomology with
$H^1\left(\Gamma_c,\mathrm{TSym}^b(\omega_{\mathcal A^\vee/\C})
  \otimes
  \omega_{\mathcal A/\C}^2\right)$. The exact sequence
\[
  1
  \longrightarrow
  \Gamma_{N,c}
  \longrightarrow
  \Gamma_c
  \longrightarrow
  \Gamma_c/\Gamma_{N,c}
  \longrightarrow
  1
\]
gives the Hochschild--Serre spectral sequence
\[
  E_2^{p,q}
  =
  H^p\left(
    \Gamma_c/\Gamma_{N,c},
    H^q\left(\Gamma_{N,c},\mathrm{TSym}^b(\omega_{\mathcal A^\vee/\C})
  \otimes
  \omega_{\mathcal A/\C}^2\right)
  \right)
  \Longrightarrow
  H^{p+q}\left(\Gamma_c,\mathrm{TSym}^b(\omega_{\mathcal A^\vee/\C})
  \otimes
  \omega_{\mathcal A/\C}^2\right).
\]
By Proposition~\ref{prop: Borel--Serre cusp components}, the quotient
$\Gamma_c/\Gamma_{N,c}$ is finite. Since the coefficient groups are
complex vector spaces, averaging over this finite group makes the
invariants functor exact. Hence
$H^p(\Gamma_c/\Gamma_{N,c},-)=0$ for $p>0$, and the spectral sequence
degenerates at $E_2$. Its degree-one edge morphism, which is restriction
to $\Gamma_{N,c}$, is therefore an isomorphism onto the invariant
subspace.
\end{proof}

Since $\C$ is contractible, one also has
$H^1\left(T_c,\mathrm{TSym}^b(\omega_{\mathcal A^\vee/\C})
  \otimes
  \omega_{\mathcal A/\C}^2\right)\cong H^1\left(\Gamma_{N,c},\mathrm{TSym}^b(\omega_{\mathcal A^\vee/\C})
  \otimes
  \omega_{\mathcal A/\C}^2\right)$. Under this identification,
pullback along the natural projection $T_c\to e_c$ identifies
$H^1\left(e_c,\mathrm{TSym}^b(\omega_{\mathcal A^\vee/\C})
  \otimes
  \omega_{\mathcal A/\C}^2\right)$ with the
$\Gamma_c/\Gamma_{N,c}$-invariant subspace of $H^1\left(T_c,\mathrm{TSym}^b(\omega_{\mathcal A^\vee/\C})
  \otimes
  \omega_{\mathcal A/\C}^2\right)$.
Consequently, to determine
$\mathrm{res}_c[\psi_g^b(f,x)]$, it is enough to compute its pullback
to the covering torus $T_c$ and then pass to the residual invariant
part. We shall carry this out first at the standard cusp $c=\infty$.
The restriction at an arbitrary cusp will then be obtained by
transporting the complete coefficient-valued form through a matrix
$\sigma_c$ with $\sigma_c(\infty)=c$.

At the standard cusp, a cusp neighborhood provides, for every sufficiently
large $\xi_0>0$, a horospherical map
$j_{\infty,\xi_0}:e_\infty\to\Gamma\backslash\mathbb H^3$
induced by $u\mapsto(u,\xi_0)$. Composing with the projection
$T_\infty\to e_\infty$ gives
\[
  j_{T_\infty,\xi_0}:
  T_\infty\longrightarrow\Gamma\backslash\mathbb H^3,
  \qquad
  [u]\longmapsto[(u,\xi_0)].
\]
The boundary restriction can be computed by pullback along this map;
no pointwise limit of the Eisenstein--Kronecker series as
$\xi_0\to+\infty$ is required.

% Standard cusp restriction: 
\begin{prop}[Restriction at the standard cusp]
\label{prop:restriction-standard-cusp}
For every sufficiently large $\xi_0>0$, one has
\begin{align*}
  &j_{T_\infty,\xi_0}^*\psi_g^b(f,x)=
  \det(H_g)
  \sum_{t\in \mathcal{D}(\C)}f(t)
  \sum_{|\beta|=b}
  \Biggl[
    \Biggl(
      \frac{1}{\xi_0}
      K^{\beta+(2,0)}(H_g,x-t,0,2,\Lambda)
  \\
  &\qquad\qquad\hspace{5.5em}
      -
      \frac{2\overline{u}}{\xi_0}
      K^{\beta+(1,1)}(H_g,x-t,0,2,\Lambda)
      +
      \frac{\overline{u}^{\,2}}{\xi_0}
      K^{\beta+(0,2)}(H_g,x-t,0,2,\Lambda)
    \Biggr)
    \frac{du}{\xi_0}
  \\
  &\qquad\qquad\qquad\qquad\hspace{5.5em}
    -
    K^{\beta+(0,2)}(H_g,x-t,0,2,\Lambda)
    d\overline{u}
  \Biggr]
  \otimes\overline{e}^{[\beta]}
  \otimes\bigwedge_{i=1}^d dz_i,
\end{align*}
where $H_g$ and the Eisenstein--Kronecker series are evaluated at
$(u,\xi_0)$. This form is invariant under the residual action of
$\Gamma_\infty/\Gamma_{N,\infty}$ and therefore descends to $e_\infty$.
Denoting the descended form by
$\mathrm{res}_{\infty,\xi_0}(\psi_g^b(f,x))$, one has
\[
  \left[
    \mathrm{res}_{\infty,\xi_0}\bigl(\psi_g^b(f,x)\bigr)
  \right]
  =
  \mathrm{res}_\infty\bigl([\psi_g^b(f,x)]\bigr).
\]
In particular, its cohomology class is independent of the sufficiently
large choice of $\xi_0$.
\end{prop}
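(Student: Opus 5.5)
The formula is a direct pullback computation. The nontrivial points are the residual invariance and the identification of the class with $\mathrm{res}_\infty$. I would proceed in three steps.

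\emph{Step 1: the formula.} The map $j_{T_\infty,\xi_0}$ is induced by $u\mapsto(u,\xi_0)$ at a fixed determinant slice. Pulling back the coframe therefore gives
\[
  du/\xi\mapsto du/\xi_0,\qquad d\overline u/\xi\mapsto d\overline u/\xi_0,\qquad d\xi/\xi\mapsto 0,
\]
and $d\det(H_g)/\det(H_g)$ also pulls back to zero. Substituting into the explicit expression for $\psi_g^b(f,x)$ in Proposition~\ref{prop: explicit moment class in dimension two} kills the $d\xi/\xi$ term. The $d\overline u/\xi$ coefficient $-\xi K^{\beta+(0,2)}$ becomes $-\xi_0K^{\beta+(0,2)}\,d\overline u/\xi_0=-K^{\beta+(0,2)}\,d\overline u$. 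The $du/\xi$ coefficient is unchanged apart from $\xi\mapsto\xi_0$. This is exactly the displayed formula.

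\emph{Step 2: invariance and descent.} The form $\psi_g^b(f,x)$ is $\Gamma$-invariant, hence $\Gamma_\infty$-invariant, as a section of the coefficient-valued forms. By the Möbius formulas of the preceding lemma, each $\gamma\in\Gamma_\infty$ is upper triangular ($\gamma_{21}=0$), so $\xi'=\xi/|\gamma_{22}|^2=\xi$ because $\gamma_{22}\in\mathcal O_K^\times$. Hence $\Gamma_\infty$ preserves the horosphere $\{\xi=\xi_0\}$ and acts on it by $u\mapsto(\gamma_{11}u+\gamma_{12})\overline{\gamma_{22}}$. The map $u\mapsto(u,\xi_0)$ is therefore $\Gamma_\infty$-equivariant, so $j^*\psi$ is $\Gamma_\infty$-invariant. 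Its $\Gamma_{N,\infty}$-invariance makes it a form on $T_\infty$, and the remaining invariance is invariance under $\Gamma_\infty/\Gamma_{N,\infty}$. Consequently the form descends to $e_\infty$, taking $\Gamma$ torsion-free, or working equivariantly as in the earlier remark.

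\emph{Step 3: identification with $\mathrm{res}_\infty$.} This is the main point. I would choose $\xi_0$ so large that the horoball $\{\xi>\xi_0\}$ is precisely invariant under $\Gamma_\infty$, with only $\Gamma_\infty$ meeting it (the standard cusp-neighbourhood statement of Borel--Serre/Harder). Then $\Gamma_\infty\backslash\{\xi\ge\xi_0\}\cong e_\infty\times[\xi_0,\infty)$ is a collar of the boundary face $e_\infty$ in $\overline{\Gamma\backslash\mathbb H^3}^{\,\mathrm{BS}}$. The inclusion of the slice $\{\xi=\xi_0\}$ is homotopic, inside the compactification, to the inclusion $j_\infty$ of $e_\infty$, via the geodesic flow $\xi\to\infty$ extended to the boundary. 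Homotopic maps induce the same map on cohomology with locally constant coefficients. Moreover, the interior-to-compactification equivalence intertwines pullback along the slice with $j_\infty^*\circ\mathrm{res}_\partial$. Since de Rham cohomology with flat coefficients computes sheaf cohomology, $[j_{\infty,\xi_0}^*\psi]=\mathrm{res}_\infty[\psi]$.

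Two consequences follow. Different large $\xi_0,\xi_0'$ give homotopic slice maps within the collar, which yields the stated independence of $\xi_0$. The determinant factor $\det(H_g)$ is constant on the chosen slice, and the earlier homotopy-invariance remark in the $\R_{>0}$ direction makes the choice of determinant slice harmless.
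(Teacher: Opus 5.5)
Your proposal is correct and follows essentially the same route as the paper. The paper's proof has the same three steps: the coframe pullback with $d\xi/\xi\mapsto 0$, then descent from the $\Gamma$-invariance of $\psi_g^b(f,x)$, then identification with $\mathrm{res}_\infty$ via the homotopy, inside the Borel--Serre cusp neighbourhood, between the horospherical slices and the boundary inclusion. You add one detail the paper leaves implicit, namely the explicit check through the M\"obius formulas that $\Gamma_\infty$ preserves $\{\xi=\xi_0\}$, since $\gamma_{21}=0$ and $|\gamma_{22}|=1$.
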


\begin{proof}
Since $\xi$ is constant along the horospherical map,
\[
  j_{T_\infty,\xi_0}^*\left(\frac{d\xi}{\xi}\right)=0,
  \qquad
  j_{T_\infty,\xi_0}^*\left(\frac{du}{\xi}\right)
  =\frac{du}{\xi_0},
  \qquad
  j_{T_\infty,\xi_0}^*\left(\frac{d\overline{u}}{\xi}\right)
  =\frac{d\overline{u}}{\xi_0}.
\]
Thus the normal component vanishes and only the tangential components
remain. Substitution into the formula of Proposition
\ref{prop: explicit moment class in dimension two} gives the displayed
expression. The $\Gamma$-equivariance of $\psi_g^b(f,x)$ implies
invariance under the residual cusp action, and hence descent to
$e_\infty$.

Inside the Borel--Serre cusp neighborhood, the horospherical maps for
sufficiently large heights are homotopic to one another and, after
composition with the inclusion of the interior, to the boundary
inclusion. Since $\psi_g^b(f,x)$ is closed, their pullbacks determine
the same cohomology class. Under the homotopy equivalence between the
interior and its Borel--Serre compactification, this class is precisely
$\mathrm{res}_\infty([\psi_g^b(f,x)])$.
\end{proof}

Here and below, Eisenstein--Kronecker series evaluated at $s=2$ are
understood through their meromorphic continuations.

For an arbitrary cusp $c$, retain the choice
$\sigma_c\in\GL_2(K)$ with $\sigma_c(\infty)=c$. Under the identification
\[
  T_c
  \cong
  \bigl(\sigma_c^{-1}\Gamma_{N,c}\sigma_c\bigr)\backslash\C,
\]
the corresponding horospherical map is
\[
  j_{T_c,\xi_0}:
  T_c\longrightarrow\Gamma\backslash\mathbb H^3,
  \qquad
  [u]\longmapsto[\sigma_c\cdot(u,\xi_0)].
\]
In particular, the unipotent lattice in these coordinates is
$\sigma_c^{-1}\Gamma_{N,c}\sigma_c$, not necessarily the lattice
$\Gamma_{N,\infty}$ of the standard cusp of the original group.

% Transport and its independence: 
\begin{prop}[Restriction at an arbitrary cusp]
\label{prop:restriction-arbitrary-cusp}
\label{thm: explicit restriction at an arbitrary cusp}
With the action convention fixed above, one has
\[
  j_{T_c,\xi_0}^*\psi_g^b(f,x)
  =
  j_{T_\infty,\xi_0}^*
  \bigl(\sigma_c^{-1}\cdot\psi_g^b(f,x)\bigr).
\]
Here the map on the right-hand side is the standard horospherical map
for the conjugated group $\sigma_c^{-1}\Gamma\sigma_c$, with unipotent
lattice $\sigma_c^{-1}\Gamma_{N,c}\sigma_c$, and the coefficient local
systems are identified through $\sigma_c$.

The explicit expression is obtained by applying Proposition
\ref{prop:restriction-standard-cusp} to the transported form and data.
It is invariant under $\Gamma_c/\Gamma_{N,c}$ and descends to a form
$\mathrm{res}_{c,\xi_0}(\psi_g^b(f,x))$ on $e_c$ satisfying
\[
  \left[
    \mathrm{res}_{c,\xi_0}\bigl(\psi_g^b(f,x)\bigr)
  \right]
  =
  \mathrm{res}_c\bigl([\psi_g^b(f,x)]\bigr).
\]
The resulting class is independent of the sufficiently large height
$\xi_0$ and of the choice of $\sigma_c$.
\end{prop}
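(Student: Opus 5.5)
The approach is to reduce the arbitrary cusp to the standard cusp by transport of structure along $\sigma_c$, and then apply Proposition~\ref{prop:restriction-standard-cusp} verbatim to the conjugated data.

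First, I will write down the conjugated data. Put $\Gamma^c:=\sigma_c^{-1}\Gamma\sigma_c\subset\GL_2(K)$ and consider the diffeomorphism $\sigma_c^{-1}\colon\mathbb H^3\to\mathbb H^3$ (extended trivially on the determinant factor of $S_\infty$). It intertwines the action of $\gamma\in\Gamma$ with that of $\sigma_c^{-1}\gamma\sigma_c\in\Gamma^c$. The coefficient module $\mathrm{TSym}^b(\omega_{\mathcal A^\vee/\C})\otimes\omega_{\mathcal A/\C}^2$ is a restriction of an algebraic $\GL_2(\C)$-representation, because the $\Gamma$-action there factors through $\gamma\mapsto\gamma_\infty$. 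So $\sigma_c$ acts on it, and the pair (pullback along $\sigma_c$, coefficient action of $\sigma_c^{-1}$) carries $\Gamma$-invariant coefficient-valued forms to $\Gamma^c$-invariant ones. This is the meaning of $\sigma_c^{-1}\cdot\psi_g^b(f,x)$. Under this transport, $\Gamma_c$ and $\Gamma_{N,c}$ go to the standard stabilizer $\Gamma^c_\infty$ and its unipotent part $\sigma_c^{-1}\Gamma_{N,c}\sigma_c$. The horospherical map $j_{T_c,\xi_0}$ is by definition $[u]\mapsto[\sigma_c\cdot(u,\xi_0)]$. Hence the identity $j_{T_c,\xi_0}^*\psi=j_{T_\infty,\xi_0}^*(\sigma_c^{-1}\cdot\psi)$ is immediate from functoriality of pullback, $(\sigma_c\circ j)^*=j^*\sigma_c^*$, once the coefficient identification is inserted.

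Second, I will apply Proposition~\ref{prop:restriction-standard-cusp} to $\Gamma^c$ and the transported form. Its proof uses only three things: closedness, invariance under the group, and the geometry of the Borel--Serre cusp neighbourhood at $\infty$. All three are preserved by the transport, since $\Gamma^c$ is commensurable with an arithmetic subgroup and $\infty$ is a cusp for it. This yields the explicit expression, the $\Gamma^c_\infty/\Gamma^c_{N,\infty}$-invariance (equivalently $\Gamma_c/\Gamma_{N,c}$-invariance), and descent to $e_c$. It also gives the equality of classes with the boundary restriction, because the transport also identifies the Borel--Serre boundary components $e_c$ of $\Gamma\backslash\mathbb H^3$ and $e_\infty$ of $\Gamma^c\backslash\mathbb H^3$, compatibly with $\mathrm{res}_c$ and $\mathrm{res}_\infty$. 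Independence of $\xi_0$ then follows as before, by homotopy of horospheres inside the cusp neighbourhood.

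The main obstacle is independence of the choice of $\sigma_c$, together with the question of how large $\xi_0$ must be. If $\sigma_c'=\sigma_c p$ with $p=\begin{bmatrix}a&z\\0&d\end{bmatrix}\in P_\infty(K)$, then $p$ acts on $\mathbb H^3$ by $(u,\xi)\mapsto\bigl((au+z)/d,\,|a/d|\xi\bigr)$. This maps horospheres to horospheres, rescaling the height and affinely changing $u$. Accordingly, I will show that $j_{T_c,\xi_0}$ for $\sigma_c'$ equals $j_{T_c,|a/d|\xi_0}$ for $\sigma_c$, precomposed with the torus isomorphism induced by $p$ between the two unipotent lattices. The class is already known to be independent of the height, and the torus isomorphism is exactly the identification of the $H^1$ groups used to compare the two models (as in the remark after Proposition~\ref{prop_9}). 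Together these give independence of $\sigma_c$. Here "sufficiently large" should be read relative to $\sigma_c$, which is harmless for the same reason.
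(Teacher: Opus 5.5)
Your proposal is correct and follows essentially the same route as the paper. You transport along $\sigma_c$, so that $j_{T_c,\xi_0}$ becomes the standard horospherical map for $\sigma_c^{-1}\Gamma\sigma_c$ composed with $\sigma_c$, and then apply Proposition~\ref{prop:restriction-standard-cusp} to get the explicit form, descent and height independence; a second choice $\sigma_c'=\sigma_c p$ with $p\in P_\infty(K)$ is handled because $p$ normalizes $N_\infty$ and carries horospheres to horospheres (your explicit rescaling $\xi_0\mapsto|a/d|\,\xi_0$ is a more detailed version of the paper's ``together with independence of the height'').
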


\begin{proof}
The horospherical map at $c$ is obtained by composing the standard
horospherical map for the conjugated group with the action of
$\sigma_c$. Pullback, together with the induced identification of
coefficients, gives the asserted identity. Proposition
\ref{prop:restriction-standard-cusp} then supplies the explicit
expression. Equivariance gives descent under the residual finite
group, while the homotopy through the cusp neighborhood identifies
the descended cohomology class with the boundary restriction and
shows its independence of the height.

If $\sigma_c'$ is another choice with $\sigma_c'(\infty)=c$, then
$\sigma_c^{-1}\sigma_c'\in P_\infty(K)$. Since $P_\infty$ normalizes
$N_\infty$, this change of coordinates identifies the corresponding
horospherical tori and transports one restricted form to the other.
Together with independence of the height, this gives the same class
on $e_c$.
\end{proof}

The action of $\sigma_c^{-1}$ is applied to the complete
coefficient-valued differential form: it acts simultaneously on the
Hermitian form, the arguments and lattice of the Eisenstein--Kronecker
series, the coefficient vectors $\overline{e}^{[\beta]}$, and the
holomorphic top form. Since $\sigma_c$ need not preserve the chosen
basis, this action may mix terms indexed by different multi-indices
$\beta$ and must therefore be applied after forming the complete sum
over $|\beta|=b$. Before transport, the torsion sum remains indexed by
the original finite set $\mathcal{D}(\C)$.

For the full group $\GL_2(\mathcal O_K)$, the transported data admit
the following ideal-class description. The quotients and their
cohomology are understood in the equivariant sense explained above.

\begin{exmp}[Ideal-class description of the cusp data]
\label{exmp: ideal-class description of the cusp data}\rm
Suppose that $\Gamma=\GL_2(\mathcal O_K)$. For a cusp representative
$c=[\alpha:\beta]\in\mathbb P^1(K)$, put
$\mathfrak a=\alpha\mathcal O_K+\beta\mathcal O_K$.
Scaling $(\alpha,\beta)$ by an element of $K^\times$ multiplies
$\mathfrak a$ by a principal fractional ideal, whereas applying an
element of $\GL_2(\mathcal O_K)$ leaves the generated ideal unchanged.
Thus the assignment $c\mapsto[\mathfrak a]$ is well-defined on cusp
orbits and gives the classical bijection
\[
  \GL_2(\mathcal O_K)\backslash\mathbb P^1(K)
  \xrightarrow{\;\sim\;}
  \operatorname{Cl}(K);
\]
see \cite[Chapter~7, Section~2, Theorem~2.4]{EGM98}.

The intersection $K(\alpha,\beta)\cap\mathcal O_K^2$ is identified
with $\mathfrak a^{-1}$ through $v\mapsto(\alpha v,\beta v)$.
Its complementary projective rank-one summand has ideal class
$[\mathfrak a]$, so we may choose the $K$-basis defining $\sigma_c$
compatibly with this decomposition, with $\sigma_c(\infty)=c$ and
\[
  \sigma_c^{-1}\mathcal O_K^2=\mathfrak a^{-1}\oplus\mathfrak a.
\]
For $u\in K$, the action of the standard unipotent element is
\(
  n(u)(v_1,v_2)=(v_1+uv_2,v_2).
\)
Hence $n(u)$ preserves $\mathfrak a^{-1}\oplus\mathfrak a$ if and
only if $u\mathfrak a\subseteq\mathfrak a^{-1}$, equivalently
$u\in\mathfrak a^{-2}$. Consequently,
\[
  \sigma_c^{-1}\Gamma_{N,c}\sigma_c
  =
  \{n(u)\mid u\in\mathfrak a^{-2}\},
  \qquad
  T_c\cong\mathfrak a^{-2}\backslash\C;
\]
compare \cite[Chapter~8, Lemma~2.2]{EGM98}.

For the fixed CM period $\Omega_K$ with
$\mathcal A_0(\C)\cong\C/\mathcal O_K\Omega_K$, the Serre construction
gives the uniformization
\[
  \bigl((\mathfrak a^{-1}\oplus\mathfrak a)
  \otimes_{\mathcal O_K}\mathcal A_0\bigr)(\C)
  \cong
  \C^2/\bigl(\mathfrak a^{-1}\Omega_K\oplus\mathfrak a\Omega_K\bigr).
\]
Continuing to denote the transported abelian lattice by $\Lambda$,
we therefore have
\[
  \Lambda=\mathfrak a^{-1}\Omega_K\oplus\mathfrak a\Omega_K.
\]
Here $\Lambda$ is the lattice defining the abelian surface, whereas
$\mathfrak a^{-2}$ is the unipotent lattice defining the
horospherical torus $T_c$.

Let $\mathfrak c,\mathfrak f\subset\mathcal O_K$ be the nonzero
integral ideals determining the torsion data, and put
$\mathfrak C=\mathfrak c\mathcal O_L$. After transport to the cusp
chart, retain the notation $\mathcal{D}(\C)$, $x$, and $f$ for the
torsion set, torsion point, and function. Since the scalar
$\mathcal O_K$-action respects the two summands of $\Lambda$,
the torsion set $\mathcal{D}(\C)=\mathcal A[\mathfrak C](\C)$ has
the coordinate realization
\[
  \mathcal{D}(\C)
  =
  \frac{\mathfrak c^{-1}\Lambda}{\Lambda}
  \cong
  \frac{\mathfrak c^{-1}\mathfrak a^{-1}\Omega_K}
       {\mathfrak a^{-1}\Omega_K}
  \oplus
  \frac{\mathfrak c^{-1}\mathfrak a\Omega_K}
       {\mathfrak a\Omega_K}.
\]
Similarly, the transported $\mathfrak f$-torsion point satisfies
\[
  x\in
  \frac{\mathfrak f^{-1}\Lambda}{\Lambda}
  \cong
  \frac{\mathfrak f^{-1}\mathfrak a^{-1}\Omega_K}
       {\mathfrak a^{-1}\Omega_K}
  \oplus
  \frac{\mathfrak f^{-1}\mathfrak a\Omega_K}
       {\mathfrak a\Omega_K}.
\]
The function $f$ is transported to this finite quotient, and its
trace-zero condition is preserved. Writing each torsion point in
these coordinates gives, for every multi-index $\mu$,
\begin{align*}
  &\sum_{t\in\mathcal{D}(\C)}
  f(t)K^\mu(H_g,x-t,0,2,\Lambda)
  =
  \sum_{\substack{
    t_1\in
    \mathfrak c^{-1}\mathfrak a^{-1}\Omega_K/
    \mathfrak a^{-1}\Omega_K\\
    t_2\in
    \mathfrak c^{-1}\mathfrak a\Omega_K/
    \mathfrak a\Omega_K
  }}
  f(t_1,t_2)
  K^\mu\bigl(H_g,x-(t_1,t_2),0,2,\Lambda\bigr).
\end{align*}
The three cases needed in the restriction formula are
$\mu=\beta+(2,0)$, $\beta+(1,1)$, and $\beta+(0,2)$, with
$|\beta|=b$.

Finally, if $\mathfrak a'=\lambda\mathfrak a$ with
$\lambda\in K^\times$, the diagonal change of coordinates identifies
the corresponding cusp-adapted lattices by
\[
  \begin{pmatrix}
    \lambda^{-1}&0\\
    0&\lambda
  \end{pmatrix}
  \bigl(\mathfrak a^{-1}\Omega_K\oplus\mathfrak a\Omega_K\bigr)
  =
  (\mathfrak a')^{-1}\Omega_K\oplus\mathfrak a'\Omega_K.
\]
It transports the torsion data and Eisenstein--Kronecker series,
together with their coefficient vectors and holomorphic top form,
compatibly. By Proposition~\ref{prop:restriction-arbitrary-cusp},
these identifications preserve the resulting cuspwise boundary
class. Thus, for the fixed global data, this class depends only on
$[\mathfrak a]$, not on the chosen ideal representative. For the
principal class, taking $\mathfrak a=\mathcal O_K$ recovers the
standard cusp description
\[
  T_\infty=\mathcal O_K\backslash\C,
  \qquad
  \Lambda=\mathcal O_K\Omega_K\oplus\mathcal O_K\Omega_K.
\]
\end{exmp}

For a finite-index subgroup $\Gamma\subseteq\GL_2(\mathcal O_K)$,
the natural map
$\Gamma\backslash\mathbb P^1(K)\to\operatorname{Cl}(K)$ is
surjective but need not be injective, so a cusp must still be
specified by its $\Gamma$-orbit. We now return to the standard cusp
$c=\infty$ to compute the Fourier expansion of the horospherical
restriction. The calculation at an arbitrary cusp is obtained by
transporting the complete form and using its conjugated unipotent
lattice, with the explicit ideal-class coordinates of
Example~\ref{exmp: ideal-class description of the cusp data} in the full-group case.

For the Fourier calculation, we work on the determinant-one component
at height $\xi_0=1$, using the height independence of the horospherical
restriction established in
Proposition~\ref{prop:restriction-standard-cusp}. The Hermitian form
and its associated norm are then
\[
  H_u=
  \begin{bmatrix}
    1&-u\\
    -\overline{u}&|u|^2+1
  \end{bmatrix},
  \qquad
  \|v\|_u^2=|v_1-uv_2|^2+|v_2|^2.
\]
Introduce the moving basis
\[
  \overline{e}_1^u:=\overline{e}_1,
  \qquad
  \overline{e}_2^u:=\overline{u}\,\overline{e}_1+\overline{e}_2,
\]
and, for $v=(v_1,v_2)$, put
$v_1^u:=v_1-uv_2$ and $v_2^u:=v_2$. Thus
$\|v\|_u^2=|v_1^u|^2+|v_2^u|^2$, and the change of basis satisfies
\[
  \overline{v}_1\overline{e}_1+
  \overline{v}_2\overline{e}_2
  =
  \overline{v_1^u}\,\overline{e}_1^u+
  \overline{v_2^u}\,\overline{e}_2^u.
\]

% Moving frame scalar coefficients: 
We reintroduce the complex parameter $s$ in the Eisenstein--Kronecker
series and initially work in the region
$\operatorname{Re}(s)\gg0$, where the sums are absolutely convergent.
For every multi-index $\beta$ with $|\beta|=b$, define
\begin{align*}
  A_f^{\beta,x}(u,s)
  :=
  \sum_{t\in\mathcal{D}(\C)}f(t)
  \sum_{v\in(x-t+\Lambda)\setminus\{0\}}
  \frac{
    (\overline{v^u})^\beta(\overline{v_1^u})^2
  }{
    \|v\|_u^{2s}
  },
  \qquad
  B_f^{\beta,x}(u,s)
  :=
  \sum_{t\in\mathcal{D}(\C)}f(t)
  \sum_{v\in(x-t+\Lambda)\setminus\{0\}}
  \frac{
    (\overline{v^u})^\beta(\overline{v_2^u})^2
  }{
    \|v\|_u^{2s}
  }.
\end{align*}
The corresponding coefficient-valued functions are
\[
  A_f^{b,x}(u,s)
  =
  \sum_{|\beta|=b}
  A_f^{\beta,x}(u,s)(\overline{e}^u)^{[\beta]},
  \qquad
  B_f^{b,x}(u,s)
  =
  \sum_{|\beta|=b}
  B_f^{\beta,x}(u,s)(\overline{e}^u)^{[\beta]}.
\]
Writing $\psi_u^b(f,x)$ for the form in these normalized coordinates,
the restriction formula of
Proposition~\ref{prop:restriction-standard-cusp} becomes
\[
  \mathrm{res}_{\infty,1}\bigl(\psi_u^b(f,x)\bigr)(s)
  =
  \left(
    A_f^{b,x}(u,s)\,du
    -
    B_f^{b,x}(u,s)\,d\overline{u}
  \right)
  \otimes\bigwedge_{i=1}^2dz_i.
\]
The moving basis converts the unipotent equivariance of this form into
ordinary periodicity of its scalar coefficients.

\begin{lem}[Unipotent equivariance and periodicity]
\label{lem: unipotent periodicity moving frame}
For $n(a)\in\Gamma_{N,\infty}$, one has
\[
  n(a)\cdot
  \mathrm{res}_{\infty,1}\bigl(\psi_u^b(f,x)\bigr)(s)
  =
  \mathrm{res}_{\infty,1}\bigl(\psi_{u+a}^b(f,x)\bigr)(s).
\]
Moreover, for every $|\beta|=b$,
\[
  A_f^{\beta,x}(u+a,s)=A_f^{\beta,x}(u,s),
  \qquad
  B_f^{\beta,x}(u+a,s)=B_f^{\beta,x}(u,s).
\]
\end{lem}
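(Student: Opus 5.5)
We will prove the second assertion directly from the lattice-sum definitions, and then read off the first assertion from the $\Gamma$-equivariance of $\psi_g^b(f,x)$ established in Proposition~\ref{prop: explicit moment class in dimension two}. The key observation is that $n(a)$, with $a\in\mathcal O_K$ (or a sublattice), acts on $\C^2$ by $(v_1,v_2)\mapsto(v_1+av_2,v_2)$. It preserves $\Lambda=\mathcal O_K\Omega_K\oplus\mathcal O_K\Omega_K$ and maps $\mathcal D(\C)$ to itself. Since $x$ and $f$ are $\Gamma$-invariant, it also fixes $x$ and $f$.

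For the periodicity, we substitute $v=n(a)w$ in the defining sums. Then $v_2=w_2$ and $v_1-(u+a)v_2=w_1-uw_2$, so that
\[
  v^{u+a}=w^u,
  \qquad
  \|v\|_{u+a}^2=\|w\|_u^2.
\]
Thus the moving coordinates absorb the unipotent shear exactly. The condition $v\in(x-t+\Lambda)\setminus\{0\}$ becomes $w\in(n(a)^{-1}x-n(a)^{-1}t+\Lambda)\setminus\{0\}$. Since $n(a)^{-1}x\equiv x$ modulo $\Lambda$, this is $w\in(x-t'+\Lambda)\setminus\{0\}$ with $t'=n(a)^{-1}t$. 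We then reindex the finite sum over $\mathcal D(\C)$ by $t\mapsto t'$ and use $f(n(a)t')=f(t')$. This gives $A_f^{\beta,x}(u+a,s)=A_f^{\beta,x}(u,s)$, and similarly for $B$, first for $\operatorname{Re}(s)\gg0$, where all sums converge absolutely and rearrangement is legitimate. Both sides are meromorphic continuations of the same function (as in Proposition~\ref{prop:functional equation of Eisenstein-Kronecker series}), so the identity persists to all $s$, in particular $s=2$.

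For the equivariance statement, we track the coefficient side. The holomorphic top form $dz_1\wedge dz_2$ transforms by $\det n(a)=1$. The form $du$ pulls back to $du$ under $u\mapsto u+a$, and $d\overline u$ pulls back to $d\overline u$. The moving basis satisfies $n(a)_*\overline e_1=\overline e_1$ and $n(a)_*\overline e_2=\overline a\,\overline e_1+\overline e_2$, so
\[
  n(a)\cdot\overline e_2^{\,u}=\overline e_2^{\,u+a},
  \qquad
  n(a)\cdot\overline e_1^{\,u}=\overline e_1^{\,u+a},
\]
and hence $n(a)\cdot(\overline e^u)^{[\beta]}=(\overline e^{u+a})^{[\beta]}$. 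Combining this with the scalar periodicity gives the first display termwise in $\beta$.

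The main obstacle is sign and convention bookkeeping. We must check that the action $\gamma\cdot$ on forms uses pushforward on coefficients and pullback on differentials consistently with Lemma~\ref{lem:equivariance-lie-derivative}. We must also confirm that the matrix of $n(a)$ on $\mathcal H^{0,-1}$ is the conjugate matrix, so that $\overline a$ rather than $a$ appears. If the convention turns out to be the inverse action, the same argument runs with $a$ replaced by $-a$, which is harmless since $\Gamma_{N,\infty}$ is a group.
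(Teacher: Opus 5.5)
Your proof is correct and follows the paper's argument essentially verbatim. It uses the same ingredients: the substitutions $v=n(a)w$ and $t=n(a)t'$ with $v^{u+a}=w^u$ and $\|v\|_{u+a}=\|w\|_u$; the $\Gamma$-invariance of $x$ and $f$ and the $\Gamma$-stability of $\mathcal D(\C)$; and the moving-frame identity $n(a)\cdot\overline e_i^{\,u}=\overline e_i^{\,u+a}$ together with the invariance of $du$, $d\overline u$ and $dz_1\wedge dz_2$. The only slip is that you need not (and in general cannot) take $\Lambda=\mathcal O_K\Omega_K\oplus\mathcal O_K\Omega_K$; all the argument uses is $\gamma_\infty\Lambda=\Lambda$ for $\gamma\in\Gamma$.
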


\begin{proof}
The action of $n(a)$ on the horospherical coordinate is
$u\mapsto u+a$, and the moving basis satisfies
\[
  n(a)\cdot\overline{e}_i^u
  =
  \overline{e}_i^{u+a},
  \qquad i=1,2.
\]
Since $n(a)w=(w_1+aw_2,w_2)$, the transformed coordinates satisfy
\[
  (n(a)w)_1^{u+a}=w_1^u,
  \qquad
  (n(a)w)_2^{u+a}=w_2^u,
  \qquad
  \|n(a)w\|_{u+a}^2=\|w\|_u^2.
\]
The point $x$ is $\Gamma$-invariant, the torsion set
$\mathcal{D}(\C)$ is $\Gamma$-stable, and $f$ is $\Gamma$-invariant.
Consequently, the substitutions $t=n(a)t'$ and $v=n(a)w$ reindex
the torsion and lattice sums without changing their summands in the
moving coordinates. This gives the two periodicity identities.
Combining them with the equivariance of the moving basis and the
invariance of $du$, $d\overline{u}$, and
$\bigwedge_{i=1}^2dz_i$ under $n(a)$ gives the equivariance of the
complete form.
\end{proof}

The scalar functions therefore admit Fourier expansions on
$T_\infty=\Gamma_{N,\infty}\backslash\C$, where
$\Gamma_{N,\infty}$ is identified with its lattice of translation
parameters. Choose a fundamental domain $\mathcal F$ and use the
oriented volume
\[
  \mathrm{vol}(\mathcal F)
  :=
  \int_{\mathcal F}du\wedge d\overline{u}.
\]
The zeroth Fourier coefficients are
\begin{align*}
  A_{f,0}^{\beta,x}(s)
  :=
  \frac{1}{\mathrm{vol}(\mathcal F)}
  \int_{\mathcal F}
  A_f^{\beta,x}(u,s)\,du\wedge d\overline{u},
  \qquad
  B_{f,0}^{\beta,x}(s)
  :=
  \frac{1}{\mathrm{vol}(\mathcal F)}
  \int_{\mathcal F}
  B_f^{\beta,x}(u,s)\,du\wedge d\overline{u}.
\end{align*}
For $t\in\mathcal{D}(\C)$, let
\[
  \operatorname{pr}_i:
  x-t+\Lambda\longrightarrow\C,
  \qquad
  (v_1,v_2)\longmapsto v_i,
  \qquad i=1,2,
\]
with fibers $\operatorname{pr}_i^{-1}(v_i)$ taken inside the indicated
coset. We also put
\[
  \Gamma_{N,\infty,t}
  :=
  \{n(a)\in\Gamma_{N,\infty}\mid n(a)t=t\}.
\]
In the fiber quotients below, $v_2\Gamma_{N,\infty,t}$ acts by
translation of the first coordinate.

% Explicit zeroth Fourier coefficients: 
\begin{prop}[Zeroth Fourier coefficients]
\label{prop: zeroth Fourier coefficients}
For $\operatorname{Re}(s)\gg0$, one has
$A_{f,0}^{\beta,x}(s)=0$ unless $\beta=(b,0)$, in which case
\[
  A_{f,0}^{(b,0),x}(s)
  =
  \sum_{t\in\mathcal{D}(\C)}f(t)
  \sum_{\substack{
    v\in(x-t+\Lambda)\setminus\{0\}\\
    v_2=0
  }}
  \frac{\overline{v}_1^{\,b+2}}{|v_1|^{2s}}.
\]
Similarly, $B_{f,0}^{\beta,x}(s)=0$ unless $\beta=(0,b)$, in which
case
\begin{align*}
  B_{f,0}^{(0,b),x}(s)
  &=
  -\frac{2\pi i}{(s-1)\mathrm{vol}(\mathcal F)}
  \sum_{t\in\mathcal{D}(\C)}
  \frac{f(t)}
       {[\Gamma_{N,\infty}:\Gamma_{N,\infty,t}]}
  \sum_{\substack{
    v_2\in\operatorname{pr}_2(x-t+\Lambda)\\
    v_2\neq0
  }}
  \#
  \left(
    \frac{\operatorname{pr}_2^{-1}(v_2)}
         {v_2\Gamma_{N,\infty,t}}
  \right)
  \frac{\overline{v}_2^{\,b+2}}{|v_2|^{2s}}.
\end{align*}
\end{prop}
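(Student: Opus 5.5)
We unfold the integral over $\mathcal F$ against the unipotent action, splitting the lattice sum by whether $v_2=0$. Work with $\operatorname{Re}(s)\gg0$, so all sums converge absolutely and integration and summation can be interchanged.

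For fixed $t$, decompose $(x-t+\Lambda)\setminus\{0\}$ into the part with $v_2=0$ and the part with $v_2\neq0$.

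\emph{The part $v_2=0$.} Here $v^u=(v_1,0)$ does not depend on $u$. The summand is therefore constant in $u$, and its average over $\mathcal F$ is itself. Its coefficient is $(\overline{v^u})^\beta(\overline{v_1^u})^2$, which vanishes unless $\beta=(b,0)$, and then equals $\overline v_1^{\,b+2}/|v_1|^{2s}$. Its $B$-coefficient contains $(\overline{v_2^u})^2=0$. This gives the stated formula for $A_{f,0}^{(b,0),x}$, and shows the $v_2=0$ part contributes nothing to $B$.

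\emph{The part $v_2\neq0$.} Group the $v$ with a fixed $v_2$. By the periodicity argument of Lemma~\ref{lem: unipotent periodicity moving frame}, $\Gamma_{N,\infty,t}$ acts on $\operatorname{pr}_2^{-1}(v_2)$ by $v_1\mapsto v_1+av_2$, and satisfies $(n(a)v)^{u+a}=v^u$. So the orbit sum over this fiber, integrated over $\mathcal F$, unfolds to an integral over $\C$. To make this precise, first pass from $\Gamma_{N,\infty}$ to the stabilizer $\Gamma_{N,\infty,t}$: enlarge $\mathcal F$ to a fundamental domain for $\Gamma_{N,\infty,t}$, which has volume $[\Gamma_{N,\infty}:\Gamma_{N,\infty,t}]\,\mathrm{vol}(\mathcal F)$. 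This produces the index factor and the orbit count $\#(\operatorname{pr}_2^{-1}(v_2)/v_2\Gamma_{N,\infty,t})$.

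Each orbit contributes
\[
\int_{\C}\frac{(\overline{v_1-uv_2})^{\beta_1}\,\overline v_2^{\,\beta_2}\,(\overline{v_1-uv_2})^{\epsilon_1}\,\overline v_2^{\,\epsilon_2}}{(|v_1-uv_2|^2+|v_2|^2)^{s}}\,du\wedge d\overline u,
\]
where $\epsilon=(2,0)$ for $A$ and $(0,2)$ for $B$. Substitute $w=v_1-uv_2$, so $du\wedge d\overline u=|v_2|^{-2}\,dw\wedge d\overline w$. The integral becomes $\int_{\C}\overline w^{\,k}(|w|^2+|v_2|^2)^{-s}$ with $k=\beta_1+\epsilon_1$. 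Rotation invariance kills it unless $k=0$. For $A$ we have $k\ge2$, so all $v_2\neq0$ terms vanish. For $B$ we need $\beta_1=0$, i.e.\ $\beta=(0,b)$. Then, using $dw\wedge d\overline w=-2i\,dx\,dy$ in polar coordinates,
\[
\int_{\C}\frac{dw\wedge d\overline w}{(|w|^2+|v_2|^2)^s}=-2i\cdot\frac{\pi}{s-1}\,|v_2|^{2-2s}=-\frac{2\pi i}{s-1}\,|v_2|^{2-2s}.
\]
Multiplying by $|v_2|^{-2}\,\overline v_2^{\,b+2}$ gives $-\frac{2\pi i}{s-1}\,\overline v_2^{\,b+2}/|v_2|^{2s}$. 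Summing over orbits and over $t$, and dividing by $\mathrm{vol}(\mathcal F)$, yields the stated $B_{f,0}^{(0,b),x}(s)$.

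\emph{Main obstacle.} The delicate step is the unfolding bookkeeping, and it has three parts:
\begin{itemize}
\item \textbf{Invariance under the subgroup.} Only $\Gamma_{N,\infty,t}$, not all of $\Gamma_{N,\infty}$, preserves the individual coset $x-t+\Lambda$. One must check that the $t$-sum is reorganized consistently with the global periodicity of Lemma~\ref{lem: unipotent periodicity moving frame}, which reindexes $t$ as well.
\item \textbf{Fibers and orbits.} One must verify that $\operatorname{pr}_2^{-1}(v_2)$ splits into finitely many $v_2\Gamma_{N,\infty,t}$-orbits, each acted on freely.
\item \textbf{Justification of the interchange.} Moving the sum past the integral needs $\operatorname{Re}(s)>\frac{b}{2}+2$ or so, which also ensures that $\int_{\C}|w|^{k}(|w|^2+c)^{-\operatorname{Re}s}$ converges.
\end{itemize}
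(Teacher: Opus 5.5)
Your proposal is correct and follows essentially the paper's argument: split by whether $v_2=0$, substitute $w=v_1-uv_2$, use the angular integral to kill every multi-index except $(b,0)$ for $A$ and $(0,b)$ for $B$, and evaluate the radial integral as $-\frac{2\pi i}{s-1}|v_2|^{2-2s}$. The paper resolves the bookkeeping you flag by grouping $\mathcal D(\C)$ into $\Gamma_{N,\infty}$-orbits, so that the orbit sum replaces $\mathcal F$ by a fundamental domain for $\Gamma_{N,\infty,t}$, and rewriting over all $t$ then gives the factor $[\Gamma_{N,\infty}:\Gamma_{N,\infty,t}]^{-1}$; your per-$t$ enlargement is an equivalent route, valid because each $t$-summand is $\Gamma_{N,\infty,t}$-periodic and a mean value does not depend on which finite-index period lattice is used to compute it.
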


\begin{proof}
Absolute convergence for $\operatorname{Re}(s)\gg0$ permits
interchanging the sums with the integral over $\mathcal F$.
For the $A$-coefficient, split the lattice sum according to whether
$v_2$ vanishes. When $v_2=0$, one has $v_1^u=v_1$ and $v_2^u=0$,
so the numerator vanishes unless $\beta_2=0$, equivalently
$\beta=(b,0)$. In this case the summand is independent of $u$,
and its average gives the displayed contribution to
$A_{f,0}^{(b,0),x}(s)$.

For $v_2\neq0$, set $w:=v_1-uv_2$. Then
$dw=-v_2\,du$ and
$d\overline{w}=-\overline{v}_2\,d\overline{u}$, so
\[
  du\wedge d\overline{u}
  =
  |v_2|^{-2}\,dw\wedge d\overline{w}.
\]
Decomposing $\mathcal{D}(\C)$ into $\Gamma_{N,\infty}$-orbits and
unfolding with respect to $\Gamma_{N,\infty,t}$ reduces the
contribution for fixed $t$ and $v_2\neq0$ to finite multiples of
\[
  \frac{\overline{v}_2^{\,\beta_2}}{|v_2|^2}
  \int_{\C}
  \frac{\overline{w}^{\,\beta_1+2}}
       {(|w|^2+|v_2|^2)^s}
  \,dw\wedge d\overline{w}.
\]
The angular integral vanishes because $\beta_1+2>0$. Thus the
terms with $v_2\neq0$ do not contribute to the zeroth Fourier
coefficient of $A_f^{\beta,x}$.

For the $B$-coefficient, the terms with $v_2=0$ vanish because of
the factor $(\overline{v_2^u})^2$. We describe the unfolding for
$v_2\neq0$, keeping track of its multiplicities. For a representative
$t$ of a $\Gamma_{N,\infty}$-orbit in $\mathcal{D}(\C)$, summing
the integrals associated with its orbit replaces $\mathcal F$ by
a fundamental domain $\mathcal F_t$ for
$\Gamma_{N,\infty,t}$. This domain consists of
$[\Gamma_{N,\infty}:\Gamma_{N,\infty,t}]$ translates of
$\mathcal F$. For fixed $v_2\neq0$, the possible first coordinates
form finitely many orbits under
\[
  v_1\longmapsto v_1+av_2,
  \qquad n(a)\in\Gamma_{N,\infty,t},
\]
and their number is
\(
  \#
  \left(
    \operatorname{pr}_2^{-1}(v_2)/v_2\Gamma_{N,\infty,t}
  \right).
\)
For each such fiber orbit, summing over the translations unfolds
the integral over $\mathcal F_t$ to an integral over $\C$.
Rewriting the sum over torsion-orbit representatives as a sum over
all $t\in\mathcal{D}(\C)$ introduces the factor
$[\Gamma_{N,\infty}:\Gamma_{N,\infty,t}]^{-1}$.

With the same substitution $w=v_1-uv_2$, the integral contributed
by each fiber orbit is
\[
  \frac{\overline{v}_2^{\,\beta_2+2}}{|v_2|^2}
  \int_{\C}
  \frac{\overline{w}^{\,\beta_1}}
       {(|w|^2+|v_2|^2)^s}
  \,dw\wedge d\overline{w}.
\]
Its angular integral vanishes unless $\beta_1=0$, equivalently
$\beta=(0,b)$. In this remaining case, polar coordinates give
\[
  \int_{\C}
  \frac{dw\wedge d\overline{w}}
       {(|w|^2+|v_2|^2)^s}
  =
  -\frac{2\pi i}{s-1}|v_2|^{2-2s}.
\]
Multiplication by the Jacobian factor $|v_2|^{-2}$ leaves the
power $|v_2|^{-2s}$. Including the fiber-orbit multiplicity and
the stabilizer-index factor, summing with weight $f(t)$ over
$t$ and $v_2\neq0$, and dividing by
$\mathrm{vol}(\mathcal F)$ gives the stated $B$-coefficient.
\end{proof}

\begin{cor}[Constant term at the standard cusp]
\label{cor: constant term standard cusp}
For $\operatorname{Re}(s)\gg0$, the zeroth Fourier coefficient of
the horospherical restriction, taken in the moving basis, is
\begin{align*}
  &\left.
    \mathrm{res}_{\infty,1}\bigl(\psi_u^b(f,x)\bigr)(s)
  \right|_0
  =
  \Biggl[
    \sum_{t\in\mathcal{D}(\C)}f(t)
    \sum_{\substack{
      v\in(x-t+\Lambda)\setminus\{0\}\\
      v_2=0
    }}
    \frac{\overline{v}_1^{\,b+2}}{|v_1|^{2s}}
    (\overline{e}_1^u)^{[b]}\,du
  \\
  &\qquad\qquad\qquad\qquad+
    \frac{2\pi i}{(s-1)\mathrm{vol}(\mathcal F)}
    \sum_{t\in\mathcal{D}(\C)}
    \frac{f(t)}
         {[\Gamma_{N,\infty}:\Gamma_{N,\infty,t}]}
    \sum_{\substack{
      v_2\in\operatorname{pr}_2(x-t+\Lambda)\\
      v_2\neq0
    }}
    \#
    \left(
      \frac{\operatorname{pr}_2^{-1}(v_2)}
           {v_2\Gamma_{N,\infty,t}}
    \right)
    \frac{\overline{v}_2^{\,b+2}}{|v_2|^{2s}}
    (\overline{e}_2^u)^{[b]}\,d\overline{u}
  \Biggr]
  \otimes\bigwedge_{i=1}^2dz_i.
\end{align*}
Thus only the two extreme multi-indices $\beta=(b,0)$ and
$\beta=(0,b)$ contribute to the zeroth Fourier coefficient.
\end{cor}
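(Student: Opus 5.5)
The corollary is a direct assembly of the restriction formula in the moving basis with Proposition~\ref{prop: zeroth Fourier coefficients}. I will start from the identity
\[
  \mathrm{res}_{\infty,1}\bigl(\psi_u^b(f,x)\bigr)(s)
  =
  \bigl(A_f^{b,x}(u,s)\,du-B_f^{b,x}(u,s)\,d\overline u\bigr)\otimes\bigwedge_{i=1}^2dz_i,
\]
with $A_f^{b,x}=\sum_{|\beta|=b}A_f^{\beta,x}(\overline e^u)^{[\beta]}$ and similarly for $B$. Lemma~\ref{lem: unipotent periodicity moving frame} shows that each scalar coefficient $A_f^{\beta,x}(\cdot,s)$, $B_f^{\beta,x}(\cdot,s)$ is $\Gamma_{N,\infty}$-periodic. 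The frame elements $(\overline e^u)^{[\beta]}$, $du$, $d\overline u$ and $\bigwedge dz_i$ are transported compatibly by $n(a)$.

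The zeroth Fourier coefficient in the moving basis is therefore defined componentwise. Replace each scalar coefficient by its average over $\mathcal F$ against $du\wedge d\overline u/\mathrm{vol}(\mathcal F)$, and keep the frame vectors. This operation is linear, so it commutes with the finite sum over $\beta$ and with the sum over $t\in\mathcal D(\C)$.

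Next I substitute Proposition~\ref{prop: zeroth Fourier coefficients}, valid for $\operatorname{Re}(s)\gg0$.
\begin{itemize}
\item All $A_{f,0}^{\beta,x}$ with $\beta\neq(b,0)$ vanish. The surviving term $A_{f,0}^{(b,0),x}$ multiplies $(\overline e^u)^{[(b,0)]}=(\overline e_1^u)^{[b]}$ and gives the $du$-term as displayed.
\item All $B_{f,0}^{\beta,x}$ with $\beta\neq(0,b)$ vanish. The surviving term multiplies $(\overline e_2^u)^{[b]}$. The overall minus sign in front of $B$ in the restricted form cancels the minus sign in $B_{f,0}^{(0,b),x}(s)$, giving the positive coefficient $2\pi i/\bigl((s-1)\mathrm{vol}(\mathcal F)\bigr)$ in front of the $d\overline u$-term.
\end{itemize}
This already yields the final assertion that only $\beta=(b,0)$ and $\beta=(0,b)$ contribute.

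The only point needing care is the interchange of the Fourier averaging with the sums. Absolute convergence for $\operatorname{Re}(s)\gg0$, already used in the proof of Proposition~\ref{prop: zeroth Fourier coefficients}, justifies this. One must also check that the moving-basis convention is what is meant by the "zeroth Fourier coefficient taken in the moving basis": the scalars are periodic in this frame, not in the fixed frame $\overline e_1,\overline e_2$. I expect no real obstacle beyond this sign and bookkeeping check.
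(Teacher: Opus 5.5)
Your proposal is correct and follows the paper's own argument. Like the paper, you apply Proposition~\ref{prop: zeroth Fourier coefficients} coefficientwise to $A_f^{b,x}(u,s)\,du-B_f^{b,x}(u,s)\,d\overline{u}$ in the moving frame, so only $\beta=(b,0)$ and $\beta=(0,b)$ survive, and the minus sign in $B_{f,0}^{(0,b),x}(s)$ cancels the one in front of the $B$-term. Your extra remarks on periodicity and on interchanging the sums are bookkeeping already covered by Lemma~\ref{lem: unipotent periodicity moving frame} and that proposition.
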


\begin{proof}
Apply Proposition~\ref{prop: zeroth Fourier coefficients} to the
scalar coefficients in
$A_f^{b,x}(u,s)\,du-B_f^{b,x}(u,s)\,d\overline{u}$.
The minus sign in $B_{f,0}^{(0,b),x}(s)$ cancels the minus sign
preceding the $B$-term in the restricted form.
\end{proof}

\begin{rem}\rm
The Fourier expansion is taken coefficientwise with respect to the
$\Gamma_{N,\infty}$-equivariant moving basis. Thus the zeroth
Fourier coefficients are those of the scalar functions
$A_f^{\beta,x}(u,s)$ and $B_f^{\beta,x}(u,s)$, whereas the vectors
$(\overline{e}_1^u)^{[b]}$ and $(\overline{e}_2^u)^{[b]}$ still
vary with $u$ according to the local system. Periodicity under
$\Gamma_{N,\infty}$ does not imply that the two scalar
contributions are separately invariant under the full cusp
stabilizer $\Gamma_\infty$. The residual finite group acts on
the complete coefficient-valued expression; at the cohomological
level, passage from $T_\infty$ to $e_\infty$ is given by taking
$\Gamma_\infty/\Gamma_{N,\infty}$-invariants, as in
Proposition~\ref{prop: cusp cohomology via unipotent subgroup}.
\end{rem}

\begin{rem}[Constant terms at an arbitrary cusp]\rm
For a cusp $c$, Proposition~\ref{prop:restriction-arbitrary-cusp}
reduces the calculation to the transported form
$\sigma_c^{-1}\cdot\psi_g^b(f,x)$, with Fourier expansion taken
with respect to the lattice
\[
  \sigma_c^{-1}\Gamma_{N,c}\sigma_c
  \subset N_\infty(K)\cong(K,+).
\]
Corollary~\ref{cor: constant term standard cusp} applies with the
transported lattice, coordinate projections, stabilizers, and
moving basis. The same angular-integral argument leaves only
$\beta=(b,0)$ and $\beta=(0,b)$ in this cusp-adapted basis.
For $\Gamma=\GL_2(\mathcal O_K)$,
Example~\ref{exmp: ideal-class description of the cusp data} supplies the unipotent
lattice $\mathfrak a^{-2}$ and the abelian lattice
$\Lambda=\mathfrak a^{-1}\Omega_K\oplus\mathfrak a\Omega_K$.
The extreme-index statement is relative to the transported basis:
on returning to the original fixed basis, the action of
$\sigma_c^{-1}$ may mix the coefficient vectors. The complete
coefficient-valued constant term must therefore be transported
before it is expressed in the fixed basis.
\end{rem}

\begin{rem}\rm
The preceding identities were established in the region of absolute
convergence. Since both sides arise from the same
Eisenstein--Kronecker series, they extend by meromorphic
continuation in $s$. For a related use of constant terms in
studying boundary restrictions of polylogarithmic Eisenstein
classes, see
\cite[Section~4.5, Proposition~4.5.1]{graf2016polylogarithmsgl2totallyreal}.
\end{rem}

It remains to show that the nonzero Fourier modes do not contribute
to the boundary cohomology class. We do this by constructing a
modewise contracting homotopy using the nilpotent monodromy of the
moving coefficient frame.

\begin{prop}[The zeroth Fourier coefficient determines the boundary class]
\label{prop:zero-fourier-coefficient-determines-boundary-class}
\label{prop: zero Fourier coefficient determines boundary class}
The forms
\[
  \mathrm{res}_{\infty,1}\bigl(\psi_u^b(f,x)\bigr)(2)
  \qquad\text{and}\qquad
  \left.
    \mathrm{res}_{\infty,1}\bigl(\psi_u^b(f,x)\bigr)(2)
  \right|_0
\]
represent the same class in $H^1\left(e_\infty,\mathrm{TSym}^b(\omega_{\mathcal A^\vee/\C})
  \otimes
  \omega_{\mathcal A/\C}^2\right)$, under the
identification with the residual-invariant cohomology of $T_\infty$
in Proposition~\ref{prop: cusp cohomology via unipotent subgroup}.
\end{prop}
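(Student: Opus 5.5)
We will show that the difference $\eta:=\mathrm{res}_{\infty,1}(\psi_u^b(f,x))(2)-\mathrm{res}_{\infty,1}(\psi_u^b(f,x))(2)|_0$ is exact on $T_\infty$ as a form with values in the local system, and that a primitive can be chosen compatibly with the residual finite action. We work on the cover $\C\to T_\infty$ with the $\Gamma_{N,\infty}$-equivariant moving basis $(\overline e^u)^{[\beta]}$. By Lemma~\ref{lem: unipotent periodicity moving frame}, the scalar coefficients $A_f^{\beta,x}(u,2)$ and $B_f^{\beta,x}(u,2)$ (meromorphically continued, or taken in $\operatorname{Re}s\gg0$ and then continued) are periodic. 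We expand them as Fourier series $\sum_{m\neq0}a_m^\beta e_m(u)$, where $e_m(u)=\exp(2\pi i\,\mathrm{Re}(\overline m u))$-type characters are indexed by the dual lattice of $\Gamma_{N,\infty}$. Then $\eta=\sum_{m\neq0}e_m(u)\,\eta_m$, with $\eta_m$ a constant-coefficient combination of $(\overline e^u)^{[\beta]}du$ and $(\overline e^u)^{[\beta]}d\overline u$.

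The key computation is the differential of the coefficient frame. Because $\overline e_2^u=\overline u\,\overline e_1+\overline e_2$, the flat connection acts by $\nabla(\overline e^u)^{[\beta]}=-\beta_2\,(\overline e^u)^{[\beta+(1,-1)]}\,d\overline u$ (up to sign). Thus $\nabla=d+N\,d\overline u$ in the moving frame, with $N$ nilpotent and raising $\beta_1$. On the $m$-th mode we have $\nabla(e_m h)=e_m\bigl((2\pi i\,\partial_u\ell_m)du+(2\pi i\,\partial_{\overline u}\ell_m)d\overline u+N d\overline u\bigr)h$ for constant $h$, where $\ell_m$ is the real linear form. This has exactly the structure of the Cartan operator $C_{l_g}$ in Lemma~\ref{lem:cartan-operator-invertible}. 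We take the contraction $\iota$ along the vector field dual to $d\ell_m$. Then $C\iota+\iota C=c_m+(\text{nilpotent})$ with $c_m=2\pi i|m|^2\cdot(\text{const})\neq0$. It is invertible by a finite geometric series, since $N^{b+1}=0$.

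Since $\eta$ is closed (the full form and its constant term are both closed; the latter because the constant $1$-forms in $du$, $d\overline u$ with $\beta$ extreme satisfy $N$-compatibility, as one checks directly), each $\eta_m$ is $C_m$-closed. The same argument as in Proposition~\ref{prop:explicit-fourier-mode-current} then gives $\eta_m=C_m\bigl(\iota(C\iota+\iota C)^{-1}\eta_m\bigr)$. So $\eta=\nabla\theta$ with $\theta=\sum_{m\neq0}e_m\,\theta_m$ and $\theta_m=\iota(C\iota+\iota C)^{-1}\eta_m$, a $0$-form with $|\theta_m|\ll|m|^{-1}|\eta_m|$. The section $\theta$ is $\Gamma_{N,\infty}$-equivariant because each mode is periodic in the moving frame, so it descends to $T_\infty$. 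Averaging $\theta$ over the finite group $\Gamma_\infty/\Gamma_{N,\infty}$ keeps $\eta=\nabla\theta$, since $\eta$ is residual-invariant: the full form is, and the constant-term projection commutes with the finite action, which permutes characters and preserves the zero mode. Proposition~\ref{prop: cusp cohomology via unipotent subgroup} then gives the claim on $e_\infty$.

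The main obstacle is analytic: justifying convergence of $\sum e_m\theta_m$ as a smooth (or at least distributional) section. At $s=2$ the Fourier coefficients come from a meromorphically continued series, and the horospherical restriction is smooth, so its Fourier coefficients decay rapidly. We will argue this by smoothness of $\psi_g^b(f,x)$ on $S_\infty$, which follows because $x\notin\mathcal D$, so the Eisenstein--Kronecker series are real-analytic in $u$. Because $\theta_m$ gains a factor $|m|^{-1}$, the series converges in $C^\infty$. If only distributional convergence is available, we will instead invoke de Rham's theorem for currents on the compact torus, as in Corollary~\ref{cor_8}. A secondary point to check carefully is the sign and nilpotent direction of $N$ in the moving frame, which determines the inverse series.
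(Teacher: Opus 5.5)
Your proposal is correct and follows essentially the paper's route. In the moving frame the local-system differential is the constant-coefficient operator $\nabla_b=d+d\overline u\wedge N_b$. Each nonzero Fourier mode is contracted by a homotopy built from a scalar-plus-nilpotent inverse. The paper contracts along $\partial/\partial x_j$ for the dominant index $j$ of $m$ rather than along the gradient of the character's phase, but this is cosmetic. The homotopies are then summed using the rapid decay of the Fourier coefficients together with the $O(|m|^{-1})$ bound.

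Your averaging over $\Gamma_\infty/\Gamma_{N,\infty}$ is harmless but not needed, since the paper simply observes that the common class $[\omega]=[\omega_0]$ on $T_\infty$ is already residual-invariant. Also, with divided powers the frame derivative of $(\overline e^u)^{[\beta]}$ carries the factor $\beta_1+1$ rather than $\beta_2$; this does not affect the argument.
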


%Fourier homotopy argument: 
\begin{proof}
By Proposition~\ref{prop: cusp cohomology via unipotent subgroup},
it is enough to compare the two classes on
$T_\infty=\Gamma_{N,\infty}\backslash\C$ and then pass to
$\Gamma_\infty/\Gamma_{N,\infty}$-invariants.
Let $N$ be the nilpotent endomorphism of
$\omega_{\mathcal A^\vee/\C}$ defined by
\[
  N(\overline{e}_1)=0,
  \qquad
  N(\overline{e}_2)=\overline{e}_1.
\]
Denote by $N_b$ its induced action on $\mathrm{TSym}^b(\omega_{\mathcal A^\vee/\C})
  \otimes
  \omega_{\mathcal A/\C}^2$, acting on symmetric
tensors by the Leibniz rule and trivially on the top-form factor.
Since $N^2=0$, one has $N_b^{b+1}=0$.
The moving frame
$\overline{e}_1^u=\overline{e}_1$ and
$\overline{e}_2^u=\overline{u}\,\overline{e}_1+\overline{e}_2$
satisfies
\[
  d\overline{e}_1^u=0,
  \qquad
  d\overline{e}_2^u=d\overline{u}\,\overline{e}_1^u.
\]
Consequently, in the moving-frame trivialization with periodic
scalar coefficients, the local-system differential is
\(
  \nabla_b=d+d\overline{u}\wedge N_b.
\)
Choose an oriented $\Z$-basis $\lambda_1,\lambda_2$ of the
translation lattice $\Gamma_{N,\infty}$ and write
\[
  u=x_1\lambda_1+x_2\lambda_2,
  \qquad
  (x_1,x_2)\in\R^2/\Z^2.
\]
Since
$d\overline{u}=\overline{\lambda}_1dx_1+
\overline{\lambda}_2dx_2$, this becomes
\[
  \nabla_b=d+dx_1\wedge N_1+dx_2\wedge N_2,
  \qquad
  N_i:=\overline{\lambda}_iN_b.
\]

Set
\[
  \omega
  :=
  \mathrm{res}_{\infty,1}\bigl(\psi_u^b(f,x)\bigr)(2),
  \qquad
  \omega_0:=\left.\omega\right|_0.
\]
By Lemma~\ref{lem: unipotent periodicity moving frame}, the scalar
coefficients of $\omega$ in the moving frame are periodic.
They are smooth, so their Fourier expansion gives
\[
  \omega=\sum_{m\in\Z^2}\chi_m(x)\omega_m,
  \qquad
  \chi_m(x)
  :=
  \exp\!\bigl(2\pi i(m_1x_1+m_2x_2)\bigr),
\]
where the $\omega_m$ have constant coefficients in this
trivialization, and the term with $m=0$ is $\omega_0$.
The constant-coefficient operator $\nabla_b$ preserves this
decomposition. On the $m$-th mode, its differential is
\[
  \nabla_{b,m}
  =
  \sum_{k=1}^2dx_k\wedge L_{m,k},
  \qquad
  L_{m,k}:=2\pi i m_k\operatorname{id}+N_k.
\]
In particular, projection $P_0$ onto the zeroth mode commutes with
$\nabla_b$. Since $\nabla_b\omega=0$, each $\omega_m$ is closed
under $\nabla_{b,m}$, and $\nabla_b\omega_0=0$.

For $m\neq0$, choose $j\in\{1,2\}$ such that
$|m_j|=\max(|m_1|,|m_2|)$. Then $m_j\neq0$, and the nilpotence
$N_j^{b+1}=0$ gives the finite inverse
\[
  L_{m,j}^{-1}
  =
  \frac{1}{2\pi i m_j}
  \sum_{r=0}^{b}
  \left(-\frac{N_j}{2\pi i m_j}\right)^r.
\]
Define the degree-minus-one operator
\(
  h_m:=L_{m,j}^{-1}\iota_{\partial/\partial x_j}.
\)
The operators $L_{m,1}$ and $L_{m,2}$ commute because $N_1$ and
$N_2$ are scalar multiples of $N_b$. Together with the
wedge-contraction identity
\[
  dx_k\wedge\iota_{\partial/\partial x_j}
  +
  \iota_{\partial/\partial x_j}(dx_k\wedge-)
  =
  \delta_{kj}\operatorname{id},
\]
this gives
\[
  \nabla_{b,m}h_m+h_m\nabla_{b,m}
  =
  \sum_{k=1}^2
  \delta_{kj}L_{m,k}L_{m,j}^{-1}
  =
  \operatorname{id}.
\]
Thus every nonzero Fourier subcomplex is contractible.

The finite inverse also gives the uniform estimate
\(
  \|h_m\|
  =
  O\!\left(\max(|m_1|,|m_2|)^{-1}\right).
\)
Since the Fourier coefficients of a smooth form decay faster than
any power, the series
\[
  K\omega
  :=
  \sum_{m\neq0}\chi_m(x)h_m\omega_m
\]
converges in the $\mathcal C^\infty$-topology. The same construction
defines $K$ on smooth coefficient-valued forms in every degree,
and convergence with all derivatives permits summing the modewise
homotopy identities:
\[
  \nabla_bK+K\nabla_b=\operatorname{id}-P_0.
\]
Applying this identity to the closed form $\omega$ yields
\[
  \omega-\omega_0=\nabla_bK\omega.
\]
Hence $[\omega]=[\omega_0]$ on $T_\infty$.
Their common class is invariant under
$\Gamma_\infty/\Gamma_{N,\infty}$ because $\omega$ comes from
the cusp restriction, so it determines a class in
$H^1\left(e_\infty,\mathrm{TSym}^b(\omega_{\mathcal A^\vee/\C})
  \otimes
  \omega_{\mathcal A/\C}^2\right)$ through the identification used at the
beginning of the proof.
\end{proof}

We may now specialize the constant-term formula to the value $s=2$
occurring in the Eisenstein--Kronecker representative.

% Explicit specialization and regularity: 
\begin{cor}[Explicit representative of the boundary class]
\label{cor:zero-fourier-representative-boundary-class}
\label{cor: zero Fourier representative of boundary class}
The boundary restriction of $[\psi_g^b(f,x)]$ at the standard cusp is
represented on $T_\infty$ by the zeroth Fourier coefficient
\begin{align*}
  &\left.
    \mathrm{res}_{\infty,1}\bigl(\psi_u^b(f,x)\bigr)(2)
  \right|_0
  =
  \Biggl[
    \sum_{t\in\mathcal{D}(\C)}f(t)
    \sum_{\substack{
      v\in(x-t+\Lambda)\setminus\{0\}\\
      v_2=0
    }}
    \frac{\overline{v}_1^{\,b+2}}{|v_1|^4}
    (\overline{e}_1^u)^{[b]}\,du
  \\
  &\qquad\qquad\qquad\qquad\qquad+
    \frac{2\pi i}{\mathrm{vol}(\mathcal F)}
    \sum_{t\in\mathcal{D}(\C)}
    \frac{f(t)}
         {[\Gamma_{N,\infty}:\Gamma_{N,\infty,t}]}
    \sum_{\substack{
      v_2\in\operatorname{pr}_2(x-t+\Lambda)\\
      v_2\neq0
    }}
    \#
    \left(
      \frac{\operatorname{pr}_2^{-1}(v_2)}
           {v_2\Gamma_{N,\infty,t}}
    \right)
    \frac{\overline{v}_2^{\,b+2}}{|v_2|^4}
    (\overline{e}_2^u)^{[b]}\,d\overline{u}
  \Biggr]
  \otimes\bigwedge_{i=1}^2dz_i.
\end{align*}
The sums at $s=2$ are understood as the values of their
meromorphic continuations.
\end{cor}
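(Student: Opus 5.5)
The plan is to assemble the corollary from three results already in hand: Proposition~\ref{prop:restriction-standard-cusp}, Proposition~\ref{prop: zero Fourier coefficient determines boundary class} and Corollary~\ref{cor: constant term standard cusp}. The one new step is specializing at $s=2$, which requires care with meromorphic continuation.

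By Proposition~\ref{prop:restriction-standard-cusp}, the class $\mathrm{res}_\infty([\psi_g^b(f,x)])$ is the class of the horospherical restriction at any sufficiently large height. Its scalar coefficients in the moving frame are the functions $A_f^{\beta,x}(u,2)$ and $B_f^{\beta,x}(u,2)$, understood as values of the continued series. Height independence lets us work at $\xi_0=1$ on the determinant-one slice, provided the cohomology class is read through the normalization used above. Proposition~\ref{prop: zero Fourier coefficient determines boundary class} then replaces this form by its zeroth Fourier mode. That proposition only uses the closedness and smoothness of the representative at $s=2$, which come from Proposition~\ref{prop: explicit moment class in dimension two}.

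Next, we identify the zeroth mode at $s=2$ with the continuation of the formula in Corollary~\ref{cor: constant term standard cusp}. For $\operatorname{Re}(s)\gg0$, the map $s\mapsto \frac{1}{\mathrm{vol}(\mathcal F)}\int_{\mathcal F}A_f^{\beta,x}(u,s)\,du\wedge d\overline u$ equals the displayed lattice sums. Each $A_f^{\beta,x}(u,s)$ is a finite linear combination, with polynomial coefficients in $\overline u$, of series $K^\mu(H_u,x-t,0,s,\Lambda)$. The trace-zero condition and the shift $|\mu|=b+2>0$ remove the poles at $s=0$ and $s=d$, by Corollary~\ref{cor:special value EK series at zero}. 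The Mellin representation from Proposition~\ref{prop:functional equation of Eisenstein-Kronecker series} then gives a continuation that is holomorphic in $s$ and continuous in $u$, locally uniformly. Averaging over the compact set $\mathcal F$ therefore commutes with continuation, so the zeroth Fourier coefficient at $s=2$ equals the continuation of the right-hand side evaluated at $s=2$.

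Finally, substituting $s=2$ gives $|v_1|^{2s}=|v_1|^4$ and $|v_2|^{2s}=|v_2|^4$. The factor $\frac{2\pi i}{(s-1)\mathrm{vol}(\mathcal F)}$ becomes $\frac{2\pi i}{\mathrm{vol}(\mathcal F)}$, and the sign bookkeeping is already done in Corollary~\ref{cor: constant term standard cusp}. This yields the displayed representative.

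The main obstacle is the continuation step. The individual Hecke-type sums $\sum \overline v_1^{\,b+2}|v_1|^{-2s}$ are Eisenstein series of weight $b+2$ at the edge of convergence: at $s=2$ they converge absolutely only for $b>0$. For $b=0$ the rank-one sums need regularization, together with a check that the one-variable continuation of each side agrees with the continuation of the two-variable series. I would handle this by proving equality of meromorphic functions on $\operatorname{Re}(s)\gg0$ and invoking uniqueness of continuation, which is exactly the convention stated in the corollary.
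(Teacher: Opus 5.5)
Your proposal is correct and follows essentially the paper's proof. You continue the identity of Corollary~\ref{cor: constant term standard cusp} meromorphically to $s=2$, where it is regular because the multi-indices have total degree $b+2>0$, and then invoke Proposition~\ref{prop: zero Fourier coefficient determines boundary class}. Your locally uniform (in $u$) Mellin continuation, which justifies averaging over $\mathcal F$ after continuing, spells out a step the paper only asserts.

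Two side remarks should be corrected. First, at $s=2$ the rank-one summands have size $|v_1|^{b-2}$, so these sums converge absolutely for no $b\geq 0$; $b=0$ is the borderline case, not the only problematic one. Second, the trace-zero condition plays no role in removing the poles, since $|\mu|>0$ alone kills both residues. Neither point affects your argument, which runs entirely through uniqueness of meromorphic continuation.
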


\begin{proof}
The identity of Corollary~\ref{cor: constant term standard cusp}
extends meromorphically in $s$ and is regular at $s=2$:
the relevant Eisenstein--Kronecker series have multi-indices
of total degree $b+2>0$, so the possible pole there does not
occur by
Proposition~\ref{prop:functional equation of Eisenstein-Kronecker series}.
Evaluate this identity at $s=2$ and apply
Proposition~\ref{prop:zero-fourier-coefficient-determines-boundary-class}.
\end{proof}

\begin{cor}[Boundary class at an arbitrary cusp]
\label{cor:boundary-class-arbitrary-cusp}
Let $c\in\mathbb P^1(K)$ represent a cusp and choose
$\sigma_c\in\GL_2(K)$ with $\sigma_c(\infty)=c$.
The class
\[
  \mathrm{res}_c\bigl([\psi_g^b(f,x)]\bigr)
  \in H^1\left(e_c,\mathrm{TSym}^b(\omega_{\mathcal A^\vee/\C})
  \otimes
  \omega_{\mathcal A/\C}^2\right)
\]
is represented by the zeroth Fourier coefficient at $s=2$ of
the standard horospherical restriction of
$\sigma_c^{-1}\cdot\psi_g^b(f,x)$, computed at normalized height
one with respect to the lattice
$\sigma_c^{-1}\Gamma_{N,c}\sigma_c$ and the transported moving
frame. Its explicit expression is obtained from
Corollary~\ref{cor:zero-fourier-representative-boundary-class}
by replacing all data with their cusp-adapted counterparts.

For $\Gamma=\GL_2(\mathcal O_K)$, the component at
$[\mathfrak a]\in\operatorname{Cl}(K)$ is computed using the
unipotent lattice $\mathfrak a^{-2}$ and the abelian lattice
$\Lambda=\mathfrak a^{-1}\Omega_K\oplus\mathfrak a\Omega_K$
of Example~\ref{exmp: ideal-class description of the cusp data}.
\end{cor}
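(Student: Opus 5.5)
The statement is a transport of the standard-cusp result, so the plan is to reduce it to Corollary~\ref{cor:zero-fourier-representative-boundary-class} through Proposition~\ref{prop:restriction-arbitrary-cusp}. The first step is to invoke that proposition. It gives $j_{T_c,\xi_0}^*\psi_g^b(f,x)=j_{T_\infty,\xi_0}^*(\sigma_c^{-1}\cdot\psi_g^b(f,x))$, where the right-hand side is the standard horospherical restriction for the conjugated group $\Gamma^c:=\sigma_c^{-1}\Gamma\sigma_c$ with unipotent lattice $\sigma_c^{-1}\Gamma_{N,c}\sigma_c$. Its class descends to $e_c$ and equals $\mathrm{res}_c([\psi_g^b(f,x)])$, independently of $\xi_0$ and of $\sigma_c$. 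By height independence we then normalize $\xi_0=1$ on the determinant-one slice, as was done at $\infty$.

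The second step is to check that the transported form has exactly the structure used in the standard-cusp analysis. Conjugation by $\sigma_c$ carries $\Gamma$, $\Lambda$, $\mathcal D(\C)$, $x$ and $f$ to data of the same type. Concretely, $\sigma_c^{-1}\Lambda$ is a lattice stable under $\Gamma^c$, $\sigma_c^{-1}\mathcal D(\C)$ is a finite $\Gamma^c$-stable set, $f$ remains $\Gamma^c$-invariant with trace zero, and $\sigma_c^{-1}x$ is $\Gamma^c$-invariant. Using the equivariance identities for $H_g$, $\Lambda_g^*$, $\vol_g$ and the Fourier characters from the proof of Theorem~\ref{thm:differential-equation-ek-current}, the form $\sigma_c^{-1}\cdot\psi_g^b(f,x)$ is given by the formula of Proposition~\ref{prop: explicit moment class in dimension two} with the transported data.

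The arguments of Lemma~\ref{lem: unipotent periodicity moving frame}, Proposition~\ref{prop: zeroth Fourier coefficients}, Corollary~\ref{cor: constant term standard cusp} and Proposition~\ref{prop:zero-fourier-coefficient-determines-boundary-class} never used $\Lambda=\mathcal O_L\otimes\Lambda_0$ or $\Gamma\subset\GL_2(\mathcal O_K)$. They used only three facts: the lattice is stable under the unipotent translations, the torsion data are invariant, and the unipotent lattice has rank two. So these arguments apply verbatim to the transported data. The moving frame is defined from the transported basis, periodicity holds for $\sigma_c^{-1}\Gamma_{N,c}\sigma_c$, the angular integrals leave only the extreme indices, and the modewise homotopy kills the nonzero modes. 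The remaining ingredients are the regularity at $s=2$ and the meromorphic continuation. These hold because the transported series are again Eisenstein--Kronecker series of total degree $b+2$ for a full lattice, so Proposition~\ref{prop:functional equation of Eisenstein-Kronecker series} applies.

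For $\Gamma=\GL_2(\mathcal O_K)$, the last step is to insert Example~\ref{exmp: ideal-class description of the cusp data}. With $\sigma_c$ chosen so that $\sigma_c^{-1}\mathcal O_K^2=\mathfrak a^{-1}\oplus\mathfrak a$, the unipotent lattice is $\mathfrak a^{-2}$ and the abelian lattice is $\mathfrak a^{-1}\Omega_K\oplus\mathfrak a\Omega_K$. The example also gives independence of the representative of $[\mathfrak a]$.

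The main obstacle is to make sure that the residual finite group $\Gamma_c/\Gamma_{N,c}$ and the coefficient identifications are transported correctly. The identity $\mathrm{res}_c=j_c^*\circ\mathrm{res}_\partial$ has to be compared with the residual-invariant cohomology of $T_c$ for the conjugated group, via Proposition~\ref{prop: cusp cohomology via unipotent subgroup}. The plan is to take this entirely from the conjugation isomorphism of Borel--Serre faces $e_\infty(\Gamma^c)\cong e_c(\Gamma)$ induced by $\sigma_c$, which intertwines all the structures involved. The statement is made in the cusp-adapted basis, so no reexpression in the fixed basis is needed.
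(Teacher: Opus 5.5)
Your proposal is correct and follows essentially the same route as the paper. Both arguments use Proposition~\ref{prop:restriction-arbitrary-cusp} to reduce to the standard horospherical restriction of $\sigma_c^{-1}\cdot\psi_g^b(f,x)$ for the conjugated data, rerun the constant-term computation and the modewise Fourier homotopy with the transported nilpotent $N_b$ (still $N_b^{b+1}=0$, so the mode operators $2\pi i m_k\operatorname{id}+\overline{\lambda}_kN_b$ stay invertible), pass to $\Gamma_c/\Gamma_{N,c}$-invariants, and take the $\GL_2(\mathcal O_K)$ case from Example~\ref{exmp: ideal-class description of the cusp data}; your extra checks (regularity at $s=2$, and extending the equivariance identities from $\gamma\in\Gamma$ to the non-integral $\sigma_c$ with $\Lambda$ replaced by $\sigma_c^{-1}\Lambda$) are details the paper leaves implicit.
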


\begin{proof}
Proposition~\ref{prop:restriction-arbitrary-cusp} identifies the
restriction at $c$ with the standard horospherical restriction
of the transported form, and
Corollary~\ref{cor: constant term standard cusp} computes its
zeroth Fourier coefficient using the transported data.
In the cusp-adapted moving frame, the local-system differential
again has the constant-coefficient form
$d+d\overline{u}\wedge N_b$, where $N_b$ now denotes the
transported nilpotent action and still satisfies
$N_b^{b+1}=0$. Choosing a basis of the conjugated unipotent
lattice therefore gives mode operators of the form
$2\pi i m_k\operatorname{id}+\overline{\lambda}_kN_b$.
The finite inverses and convergent Fourier homotopy of
Proposition~\ref{prop:zero-fourier-coefficient-determines-boundary-class}
apply to these operators, so the restricted form and its zeroth
mode determine the same class on $T_c$. Passage to
$\Gamma_c/\Gamma_{N,c}$-invariants identifies this class with
the boundary restriction on $e_c$.
\end{proof}

Here the surviving indices $(b,0)$ and $(0,b)$ are relative to
the cusp-adapted coefficient basis; expressing the transported
result in the original fixed basis may mix their coefficient
vectors through the action of $\sigma_c^{-1}$.

The cuspwise representatives now assemble into the full
Borel--Serre boundary class.

\begin{prop}[The boundary restriction class]
\label{prop:boundary-restriction-class}
\label{prop: boundary restriction class}
The decomposition
\(
  \partial\overline{\Gamma\backslash\mathbb H^3}^{\,\mathrm{BS}}
  =
  \coprod_{c\in\Gamma\backslash\mathbb P^1(K)}e_c
\)
induces canonical identifications
\begin{align*}
  H^1\left(
    \partial\overline{\Gamma\backslash\mathbb H^3}^{\,\mathrm{BS}},
    \mathrm{TSym}^b(\omega_{\mathcal A^\vee/\C})
    \otimes
    \omega_{\mathcal A/\C}^2
  \right)
  &\cong
  \bigoplus_{c\in\Gamma\backslash\mathbb P^1(K)}
  H^1\left(e_c,\mathrm{TSym}^b(\omega_{\mathcal A^\vee/\C})
  \otimes
  \omega_{\mathcal A/\C}^2\right)
  \\
  &\cong
  \bigoplus_{c\in\Gamma\backslash\mathbb P^1(K)}
  H^1\left(\Gamma_{N,c},\mathrm{TSym}^b(\omega_{\mathcal A^\vee/\C})
  \otimes
  \omega_{\mathcal A/\C}^2\right)^{\Gamma_c/\Gamma_{N,c}}.
\end{align*}
Under these identifications,
\begin{align*}
  \mathrm{res}_{\partial}\bigl([\psi_g^b(f,x)]\bigr)
  &=
  \left(
    \mathrm{res}_c\bigl([\psi_g^b(f,x)]\bigr)
  \right)_{c\in\Gamma\backslash\mathbb P^1(K)}
  =
  \left(
    \left[
      \left.j_{T_c,1}^*\psi_g^b(f,x)\right|_0
    \right]
  \right)_{c\in\Gamma\backslash\mathbb P^1(K)}.
\end{align*}
Here each zeroth Fourier coefficient is computed at normalized
height one in the corresponding cusp-adapted moving frame, and
its class is viewed in the residual-invariant subspace.
Moreover,
\[
  \mathrm{res}_{\partial}\bigl([\psi_g^b(f,x)]\bigr)
  =
  \mathrm{res}_{\partial}
  \left(
    \mathrm{mom}_x^b
    \bigl(x^*EK_{\Gamma,\mathcal A}(f)\bigr)
  \right).
\]
Thus these cuspwise constant terms describe the boundary
restriction of the algebraically constructed equivariant
Eisenstein--Kronecker moment class.
\end{prop}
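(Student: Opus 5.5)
The proof is an assembly of results already established, carried out in three steps. First, since $\Gamma$ has finitely many cusp orbits (Proposition~\ref{prop: Borel--Serre cusp components}), the Borel--Serre boundary is the finite disjoint union of the faces $e_c$. Cohomology of a finite disjoint union is the direct sum of the cohomologies of the components, via the pullbacks $j_c^*$. This gives the first identification. The second identification is Proposition~\ref{prop: cusp cohomology via unipotent subgroup}, applied componentwise: Hochschild--Serre for $1\to\Gamma_{N,c}\to\Gamma_c\to\Gamma_c/\Gamma_{N,c}\to1$ degenerates because the quotient is finite and the coefficients are complex vector spaces. Both identifications are canonical. The only dependence is on the choice of representative $c$ in its orbit, and replacing $c$ by $\gamma c$ conjugates all the data by $\gamma$ and induces the evident isomorphism. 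By the definition $\mathrm{res}_c=j_c^*\circ\mathrm{res}_\partial$, the image of $\mathrm{res}_\partial([\psi_g^b(f,x)])$ under the sum decomposition is exactly the tuple $(\mathrm{res}_c([\psi_g^b(f,x)]))_c$.

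Second, I would identify each component with the constant term. Proposition~\ref{prop:restriction-arbitrary-cusp} represents $\mathrm{res}_c([\psi_g^b(f,x)])$ by the horospherical pullback $j_{T_c,\xi_0}^*\psi_g^b(f,x)$, descended to $e_c$, and shows this class is independent of $\xi_0$ and of $\sigma_c$. Next, I would pass to normalized height one on the determinant-one slice. This uses height independence together with the homotopy invariance along the $\R_{>0}$ factor; the latter holds because $\Gamma$ acts trivially on that factor, since $|\det\gamma|=1$. Finally, Corollary~\ref{cor:boundary-class-arbitrary-cusp} (which rests on the Fourier homotopy of Proposition~\ref{prop:zero-fourier-coefficient-determines-boundary-class}) shows that the pullback and its zeroth Fourier mode in the cusp-adapted moving frame have the same class on $T_c$. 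These are meromorphic continuations evaluated at $s=2$, which is a regular point because $|\mu|=b+2>0$. Since the full restricted form is $\Gamma_c/\Gamma_{N,c}$-invariant, this common class lies in the residual-invariant subspace and corresponds to $\mathrm{res}_c$ under the second identification.

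Third, the final equality follows by applying $\mathrm{res}_\partial$ to the identity $\mathrm{mom}_x^b(x^*EK_{\Gamma,\mathcal A}(f))=[\psi_g^b(f,x)]$ of Proposition~\ref{prop: explicit moment class in dimension two}. That identity is stated in equivariant group cohomology, so I would check that it matches the cohomology of $\Gamma\backslash\mathbb H^3$ used to define $\mathrm{res}_\partial$. The match comes from the Borel model (Proposition~\ref{prop: equiv coh via Borel}), restriction to a determinant slice, and, if $\Gamma$ has torsion, passage to a torsion-free finite-index subgroup via Proposition~\ref{prop:restriction-corestriction} with normalized corestriction over $\C$.

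The main obstacle I expect is a compatibility point rather than a computation: verifying that the homotopy equivalences (interior versus Borel--Serre compactification, group versus space cohomology) commute with restriction to faces, and that the corestriction step commutes with $\mathrm{res}_\partial$. I would handle this by naturality: $\mathrm{res}_c$ is group-cohomological restriction from $\Gamma$ to $\Gamma_c$, so it commutes with restriction and corestriction along finite-index subgroups, and the face decompositions for $\Gamma'$ map onto those for $\Gamma$.
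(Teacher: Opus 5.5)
Your proposal is correct and follows the paper's proof essentially step for step. It decomposes the boundary cohomology over the components $e_c$, applies Proposition~\ref{prop: cusp cohomology via unipotent subgroup} componentwise, takes the zeroth Fourier representatives from Corollary~\ref{cor:boundary-class-arbitrary-cusp}, and obtains the last equality by applying $\mathrm{res}_\partial$ to Proposition~\ref{prop: explicit moment class in dimension two}.

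One small imprecision in your extra compatibility remarks: $\mathrm{res}_c$ does not literally commute with corestriction, since Mackey's double-coset formula intervenes. This does not matter, because restriction from $\Gamma_c$ to $\Gamma'\cap\Gamma_c$ is injective over $\C$, so restriction alone already suffices.
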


\begin{proof}
By Proposition~\ref{prop: Borel--Serre cusp components}, the
boundary is the finite disjoint union of the components $e_c$,
which gives the first cohomological decomposition.
The second follows from
Proposition~\ref{prop: cusp cohomology via unipotent subgroup}.
Restriction to the disjoint components gives the tuple of
cuspwise classes, and
Corollary~\ref{cor:boundary-class-arbitrary-cusp} supplies their
zeroth Fourier representatives.
Finally, Proposition~\ref{prop: explicit moment class in dimension two}
identifies
$[\psi_g^b(f,x)]
=\mathrm{mom}_x^b(x^*EK_{\Gamma,\mathcal A}(f))$;
applying $\mathrm{res}_{\partial}$ gives the last equality.
\end{proof}

The boundary restriction is therefore reduced to explicit
cuspwise constant terms in the analytic moving frame.

\subsection{Algebraic normalization of the boundary class}

We now compare the analytic moving frame occurring in the boundary
calculation with the algebraic Hodge bases fixed in
Section~\ref{subsection: Paring and decompositions}. This rewrites the
zeroth Fourier coefficient obtained in the preceding subsection in
algebraic coordinates and makes explicit the normalization arising from
the de Rham pairing.

% Basis normalization and comparison:
We retain $g=1$, $n=2$, and $S=\Spec\C$, together with the Hodge
identification
$\mathcal H^{0,-1}\cong\omega_{\mathcal A^\vee/\C}
\cong\overline{\operatorname{Lie}(\mathcal A/\C)}$
fixed above.
Assume that
$\mathcal O_L=\mathcal O_K[\alpha]
=\mathcal O_K\oplus\mathcal O_K\alpha$,
and let $\{1^\vee,\alpha^\vee\}$ be the basis dual to
$\{1,\alpha\}$. For the fixed CM type $\Sigma_K=\{\sigma\}$, normalize
the analytic generator
$E_\sigma\in\operatorname{Lie}(\mathcal A_0/\C)(\sigma)$ by
$\omega_{\mathcal A_0}(\sigma^{-1})(E_\sigma)=1$.
The corresponding Serre tensor basis is
\[
  e_1=E_\sigma\otimes_{\mathcal O_K,\sigma}1,
  \qquad
  e_2=E_\sigma\otimes_{\mathcal O_K,\sigma}\alpha.
\]
Choose the coordinates $(z_1,z_2)$ compatibly with this basis, so that
\[
  dz_1=\omega_{\mathcal A_0}(\sigma^{-1})
       \otimes_{\mathcal O_K}1^\vee,
  \qquad
  dz_2=\omega_{\mathcal A_0}(\sigma^{-1})
       \otimes_{\mathcal O_K}\alpha^\vee.
\]
In particular, $dz_1\wedge dz_2$ is the wedge of these algebraic
basis vectors.

\begin{lem}\label{lem:comparison analytic algebraic Hodge basis}
Under the Hodge identification
$\mathcal H^{0,-1}\cong\omega_{\mathcal A^\vee/\C}$, one has
\[
  \begin{aligned}
    \overline e_1
    =
    \frac{
      \omega_{\mathcal A_0^\vee}(\overline\sigma)
      \otimes_{\mathcal O_K}1
    }{
      \left\langle
        \overline{\omega(\mathcal A)},\omega(\mathcal A^\vee)
      \right\rangle_{dR,\mathcal A}
    },\qquad
    \overline e_2
    =
    \frac{
      \omega_{\mathcal A_0^\vee}(\overline\sigma)
      \otimes_{\mathcal O_K}\alpha
    }{
      \left\langle
        \overline{\omega(\mathcal A)},\omega(\mathcal A^\vee)
      \right\rangle_{dR,\mathcal A}
    }.
  \end{aligned}
\]
\end{lem}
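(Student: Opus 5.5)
We reduce the lemma to a one-dimensional statement on $\mathcal A_0$ and then transport it through the Serre tensor construction. Both sides are $\C$-linear in the $\mathcal O_K$-factor: $\overline e_i$ is the image of $\partial/\partial\overline z_i$, and by construction $e_i=E_\sigma\otimes_{\mathcal O_K,\sigma}m_i$ with $m_1=1$, $m_2=\alpha$. Hence $\overline e_i=\overline{E_\sigma}\otimes_{\mathcal O_K,\overline\sigma}m_i$ under the Hodge identification of Remark~\ref{rem_2}, $\mathcal H^{0,-1}\cong\overline{\operatorname{Lie}(\mathcal A/\C)}\cong\overline{\operatorname{Lie}(\mathcal A_0/\C)}\otimes_{\mathcal O_K}\mathcal O_L$. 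This is compatible with the decomposition $\omega_{\mathcal A^\vee/\C}\cong\omega_{\mathcal A_0^\vee/\C}(\overline\sigma)\otimes_{\mathcal O_K}\mathcal O_L$ recorded after Proposition~\ref{prop: refined period pairings}. It therefore suffices to prove the single identity
\[
  \overline{E_\sigma}=\frac{\omega_{\mathcal A_0^\vee}(\overline\sigma)}{\langle\overline{\omega(\mathcal A)},\omega(\mathcal A^\vee)\rangle_{dR,\mathcal A}}
\]
in $\mathcal H_0^{0,-1}\cong\omega_{\mathcal A_0^\vee/\C}(\overline\sigma)$, and then tensor with $m_i$.

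For the one-dimensional identity, note that $\omega_{\mathcal A_0^\vee/\C}(\overline\sigma)$ is a line. So $\overline{E_\sigma}=c\,\omega_{\mathcal A_0^\vee}(\overline\sigma)$ for a unique $c\in\C^\times$, and we determine $c$ by pairing both sides against $\overline{\omega(\mathcal A_0)}$ with the de Rham pairing. Here $\overline{\omega(\mathcal A_0)}$ is the class of the antiholomorphic form $\overline{\omega_{\mathcal A_0}(\sigma^{-1})}$, which spans $H^{0,1}$.
\begin{enumerate}
\item On the left, $\overline{E_\sigma}$ is viewed as a homology class, namely a functional on $H^1_{\mathrm{dR}}$ via Remark~\ref{rem_2}. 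Its value on $\overline{\omega_{\mathcal A_0}(\sigma^{-1})}$ is the complex conjugate of $\omega_{\mathcal A_0}(\sigma^{-1})(E_\sigma)=1$, so the pairing equals $1$. Here we use that $E_\sigma$ is dual to $dz$ and $\overline{E_\sigma}$ is dual to $d\overline z$, exactly as in the convention $d\overline z_j(\overline e_i)=\delta_{ij}$ of Notation~\ref{notation: decomp of H_C}.
\item On the right, the identification $\mathcal H_0\cong H^1_{\mathrm{dR}}(\mathcal A_0^\vee)$ turns evaluation into the de Rham pairing of Definition~\ref{def_2}. Its $\overline\sigma$-refined version, from Proposition~\ref{prop: refined period pairings}, gives $c\,\langle\overline{\omega(\mathcal A_0)},\omega(\mathcal A_0^\vee)\rangle_{dR,\mathcal A_0}$.
\end{enumerate}
Equating the two values yields $c=\langle\overline{\omega(\mathcal A_0)},\omega(\mathcal A_0^\vee)\rangle_{dR,\mathcal A_0}^{-1}$.

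It remains to see that the $\mathcal A_0$-pairing agrees with the $\mathcal A$-pairing appearing in the statement. We get this from the compatibility of the refined de Rham pairing with the Serre tensor construction. The bases $\omega(\mathcal A)(\sigma^{-1},m_i^\vee)$ and $\omega(\mathcal A^\vee)(\bar\sigma,m_j)$ of Definition~\ref{def: algebraic bases} are dual in the $\mathcal O_L$-factor through $m_i^\vee(m_j)=\delta_{ij}$. The refined pairing on $\mathcal A$ is therefore the $\mathcal A_0$ pairing tensored with the trace duality of $L/K$. Read as an element of $\C^{\overline\Sigma_L}$ restricted to the diagonal scalars, it is the same scalar.

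The main obstacle is bookkeeping of conventions rather than any substantive step. We must check that the Hodge identification $\mathcal H^{0,-1}\cong\omega_{\mathcal A^\vee/\C}$ really carries the evaluation pairing to $\langle-,-\rangle_{\mathrm{dR}}$ with no sign and no factor of $2\pi i$, given the polarization normalization $\langle v,w\rangle_{\mathrm{pol}}=2\pi i\langle v,w\rangle$. We also must check that the left unit action convention on $\omega_{\mathcal A_0}$ puts $\overline{\omega(\mathcal A_0)}$ in the $\overline\sigma$-component. I would verify both first on $\mathcal A_0(\C)=\C/\Lambda_0$ with $\omega=dz$, computing $\langle d\overline z,\cdot\rangle$ through the first Chern class of the Poincaré bundle, and then propagate the result by tensoring.
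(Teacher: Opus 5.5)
Your argument is correct and is essentially the paper's: both determine $\overline e_i$ by its de Rham pairing with the conjugate differentials $d\overline z_j$. Both rely on the fact that the pairing on the Serre tensor construction is the $\mathcal A_0$-pairing combined with the evaluation pairing of the dual bases $\{1^\vee,\alpha^\vee\}$ and $\{1,\alpha\}$, so the pairing matrix is diagonal with the single scalar inherited from $\mathcal A_0$. You normalize on $\mathcal A_0$ first and then tensor, whereas the paper works on $\mathcal A$ directly, and the $2\pi i$ issue you flag does not arise, since $\mathcal H\cong H^1_{\mathrm{dR}}(\mathcal A^\vee/S)$ is defined by the de Rham pairing itself.
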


\begin{proof}
By the Serre construction and the decomposition recalled in
Section~\ref{subsection: Paring and decompositions}, the conjugate Hodge
component is generated by
$\omega_{\mathcal A_0^\vee}(\overline\sigma)\otimes_{\mathcal O_K}1$
and
$\omega_{\mathcal A_0^\vee}(\overline\sigma)\otimes_{\mathcal O_K}\alpha$.
Dualizing $\{e_1,e_2\}$ gives the algebraic basis of
$\omega_{\mathcal A/\C}$ identified above with $\{dz_1,dz_2\}$.
The de Rham pairing for the Serre construction is induced by the pairing
for $\mathcal A_0$ together with the evaluation pairing between
$\mathcal O_L^\vee$ and $\mathcal O_L$. Since
$\{1^\vee,\alpha^\vee\}$ and $\{1,\alpha\}$ are dual, the two components
are orthogonal and have the same pairing scalar, inherited from
$\mathcal A_0$. Under the Hodge decomposition, $\overline e_i$
corresponds to $\partial/\partial\overline z_i$; normalizing the
algebraic generators by their pairing with the conjugate differential
basis therefore gives the displayed expressions.
\end{proof}

% Moving frame and tensor symmetric coefficients: 
Recall that the moving frame in the Bianchi boundary calculation satisfies
$\overline e_1^u=\overline e_1$ and
$\overline e_2^u=\overline u\,\overline e_1+\overline e_2$.
Lemma~\ref{lem:comparison analytic algebraic Hodge basis} consequently gives
\[
  \begin{aligned}
    \overline e_1^u
    =
    \frac{
      \omega_{\mathcal A_0^\vee}(\overline\sigma)
      \otimes_{\mathcal O_K}1
    }{
      \left\langle
        \overline{\omega(\mathcal A)},\omega(\mathcal A^\vee)
      \right\rangle_{dR,\mathcal A}
    },\qquad
    \overline e_2^u
    =
    \frac{
      \omega_{\mathcal A_0^\vee}(\overline\sigma)
      \otimes_{\mathcal O_K}\alpha
      +
      \overline u\,
      \omega_{\mathcal A_0^\vee}(\overline\sigma)
      \otimes_{\mathcal O_K}1
    }{
      \left\langle
        \overline{\omega(\mathcal A)},\omega(\mathcal A^\vee)
      \right\rangle_{dR,\mathcal A}
    }.
  \end{aligned}
\]
The change-of-frame matrix
$n(\overline u)=
\bigl(\begin{smallmatrix}1&\overline u\\0&1\end{smallmatrix}\bigr)$
is complex unipotent, not in general unitary. The second numerator is a
complex linear combination in $\omega_{\mathcal A^\vee/\C}$, rather
than a tensor involving $\alpha+\overline u$ over $\mathcal O_K$,
since $\overline u$ need not belong to $\mathcal O_L$.
For $\beta=(\beta_1,\beta_2)\in\mathbb N^2$ with $|\beta|=b$, the
corresponding tensor-symmetric coefficient is therefore
\[
  \begin{aligned}
    (\overline e_1^u)^{[\beta_1]}
    (\overline e_2^u)^{[\beta_2]}
    =
    \frac{1}{
      \left\langle
        \overline{\omega(\mathcal A)},\omega(\mathcal A^\vee)
      \right\rangle_{dR,\mathcal A}^{\,b}
    }
    \left(
      \omega_{\mathcal A_0^\vee}(\overline\sigma)
      \otimes_{\mathcal O_K}1
    \right)^{[\beta_1]}
    \left(
      \omega_{\mathcal A_0^\vee}(\overline\sigma)
      \otimes_{\mathcal O_K}\alpha
      +
      \overline u\,
      \omega_{\mathcal A_0^\vee}(\overline\sigma)
      \otimes_{\mathcal O_K}1
    \right)^{[\beta_2]}.
  \end{aligned}
\]

We now combine the zeroth Fourier coefficient computed above with the
comparison between the moving frame and the algebraic Hodge basis.

\begin{thm}[Algebraic formula for the constant term at the cusp $\infty$]
\label{thm:algebraic constant term at infinity}

Keep the assumptions and algebraic Hodge bases fixed above, and let
$b\in\mathbb N$. For $\operatorname{Re}(s)\gg0$, the zeroth Fourier
coefficient of the horospherical restriction
$\mathrm{res}_{\infty,1}(\psi_u^b(f,x))(s)$ is
\begin{align*}
  &\left.
    \mathrm{res}_{\infty,1}\bigl(\psi_u^b(f,x)\bigr)(s)
  \right|_0
  =
  \frac{1}{
    \left\langle
      \overline{\omega(\mathcal A)},\omega(\mathcal A^\vee)
    \right\rangle_{dR,\mathcal A}^{\,b}
  }
  \sum_{t\in\mathcal{D}(\C)}
  f(t)
  \Biggl(
    \sum_{\substack{
      v\in x-t+\Lambda\setminus\{0\}\\
      v_2=0
    }}
    \frac{\overline v_1^{\,b+2}}{|v_1|^{2s}}
    \left(
      \omega(\mathcal A^\vee)
      \begin{bmatrix}
        1\\
        0
      \end{bmatrix}
    \right)^{[b]}
    du
  \\
  &\qquad\qquad+
    \frac{
      2\pi i
    }{
      (s-1)\mathrm{vol}(\mathcal F)
      [\Gamma_{N,\infty}:\Gamma_{N,\infty,t}]
    }
    \sum_{\substack{
      v_2\in \mathrm{pr}_2(x-t+\Lambda\setminus\{0\})\\
      v_2\neq0
    }}
    \#
    \left(
      \frac{
        \mathrm{pr}_2^{-1}(v_2)
      }{
        v_2\Gamma_{N,\infty,t}
      }
    \right)
    \frac{\overline v_2^{\,b+2}}{|v_2|^{2s}}
    \left(
      \omega(\mathcal A^\vee)
      \begin{bmatrix}
        \overline u\\
        1
      \end{bmatrix}
    \right)^{[b]}
    d\overline u
  \Biggr)
  \otimes\omega(\mathcal A)^{\otimes2}.
\end{align*}
Both sides admit meromorphic continuation in $s$, holomorphic at $s=2$.
The specialization at $s=2$ is obtained by evaluating this continued
identity, formally replacing $|v_j|^{2s}$ by $|v_j|^4$ for $j=1,2$ and
$(s-1)^{-1}$ by $1$. Moreover,
\[
  \left[
    \left.
      \mathrm{res}_{\infty,1}\bigl(\psi_u^b(f,x)\bigr)(2)
    \right|_0
  \right]
  =
  \left[
    \mathrm{res}_{\infty,1}\bigl(\psi_u^b(f,x)\bigr)(2)
  \right]
\]
in
$H^1\bigl(
  e_\infty,
  \mathrm{TSym}^b(\omega_{\mathcal A^\vee/\C})
  \otimes\omega_{\mathcal A/\C}^2
\bigr)$.
Under the Hochschild--Serre identification fixed above, their common
class is the complex realization of the restriction to the cusp
$\infty$ of
\[
  \operatorname{mom}_x^b
  \bigl(x^*EK_{\Gamma,\mathcal A}(f)\bigr)
  \in
  H^1\left(
    \Gamma,
    \mathrm{TSym}^b(\omega_{\mathcal A^\vee/R})
    \otimes\omega_{\mathcal A/R}^2
  \right).
\]
\end{thm}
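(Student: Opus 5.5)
The plan is to combine Corollary~\ref{cor: constant term standard cusp} with the basis comparison of Lemma~\ref{lem:comparison analytic algebraic Hodge basis}, and then to invoke the cohomological results already proved. The argument has three steps.

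\emph{Rewriting the constant term.} I start from the constant term for $\operatorname{Re}(s)\gg0$ in Corollary~\ref{cor: constant term standard cusp}. Only $\beta=(b,0)$ and $\beta=(0,b)$ survive, with coefficient vectors $(\overline e_1^u)^{[b]}$ and $(\overline e_2^u)^{[b]}$ respectively. I substitute the expressions for $\overline e_1^u$ and $\overline e_2^u$ derived after Lemma~\ref{lem:comparison analytic algebraic Hodge basis}.

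\begin{itemize}
\item By the tensor-symmetric formula displayed there, $(\overline e_1^u)^{[b]}$ becomes $\langle\overline{\omega(\mathcal A)},\omega(\mathcal A^\vee)\rangle_{dR,\mathcal A}^{-b}$ times $(\omega(\mathcal A^\vee)[1,0]^t)^{[b]}$.
\item Similarly, $(\overline e_2^u)^{[b]}$ becomes the same scalar factor times $(\omega(\mathcal A^\vee)[\overline u,1]^t)^{[b]}$.
\item Here $\omega(\mathcal A^\vee)$ applied to a column vector denotes the corresponding combination of $\omega_{\mathcal A_0^\vee}(\overline\sigma)\otimes1$ and $\omega_{\mathcal A_0^\vee}(\overline\sigma)\otimes\alpha$.
\item Since $dz_1$ and $dz_2$ were chosen to be the algebraic Serre basis vectors, $\bigwedge_i dz_i=\omega(\mathcal A)^{\otimes2}$ needs no rescaling.
\item The common factor $\langle\cdot,\cdot\rangle^{-b}$ pulls out of both terms.
\end{itemize}

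This gives the displayed identity for $\operatorname{Re}(s)\gg0$, with the per-$t$ factor $f(t)$ and the stabilizer index rearranged.

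\emph{Continuation to $s=2$.} Both scalar sums are partial sums, up to the finite multiplicities, of Eisenstein--Kronecker type series in one complex variable: the first has numerator exponent $\overline v_1^{b+2}$, the second $\overline v_2^{b+2}$.

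\begin{itemize}
\item By Proposition~\ref{prop:functional equation of Eisenstein-Kronecker series} in dimension one, these series continue meromorphically.
\item Because the multi-index degree $b+2$ is positive, Corollary~\ref{cor:special value EK series at zero} shows there is no pole, in particular none at $s=2$.
\item The factor $(s-1)^{-1}$ is regular at $s=2$ and equals $1$ there.
\item The left-hand side is the averaged integral of the continued series, so the identity persists by uniqueness of meromorphic continuation, as in the remark following the constant-term corollary.
\end{itemize}

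\emph{The class statements.} The equality of classes of the restricted form and its zeroth mode is Proposition~\ref{prop:zero-fourier-coefficient-determines-boundary-class}. The identification of the common class with the complex realization of $\mathrm{res}_\infty(\operatorname{mom}_x^b(x^*EK_{\Gamma,\mathcal A}(f)))$ follows from three results:

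\begin{itemize}
\item Proposition~\ref{prop: explicit moment class in dimension two}, for $[\psi_g^b]=\operatorname{mom}_x^b(x^*EK)_\C$;
\item Proposition~\ref{prop:restriction-standard-cusp}, together with the height-one normalization;
\item Proposition~\ref{prop: cusp cohomology via unipotent subgroup}.
\end{itemize}

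One must also check compatibility with base change from $R$ to $\C$ via Corollary~\ref{cor: base change of specialized EK classes}, and that the Hochschild--Serre identification over $R$ commutes with the analytic one over $\C$. Both commute with extension of scalars, because restriction to $\Gamma_c$ and the edge map are natural in the coefficient module.

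I expect the main obstacle to be bookkeeping rather than substance. The analytic moving frame, the de Rham pairing normalization (a single scalar for both Serre components, by orthogonality of the dual bases), and the sign and orientation conventions in $\mathrm{vol}(\mathcal F)$ and $2\pi i$ all have to match exactly. I would verify this first on the case $b=0$, where no pairing factor appears.
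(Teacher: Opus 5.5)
Your proposal is correct and follows the paper's proof. The paper also substitutes the basis comparison of Lemma~\ref{lem:comparison analytic algebraic Hodge basis} into the constant term of Corollary~\ref{cor: constant term standard cusp}, identifies $dz_1\wedge dz_2$ with $\omega(\mathcal A)^{\otimes2}$, takes continuation from Proposition~\ref{prop:functional equation of Eisenstein-Kronecker series}, and concludes with Proposition~\ref{prop:zero-fourier-coefficient-determines-boundary-class} and the identification of $[\psi_g^b(f,x)]$ with the complex realization of the moment class; your explicit base-change check is a welcome addition. One small imprecision: the fibre multiplicities $\#(\mathrm{pr}_2^{-1}(v_2)/v_2\Gamma_{N,\infty,t})$ are not bounded but proportional to $|v_2|^2$, so the second scalar sum is a one-variable Eisenstein--Kronecker series in $s-1$ rather than in $s$; since its degree $b+2$ is positive it is still entire, so your continuation argument, and your alternative via uniqueness of continuation from the two-dimensional series, go through unchanged.
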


\begin{proof}
By Corollary~\ref{cor: zero Fourier representative of boundary class},
the restriction of $[\psi_g^b(f,x)]$ to the cusp $\infty$ is represented
by
$\left.
  \mathrm{res}_{\infty,1}(\psi_u^b(f,x))(2)
\right|_0$.
In the constant-term formula computed above,
Lemma~\ref{lem:comparison analytic algebraic Hodge basis} and the
moving-frame identities replace
$(\overline e_1^u)^{[b]}$ and $(\overline e_2^u)^{[b]}$
by the displayed algebraic coefficient vectors, contributing the factor
$\left\langle
  \overline{\omega(\mathcal A)},\omega(\mathcal A^\vee)
\right\rangle_{dR,\mathcal A}^{-b}$.
The chosen normalization identifies $dz_1\wedge dz_2$ with
$\omega(\mathcal A)^{\otimes2}$, which gives the asserted formula.
Its meromorphic continuation and holomorphy at $s=2$ follow from
Proposition~\ref{prop:functional equation of Eisenstein-Kronecker series}.
Finally, Theorem~\ref{prop:functional equation for currents}
identifies $[\psi_g^b(f,x)]$ with the complex realization of
$\operatorname{mom}_x^b(x^*EK_{\Gamma,\mathcal A}(f))$.
\end{proof}

\begin{rem}\rm
The displayed series are not to be interpreted as an absolutely
convergent definition at $s=2$. The identity is initially obtained
for $\operatorname{Re}(s)\gg0$, and the value representing the boundary
class is the value of its meromorphic continuation at $s=2$.
\end{rem}

\begin{cor}[The algebraic boundary restriction class]
\label{cor: algebraic boundary restriction class}
For each cusp $c\in\Gamma\backslash\mathbb P^1(K)$, choose
$\sigma_c\in\GL_2(K)$ with $\sigma_c(\infty)=c$ and a sufficiently
large cusp parameter $\xi_0$. Transporting the formula of
Theorem~\ref{thm:algebraic constant term at infinity} by $\sigma_c$
expresses
$\left.
  \mathrm{res}_{c,\xi_0}(\psi_g^b(f,x))(2)
\right|_0$
in the algebraic Hodge bases. The tuple of its cuspwise cohomology
classes is independent of these choices and belongs to
\begin{align*}
  \bigoplus_{c\in\Gamma\backslash\mathbb P^1(K)}
  H^1\left(
    e_c,
    \mathrm{TSym}^b(\omega_{\mathcal A^\vee/\C})
    \otimes\omega_{\mathcal A/\C}^2
  \right)
  \cong
  H^1\left(
    \partial\overline{\Gamma\backslash\mathbb H^3}^{\,\mathrm{BS}},
    \mathrm{TSym}^b(\omega_{\mathcal A^\vee/\C})
    \otimes\omega_{\mathcal A/\C}^2
  \right).
\end{align*}
Under this identification, and after base change to $\C$ on the
right-hand side, one has
\[
  \left(
    \left[
      \left.
        \mathrm{res}_{c,\xi_0}\bigl(\psi_g^b(f,x)\bigr)(2)
      \right|_0
    \right]
  \right)_{c\in\Gamma\backslash\mathbb P^1(K)}
  =
  \mathrm{res}_{\partial}
  \left(
    \operatorname{mom}_x^b
    \bigl(x^*EK_{\Gamma,\mathcal A}(f)\bigr)
  \right).
\]
\end{cor}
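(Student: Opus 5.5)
The plan is to assemble the corollary from three ingredients that are already in place: the cuspwise transport of Proposition~\ref{prop:restriction-arbitrary-cusp}, the algebraic constant-term formula at $\infty$ in Theorem~\ref{thm:algebraic constant term at infinity}, and the boundary decomposition of Proposition~\ref{prop:boundary-restriction-class}.

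\textbf{Step 1: transport to the cusp $c$.} Fix a cusp $c$ and a choice $\sigma_c$. Apply Proposition~\ref{prop:restriction-arbitrary-cusp} to write
\[
j_{T_c,\xi_0}^*\psi_g^b(f,x)=j_{T_\infty,\xi_0}^*\bigl(\sigma_c^{-1}\cdot\psi_g^b(f,x)\bigr).
\]
The right-hand side is the standard horospherical restriction for the conjugated group $\sigma_c^{-1}\Gamma\sigma_c$, with transported lattice, torsion data and coefficient system. Then apply Theorem~\ref{thm:algebraic constant term at infinity} to these transported data; the conjugated group again acts through $\GL_2(K)$ on the Serre tensor construction. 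This yields the zeroth Fourier coefficient at $s=2$ in algebraic Hodge bases, carrying the factor $\langle\overline{\omega(\mathcal A)},\omega(\mathcal A^\vee)\rangle_{dR,\mathcal A}^{-b}$. The coefficient vectors $\omega(\mathcal A^\vee)(\cdot)^{[b]}$ are then mapped back to the fixed basis by the induced action of $\sigma_c$ on $\mathrm{TSym}^b(\omega_{\mathcal A^\vee/\C})\otimes\omega^2_{\mathcal A/\C}$. Because this action is algebraic, the result is again written in algebraic bases.

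\textbf{Step 2: identify the class at $c$.} Corollary~\ref{cor:boundary-class-arbitrary-cusp} shows that the transported zeroth Fourier mode represents the same class in $H^1(e_c,-)$ as the full restricted form. Its proof adapts the Fourier contracting homotopy to the transported nilpotent operator $N_b$. The meromorphic continuation is holomorphic at $s=2$ because the multi-indices have total degree $b+2>0$. Independence of $\xi_0$ and of $\sigma_c$ is supplied by Proposition~\ref{prop:restriction-arbitrary-cusp}. Independence of the orbit representative $c$ follows from $\Gamma$-equivariance of $\psi_g^b(f,x)$: replacing $c$ by $\gamma c$ and $\sigma_c$ by $\gamma\sigma_c$ transports the form to itself.

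\textbf{Step 3: assemble the boundary class.} Proposition~\ref{prop:boundary-restriction-class} gives the decomposition of $H^1(\partial\overline{\Gamma\backslash\mathbb H^3}^{\mathrm{BS}},-)$ as the direct sum over cusps. It also gives the equality of the tuple $([j_{T_c,1}^*\psi_g^b(f,x)|_0])_c$ with $\mathrm{res}_\partial([\psi_g^b(f,x)])$. Proposition~\ref{prop: explicit moment class in dimension two}, building on Theorem~\ref{prop:functional equation for currents}, identifies $[\psi_g^b(f,x)]$ with the complex realization of $\operatorname{mom}_x^b(x^*EK_{\Gamma,\mathcal A}(f))$. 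Combining these, together with the base change compatibility of Corollary~\ref{cor: base change of specialized EK classes} for passing from $R$ to $\C$, gives the stated equality.

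\textbf{Main obstacle.} The delicate point is the bookkeeping in Step~1. The action of $\sigma_c^{-1}$ must be applied to the complete coefficient-valued form, not index by index. It must also be compatible with the Hodge basis comparison of Lemma~\ref{lem:comparison analytic algebraic Hodge basis}: conjugating by $\sigma_c$ should preserve the de Rham pairing scalar up to the determinant factor on $\omega_{\mathcal A}^2$. I would first verify this compatibility directly for diagonal $\sigma_c$ as in Example~\ref{exmp: ideal-class description of the cusp data}, then reduce a general $\sigma_c$ to that case modulo $P_\infty(K)$.
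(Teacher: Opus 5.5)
Your proposal is correct and follows the paper's own proof, which applies Theorem~\ref{thm:algebraic constant term at infinity} to the transported form $\sigma_c^{-1}\cdot\psi_g^b(f,x)$ at each cusp and then invokes Proposition~\ref{prop: boundary restriction class} for the cuspwise decomposition, the identification with $\mathrm{res}_{\partial}\bigl(\operatorname{mom}_x^b(x^*EK_{\Gamma,\mathcal A}(f))\bigr)$, and independence of the choices. Your extra steps (Proposition~\ref{prop:restriction-arbitrary-cusp} for independence of $\xi_0$ and $\sigma_c$, $\Gamma$-invariance for the orbit representative, and base change from $R$ to $\C$) only make explicit what the paper leaves implicit, and the pairing check you call the main obstacle is simpler than you expect: the de Rham pairing scalar is inherited from $\mathcal A_0$ and unchanged by transport, with $\det\sigma_c$ entering only through the top-form factor.
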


\begin{proof}
Apply Theorem~\ref{thm:algebraic constant term at infinity} to
$\sigma_c^{-1}\cdot\psi_g^b(f,x)$ at each cusp.
Proposition~\ref{prop: boundary restriction class} identifies the
resulting tuple with the indicated boundary restriction and establishes
its independence of the auxiliary choices.
\end{proof}

\begin{rem}[Comparison with Harder's Eisenstein construction]\rm
Harder constructs global Eisenstein cohomology classes from suitable
boundary data by an analytic Eisenstein operator defined using
Eisenstein series and meromorphic continuation, and subsequently
establishes rationality properties of the resulting classes; see
\cite[Sections~2.3--2.9 and 4.1--4.3]{Harder87}.
Here $EK_{\Gamma,\mathcal A}(f)$ is first constructed algebraically
from the completed Poincaré bundle and the equivariant localization
sequence, as recalled from
\cite[Theorem~2.20 and Definition~2.21]{KS_2025}, and its boundary
forms are computed only after base change to $\C$.
Thus the forms in
Corollary~\ref{cor: algebraic boundary restriction class}
give the explicit complex realization of the boundary restriction
of an already constructed algebraic class, rather than being used
to define that class.
\end{rem}

\subsection{Quadratic torus periods and the Kings--Sprang formula}

Returning to the ideal-theoretic torsion data, we restrict the Bianchi
realization of the Eisenstein--Kronecker class to the quadratic torus
associated with $L/K$. We compute its period and, after passing to the
algebraic Hodge bases, compare the resulting lattice sum with the
Kings--Sprang formula.

Let $\mathcal D=\mathcal A[\mathfrak C]$ and
$\mathcal U_{\mathcal D}=\mathcal A\setminus\mathcal D$, and let
$x\in\mathcal U_{\mathcal D}(\C)$ be the fixed $\Gamma$-invariant
$\mathfrak f$-torsion point. Under the uniformization
$\mathcal A(\C)=\C^2/\Lambda$, we have
$\mathcal D(\C)=\mathfrak c^{-1}\Lambda/\Lambda$, and we regard
$f\in\mathcal O_S[\mathcal D]^{0,\Gamma}$ as the corresponding
$\Gamma$-invariant trace-zero function on
$\mathfrak c^{-1}\Lambda/\Lambda$. For the comparison in this subsection,
we restrict the action to $\Gamma\cap\mathcal O_L^\times$, where units
act by multiplication on $\mathcal O_L$.

Recall that $G=\operatorname{Res}_{K/\Q}\GL_2$ and
$H=\operatorname{Res}_{L/\Q}\mathbb G_m$. Multiplication on the
two-dimensional $K$-vector space $L$ defines the regular representation
$i_H:H\longrightarrow G$, whose map on $\Q$-points is
$L^\times\longrightarrow\GL_K(L)$. Let $\{1,\alpha\}$ be the fixed
$\mathcal O_K$-basis of $\mathcal O_L$, and write $X^2-TX+N$ for the
minimal polynomial of $\alpha$ over $K$, where
$T=\Tr_{L/K}(\alpha)$ and $N=\Norm_{L/K}(\alpha)$. Fix the chosen
$\sigma\in\Sigma_K$, and let $\sigma_1,\sigma_2:L\hookrightarrow\C$
be the two embeddings above $\sigma$. Put $\alpha_i=\sigma_i(\alpha)$
and set
\[
  E_\alpha=
  \begin{pmatrix}
    1&\alpha_1\\
    1&\alpha_2
  \end{pmatrix}.
\]

\begin{lem}
\label{lem:quadratic torus algebraic embedding}
\label{lem:quadratic torus archimedean embedding}
With respect to the basis $\{1,\alpha\}$, the regular representation is
given by
\[
  i_H(a+b\alpha)=
  \begin{pmatrix}
    a&-bN\\
    b&a+bT
  \end{pmatrix},
  \qquad a+b\alpha\in L^\times.
\]
In particular, it restricts to an embedding
$\mathcal O_L^\times\longrightarrow\GL_2(\mathcal O_K)$.
Under $H(\R)\cong\C^\times\times\C^\times$ and
$G_\infty\cong\GL_2(\C)$, its archimedean realization is
\[
  i_{H_\infty}(h_1,h_2)
  =
  E_\alpha^{-1}
  \begin{pmatrix}
    h_1&0\\
    0&h_2
  \end{pmatrix}
  E_\alpha.
\]
For $h\in L^\times$, the corresponding coordinates are
$h_i=\sigma_i(h)$.
\end{lem}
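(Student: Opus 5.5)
The plan has three parts: compute the matrix of multiplication on $L$ in the basis $\{1,\alpha\}$, deduce integrality, and then diagonalize after base change to $\C$ along the two embeddings $\sigma_1,\sigma_2$ above $\sigma$.

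\emph{Algebraic matrix.} For $h=a+b\alpha$ with $a,b\in K$, I compute its action on the basis, using the relation $\alpha^2=T\alpha-N$ from the minimal polynomial:
\[
  h\cdot 1=a+b\alpha,\qquad
  h\cdot\alpha=a\alpha+b\alpha^2=-bN+(a+bT)\alpha .
\]
The columns of the matrix are therefore $(a,b)^t$ and $(-bN,a+bT)^t$, which is the displayed formula.

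\emph{Integrality.} Since $\mathcal O_L=\mathcal O_K\oplus\mathcal O_K\alpha$, every $h\in\mathcal O_L$ has $a,b\in\mathcal O_K$. Also $\alpha$ is integral over $\mathcal O_K$, so $T,N\in\mathcal O_K$, and all entries of $i_H(h)$ lie in $\mathcal O_K$. For a unit $h$, $i_H(h^{-1})$ is an integral inverse, so $i_H(h)\in\GL_2(\mathcal O_K)$. Injectivity is clear: the first column $(a,b)^t$ recovers $h$.

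\emph{Archimedean diagonalization.} Extend scalars along $\sigma$ to get $L\otimes_{K,\sigma}\C\cong\C\times\C$ via $(\sigma_1,\sigma_2)$. In these coordinates multiplication by $h$ acts diagonally by $(\sigma_1(h),\sigma_2(h))$. The change of basis from $\{1,\alpha\}$ is the map sending a coordinate vector $(a,b)^t$ to $(\sigma_1(a+b\alpha),\sigma_2(a+b\alpha))^t$, i.e. (identifying $a,b$ with their images under $\sigma$) to $E_\alpha(a,b)^t$. Hence
\[
  E_\alpha\, i_H(h)=\operatorname{diag}(\sigma_1(h),\sigma_2(h))\,E_\alpha ,
\]
which gives $i_{H_\infty}(h_1,h_2)=E_\alpha^{-1}\operatorname{diag}(h_1,h_2)E_\alpha$ for $h_i=\sigma_i(h)$. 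Both sides are algebraic in $(h_1,h_2)$ and agree on the dense subset $L^\times\subset H(\R)$, so the identity extends to all of $H(\R)$. The matrix $E_\alpha$ is invertible because $\det E_\alpha=\alpha_2-\alpha_1\neq0$, as $L/K$ is separable.

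The step needing the most care is the archimedean identification. One must check that $H(\R)\cong\C^\times\times\C^\times$ and $G_\infty\cong\GL_2(\C)$ are taken through $\sigma$, matching the CM-type convention of the Notation. Otherwise $\sigma_i$ would be replaced by their conjugates and the formula would instead conjugate by $\overline{E_\alpha}$. One must also confirm that $E_\alpha$ (rather than its transpose or inverse) is the correct change-of-basis matrix, which is done by checking the identity on $h=\alpha$.
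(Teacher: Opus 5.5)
Your proposal is correct and follows essentially the same route as the paper. The paper likewise reads off the matrix from $\alpha^2=T\alpha-N$, gets integrality because multiplication by a unit is an $\mathcal O_K$-automorphism of $\mathcal O_L$, and obtains the archimedean formula from the fact that $E_\alpha$ represents $L\otimes_{K,\sigma}\C\cong\C\times\C$, in whose coordinates multiplication by $h$ is $\operatorname{diag}(h_1,h_2)$. Your density step is not needed: the identity $E_\alpha i_H(h)=\operatorname{diag}(h_1,h_2)E_\alpha$ holds directly for all $h\in L\otimes_{K,\sigma}\C$ by $\C$-linearity.
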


\begin{proof}
The relation $\alpha^2=T\alpha-N$ gives the matrix of multiplication
by $a+b\alpha$. For an integral unit, this multiplication is an
automorphism of the $\mathcal O_K$-module $\mathcal O_L$.
The matrix $E_\alpha$ represents the isomorphism
$L\otimes_{K,\sigma}\C\cong\C\times\C$ induced by
$\sigma_1,\sigma_2$. In these coordinates, multiplication by $h$ is
diagonal with entries $h_1,h_2$, giving the archimedean formula.
\end{proof}

Put $K_\infty^H=(S^1)^2\subset H(\R)$ and
$H_\infty=H(\R)/K_\infty^H$. The point
$g_\alpha=E_\alpha^{-1}$ satisfies
\[
  g_\alpha^{-1}i_{H_\infty}(k_1,k_2)g_\alpha
  =
  \begin{pmatrix}
    k_1&0\\
    0&k_2
  \end{pmatrix}
  \in U(2),
  \qquad (k_1,k_2)\in K_\infty^H.
\]
It therefore defines a well-defined map
\[
  j_\infty(g_\alpha):
  H_\infty\longrightarrow S_\infty,
  \qquad
  hK_\infty^H\longmapsto
  i_{H_\infty}(h)g_\alpha U(2)
  =
  E_\alpha^{-1}
  \begin{pmatrix}
    h_1&0\\
    0&h_2
  \end{pmatrix}
  U(2).
\]
For $\gamma\in\Gamma\cap\mathcal O_L^\times$, multiplicativity of
$i_H$ gives
$j_\infty(g_\alpha)(\gamma h)=
\gamma\cdot j_\infty(g_\alpha)(h)$, so this map is
$\Gamma\cap\mathcal O_L^\times$-equivariant.

The center $Z_G=\operatorname{Res}_{K/\Q}\mathbb G_m$ is identified
with the diagonal subtorus of $H$, with
$i_{H_\infty}(c,c)=cI_2$. Thus the relative torus is
\[
  T_{L/K}
  =
  H/(Z_G\cap H)
  =
  \operatorname{Res}_{L/\Q}\mathbb G_m/
  \operatorname{Res}_{K/\Q}\mathbb G_m.
\]
On real points, the ratio $(h_1,h_2)\mapsto h_1/h_2$ identifies
\[
  T_{L/K}(\R)
  \cong
  (\C^\times\times\C^\times)/\C^\times
  \cong
  \C^\times.
\]
Let $K_\infty^T$ denote the image of $K_\infty^H$ in
$T_{L/K}(\R)$. It is the maximal compact subgroup, with
$K_\infty^T\cong
K_\infty^H/(K_\infty^H\cap Z_G(\R))\cong S^1$.
Consequently,
\[
  T_{L/K}(\R)/K_\infty^T
  \cong
  H(\R)\big/\bigl((Z_G\cap H)(\R)K_\infty^H\bigr)
  \cong
  \R_{>0},
\]
where the positive coordinate is $\eta=|h_1/h_2|$.
Writing $h_i=r_ie^{i\theta_i}$ with $r_i>0$, the unitary factors
disappear in $S_\infty$, so $j_\infty(g_\alpha)(h)$ is represented
by $E_\alpha^{-1}\operatorname{diag}(r_1,r_2)$.
The common scalar $(r_1r_2)^{1/2}$ accounts for the central
determinant direction, while $\eta=r_1/r_2$ parametrizes the
relative torus.

\begin{prop}\label{prop:quadratic torus Bianchi embedding}
Under the Bianchi identification
$S_\infty\cong\mathbb H^3\times\R_{>0}$, the fixed-determinant
component of the relative torus is the map
\[
  j_{T,\infty}:\R_{>0}\longrightarrow\mathbb H^3,
  \qquad
  \eta\longmapsto\bigl(u(\eta),\xi(\eta)\bigr),
\]
where
\[
  u(\eta)
  =
  -\frac{\alpha_1+\alpha_2\eta^2}{1+\eta^2},
  \qquad
  \xi(\eta)
  =
  \frac{|\alpha_2-\alpha_1|\eta}{1+\eta^2}.
\]
\end{prop}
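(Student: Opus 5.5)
The proof is a direct computation with the Hermitian realization $gK_\infty\mapsto H_g=(g^{-1})^*H_0g^{-1}$. It has three steps: compute $H_g$ at the torus point, normalize it to determinant one, and read off the Bianchi coordinates $(u,\xi)$ from the unique expression of $H_g$ fixed at the start of the Bianchi subsection.

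\emph{Step 1: the Hermitian form.} By the discussion preceding the statement, $j_\infty(g_\alpha)(h)$ is represented by $g=E_\alpha^{-1}\operatorname{diag}(r_1,r_2)$. Hence
\[
  H_g=(g^{-1})^*g^{-1}
  =E_\alpha^*\operatorname{diag}(r_1^{-2},r_2^{-2})E_\alpha .
\]
Write $a=r_1^{-2}$ and $c=r_2^{-2}$. The entries are
\[
  (H_g)_{11}=a+c,\qquad
  (H_g)_{12}=a\alpha_1+c\alpha_2,\qquad
  (H_g)_{21}=\overline{(H_g)_{12}},\qquad
  (H_g)_{22}=a|\alpha_1|^2+c|\alpha_2|^2 .
\]
The determinant is $\det(H_g)=ac\,|\alpha_2-\alpha_1|^2$, since $\det E_\alpha=\alpha_2-\alpha_1$. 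This depends only on $r_1r_2$, so the central scalar moves only the $\R_{>0}$ factor. Fixing the determinant therefore leaves exactly the relative parameter $\eta=r_1/r_2$.

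\emph{Step 2: matching coordinates.} Comparing with
\[
  H_g=\sqrt{\det H_g}\begin{bmatrix}1/\xi&-u/\xi\\-\overline u/\xi&(|u|^2+\xi^2)/\xi\end{bmatrix},
\]
the ratio of the $(1,2)$ and $(1,1)$ entries gives
\[
  -u=\frac{a\alpha_1+c\alpha_2}{a+c}.
\]
Since $a/c=\eta^{-2}$, multiplying numerator and denominator by $r_1^2r_2^2$ yields
\[
  u=-\frac{\alpha_1+\alpha_2\eta^2}{1+\eta^2}.
\]
The $(1,1)$ entry gives
\[
  \xi=\frac{\sqrt{ac}\,|\alpha_2-\alpha_1|}{a+c}.
\]
Here $\sqrt{ac}/(a+c)=\eta/(1+\eta^2)$, which is the stated formula for $\xi$. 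This matching uses only the $(1,1)$ and $(1,2)$ entries.

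\emph{Step 3: consistency check.} By the uniqueness of the Bianchi expression, the $(2,2)$ entry must agree automatically once $\det H_g$ is fixed. It is still worth verifying directly that
\[
  \frac{a|\alpha_1|^2+c|\alpha_2|^2}{\sqrt{\det H_g}}=\frac{|u|^2+\xi^2}{\xi}.
\]
This reduces to the identity
\[
  |a\alpha_1+c\alpha_2|^2+ac\,|\alpha_1-\alpha_2|^2=(a+c)\bigl(a|\alpha_1|^2+c|\alpha_2|^2\bigr),
\]
which is an expansion in which the cross terms cancel.

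The main obstacle is bookkeeping of conventions rather than any conceptual step. One must keep straight $g$ versus $g^{-1}$ in $H_g$, the placement of the conjugate transpose, and the sign of $u$, so that the result matches the paper's normalization. The convention can be cross-checked against the matrix $M$ from the earlier lemma on Möbius transformations, or by testing the diagonal case where $\eta\to0$ forces $u\to-\alpha_1$.
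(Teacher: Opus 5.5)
Your proposal is correct and matches the paper's proof: you compute $H_g=E_\alpha^*\operatorname{diag}(r_1^{-2},r_2^{-2})E_\alpha$ and its determinant $|\alpha_2-\alpha_1|^2/(r_1^2r_2^2)$, then compare the first-row entries with the Bianchi normal form to obtain $u$ and $\xi$ as functions of $\eta=r_1/r_2$. Your $(2,2)$-entry consistency check is redundant given the uniqueness of the Bianchi expression, but it is harmless and correctly verified.
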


\begin{proof}
For $g=E_\alpha^{-1}\operatorname{diag}(r_1,r_2)$, the associated
Hermitian matrix and its determinant are
\[
  H_g
  =
  E_\alpha^*
  \begin{pmatrix}
    r_1^{-2}&0\\
    0&r_2^{-2}
  \end{pmatrix}
  E_\alpha,
  \qquad
  \det(H_g)
  =
  \frac{|\alpha_2-\alpha_1|^2}{r_1^2r_2^2}.
\]
Its first-row entries are $r_1^{-2}+r_2^{-2}$ and
$r_1^{-2}\alpha_1+r_2^{-2}\alpha_2$. In the Bianchi-coordinate
expression for $H_g$, these are
$\sqrt{\det(H_g)}/\xi$ and
$-\sqrt{\det(H_g)}u/\xi$, respectively. Comparing them gives
\[
  u
  =
  -\frac{r_1^{-2}\alpha_1+r_2^{-2}\alpha_2}
        {r_1^{-2}+r_2^{-2}},
  \qquad
  \xi
  =
  \frac{|\alpha_2-\alpha_1|r_1r_2}{r_1^2+r_2^2},
\]
which depend only on $\eta=r_1/r_2$.
\end{proof}

The image of $j_{T,\infty}$ is the hyperbolic geodesic with endpoints
$-\alpha_1$ and $-\alpha_2$, traversed from $-\alpha_1$ to
$-\alpha_2$ as $\eta$ increases from $0$ to $\infty$.
Its $\Gamma\cap\mathcal O_L^\times$-equivariance follows from that of
$j_\infty(g_\alpha)$.

By the previously established restriction--corestriction formalism,
we may assume that $\Gamma$ is torsion-free. Since the $\Gamma$-action
preserves the determinant coordinate, the determinant-slice comparison
allows us to regard the moment class as
\[
  \mathrm{mom}_x^b
  \bigl(x^*EK_{\Gamma,\mathcal A}(f)\bigr)
  =
  [\psi_g^b(f,x)]
  \in
  H^1\left(
    \Gamma\backslash\mathbb H^3,
    \mathrm{TSym}^b(\omega_{\mathcal A^\vee/\C})
    \otimes\omega_{\mathcal A/\C}^2
  \right).
\]
Equivariance of $j_{T,\infty}$ gives the descended map
\[
  j_\Gamma:
  (\Gamma\cap\mathcal O_L^\times)
  \backslash T_{L/K}(\R)/K_\infty^T
  \longrightarrow
  \Gamma\backslash\mathbb H^3,
\]
and hence the induced restriction map
\[
  \begin{aligned}
    j_\Gamma^*:\quad
    &H^1\left(
      \Gamma\backslash\mathbb H^3,
      \mathrm{TSym}^b(\omega_{\mathcal A^\vee/\C})
      \otimes\omega_{\mathcal A/\C}^2
    \right)
    \longrightarrow
    H^1\left(
      (\Gamma\cap\mathcal O_L^\times)
      \backslash T_{L/K}(\R)/K_\infty^T,
      \mathrm{TSym}^b(\omega_{\mathcal A^\vee/\C})
      \otimes\omega_{\mathcal A/\C}^2
    \right).
  \end{aligned}
\]

We compute the restriction using the explicit Bianchi representative.

\begin{prop}\label{prop:restriction to quadratic torus}
The restriction of the Eisenstein--Kronecker class to the quadratic
torus is represented by
\[
  j_\Gamma^*EK^b_{\Gamma,\mathcal A}(f,x)
  =
  \bigl[j_\Gamma^*\psi_g^b(f,x)\bigr],
\]
where
\[
\begin{aligned}
  j_\Gamma^*\psi_g^b(f,x)
  ={}&
  -\frac{2\det(H_\eta)}{\overline{\alpha_2-\alpha_1}}
  \sum_{t\in\mathfrak c^{-1}\Lambda/\Lambda}
  f(t)\sum_{|\beta|=b}
  \Bigl(
    K^{\beta+(2,0)}(H_\eta,x-t,0,2,\Lambda)
    +
    (\overline{\alpha_1}+\overline{\alpha_2})
    K^{\beta+(1,1)}(H_\eta,x-t,0,2,\Lambda)
  \\
  &\qquad\qquad\qquad\qquad\qquad\qquad\qquad\qquad+
    \overline{\alpha_1}\,\overline{\alpha_2}
    K^{\beta+(0,2)}(H_\eta,x-t,0,2,\Lambda)
  \Bigr)
  \frac{d\eta}{\eta}
  \otimes
  \overline e^{[\beta]}
  \otimes
  \bigwedge_{i=1}^2dz_i.
\end{aligned}
\]
Here $H_\eta$ is the Hermitian form along the chosen fixed determinant
slice, given by
\[
  H_\eta
  =
  \frac{\sqrt{\det(H_\eta)}}{|\alpha_2-\alpha_1|\eta}
  E_\alpha^*
  \begin{pmatrix}
    1&0\\
    0&\eta^2
  \end{pmatrix}
  E_\alpha.
\]
\end{prop}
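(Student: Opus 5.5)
The first equality is formal. Proposition~\ref{prop: explicit moment class in dimension two} identifies $EK^b_{\Gamma,\mathcal A}(f,x)=\operatorname{mom}_x^b(x^*EK_{\Gamma,\mathcal A}(f))$ with $[\psi_g^b(f,x)]$, and restriction to a determinant slice is a homotopy equivalence. Naturality of pullback along the $\Gamma\cap\mathcal O_L^\times$-equivariant map $j_{T,\infty}$ then gives $j_\Gamma^*EK^b_{\Gamma,\mathcal A}(f,x)=[j_\Gamma^*\psi_g^b(f,x)]$. The content of the statement is therefore the explicit pullback of the one-form $\psi_g^b(f,x)$ along the geodesic $\eta\mapsto(u(\eta),\xi(\eta))$ of Proposition~\ref{prop:quadratic torus Bianchi embedding}. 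This is a direct substitution computation.

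First, I will identify $H_\eta$. The proof of Proposition~\ref{prop:quadratic torus Bianchi embedding} gives $H_g=E_\alpha^*\operatorname{diag}(r_1^{-2},r_2^{-2})E_\alpha$. Factoring out $r_1^{-2}$ and using $\eta=r_1/r_2$ produces $E_\alpha^*\operatorname{diag}(1,\eta^2)E_\alpha$ up to a scalar. That scalar is fixed by the determinant: $\det(E_\alpha^*\operatorname{diag}(1,\eta^2)E_\alpha)=|\alpha_2-\alpha_1|^2\eta^2$. Normalizing to the fixed value $\det(H_\eta)$ gives exactly the displayed prefactor $\sqrt{\det(H_\eta)}/(|\alpha_2-\alpha_1|\eta)$.

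Next, I will pull back the coframe. Differentiating $u(\eta)$ and $\xi(\eta)$ gives
\[
du=-\frac{2(\alpha_2-\alpha_1)\eta^2}{(1+\eta^2)^2}\frac{d\eta}{\eta},\qquad d\xi=\xi\,\frac{1-\eta^2}{1+\eta^2}\frac{d\eta}{\eta},
\]
and $d\overline u$ is the conjugate of $du$. I then form the three combinations $du/\xi$, $d\overline u/\xi$ and $d\xi/\xi$ and insert them into $\psi_g^b(f,x)$. Each of $K^{\beta+(2,0)}$, $K^{\beta+(1,1)}$ and $K^{\beta+(0,2)}$ acquires a rational function of $\eta$, $\alpha_1$ and $\alpha_2$ as its coefficient.

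The main obstacle is showing that these coefficients collapse to the $\eta$-independent values $-2/\overline{(\alpha_2-\alpha_1)}$ times $1$, $\overline{\alpha_1}+\overline{\alpha_2}$ and $\overline{\alpha_1}\,\overline{\alpha_2}$.

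For the $K^{\beta+(2,0)}$ term only the $du/\xi$ component contributes, with coefficient $\xi^{-2}du$. Using $\xi^2=|\alpha_2-\alpha_1|^2\eta^2/(1+\eta^2)^2$, this reduces to $-2(\alpha_2-\alpha_1)/|\alpha_2-\alpha_1|^2=-2/\overline{(\alpha_2-\alpha_1)}$. The other two terms combine $\overline u$ and $\overline u^{\,2}$ with $du$, $d\overline u$ and $d\xi$. Here I will use the identities
\[
\overline{\alpha_1}+\overline u=\frac{\eta^2(\overline{\alpha_1}-\overline{\alpha_2})}{1+\eta^2},\qquad \overline{\alpha_2}+\overline u=\frac{\overline{\alpha_2}-\overline{\alpha_1}}{1+\eta^2}.
\]
The $K^{\beta+(0,2)}$ coefficient is $\overline u^{\,2}\xi^{-2}du-\xi\,d\overline u/\xi+2\overline u\,d\xi/\xi$. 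It should factor as $-2\,\overline{\alpha_1}\,\overline{\alpha_2}/\overline{(\alpha_2-\alpha_1)}\cdot d\eta/\eta$, which amounts to checking a polynomial identity in $\eta^2$. I would check it first at $\eta=1$, then confirm the general case by clearing denominators. The $K^{\beta+(1,1)}$ coefficient is handled in the same way.

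If a sign or conjugation discrepancy appears, I would trace it to the orientation of the geodesic, which runs from $-\alpha_1$ to $-\alpha_2$, or to the conjugation convention for $\overline u$. Finally, collecting the common factor $\det(H_\eta)$ and the torsion sum over $\mathfrak c^{-1}\Lambda/\Lambda$ gives the stated formula.
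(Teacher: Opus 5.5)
Your proposal is correct and follows the paper's own proof. Both pull back the coframe $du/\xi$, $d\overline u/\xi$, $d\xi/\xi$ along the geodesic of Proposition~\ref{prop:quadratic torus Bianchi embedding}, use $\xi^{-2}du=-\frac{2}{\overline{\alpha_2-\alpha_1}}\frac{d\eta}{\eta}$ for the leading term, and check that the remaining coefficients simplify. The polynomial identity you deferred does hold: multiplying the $K^{\beta+(1,1)}$ and $K^{\beta+(0,2)}$ coefficients of $d\eta/\eta$ by $\overline{(\alpha_2-\alpha_1)}(1+\eta^2)$ and $\overline{(\alpha_2-\alpha_1)}(1+\eta^2)^2$ gives $-2(\overline{\alpha_1}+\overline{\alpha_2})(1+\eta^2)$ and $-2\,\overline{\alpha_1}\,\overline{\alpha_2}(1+\eta^2)^2$, respectively.
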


\begin{proof}
Differentiating the coordinate formulas in
Proposition~\ref{prop:quadratic torus Bianchi embedding} gives
\[
\begin{aligned}
  j_\Gamma^*\left(\frac{du}{\xi}\right)
  =
  -\frac{2(\alpha_2-\alpha_1)}
        {|\alpha_2-\alpha_1|(1+\eta^2)}\,d\eta,
  \qquad
  j_\Gamma^*\left(\frac{d\overline u}{\xi}\right)
  =
  -\frac{2\overline{\alpha_2-\alpha_1}}
        {|\alpha_2-\alpha_1|(1+\eta^2)}\,d\eta,
  \qquad
  j_\Gamma^*\left(\frac{d\xi}{\xi}\right)
  =
  \frac{1-\eta^2}{1+\eta^2}\frac{d\eta}{\eta}.
\end{aligned}
\]
In particular,
\[
  j_\Gamma^*\left(\frac{1}{\xi}\frac{du}{\xi}\right)
  =
  -\frac{2}{\overline{\alpha_2-\alpha_1}}
  \frac{d\eta}{\eta}.
\]
Substituting these identities into the explicit formula for
$\psi_g^b(f,x)$ obtained above gives the common prefactor
$-2\det(H_\eta)/\overline{\alpha_2-\alpha_1}$; the remaining
coefficients of $K^{\beta+(1,1)}$ and $K^{\beta+(0,2)}$ simplify to
$\overline{\alpha_1}+\overline{\alpha_2}$ and
$\overline{\alpha_1}\,\overline{\alpha_2}$, respectively.
Since $\psi_g^b(f,x)$ is $\Gamma$-equivariant and $j_{T,\infty}$ is
$\Gamma\cap\mathcal O_L^\times$-equivariant, the resulting form
descends to
$(\Gamma\cap\mathcal O_L^\times)
\backslash T_{L/K}(\R)/K_\infty^T$.
\end{proof}

We now evaluate the restricted form on the quadratic torus cycle.

\begin{prop}\label{prop_11}
With the orientation induced by increasing $\eta$, the quadratic
torus period is
 \begin{equation*}
    \scalebox{0.92}{$\displaystyle
\begin{aligned}
  \int_{
    (\Gamma\cap\mathcal O_L^\times)
    \backslash T_{L/K}(\R)/K_\infty^T
  }
  j_\Gamma^*\bigl(\psi_g^b(f,x)\bigr)
 =
  -(\alpha_2-\alpha_1)
  \sum_{
    t\in
    (\Gamma\cap\mathcal O_L^\times)
    \backslash(\mathfrak c^{-1}\Lambda/\Lambda)
  }
  f(t)\sum_{|\beta|=b}
  \sum_{
    \substack{
      v\in
      (\Gamma\cap\mathcal O_L^\times)
      \backslash
      \bigl(\Lambda+(x-t)(\Gamma\cap\mathcal O_L^\times)\bigr)
      \\
      v\neq0
    }
  }
  \frac{\overline v^\beta}{v^{(1)}v^{(2)}}
  \otimes
  \overline e^{[\beta]}
  \otimes
  \bigwedge_{i=1}^2dz_i,
\end{aligned}
    $}
 \end{equation*}
where, for $v=(v_1,v_2)$,
\[
  v^{(1)}=v_1+\alpha_1v_2,
  \qquad
  v^{(2)}=v_1+\alpha_2v_2.
\]
\end{prop}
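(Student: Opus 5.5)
Starting from Proposition~\ref{prop:restriction to quadratic torus}, the period is the integral over a fundamental domain $[\eta_0,\varepsilon^2\eta_0]$ (for a generator $\varepsilon$ of the relevant unit group modulo torsion and centre, acting on $\eta=|h_1/h_2|$) of the single one-form in $d\eta/\eta$ displayed there. The plan is to undo the functional equation and unfold, in the style of Hecke's integral representation for quadratic fields. Concretely, I would first recombine the three Eisenstein--Kronecker terms. By Proposition~\ref{prop:functional equation of Eisenstein-Kronecker series} with $s=2=d$ and $|\mu|=b+2>0$ there are no pole contributions. Working at $\operatorname{Re}(s)\gg0$ and continuing afterwards, the combination
\[
K^{\beta+(2,0)}+(\overline{\alpha_1}+\overline{\alpha_2})K^{\beta+(1,1)}+\overline{\alpha_1}\,\overline{\alpha_2}K^{\beta+(0,2)}
\]
is the lattice sum over $v\in x-t+\Lambda$ with numerator $\overline v^{\beta}(\overline{v_1}+\overline{\alpha_1}\overline{v_2})(\overline{v_1}+\overline{\alpha_2}\overline{v_2})=\overline v^\beta\,\overline{v^{(1)}}\,\overline{v^{(2)}}$ and denominator $\|v\|_{H_\eta}^{2s}$. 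Moreover, from the formula for $H_\eta$ one has $\|v\|_{H_\eta}^2=c(\eta)\bigl(|v^{(1)}|^2+\eta^2|v^{(2)}|^2\bigr)$ with $c(\eta)=\sqrt{\det H_\eta}/(|\alpha_2-\alpha_1|\eta)$, since $E_\alpha v=(v^{(1)},v^{(2)})$.

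Next, I would interchange the $\eta$-integral with the finite torsion sum and the lattice sum. Using the $\Gamma\cap\mathcal O_L^\times$-invariance of $f$, the invariance of $x$, and the equivariance of the integrand (the action multiplies $v^{(i)}$ by $\sigma_i(\varepsilon)$ and rescales $\eta$), I would unfold the integral over the fundamental domain together with the sum over $\Lambda+(x-t)$. The result is a sum over orbit representatives $v$ and over $t$ modulo the unit group, each term carrying an integral over all of $\R_{>0}$. The numerator $\overline v^\beta$ transforms compatibly with $\overline e^{[\beta]}$, so the complete coefficient vector $\sum_\beta \overline v^\beta\overline e^{[\beta]}$ is invariant; this is what justifies the unfolding. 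The vectors $v$ with $v^{(1)}v^{(2)}=0$ can only be $v=0$, because $v^{(i)}$ are the two embeddings of an element of $L$; this matches the condition $v\neq0$.

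The remaining scalar integral is
\[
\int_0^\infty \frac{\eta^{a}\,d\eta/\eta}{(|v^{(1)}|^2+\eta^2|v^{(2)}|^2)^{s}}
=\tfrac12 B\bigl(\tfrac a2,s-\tfrac a2\bigr)\,|v^{(1)}|^{a-2s}\,|v^{(2)}|^{-a},
\]
where $a$ records the powers of $\eta$ coming from $c(\eta)^{-s}$ and from $\det H_\eta$. At $s=2$ one gets $a=2$, the Beta factor equals $1$, and the factor $\tfrac12$ cancels the $2$ in the prefactor. The output is then $\overline{v^{(1)}}\,\overline{v^{(2)}}/(|v^{(1)}|^2|v^{(2)}|^2)=1/(v^{(1)}v^{(2)})$, together with the powers of $|\alpha_2-\alpha_1|$ and $\det H_\eta$. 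The constant $-2/\overline{\alpha_2-\alpha_1}$ combined with $|\alpha_2-\alpha_1|^2$ from $c(\eta)$ should give $-(\alpha_2-\alpha_1)$, and the $\det H_\eta$ factors should cancel since the slice is fixed.

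I expect the main obstacle to be justifying all of this at $s=2$. The lattice sums are not absolutely convergent there, so I would carry out the whole computation for $\operatorname{Re}(s)$ large, with a generalized $s$-dependent form whose integrand matches $\psi$ at $s=2$. The exponent bookkeeping for $\eta$ must then be done with $s$ general, the resulting partial Hecke-type series continued meromorphically, and the continuation evaluated at $s=2$. The sum over $v$ is interpreted in that regularized sense, following the argument of \cite[Corollary~3.28]{KS_2025}.
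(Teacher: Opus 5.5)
Your proposal follows the paper's proof: you recombine the three Eisenstein--Kronecker terms via the factorization into $\overline{v^{(1)}v^{(2)}}$, write $\|v\|_\eta^2$ through $E_\alpha$, evaluate the $\eta$-integral (your Beta integral at $s=2$ is exactly the paper's $1/(2|v^{(1)}|^2|v^{(2)}|^2)$), multiply by the prefactor, and unfold over unit orbits. The only difference is that you carry out the computation for $\operatorname{Re}(s)\gg0$ and then continue to $s=2$, which makes explicit the regularization the paper uses tacitly, since the lattice sums do not converge absolutely at $s=2$.
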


\begin{proof}
Choose lifts of $x$ and $t$ to $\C^2$ and write
$v=x-t+\lambda$ with $\lambda\in\Lambda$.
The factorization
$(\overline v_1+\overline{\alpha_1}\,\overline v_2)
 (\overline v_1+\overline{\alpha_2}\,\overline v_2)
 =\overline{v^{(1)}v^{(2)}}$
combines the three Eisenstein--Kronecker terms into
 \begin{equation*}
    \scalebox{0.90}{$\displaystyle
\begin{aligned}
  K^{\beta+(2,0)}(H_\eta,x-t,0,2,\Lambda)
  +
  (\overline{\alpha_1}+\overline{\alpha_2})
  K^{\beta+(1,1)}(H_\eta,x-t,0,2,\Lambda)
  +
  \overline{\alpha_1}\,\overline{\alpha_2}
  K^{\beta+(0,2)}(H_\eta,x-t,0,2,\Lambda)
  =
  \sum_{v\in(x-t+\Lambda)\setminus\{0\}}
  \frac{
    \overline v^\beta\,\overline{v^{(1)}v^{(2)}}
  }{
    \|v\|_\eta^4
  }.
\end{aligned}
    $}
 \end{equation*}
The expression for $H_\eta$ gives
\[
  \|v\|_\eta^2
  =
  \frac{\sqrt{\det(H_\eta)}}{|\alpha_2-\alpha_1|\eta}
  \left(
    |v^{(1)}|^2+\eta^2|v^{(2)}|^2
  \right).
\]
Since $\det(H_\eta)$ is constant on the chosen slice, the integral
of a lattice summand is reduced to
\[
  \int_0^\infty
  \frac{\eta\,d\eta}
       {\left(|v^{(1)}|^2+\eta^2|v^{(2)}|^2\right)^2}
  =
  \frac{1}{2|v^{(1)}|^2|v^{(2)}|^2}.
\]
Together with the preceding norm formula, this yields
\[
  \int_0^\infty
  \frac{
    \overline v^\beta\,\overline{v^{(1)}v^{(2)}}
  }{
    \|v\|_\eta^4
  }
  \frac{d\eta}{\eta}
  =
  \frac{|\alpha_2-\alpha_1|^2}{2\det(H_\eta)}
  \frac{\overline v^\beta}{v^{(1)}v^{(2)}}.
\]
Multiplying by the prefactor in
Proposition~\ref{prop:restriction to quadratic torus} gives the
single-vector contribution
$-(\alpha_2-\alpha_1)\overline v^\beta/(v^{(1)}v^{(2)})$.

Finally, the $\Gamma\cap\mathcal O_L^\times$-invariance of $f$
allows us to group the torsion points into unit orbits; since $x$
is fixed, the shifted lattices associated with the orbit of $t$
have union
$\Lambda+(x-t)(\Gamma\cap\mathcal O_L^\times)$.
For each unit orbit of nonzero lattice vectors, equivariance
identifies the sum of the corresponding integrals over a
fundamental domain with the integral of a representative summand
over $\R_{>0}$, obtained by combining the unit translates of that
domain.
\end{proof}

In the eigen-coordinates determined by $\alpha_1$ and $\alpha_2$,
the product $v^{(1)}v^{(2)}$ is the relative norm factor. We now rewrite the period of Proposition~\ref{prop_11} in the algebraic
Hodge bases fixed in the preceding subsection.

\begin{cor}
With respect to these bases, the quadratic torus period is
\[
\begin{aligned}
  &\int_{
    (\Gamma\cap\mathcal O_L^\times)
    \backslash T_{L/K}(\R)/K_\infty^T
  }
  j_\Gamma^*\bigl(\psi_g^b(f,x)\bigr)
 =
  -\frac{
    \alpha_2-\alpha_1
  }{
    \left\langle
      \omega(\mathcal A_0),
      \omega(\mathcal A_0^\vee)
    \right\rangle_{dR,\mathcal A_0}^{\,b}
  }
  \sum_{
    t\in
    (\Gamma\cap\mathcal O_L^\times)
    \backslash(\mathfrak c^{-1}\Lambda/\Lambda)
  }
  f(t)\sum_{|\beta|=b}
  \sum_{
    \substack{
      0\neq v\in
      (\Gamma\cap\mathcal O_L^\times)\backslash
      \\
      \bigl(\Lambda+(x-t)(\Gamma\cap\mathcal O_L^\times)\bigr)
    }
  }
  \frac{\overline v^\beta}{v^{(1)}v^{(2)}}
  \\
  &\qquad\qquad\qquad\qquad\qquad\qquad\otimes
  \left(
    \omega_{\mathcal A_0^\vee,\overline\sigma}
    \otimes_{\mathcal O_K}1
  \right)^{[\beta_1]}
  \otimes
  \left(
    \omega_{\mathcal A_0^\vee,\overline\sigma}
    \otimes_{\mathcal O_K}\alpha
  \right)^{[\beta_2]}
 \otimes
  \left(
    \left(
      \omega_{\mathcal A_0,-\sigma}
      \otimes_{\mathcal O_K}1^\vee
    \right)
    \wedge
    \left(
      \omega_{\mathcal A_0,-\sigma}
      \otimes_{\mathcal O_K}\alpha^\vee
    \right)
  \right).
\end{aligned}
\]
\end{cor}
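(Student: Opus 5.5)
The corollary follows from Proposition~\ref{prop_11} by a change of basis in the coefficient module, with no further analysis. The lattice sum, the orbit decomposition and the orientation are already fixed there. The only task is to express the coefficient vectors $\overline e^{[\beta]}$ and the top form $dz_1\wedge dz_2$ in the algebraic Hodge bases of Definition~\ref{def: algebraic bases}, as in Lemma~\ref{lem:comparison analytic algebraic Hodge basis}.

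The steps are as follows.

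\emph{Step 1: the coefficient vectors.} Lemma~\ref{lem:comparison analytic algebraic Hodge basis} gives
\[
\overline e_1=\frac{\omega_{\mathcal A_0^\vee}(\overline\sigma)\otimes1}{\langle\overline{\omega(\mathcal A)},\omega(\mathcal A^\vee)\rangle_{dR,\mathcal A}},
\qquad
\overline e_2=\frac{\omega_{\mathcal A_0^\vee}(\overline\sigma)\otimes\alpha}{\langle\overline{\omega(\mathcal A)},\omega(\mathcal A^\vee)\rangle_{dR,\mathcal A}}.
\]
Since $\mathrm{TSym}$ divided powers are multiplicative, $\overline e^{[\beta]}=\overline e_1^{[\beta_1]}\overline e_2^{[\beta_2]}$ acquires the factor $\langle\cdot,\cdot\rangle_{dR,\mathcal A}^{-|\beta|}=\langle\cdot,\cdot\rangle_{dR,\mathcal A}^{-b}$. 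This factor is uniform in $\beta$, so it can be pulled out of the sum over $|\beta|=b$.

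\emph{Step 2: the top form.} By the chosen normalization of the coordinates, $dz_1=\omega_{\mathcal A_0}(\sigma^{-1})\otimes1^\vee$ and $dz_2=\omega_{\mathcal A_0}(\sigma^{-1})\otimes\alpha^\vee$. The wedge $dz_1\wedge dz_2$ is therefore exactly the displayed algebraic top form and contributes no scalar.

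\emph{Step 3: the pairing scalar.} Replace the pairing scalar for $\mathcal A$ by the one for $\mathcal A_0$. The proof of Lemma~\ref{lem:comparison analytic algebraic Hodge basis} observes that the Serre-tensor pairing is the $\mathcal A_0$ pairing tensored with the evaluation pairing of the dual bases $\{1^\vee,\alpha^\vee\}$ and $\{1,\alpha\}$. Both components therefore carry the same scalar $\langle\omega(\mathcal A_0),\omega(\mathcal A_0^\vee)\rangle_{dR,\mathcal A_0}$, read in the $\overline\sigma$-component via Proposition~\ref{prop: refined period pairings}.

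The one point I expect to need care is the conjugation convention. The lemma is written with $\overline{\omega(\mathcal A)}$, while the corollary writes $\omega(\mathcal A_0)$. I would check against Proposition~\ref{prop: refined period pairings} that the refined pairing restricted to $\overline{\omega_{\mathcal A_0}}\times\omega_{\mathcal A_0^\vee}$ is the $\overline\sigma$-component scalar denoted in the corollary, so that no extra complex conjugate or factor $2\pi i$ appears. Once that is settled, substituting Steps 1–3 into the formula of Proposition~\ref{prop_11} gives the stated identity term by term.
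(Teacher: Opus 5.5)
Your proposal is correct and is essentially the paper's own argument. The paper's proof likewise substitutes Lemma~\ref{lem:comparison analytic algebraic Hodge basis} and the normalization of $dz_1\wedge dz_2$ into Proposition~\ref{prop_11}, pulling out the uniform factor of the pairing to the power $-b$; the proof of that lemma is what supplies your Step~3, that the Serre-tensor pairing scalar is the one inherited from $\mathcal A_0$. On the point you flagged, the pairing in the statement should be read as $\langle\overline{\omega(\mathcal A_0)},\omega(\mathcal A_0^\vee)\rangle_{dR,\mathcal A_0}$ in the $\overline\sigma$-component, as in Proposition~\ref{prop: refined period pairings}, since the unconjugated pairing of the two Hodge lines vanishes, and no factor $2\pi i$ enters.
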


\begin{proof}
By Lemma~\ref{lem:comparison analytic algebraic Hodge basis},
$\overline e^{[\beta]}$ is the tensor of the two algebraic
divided-power factors displayed above multiplied by
$\left\langle
  \omega(\mathcal A_0),\omega(\mathcal A_0^\vee)
\right\rangle_{dR,\mathcal A_0}^{-b}$,
since $|\beta|=b$. The chosen normalization identifies
$dz_1\wedge dz_2$ with the displayed algebraic top form.
Substituting these two identifications into
Proposition~\ref{prop_11} gives the formula.
\end{proof}

\begin{rem}\rm
After restriction to the commutative subgroup
$\Gamma\cap\mathcal O_L^\times$ and decomposition of the coefficient
system into its CM eigenspaces, this formula recovers the
$d=2$ specialization of \cite[Corollary~3.28]{KS_2025} with zero
holomorphic multi-index. Indeed, the factor
$\left\langle
  \omega(\mathcal A_0),\omega(\mathcal A_0^\vee)
\right\rangle_{dR,\mathcal A_0}^{-b}$
is precisely the de Rham pairing factor in their algebraic
normalization, while
\[
  (\alpha_2-\alpha_1)\,dz_1\wedge dz_2
  =
  (dz_1+\alpha_1dz_2)\wedge(dz_1+\alpha_2dz_2).
\]
Thus $\alpha_2-\alpha_1$ is the change-of-basis factor from the
chosen $\mathcal O_K$-basis to the ordered CM eigenbasis.
After the change of variable $t\mapsto-t$ in the torsion sum, the
orbit lattice sum is the corresponding Eisenstein series in
their formula.
\end{rem}

\section{Application to special values of Hecke \texorpdfstring{$L$}{L}-functions}

Building on the quadratic-torus period formula obtained in the preceding
section, we now relate the resulting Eisenstein--Kronecker sums to critical
values of Hecke $L$-functions. After recalling the relevant partial
$L$-functions and fixing compatible CM periods, we use the algebraic
boundary classes constructed above to deduce the corresponding
algebraicity statements.

\subsection{Hecke characters and partial \texorpdfstring{$L$}{L}-functions}

We recall the Hecke-theoretic notation needed for the special-value
calculation and give a ray-class description of the relevant partial
$L$-functions. We retain the notation for types and CM types from
Section~\ref{section:CM field and CM type}; in particular, $K$ is a CM
field with CM type $\Sigma_K\subset J_K$, and $I_K=\mathbb Z[J_K]$.

Let $\mathfrak f\subset\mathcal O_K$ be a nonzero integral ideal.
We denote by $\mathcal I(\mathfrak f)$ the group of fractional ideals
of $K$ prime to $\mathfrak f$, and by $\mathcal P_{\mathfrak f}$ the
subgroup of principal ideals generated by elements $\lambda\in K^\times$
satisfying the ray congruence $\lambda\equiv1\pmod{\mathfrak f}$.
Since $K$ is totally imaginary, no positivity condition at the
archimedean places is required, and
$\mathcal I(\mathfrak f)/\mathcal P_{\mathfrak f}$ is the ray class
group modulo $\mathfrak f$. We also put
$\mathcal O_{\mathfrak f}^{\times}
:=\{u\in\mathcal O_K^\times\mid u\equiv1\pmod{\mathfrak f}\}$.

A type $\mu\in I_K$ is called a \emph{Hecke character type} of
\emph{weight} $w\in\mathbb Z$ if
$\mu(\sigma)+\mu(\overline{\sigma})=w$ for every
$\sigma\in\Sigma_K$.

\begin{defn}[Algebraic Hecke character]\rm
Let $E\subset\C$ be a number field. An \emph{algebraic Hecke character}
of $K$ with values in $E$, conductor dividing $\mathfrak f$, and
infinity type $\mu\in I_K$ is a homomorphism
$\chi:\mathcal I(\mathfrak f)\longrightarrow E^\times$ such that
\[
  \chi((\lambda))
  =\lambda^\mu
  =\prod_{\sigma\in J_K}\sigma(\lambda)^{\mu(\sigma)}
\]
for every $\lambda\in K^\times$ satisfying
$\lambda\equiv1\pmod{\mathfrak f}$.
An algebraic Hecke character of finite order is called a
\emph{Dirichlet character}.
\end{defn}

Characters are identified under restriction along
$\mathcal I(\mathfrak f')\subset\mathcal I(\mathfrak f)$ whenever
$\mathfrak f\mid\mathfrak f'$, and the least modulus for which such
a description exists is the \emph{conductor} of $\chi$; see
\cite[Definition~4.1]{KS_2025}.

For $\operatorname{Re}(s)\gg0$, the Hecke $L$-function of $\chi$ is
defined by
\[
  L_{\mathfrak f}(\chi,s)
  :=\sum_{\substack{
    \mathfrak a\in\mathcal I(\mathfrak f)\\
    \mathfrak a\subset\mathcal O_K
  }}
  \frac{\chi(\mathfrak a)}{(N_{K/\Q}\mathfrak a)^s}.
\]
For an integral ideal $\mathfrak b$ prime to $\mathfrak f$, with ray
class
$[\mathfrak b]\in\mathcal I(\mathfrak f)/\mathcal P_{\mathfrak f}$,
the associated \emph{partial $L$-function} is
\[
  L_{\mathfrak f}(\chi,s,[\mathfrak b])
  :=\sum_{\substack{
    \mathfrak a\in[\mathfrak b]\\
    \mathfrak a\subset\mathcal O_K
  }}
  \frac{\chi(\mathfrak a)}{(N_{K/\Q}\mathfrak a)^s}.
\]
The ray classes partition the integral ideals prime to $\mathfrak f$,
so that
\[
  L_{\mathfrak f}(\chi,s)
  =\sum_{[\mathfrak b]\in\mathcal I(\mathfrak f)/\mathcal P_{\mathfrak f}}
  L_{\mathfrak f}(\chi,s,[\mathfrak b]).
\]
These functions admit meromorphic continuation to $\C$.
For $k\in\mathbb Z$, twisting by the norm character
$(\chi N_{K/\Q}^{-k})(\mathfrak a)
=\chi(\mathfrak a)(N_{K/\Q}\mathfrak a)^{-k}$ gives
\[
  L_{\mathfrak f}(\chi,k,[\mathfrak b])
  =L_{\mathfrak f}(\chi N_{K/\Q}^{-k},0,[\mathfrak b]),
\]
and the analogous identity holds for the full $L$-function.
Thus integral values may be formulated at $s=0$.
In the CM case, criticality is determined by the infinity type,
and the embeddings on which a critical infinity type is negative
form its attached CM type.

\begin{lem}[Ray-class parametrization]
\label{lem:Ray-class parametrization}\rm
Let $\mathfrak b\subset\mathcal O_K$ be an integral ideal prime to
$\mathfrak f$. Multiplication by $\mathfrak b$ induces a bijection
\[
  \mathcal O_{\mathfrak f}^{\times}
  \backslash(1+\mathfrak f\mathfrak b^{-1})
  \xrightarrow{\;\sim\;}
  \{\mathfrak a\in[\mathfrak b]\mid\mathfrak a\subset\mathcal O_K\},
  \qquad
  \lambda\longmapsto\lambda\mathfrak b,
\]
where $\lambda=0$ is omitted if it occurs.
Consequently, for $\operatorname{Re}(s)\gg0$,
\[
  L_{\mathfrak f}(\chi,s,[\mathfrak b])
  =\chi(\mathfrak b)(N_{K/\Q}\mathfrak b)^{-s}
  \sum_{\substack{
    \lambda\in\mathcal O_{\mathfrak f}^{\times}
    \backslash(1+\mathfrak f\mathfrak b^{-1})\\
    \lambda\neq0
  }}
  \frac{\chi((\lambda))}{|N_{K/\Q}(\lambda)|^s}.
\]
\end{lem}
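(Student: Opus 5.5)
The plan is to first establish the bijection directly, and then substitute it into the defining series for the partial $L$-function.

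For the bijection, take $\lambda\in 1+\mathfrak f\mathfrak b^{-1}$ with $\lambda\neq0$. Then $\lambda\mathfrak b\subset\mathfrak b+\mathfrak f\subset\mathcal O_K$, so $\lambda\mathfrak b$ is integral. To see that $\lambda\mathfrak b$ is prime to $\mathfrak f$ and lies in $[\mathfrak b]$, argue locally at primes $\mathfrak p\mid\mathfrak f$. Since $\mathfrak b$ is prime to $\mathfrak f$, one has $\mathfrak b_{\mathfrak p}=\mathcal O_{K,\mathfrak p}$, so $\lambda\in 1+\mathfrak f\mathcal O_{K,\mathfrak p}$. This is exactly the ray congruence $\lambda\equiv1\pmod{\mathfrak f}$ in the multiplicative sense, and it gives $(\lambda)\in\mathcal P_{\mathfrak f}$.

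Conversely, suppose $\mathfrak a\subset\mathcal O_K$ lies in $[\mathfrak b]$. Then $\mathfrak a=\lambda\mathfrak b$ for some $\lambda\equiv1\pmod{\mathfrak f}$, with the generator determined up to $\mathcal O_K^\times$. The condition that the class be a ray class fixes $\lambda$ up to units $u$ with $u\lambda\equiv1$, that is, up to $\mathcal O_{\mathfrak f}^\times$. It remains to check $\lambda-1\in\mathfrak f\mathfrak b^{-1}$:
\begin{itemize}
\item at primes dividing $\mathfrak f$, this follows from the congruence;
\item at other primes, it follows from $\lambda\mathfrak b\subset\mathcal O_K$, since then $\lambda\in\mathfrak b^{-1}$ and $1\in\mathfrak b^{-1}$.
\end{itemize}
Injectivity modulo $\mathcal O_{\mathfrak f}^\times$ is immediate: equal ideals $\lambda\mathfrak b=\lambda'\mathfrak b$ give $\lambda'=u\lambda$ for a unit $u$, and comparing both congruences forces $u\equiv1\pmod{\mathfrak f}$.

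For the $L$-series identity, substitute $\mathfrak a=\lambda\mathfrak b$ into the definition. Multiplicativity gives
\[
\chi(\mathfrak a)=\chi(\mathfrak b)\,\chi((\lambda))
\qquad\text{and}\qquad
N_{K/\Q}\mathfrak a=N_{K/\Q}\mathfrak b\cdot|N_{K/\Q}(\lambda)|.
\]
Here $\chi((\lambda))$ is well defined on $\mathcal O_{\mathfrak f}^\times$-orbits, because $\chi((u\lambda))=\chi((\lambda))$: the ideal is unchanged. Absolute convergence for $\operatorname{Re}(s)\gg0$ permits the reindexing.

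The main subtle point is the local verification that $\lambda\equiv1\pmod{\mathfrak f}$ in the ray sense is equivalent to $\lambda\in1+\mathfrak f\mathfrak b^{-1}$ when $\lambda\mathfrak b$ is integral. This requires $\mathfrak b$ to be coprime to $\mathfrak f$, and I would carry it out prime by prime.
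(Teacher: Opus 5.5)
Your proposal is correct and follows essentially the same route as the paper. The shared steps are integrality via $\lambda\mathfrak b\subset\mathfrak b+\mathfrak f$, the ray congruence at primes dividing $\mathfrak f$ (using coprimality of $\mathfrak b$ and $\mathfrak f$), the converse via $\lambda\in\mathfrak b^{-1}$ combined with that congruence, injectivity modulo $\mathcal O_{\mathfrak f}^{\times}$, and reindexing the series using $\chi(\lambda\mathfrak b)=\chi((\lambda))\chi(\mathfrak b)$ and $N_{K/\Q}(\lambda\mathfrak b)=|N_{K/\Q}(\lambda)|\,N_{K/\Q}\mathfrak b$; your prime-by-prime phrasing only spells out the local check the paper states in one line.
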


\begin{proof}
For nonzero $\lambda\in1+\mathfrak f\mathfrak b^{-1}$, one has
$\lambda\mathfrak b\subset\mathfrak b+\mathfrak f\subset\mathcal O_K$.
Since $\mathfrak b$ is prime to $\mathfrak f$, the element $\lambda$
satisfies $\lambda\equiv1\pmod{\mathfrak f}$ locally at the primes
dividing $\mathfrak f$, so $\lambda\mathfrak b$ belongs to
$[\mathfrak b]$. Two such elements $\lambda,\lambda'$ determine the
same ideal precisely when $\lambda'=u\lambda$ for some
$u\in\mathcal O_K^\times$, and their ray congruences force
$u\in\mathcal O_{\mathfrak f}^{\times}$.
Conversely, an integral ideal $\mathfrak a\in[\mathfrak b]$ can be
written as $\mathfrak a=(\lambda)\mathfrak b$ with
$\lambda\equiv1\pmod{\mathfrak f}$ in the ray sense.
Integrality gives $\lambda\in\mathfrak b^{-1}$ and hence
$\lambda-1\in\mathfrak b^{-1}$; combining this with the ray
congruence and the coprimality of $\mathfrak b$ and $\mathfrak f$
yields $\lambda-1\in\mathfrak f\mathfrak b^{-1}$.
Reindexing the defining series by this bijection and substituting
$\chi(\lambda\mathfrak b)=\chi((\lambda))\chi(\mathfrak b)$ and
$N_{K/\Q}(\lambda\mathfrak b)
=|N_{K/\Q}(\lambda)|\,N_{K/\Q}\mathfrak b$
gives the formula.
\end{proof}

In the case $g=1$ arising in the dimension-two computation, $K$ is
imaginary quadratic. Thus $\mathcal O_K^\times$, and hence
$\mathcal O_{\mathfrak f}^{\times}$, is finite, and under the fixed
embedding $K\subset\C$ one has $N_{K/\Q}(\lambda)=|\lambda|^2$.

\subsection{CM periods and compatible Serre twists}
\label{subsection: CM periods and compatible Serre twists}

We fix the period normalizations needed for the arithmetic applications.
Following \cite[Section~4.2]{KS_2025} for the period formalism over a
fixed CM field, we recall the Betti structures and pairings defining
the CM periods of $\mathcal A_0$ and its dual. We then compare these
with the periods of $\mathcal A$ and $\mathcal A^\vee$ through the
Serre tensor construction.

Throughout, let $S=\Spec R$, where $R\subset\C$ is an
$\mathcal O_{K^{\mathrm{Gal}}}[1/d_K]$-algebra. We retain the CM-type
decompositions, pairings, and algebraic Hodge bases of
Section~\ref{subsection: Paring and decompositions}.

\begin{defn}\label{def: ideal structures}\rm
Let $\mathcal A_0/S$ be an abelian scheme with CM by $\mathcal O_K$,
and let $\mathfrak a$ be a fractional ideal of $K$.
An $\mathfrak a$-structure and a $K$-structure on $\mathcal A_0$
are, respectively, an isomorphism of $\mathcal O_K$-modules and an
isomorphism of $K$-vector spaces
\[
  \xi_{\mathfrak a}:\mathfrak a\xrightarrow{\sim}
  H_1(\mathcal A_0(\C),\Z),
  \qquad
  \xi_K:K\xrightarrow{\sim}H_1(\mathcal A_0(\C),\Q).
\]
\end{defn}

Tensoring $\xi_{\mathfrak a}$ with $\Q$ gives a $K$-structure,
again denoted $\xi_{\mathfrak a}$; the expression
$\xi_{\mathfrak a}(1)$ always refers to this rational extension.
For a fractional ideal $\mathfrak b$ of $K$, the Serre tensor
construction~\ref{example_A} gives an induced
$\mathfrak a\mathfrak b$-structure on
$\mathfrak b\otimes_{\mathcal O_K}\mathcal A_0$.
The same construction induces an $\mathfrak A$-structure on
$\mathcal A=\mathcal O_L\otimes_{\mathcal O_K}\mathcal A_0$,
where $\mathfrak A=\mathcal O_L\otimes_{\mathcal O_K}\mathfrak a$.

The compatibility between the Betti and de Rham pairings is expressed
by the commutative diagram
% https://q.uiver.app/#q=WzAsNSxbMCwwLCJIXzEoXFxtYXRoY2FsIEFfMChcXEMpLFxcWikgICAgIFxcdGltZXMgICAgIEhfMShcXG1hdGhjYWwgQV8wXlxcdmVlKFxcQyksXFxaKSJdLFsyLDBdLFswLDEsIlxcbWF0aGNhbCBIX3tcXG1hdGhjYWwgQV8wL1xcQ30gICAgIFxcdGltZXMgICAgIFxcbWF0aGNhbCBIX3tcXG1hdGhjYWwgQV8wXlxcdmVlL1xcQ30iXSxbMywwLCIoMlxccGkgaSlcXFoiXSxbMywxLCJcXEMiXSxbMCwyXSxbMCwzLCJcXGxhLSwtXFxyYV97XFxtYXRocm17dG9wfSxcXG1hdGhjYWwgQV8wfSJdLFsyLDQsIlxcbGEtLC1cXHJhX3tcXG1hdGhybXtkUn0sXFxtYXRoY2FsIEFfMH0iXSxbMyw0XV0=
\[\begin{tikzcd}
	{H_1(\mathcal A_0(\C),\Z)     \times     H_1(\mathcal A_0^\vee(\C),\Z)} && {} & {(2\pi i)\Z} \\
	{\mathcal H_{\mathcal A_0/\C}     \times     \mathcal H_{\mathcal A_0^\vee/\C}} &&& \C
	\arrow["{\la-,-\ra_{\mathrm{top},\mathcal A_0}}", from=1-1, to=1-4]
	\arrow[from=1-1, to=2-1]
	\arrow[from=1-4, to=2-4]
	\arrow["{\la-,-\ra_{\mathrm{dR},\mathcal A_0}}", from=2-1, to=2-4]
\end{tikzcd}\]
Here
$\mathcal H_{\mathcal A_0/\C}
=H^1_{\mathrm{dR}}(\mathcal A_0/\C)^\vee$
as in Definition~\ref{def_2}.
The pairing induces a $K$-structure $\xi_K^\vee$ on
$\mathcal A_0^\vee$ from $\xi_K$; for an $\mathfrak a$-structure,
trace duality identifies the integral Betti lattice of the dual
with $\mathfrak a^{-1}\mathfrak d_K^{-1}$.
By Proposition~\ref{prop: refined period pairings}, the refined period
pairings $\langle-,-\rangle_{\mathcal A_0}$ and
$\langle-,-\rangle_{\mathcal A_0^\vee}$ take values in
$\C^{\Sigma_K}$ and $\C^{\overline{\Sigma}_K}$, respectively,
while the refined de Rham pairing pairs
$\overline{\omega_{\mathcal A_0/\C}}$ with
$\omega_{\mathcal A_0^\vee/\C}$ in
$\C^{\overline{\Sigma}_K}$.
Composing with the trace recovers the ordinary pairings.
The analogous statements hold for $\mathcal A$ and the lifted
CM type $\Sigma_L$.

\begin{defn}[CM periods]\label{def: CM periods}\rm
Suppose that $\mathcal A_0$ is equipped with an
$\mathcal O_K$-structure $\xi_{\mathcal O_K}$, and denote by
$\xi_K^\vee$ the induced $K$-structure on $\mathcal A_0^\vee$.
For the algebraic bases $\omega(\mathcal A_0)$ and
$\omega(\mathcal A_0^\vee)$ of
Definition~\ref{def: algebraic bases}, define
\[
  \Omega_K
  :=\langle
      \omega(\mathcal A_0),\xi_{\mathcal O_K}(1)
    \rangle_{\mathcal A_0},
  \qquad
  \Omega_K^\vee
  :=\langle
      \omega(\mathcal A_0^\vee),\xi_K^\vee(1)
    \rangle_{\mathcal A_0^\vee},
\]
in $\C^{\Sigma_K}$ and $\C^{\overline{\Sigma}_K}$, respectively.
\end{defn}

\begin{prop}\label{prop: dual CM periods}
With the above normalizations, the dual CM period satisfies
\[
  \Omega_K^\vee
  =
  \frac{
    2\pi i\,
    \left\langle
      \overline{\omega(\mathcal A_0)},
      \omega(\mathcal A_0^\vee)
    \right\rangle_{\mathrm{dR},\mathcal A_0}
  }{
    \overline{\Omega}_K
  }
\]
componentwise in $\C^{\overline{\Sigma}_K}$.
\end{prop}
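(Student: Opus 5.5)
The identity follows from the compatibility of the Betti and de Rham pairings in the commutative diagram above, combined with the Hodge decomposition of Proposition~\ref{prop_5}. We work componentwise in $\C^{\overline{\Sigma}_K}$. Since $g$ may exceed one, we fix $\sigma\in\Sigma_K$ and argue in the $\overline\sigma$-component. There both $H_1(\mathcal A_0(\C),\C)(\overline\sigma)$-type pieces are one-dimensional, so every pairing becomes a product of scalars.

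\emph{Expanding the Betti generator.} First express $\xi_{\mathcal O_K}(1)\in H_1(\mathcal A_0(\C),\Q)\subset\mathcal H_{\mathcal A_0/\C}$ in the Hodge decomposition $\mathcal H^{-1,0}\oplus\mathcal H^{0,-1}$. By definition, its pairing with $\omega(\mathcal A_0)$ is $\Omega_K$ in the $\sigma$-component. Because $\xi_{\mathcal O_K}(1)$ is real, i.e.\ fixed by complex conjugation, its pairing with the conjugate class $\overline{\omega(\mathcal A_0)}$ is $\overline{\Omega}_K$, now in the $\overline\sigma$-component. The same description applies to $\xi_K^\vee(1)$ on the dual: its pairing with $\omega(\mathcal A_0^\vee)$ is $\Omega_K^\vee$ by definition.

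\emph{Comparing the two pairings.} Next use that $\xi_K^\vee$ is defined from $\xi_K$ by the topological pairing, normalized so that the trace-refined pairing of $\xi_{\mathcal O_K}(1)$ with $\xi_K^\vee(1)$ is $2\pi i$ in $K\otimes\C$. By the diagram, this value can be computed with the de Rham pairing after writing both homology classes in the dual Hodge bases. Dualizing, a homology class $\gamma$ equals $\sum\langle\eta_i,\gamma\rangle\eta_i^\vee$, with $\eta_i$ running over $\omega(\mathcal A_0)$ and $\overline{\omega(\mathcal A_0)}$, and similarly on the dual side. By the isotropy of the Hodge filtration (the refined de Rham pairing only pairs $\overline{\omega_{\mathcal A_0/\C}}$ with $\omega_{\mathcal A_0^\vee/\C}$, by Proposition~\ref{prop: refined period pairings}), only one cross term survives in the $\overline\sigma$-component. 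That term is
\[
\overline{\Omega}_K\,\Omega_K^\vee\cdot\bigl(\text{dual-basis pairing}\bigr),
\]
and the dual-basis pairing equals the inverse of $\langle\overline{\omega(\mathcal A_0)},\omega(\mathcal A_0^\vee)\rangle_{\mathrm{dR},\mathcal A_0}$. Equating this with $2\pi i$ and solving gives the stated formula.

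\emph{Main obstacle.} The hard part is the normalization bookkeeping. One must check that the trace duality identifying the dual lattice with $\mathfrak d_K^{-1}$ really makes the refined pairing of $1$ with $\xi_K^\vee(1)$ equal to $2\pi i\cdot1$ in each component, with no stray different factor. One must also check that the sign and conjugation conventions for the left unit action on $\omega_{\mathcal A_0}$ (contragredient weight $\sigma^{-1}$) place the surviving term in the $\overline\sigma$ slot rather than the $\sigma$ slot. To settle both, I would first verify everything in the basic case $\mathcal A_0(\C)=\C/\mathcal O_K\Omega_K$ with $g=1$, where the pairing is the Legendre-type relation, and then extend componentwise using the $\mathbb T_{K,R}$-weight decomposition.
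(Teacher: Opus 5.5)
Your proposal is correct and follows essentially the same route as the paper: expand $\xi_{\mathcal O_K}(1)$ in the Hodge basis dual to $\omega(\mathcal A_0),\overline{\omega(\mathcal A_0)}$ with coefficients $\Omega_K,\overline{\Omega}_K$, use the $2\pi i$-normalized duality between $\xi_{\mathcal O_K}(1)$ and $\xi_K^\vee(1)$, and isolate the $\overline{\omega(\mathcal A_0)}$--$\omega(\mathcal A_0^\vee)$ term via the isotropy of the refined de Rham pairing. The difference is only organizational. The paper identifies $\frac{1}{2\pi i}\xi_K^\vee(1)$ directly with the de Rham class $\omega(\mathcal A_0)/\Omega_K+\overline{\omega(\mathcal A_0)}/\overline{\Omega}_K$ and pairs that class with $\omega(\mathcal A_0^\vee)$. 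You instead also expand $\xi_K^\vee(1)$ in Hodge coordinates and solve $\langle\xi_{\mathcal O_K}(1),\xi_K^\vee(1)\rangle=2\pi i$, which additionally uses the (correct) fact that dual Hodge bases pair by the inverse of the de Rham pairing.
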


\begin{proof}
By Remark~\ref{rem_2} and Proposition~\ref{prop_5}, the
Betti--de Rham comparison and the Hodge decomposition give
\[
  H_1(\mathcal A_0(\C),\C)
  \cong
  \mathcal H_{\mathcal A_0/\C}
  \cong
  \mathrm{Lie}(\mathcal A_0/\C)
  \oplus
  \overline{\mathrm{Lie}(\mathcal A_0/\C)}.
\]
Under this identification, $\xi_{\mathcal O_K}(1)$ maps to
$\bigl(
  \Omega_K\omega(\mathcal A_0)^\vee,
  \overline{\Omega}_K\,\overline{\omega(\mathcal A_0)}^\vee
\bigr)$.
Dualizing the comparison and using the $2\pi i$-normalization
of the topological pairing gives
\[
  \left(
    \frac{\omega(\mathcal A_0)}{\Omega_K},
    \frac{\overline{\omega(\mathcal A_0)}}{\overline{\Omega}_K}
  \right)
  \longmapsto
  \frac{1}{2\pi i}\xi_K^\vee(1).
\]
By the CM-type decomposition, the holomorphic component
$\omega(\mathcal A_0)$ pairs trivially with
$\omega(\mathcal A_0^\vee)$.
Evaluating the dual comparison against
$\omega(\mathcal A_0^\vee)$ therefore yields
\[
  \frac{\Omega_K^\vee}{2\pi i}
  =
  \left\langle
    \frac{\overline{\omega(\mathcal A_0)}}{\overline{\Omega}_K},
    \omega(\mathcal A_0^\vee)
  \right\rangle_{\mathrm{dR},\mathcal A_0}.
\]
\end{proof}

The Serre tensor identification
\[
  H_1(\mathcal A(\C),\Z)
  \cong
  \mathcal O_L\otimes_{\mathcal O_K}
  H_1(\mathcal A_0(\C),\Z)
\]
induces an $\mathcal O_L$-structure $\xi_{\mathcal O_L}$ on
$\mathcal A$ from $1\otimes\xi_{\mathcal O_K}$.
Denote by $\xi_L^\vee$ the corresponding $L$-structure on
$\mathcal A^\vee$.
Using the induced algebraic Hodge bases of
Definition~\ref{def: algebraic bases}, we similarly define
\[
  \Omega_L
  :=\langle
      \omega(\mathcal A),\xi_{\mathcal O_L}(1)
    \rangle_{\mathcal A},
  \qquad
  \Omega_L^\vee
  :=\langle
      \omega(\mathcal A^\vee),\xi_L^\vee(1)
    \rangle_{\mathcal A^\vee},
\]
in $\C^{\Sigma_L}$ and $\C^{\overline{\Sigma}_L}$, respectively.

\begin{prop}
For the Betti structures and algebraic Hodge bases induced by the
Serre tensor construction, one has
\[
  \Omega_L=1\otimes_{\mathcal O_K}\Omega_K,
  \qquad
  \Omega_L^\vee=1\otimes_{\mathcal O_K}\Omega_K^\vee,
\]
under the identifications associated with the lifted CM type.
\end{prop}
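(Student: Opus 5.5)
The plan is to unwind both periods through the Serre tensor identifications and reduce them to the defining pairings for $\mathcal A_0$. First I will fix the structures precisely: $\xi_{\mathcal O_L}(1)=1\otimes\xi_{\mathcal O_K}(1)$ in $\mathcal O_L\otimes_{\mathcal O_K}H_1(\mathcal A_0(\C),\Z)$, and the algebraic bases are $\omega(\mathcal A)(\sigma^{-1},m_i^\vee)=\omega(\mathcal A_0)(\sigma^{-1})\otimes m_i^\vee$ from Definition~\ref{def: algebraic bases}. I will assemble these into the intrinsic trivialization \eqref{eq_2} rather than work with individual $m_i$, so that no global $\mathcal O_K$-basis of $\mathcal O_L$ is needed. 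The key structural input is that the refined period pairing for $\mathcal A$ is the $\mathcal O_L$-linear extension of the refined pairing for $\mathcal A_0$. Concretely, I will show that for $\eta\otimes\ell^\vee\in\omega_{\mathcal A_0}\otimes\mathcal O_L^\vee$ and $m\otimes\gamma\in\mathcal O_L\otimes H_1(\mathcal A_0)$, the $L\otimes\C$-valued pairing, after applying the trace duality of $L/K$, becomes $\ell^\vee(m)\langle\eta,\gamma\rangle_{\mathcal A_0}$, viewed inside $\mathcal O_L\otimes_{\mathcal O_K}\C^{\Sigma_K}\cong\C^{\Sigma_L}$.

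With this compatibility in hand, evaluating at $\xi_{\mathcal O_L}(1)=1\otimes\xi_{\mathcal O_K}(1)$ forces the $\mathcal O_L$-factor to be $1$. We then get $\Omega_L=1\otimes\Omega_K$, where the lifted CM type is what identifies $\mathcal O_L\otimes_{\mathcal O_K,\sigma}\C$ with $\prod_{\tau|\sigma}\C$: each $\tau\in\Sigma_L$ above $\sigma$ receives the component $\Omega_{K,\sigma}$. This componentwise check I will carry out using Proposition~\ref{prop: refined period pairings}, which guarantees the values land in $\C^{\Sigma_L}$.

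For the dual period there are two possible routes. The first is to repeat the argument verbatim: the dual abelian scheme is $\mathcal A^\vee\cong\mathcal O_L^\vee\otimes_{\mathcal O_K}\mathcal A_0^\vee$, and $\xi_L^\vee$ is induced from $\xi_K^\vee$ via the top pairing tensored with the evaluation pairing $\mathcal O_L^\vee\times\mathcal O_L$. The second route, which I prefer as a check, is to use Proposition~\ref{prop: dual CM periods} for both $\mathcal A_0$ and $\mathcal A$. Its proof only uses the Hodge decomposition and the $2\pi i$-normalized pairing, so it applies equally to $\mathcal A$, giving $\Omega_L^\vee=2\pi i\langle\overline{\omega(\mathcal A)},\omega(\mathcal A^\vee)\rangle_{\mathrm{dR},\mathcal A}/\overline{\Omega}_L$. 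The de Rham pairing of the Serre bases equals $1\otimes\langle\overline{\omega(\mathcal A_0)},\omega(\mathcal A_0^\vee)\rangle_{\mathrm{dR},\mathcal A_0}$ by duality of $\{m_i^\vee\}$ and $\{m_i\}$, as already used in Lemma~\ref{lem:comparison analytic algebraic Hodge basis}. Combining this with $\overline\Omega_L=1\otimes\overline\Omega_K$ from the first part gives the claim.

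The main obstacle is the bookkeeping of duals. The differentials of $\mathcal A$ carry $\mathcal O_L^\vee\cong\mathfrak d_{L/K}^{-1}$, while the homology carries $\mathcal O_L$, so I must verify that the two trace dualities, over $K/\Q$ and over $L/K$, compose to the trace duality for $L/\Q$ without introducing a stray different factor. I will check this first locally where $\mathcal O_L$ is free, with dual bases, and then glue using the intrinsic trivialization \eqref{eq_2}.
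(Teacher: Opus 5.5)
Your proposal is correct and is essentially the paper's proof. You identify $\xi_{\mathcal O_L}(1)$ with $1\otimes_{\mathcal O_K}\xi_{\mathcal O_K}(1)$ and $\omega(\mathcal A)$ with the Serre-induced trivialization \eqref{eq_2}, invoke the compatibility of the refined period pairings with the Serre construction, and then repeat the argument verbatim for $\mathcal A^\vee$, which is your first route. The paper simply asserts that compatibility, and your transitivity-of-trace check justifies it. One caution: $\ell^\vee(m)\langle\eta,\gamma\rangle_{\mathcal A_0}$ is only the $K$-valued coordinate, namely $\mathrm{Tr}_{L/K}$ of the $L$-linear value $\lambda m\otimes\langle\eta,\gamma\rangle_{\mathcal A_0}$, where $\lambda\in\mathfrak d_{L/K}^{-1}$ is trace-dual to $\ell^\vee$. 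So $1\otimes\Omega_K$ appears only after you assemble these coordinates via \eqref{eq_2}, as you plan, and not from pairing a single Serre basis vector.

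Your second route goes through Proposition~\ref{prop: dual CM periods} applied to $\mathcal A$, together with the Serre compatibility of the de Rham pairing and $\overline{\Omega}_L=1\otimes_{\mathcal O_K}\overline{\Omega}_K$. It is a valid cross-check but is not needed.
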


\begin{proof}
Under the displayed Betti identification,
$\xi_{\mathcal O_L}(1)$ corresponds to
$1\otimes_{\mathcal O_K}\xi_{\mathcal O_K}(1)$,
and the algebraic Hodge basis $\omega(\mathcal A)$ is induced
from $\omega(\mathcal A_0)$.
The compatibility of the refined period pairings with the Serre
tensor construction in
Proposition~\ref{prop: refined period pairings} therefore gives
\[
  \left\langle
    \omega(\mathcal A),\xi_{\mathcal O_L}(1)
  \right\rangle_{\mathcal A}
  =
  1\otimes_{\mathcal O_K}
  \left\langle
    \omega(\mathcal A_0),\xi_{\mathcal O_K}(1)
  \right\rangle_{\mathcal A_0}.
\]
For the dual, use the rational structures $\xi_K^\vee$ and
$\xi_L^\vee$ induced by the Betti--de Rham pairings, together
with the dual Hodge basis $\omega(\mathcal A^\vee)$ induced
from $\omega(\mathcal A_0^\vee)$.
The corresponding compatibility of the dual refined period
pairings gives
\[
  \left\langle
    \omega(\mathcal A^\vee),\xi_L^\vee(1)
  \right\rangle_{\mathcal A^\vee}
  =
  1\otimes_{\mathcal O_K}
  \left\langle
    \omega(\mathcal A_0^\vee),\xi_K^\vee(1)
  \right\rangle_{\mathcal A_0^\vee}.
\]
\end{proof}

Fix the reference data
$(\mathcal A_0/S,\omega(\mathcal A_0),
\omega(\mathcal A_0^\vee),\xi_{\mathcal O_K})$
as above. For a fractional ideal $\mathfrak a$ of $K$, set
$\mathcal A_0(\mathfrak a)
:=\mathfrak a\otimes_{\mathcal O_K}\mathcal A_0$
and equip it with the induced $\mathfrak a$-structure
$\xi_{\mathfrak a}$. Under the Serre tensor identification
$\mathrm{Lie}(\mathcal A_0(\mathfrak a)/R)
\cong\mathfrak a\otimes_{\mathcal O_K}
\mathrm{Lie}(\mathcal A_0/R)$,
the basis $\omega(\mathcal A_0)$, viewed as an isomorphism
via \eqref{eq_1}, induces
\[
  \mathfrak a\otimes\omega(\mathcal A_0):
  \mathrm{Lie}(\mathcal A_0(\mathfrak a)/R)
  \xrightarrow{\sim}
  (\mathfrak a\otimes_{\Z}R)(\Sigma_K).
\]
After base change to $\C$, this identifies
$\mathrm{Lie}(\mathcal A_0(\mathfrak a)/\C)$ with
$\C^{\Sigma_K}$. 

\begin{prop}[Compatible Serre twists]\label{prop_4}
With the above notation, one has
\[
  \left\langle
    \mathfrak a\otimes\omega(\mathcal A_0),\xi_{\mathfrak a}(1)
  \right\rangle_{\mathcal A_0(\mathfrak a)}
  =
  \left\langle
    \omega(\mathcal A_0),\xi_{\mathcal O_K}(1)
  \right\rangle_{\mathcal A_0}
  =\Omega_K.
\]
The image of $H_1(\mathcal A_0(\mathfrak a)(\C),\Z)$ in
$\C^{\Sigma_K}$ under the map induced by
$\mathfrak a\otimes\omega(\mathcal A_0)$ is
$\mathfrak a\Omega_K$. Consequently,
\[
  \mathcal A_0(\mathfrak a)(\C)
  \cong\C^{\Sigma_K}/\mathfrak a\Omega_K.
\]

Suppose further that $\mathfrak a=\mathfrak f\mathfrak b^{-1}$
for integral ideals $\mathfrak f,\mathfrak b$, and that the isogenies
\[
  [\mathfrak f]:
  \mathcal A_0(\mathfrak a)
  \longrightarrow\mathcal A_0(\mathfrak b^{-1}),
  \qquad
  [\mathfrak b]:
  \mathcal A_0
  \longrightarrow\mathcal A_0(\mathfrak b^{-1})
\]
are étale. Then
\[
  \omega(\mathcal A_0)\otimes\mathfrak a
  =
  [\mathfrak f]^*([\mathfrak b]^*)^{-1}\omega(\mathcal A_0)
\]
is a basis of $\omega_{\mathcal A_0(\mathfrak a)/R}$.

The same assertions hold for $\mathcal A$, with
$\Sigma_K,\Omega_K$ replaced by $\Sigma_L,\Omega_L$ and
$\mathfrak a,\mathfrak b,\mathfrak f$ replaced, respectively, by
\[
  \mathfrak A:=\mathcal O_L\otimes_{\mathcal O_K}\mathfrak a,
  \qquad
  \mathfrak B:=\mathcal O_L\otimes_{\mathcal O_K}\mathfrak b,
  \qquad
  \mathfrak F:=\mathcal O_L\otimes_{\mathcal O_K}\mathfrak f.
\]
In particular, the corresponding period lattice is
$\mathfrak A\Omega_L$, and under the étaleness hypotheses
the compatible algebraic basis is
\[
  \omega(\mathcal A)\otimes\mathfrak A
  =
  [\mathfrak F]^*([\mathfrak B]^*)^{-1}\omega(\mathcal A).
\]
\end{prop}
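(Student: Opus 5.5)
The plan is to reduce everything to functoriality of the Serre tensor construction and of the refined period pairing, handling the three assertions in order.

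\emph{Period identity and lattice.} Under the Serre tensor identifications $H_1(\mathcal A_0(\mathfrak a)(\C),\Z)\cong\mathfrak a\otimes_{\mathcal O_K}H_1(\mathcal A_0(\C),\Z)$ and $\mathrm{Lie}(\mathcal A_0(\mathfrak a)/\C)\cong\mathfrak a\otimes_{\mathcal O_K}\mathrm{Lie}(\mathcal A_0/\C)$, the induced $\mathfrak a$-structure is $\xi_{\mathfrak a}=\mathfrak a\otimes\xi_{\mathcal O_K}$. Its rational extension therefore sends $1\in K$ to $1\otimes\xi_{\mathcal O_K}(1)$ in $K\otimes_K H_1(\mathcal A_0(\C),\Q)$. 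The exponential sequence for $\mathcal A_0(\mathfrak a)$ is obtained from that of $\mathcal A_0$ by applying $\mathfrak a\otimes_{\mathcal O_K}-$, which is exact because $\mathfrak a$ is projective. Hence the trivialization $\mathfrak a\otimes\omega(\mathcal A_0)$ maps $a\otimes\gamma$ to $a\cdot\langle\omega(\mathcal A_0),\gamma\rangle_{\mathcal A_0}$, where $a$ acts on $\C^{\Sigma_K}$ through the embeddings in $\Sigma_K$. Evaluating at $1\otimes\xi_{\mathcal O_K}(1)$ gives the first equality. Because $\xi_{\mathcal O_K}$ is $\mathcal O_K$-linear, $H_1(\mathcal A_0(\C),\Z)=\mathcal O_K\xi_{\mathcal O_K}(1)$, so the image of the lattice is $\mathfrak a\cdot\mathcal O_K\Omega_K=\mathfrak a\Omega_K$. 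The uniformization $\mathcal A_0(\mathfrak a)(\C)\cong\C^{\Sigma_K}/\mathfrak a\Omega_K$ then follows from the analogue of \eqref{eq_3}.

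\emph{The basis formula.} The inclusions $\mathfrak a=\mathfrak f\mathfrak b^{-1}\subset\mathfrak b^{-1}\supset\mathcal O_K$ induce the isogenies $[\mathfrak f]$ and $[\mathfrak b]$. On Lie algebras these are the maps $\mathfrak a\otimes\mathrm{Lie}\to\mathfrak b^{-1}\otimes\mathrm{Lie}\leftarrow\mathcal O_K\otimes\mathrm{Lie}$. Since both isogenies are étale, their differentials are isomorphisms, so the pullbacks $[\mathfrak f]^*,[\mathfrak b]^*$ on invariant differentials are isomorphisms over $R$. Consequently $[\mathfrak f]^*([\mathfrak b]^*)^{-1}\omega(\mathcal A_0)$ is a basis of $\omega_{\mathcal A_0(\mathfrak a)/R}$. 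To identify it with $\omega(\mathcal A_0)\otimes\mathfrak a$, dualize: both are characterized by the induced map on $\mathrm{Lie}$, namely $\mathrm{Lie}(\mathcal A_0(\mathfrak a))\to\mathrm{Lie}(\mathcal A_0(\mathfrak b^{-1}))\xleftarrow{\sim}\mathrm{Lie}(\mathcal A_0)\xrightarrow{\omega(\mathcal A_0)}(\mathcal O_K\otimes R)(\Sigma_K)$. After tensoring with $K$ this composite is the identity on the $\mathfrak a\otimes$ factor, so it agrees with $\mathfrak a\otimes\omega(\mathcal A_0)$, with target $(\mathfrak a\otimes R)(\Sigma_K)$.

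\emph{Passage to $\mathcal A$.} Apply $\mathcal O_L\otimes_{\mathcal O_K}-$ throughout. The Serre tensor constructions commute, since $\mathcal A(\mathfrak A)=\mathcal O_L\otimes\mathcal A_0(\mathfrak a)$, and $[\mathfrak F]=\mathrm{id}\otimes[\mathfrak f]$ as in Construction~\ref{example_A}. By the proposition above, $\Omega_L=1\otimes\Omega_K$, which gives lattice $\mathfrak A\Omega_L$. The basis identity is obtained by tensoring the $\mathcal A_0$ identity with $\mathcal O_L^\vee$.

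\emph{Main obstacle.} The step I expect to be delicate is the basis formula. The issue is to check that étaleness really makes $[\mathfrak b]^*$ invertible integrally over $R$, rather than only after inverting norms. The plan for this is to note that an étale isogeny induces an isomorphism on $\mathrm{Lie}$, hence on $\omega$. It also requires keeping track of the left/right action conventions on $\omega$ fixed after Corollary~\ref{cor_1}.
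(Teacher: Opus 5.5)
Your proposal is correct and follows essentially the same route as the paper. The period identity and the lattice $\mathfrak a\Omega_K$ come from the Serre-tensor identification $H_1(\mathcal A_0(\mathfrak a)(\C),\Z)\cong\mathfrak a\otimes_{\mathcal O_K}H_1(\mathcal A_0(\C),\Z)$, compatible with the Lie algebras. The basis formula comes from étaleness, which makes $[\mathfrak f]^*$ and $[\mathfrak b]^*$ isomorphisms on invariant differentials, together with the fact that the tangent maps are induced by the ideal inclusions. The case of $\mathcal A$ follows by applying $\mathcal O_L\otimes_{\mathcal O_K}-$ and using $\Omega_L=1\otimes\Omega_K$.

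The only cosmetic difference is in checking that the tangent maps are the identity. The paper does this on the complex uniformizations via the lattice inclusions $\mathfrak a\Omega_K\subset\mathfrak b^{-1}\Omega_K\supset\mathcal O_K\Omega_K$ and then descends using $R\subset\C$, whereas you check it algebraically on the Lie algebras after rationalizing.
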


\begin{proof}
Functoriality of the Serre construction gives
\[
  H_1(\mathcal A_0(\mathfrak a)(\C),\Q)
  \cong
  \mathfrak a\otimes_{\mathcal O_K}
  H_1(\mathcal A_0(\C),\Q),
\]
compatibly with the maps to the Lie algebras. In the coordinates
$\mathfrak a\otimes\omega(\mathcal A_0)$, the period map is
therefore induced from the corresponding map for $\mathcal A_0$.
By the definition of the induced rational structure,
$\xi_{\mathfrak a}(1)$ has the same image as
$\xi_{\mathcal O_K}(1)$ in the reference coordinates, namely
$\Omega_K$. With integral coefficients, the same identification
tensors the reference period lattice $\mathcal O_K\Omega_K$
with $\mathfrak a$, giving precisely $\mathfrak a\Omega_K$.
The asserted complex uniformization follows.

Suppose now that $\mathfrak a=\mathfrak f\mathfrak b^{-1}$.
The uniformization also shows that the class of $\Omega_K$
is an $[\mathfrak f]$-torsion point, since
$\Omega_K\in\mathfrak b^{-1}\Omega_K$.
The étaleness assumptions give pullback isomorphisms
\[
  \omega_{\mathcal A_0(\mathfrak a)/R}
  \xleftarrow[\sim]{[\mathfrak f]^*}
  \omega_{\mathcal A_0(\mathfrak b^{-1})/R}
  \xrightarrow[\sim]{[\mathfrak b]^*}
  \omega_{\mathcal A_0/R}.
\]
Thus $[\mathfrak f]^*([\mathfrak b]^*)^{-1}\omega(\mathcal A_0)$
is an algebraic basis. On the complex uniformizations, the two
isogenies are induced by the lattice inclusions
$\mathfrak a\Omega_K\subset\mathfrak b^{-1}\Omega_K$ and
$\mathcal O_K\Omega_K\subset\mathfrak b^{-1}\Omega_K$.
Their tangent maps are therefore the identity in the preceding
coordinates, so the pullback basis agrees with
$\omega(\mathcal A_0)\otimes\mathfrak a$ after base change to
$\C$, and hence over $R\subset\C$.

For $\mathcal A$, the Betti and Lie algebra identifications of
Construction~\ref{example_A}, together with
$\Omega_L=1\otimes_{\mathcal O_K}\Omega_K$, give the corresponding
period and lattice assertions by the same argument.
Locally on $S$, the morphisms $[\mathfrak F]$ and
$[\mathfrak B]$ are identified with finite products of
$[\mathfrak f]$ and $[\mathfrak b]$, respectively.
They are therefore étale, and their pullbacks on invariant
differentials give the stated basis comparison.
\end{proof}

\subsection{Algebraicity of critical \texorpdfstring{$L$}{L}-values}

We now combine the algebraic boundary restriction with the period
normalizations fixed above. Specializing the two rank-one contributions
at suitable torsion sections relates their constant terms to regularized
critical partial Hecke $L$-values. Summing over ray classes then gives the
corresponding algebraicity statements.

We retain the arithmetic setting of the preceding subsections:
$K$ is imaginary quadratic, $[L:K]=2$, and
$\mathcal A=\mathcal O_L\otimes_{\mathcal O_K}\mathcal A_0$ over
$S=\operatorname{Spec}R$, with the fixed $\mathcal O_K$-basis
$\{1,\alpha\}$ of $\mathcal O_L$. Let
$\mathfrak f,\mathfrak b,\mathfrak c$ be pairwise coprime nonzero integral
ideals of $\mathcal O_K$, with
$\mathfrak f,\mathfrak c\neq\mathcal O_K$, and put
$\mathfrak a=\mathfrak f\mathfrak b^{-1}$. Their extensions to
$\mathcal O_L$ are denoted by
\[
  \mathfrak F=\mathfrak f\mathcal O_L,\qquad
  \mathfrak B=\mathfrak b\mathcal O_L,\qquad
  \mathfrak C=\mathfrak c\mathcal O_L,\qquad
  \mathfrak A=\mathfrak a\mathcal O_L
  =\mathfrak F\mathfrak B^{-1}.
\]
In particular,
$N_{L/\Q}(\mathfrak C)=N_{K/\Q}(\mathfrak c)^2$.
We also write
\[
  \mathcal O_{\mathfrak f}^{\times}
  :=
  \{k\in\mathcal O_K^\times\mid k\equiv1\pmod{\mathfrak f}\}
\]
for the group of units satisfying the ray congruence.

For the coefficient rings occurring in the arithmetic specialization,
we use the notation
\[
  R\left[\frac{1}{\mathfrak f}\right]
  :=
  \bigcap_{\substack{
    \mathfrak m\subset R\ \mathrm{maximal}\\
    \mathfrak m\cap\mathcal O_K\nmid\mathfrak f
  }}
  R_{\mathfrak m}.
\]
After passing to
$\operatorname{Spec}R[1/\mathfrak f]$, the relevant
$\mathfrak f$- and $\mathfrak F$-isogenies become étale, since the
primes lying above the prime divisors of $\mathfrak f$ have been
removed. We keep this localization explicit rather than absorbing it
into the notation $R$.

Fix $b\in\mathbb N$. Let $\chi_A$ and $\chi_B$ be algebraic Hecke
characters of $K$, with values in $E$ and conductor dividing
$\mathfrak f$, satisfying
\[
  \chi_A((\lambda))
  =
  \frac{\overline{\lambda}^{\,b}}{\lambda^2},
  \qquad
  \chi_B((\lambda))
  =
  \frac{\overline{\lambda}^{\,b+1}}{\lambda}
\]
for every $\lambda\equiv1\pmod{\mathfrak f}$. In the conventions fixed
above, their infinity types are
$b\overline{\Sigma}_K-2\Sigma_K$ and
$(b+1)\overline{\Sigma}_K-\Sigma_K$, respectively. Thus $s=0$ is
critical for both characters, with attached CM type $\Sigma_K$.

Under the compatible uniformization
$\mathcal A_0(\mathfrak f)(\C)
\cong\C^{\Sigma_K}/\mathfrak f\Omega_K$
of the preceding subsection, let $x_{\Omega_K}$ denote the
$\mathfrak f$-torsion point represented by $\Omega_K$. After a finite
extension of $R$, if necessary, we assume that it is defined over $R$.
Since $\mathfrak b$ is integral, the inclusion
$\mathfrak f\subset\mathfrak a$ induces an isogeny
$[\mathfrak b]:\mathcal A_0(\mathfrak f)\to
\mathcal A_0(\mathfrak a)$ under which this point is still represented
by $\Omega_K$; we use the same notation for its image. The corresponding
isogeny
$[\mathfrak B]:\mathcal A(\mathfrak F)\to\mathcal A(\mathfrak A)$
likewise preserves the representatives of the two
$\mathfrak F$-torsion sections
\[
  x_{\Omega_L}^{A}
  :=
  1\otimes_{\mathcal O_K}x_{\Omega_K},
  \qquad
  x_{\Omega_L}^{B}
  :=
  \alpha\otimes_{\mathcal O_K}x_{\Omega_K}.
\]
We retain these symbols for their images on
$\mathcal A(\mathfrak A)$. With respect to the fixed basis
$\{1,\alpha\}$, their representatives are
\[
  x_{\Omega_L}^{A}=(\Omega_K,0),
  \qquad
  x_{\Omega_L}^{B}=(0,\Omega_K).
\]

With the convention
$\Omega_L=1\otimes_{\mathcal O_K}\Omega_K$, the period lattice at the
standard cusp is
\[
  \Lambda
  =
  \mathfrak A\Omega_L
  =
  \mathfrak a\Omega_K
  \oplus
  \mathfrak a\Omega_K\alpha.
\]
The auxiliary torsion subscheme
$\mathcal D=\mathcal A(\mathfrak A)[\mathfrak C]$
therefore has the coordinate realization
\[
  \mathcal D(\C)
  =
  \mathfrak C^{-1}\Lambda/\Lambda
  \cong
  \frac{\mathfrak c^{-1}\mathfrak a\Omega_K}
       {\mathfrak a\Omega_K}
  \oplus
  \frac{\mathfrak c^{-1}\mathfrak a\Omega_K}
       {\mathfrak a\Omega_K}\alpha.
\]
Accordingly, we write its points as
$t=(t_1\Omega_K,t_2\Omega_K)$, with
$t_1,t_2\in\mathfrak c^{-1}\mathfrak a/\mathfrak a$.
Since $\mathfrak f\neq\mathcal O_K$ and
$(\mathfrak f,\mathfrak b)=1$, both chosen torsion sections are
nonzero. Coprimality of $\mathfrak f$ and $\mathfrak c$ then shows that
they lie outside $\mathcal D$, and hence factor through
$\mathcal U_{\mathcal D}$.

With respect to the same basis, take the congruence group
\[
  \Gamma(\mathfrak f)
  =
  \ker\!\left(
    \GL_2(\mathcal O_K)
    \longrightarrow
    \GL_2(\mathcal O_K/\mathfrak f)
  \right).
\]
Its action on $\mathcal O_L/\mathfrak F$ is trivial, so every
$\mathfrak F$-torsion section, in particular
$x_{\Omega_L}^{A}$ and $x_{\Omega_L}^{B}$, is fixed by
$\Gamma(\mathfrak f)$. Moreover,
$\mathfrak C=\mathfrak c\mathcal O_L$ is preserved by every
$\mathcal O_K$-linear automorphism of $\mathcal O_L$; thus
$[\mathfrak C]$ and $\mathcal D$ are compatible with this action.
At the standard cusp, the determinant-one part of the stabilizer
satisfies
\[
  \frac{
    \Gamma_\infty(\mathfrak f)\cap\mathrm{SL}_2(\mathcal O_K)
  }{
    \Gamma_{N,\infty}(\mathfrak f)
  }
  \cong
  \mathcal O_{\mathfrak f}^{\times}.
\]
The class corresponding to $k\in\mathcal O_{\mathfrak f}^{\times}$
is represented by
$\left(\begin{smallmatrix}k&0\\0&k^{-1}\end{smallmatrix}\right)$
and acts on the torsion coordinates by
$k\cdot(t_1,t_2)=(kt_1,k^{-1}t_2)$.
The group $\mathcal O_{\mathfrak f}^{\times}$ is finite, since $K$ is
imaginary quadratic.

On the full torsion subscheme $\mathcal D$, we use the distinguished
function
\[
  f_{L,[\mathfrak C]}
  =
  N_{L/\Q}(\mathfrak C)\,1_{e(R)}-1_{\mathcal D}
  \in
  R\left[\frac{1}{N_{L/\Q}(\mathfrak C)}\right]
  [\mathcal D]^{0,\Gamma(\mathfrak f)}.
\]
Its trace vanishes because
$\#\mathcal D(\C)=N_{L/\Q}(\mathfrak C)$, and its invariance follows
from the stability of $\mathcal D$ and the zero section.
For $t_1,t_2\in\mathfrak c^{-1}\mathfrak a/\mathfrak a$, the
corresponding one-variable functions are
\begin{align*}
  f_{L,[\mathfrak C]}^{A}(t_1)
  &:=
  f_{L,[\mathfrak C]}\bigl(-(t_1\Omega_K,0)\bigr)
  =
  N_{K/\Q}(\mathfrak c)^2\,1_0(t_1)-1,
  \\
  f_{L,[\mathfrak C]}^{B}(t_2)
  &:=
  \sum_{t_1\in\mathfrak c^{-1}\mathfrak a/\mathfrak a}
  f_{L,[\mathfrak C]}
  \bigl(-(t_1\Omega_K,t_2\Omega_K)\bigr)
  =
  N_{K/\Q}(\mathfrak c)^2\,1_0(t_2)-N_{K/\Q}(\mathfrak c).
\end{align*}
Thus $f_{L,[\mathfrak C]}^{A}$ is obtained by restriction to the first
coordinate, whereas $f_{L,[\mathfrak C]}^{B}$ is obtained by summing
over that coordinate. These are auxiliary functions for the two
projections; the trace-zero function defining the
Eisenstein--Kronecker class remains the single function
$f_{L,[\mathfrak C]}$ on $\mathcal D$.

The same distribution relation applies to the two resulting
one-variable sums. It is obtained by grouping the torsion parameters
and lattice variables into $\mathcal O_{\mathfrak f}^{\times}$-orbits.

\begin{lem}\label{lem: distribution relation}
Let $\mathfrak f,\mathfrak b,\mathfrak c$ be pairwise coprime nonzero
integral ideals of $\mathcal O_K$, with
$\mathfrak c\neq\mathcal O_K$, and put
$\mathfrak a=\mathfrak f\mathfrak b^{-1}$.
Multiplication by $\mathcal O_{\mathfrak f}^{\times}$ defines an action
on $\mathfrak c^{-1}\mathfrak a/\mathfrak a$ and fixes the class of $1$
in $\mathfrak b^{-1}/\mathfrak a$.
Let $G:K^\times\to\C$ satisfy $G(kv)=G(v)$ for every
$k\in\mathcal O_{\mathfrak f}^{\times}$ and $v\in K^\times$.
Whenever the following sums are absolutely convergent, one has
\begin{align*}
  &\sum_{
    t\in\mathcal O_{\mathfrak f}^{\times}
    \backslash(\mathfrak c^{-1}\mathfrak a/\mathfrak a)
  }
  \;
  \sum_{
    v\in\mathcal O_{\mathfrak f}^{\times}
    \backslash
    \bigl(
      (1+\mathfrak a+\mathcal O_{\mathfrak f}^{\times}t)
      \setminus\{0\}
    \bigr)
  }
  G(v)
=
  \sum_{
    v\in\mathcal O_{\mathfrak f}^{\times}
    \backslash
    \bigl(
      (1+\mathfrak c^{-1}\mathfrak a)\setminus\{0\}
    \bigr)
  }
  G(v).
\end{align*}
\end{lem}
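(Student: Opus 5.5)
The plan is to show that both sides sum $G$ over the same set of $\mathcal O_{\mathfrak f}^{\times}$-orbits. First come the preliminary claims. Since $\mathfrak c^{-1}\mathfrak a$ and $\mathfrak a$ are $\mathcal O_K$-modules, multiplication by units preserves them, so $\mathcal O_{\mathfrak f}^{\times}$ acts on the quotient $\mathfrak c^{-1}\mathfrak a/\mathfrak a$. For $k\in\mathcal O_{\mathfrak f}^{\times}$ we have $k-1\in\mathfrak f\subset\mathfrak f\mathfrak b^{-1}=\mathfrak a$, because $\mathfrak b$ is integral. Hence $k\cdot1\equiv1$ modulo $\mathfrak a$, and the class of $1$ is fixed. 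Consequently every $k$ maps the coset $1+t+\mathfrak a$ onto $1+kt+\mathfrak a$. So for each orbit representative $t$, the set $1+\mathfrak a+\mathcal O_{\mathfrak f}^{\times}t=\bigcup_{k}(1+kt+\mathfrak a)$ is $\mathcal O_{\mathfrak f}^{\times}$-stable, and the inner orbit sum makes sense, since $G$ is constant on orbits.

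Next comes the decomposition. Since $\mathfrak a\subset\mathfrak c^{-1}\mathfrak a$, the set $1+\mathfrak c^{-1}\mathfrak a$ is the disjoint union of the cosets $1+t+\mathfrak a$ as $t$ runs over $\mathfrak c^{-1}\mathfrak a/\mathfrak a$. I would group these cosets according to the $\mathcal O_{\mathfrak f}^{\times}$-orbit of $t$. The union over the orbit of $t$ is exactly $1+\mathfrak a+\mathcal O_{\mathfrak f}^{\times}t$. Distinct orbits give disjoint unions, since $t$ is recovered as the class of $v-1$ modulo $\mathfrak a$. This gives an $\mathcal O_{\mathfrak f}^{\times}$-stable partition
\[
(1+\mathfrak c^{-1}\mathfrak a)\setminus\{0\}=\coprod_{t\in\mathcal O_{\mathfrak f}^{\times}\backslash(\mathfrak c^{-1}\mathfrak a/\mathfrak a)}\bigl((1+\mathfrak a+\mathcal O_{\mathfrak f}^{\times}t)\setminus\{0\}\bigr).
\]
Passing to orbit sets commutes with a partition into invariant pieces. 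Absolute convergence then lets me reorder the right-hand sum into the left-hand double sum. The zero vector is removed consistently on both sides, so no separate case is needed.

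The only delicate point is the compatibility of the orbit notation. The inner sum is over orbits in the union of cosets and not over a single coset. One has to check that the map from $v$ to the class of $v-1$ modulo $\mathfrak a$ is equivariant, carrying $kv$ to $k(v-1)+(k-1)\equiv k(v-1)$. This uses $k-1\in\mathfrak a$ again, and it ensures that orbits of $v$ lie over orbits of $t$ and are not split across them. This is a bookkeeping step rather than a real obstacle. Stabilizers of individual $t$ play no role, because the sums run over orbits and not over elements with multiplicity.
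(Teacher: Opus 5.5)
Your proposal is correct and follows essentially the same route as the paper. You use $k-1\in\mathfrak f\subset\mathfrak a$ to show that the class of $1$ is fixed and that each piece $1+\mathfrak a+\mathcal O_{\mathfrak f}^{\times}t$ is unit-stable, then partition $1+\mathfrak c^{-1}\mathfrak a$ into these pieces indexed by unit orbits in $\mathfrak c^{-1}\mathfrak a/\mathfrak a$ and regroup the orbit sum using absolute convergence. Your explicit check that $v\mapsto v-1 \bmod \mathfrak a$ is equivariant is a slightly more detailed form of the paper's remark that two pieces meet only if their torsion parameters lie in the same unit orbit.
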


\begin{proof}
Since $\mathfrak b$ is integral,
$\mathfrak f\subset\mathfrak a$, and therefore
$k-1\in\mathfrak a$ for every
$k\in\mathcal O_{\mathfrak f}^{\times}$.
Multiplication by units preserves the fractional ideals involved,
so the indicated actions are well defined and the class of $1$ is
fixed. Grouping the cosets of $\mathfrak a$ according to their unit
orbits gives the disjoint decomposition
\[
  1+\mathfrak c^{-1}\mathfrak a
  =
  \coprod_{
    \mathcal O_{\mathfrak f}^{\times}t
    \in
    \mathcal O_{\mathfrak f}^{\times}
    \backslash(\mathfrak c^{-1}\mathfrak a/\mathfrak a)
  }
  \left(
    1+\mathfrak a+\mathcal O_{\mathfrak f}^{\times}t
  \right).
\]
Indeed, two pieces can intersect only if their torsion parameters
belong to the same unit orbit modulo $\mathfrak a$.
Each piece is stable under $\mathcal O_{\mathfrak f}^{\times}$ because
$k\equiv1\pmod{\mathfrak a}$.
After removing zero, we may therefore pass to unit orbits;
the invariance of $G$ and absolute convergence allow its sum to be
split over the resulting disjoint union.
\end{proof}

For the two functions
\[
  G_A(v)
  :=
  \frac{\overline v^{\,b+2}}{|v|^{2s}},
  \qquad
  G_B(v)
  :=
  N_{K/\Q}((v)\mathfrak b)
  \frac{\overline v^{\,b+2}}{|v|^{2s}},
\]
the invariance hypothesis of
Lemma~\ref{lem: distribution relation} follows from the defining
condition for $\chi_A$. Namely, for
$k\in\mathcal O_{\mathfrak f}^{\times}$, the identities
$\chi_A((k))=1$ and $k\overline k=1$ give
$\overline k^{\,b+2}=1$, hence $G_A(kv)=G_A(v)$.
Since $(kv)=(v)$ for units $k$, the same holds for $G_B$.
For these functions, the distribution identities are first established
in the region of absolute convergence, and their values at $s=2$ are
understood by meromorphic continuation.

We first consider the contribution of $\mathrm{pr}_A$ at the standard
cusp $\infty$, taking $x=x_{\Omega_L}^{A}$. The first rank-one summand
of the constant-term formula is supported on the locus $v_2=0$,
which reduces the calculation to the first torsion coordinate.

\begin{thm}[The $\mathrm{pr}_A$-contribution]
\label{thm: Special values of partial raw functions}
For
$f\in R[1/N_{K/\Q}(\mathfrak c)]
[\mathcal D]^{0,\Gamma(\mathfrak f)}$,
the $\mathrm{pr}_A$-component of the zeroth Fourier coefficient of
$\mathrm{res}_{\infty,1}
(\psi_u^b(f,x_{\Omega_L}^{A}))(s)$ is, for
$\operatorname{Re}(s)\gg0$, given by
\begin{equation}
\label{eq: prA general constant term}
\begin{aligned}
&\mathrm{pr}_A\left(
  \left[
    \left.
      \mathrm{res}_{\infty,1}
      \bigl(\psi_u^b(f,x_{\Omega_L}^{A})\bigr)(s)
    \right|_0
  \right]
\right)=\frac{
  (2\pi i)^b\overline{\Omega_K}^{\,2}
  |\mathcal O_{\mathfrak f}^{\times}|
}{
  (\Omega_K^\vee)^b|\Omega_K|^{2s}
}
\sum_{
  t_1\in\mathcal O_{\mathfrak f}^{\times}
  \backslash(\mathfrak c^{-1}\mathfrak a/\mathfrak a)
}
f\bigl(-(t_1\Omega_K,0)\bigr)
\sum_{
  v_1\in\mathcal O_{\mathfrak f}^{\times}
  \backslash
  \bigl(
    (1+\mathfrak a+\mathcal O_{\mathfrak f}^{\times}t_1)
    \setminus\{0\}
  \bigr)
}
\frac{\overline{v_1}^{\,b+2}}{|v_1|^{2s}}.
\end{aligned}
\end{equation}
This expression admits meromorphic continuation in $s$ and is
holomorphic at $s=2$. At this value, its prefactor simplifies to
$(2\pi i)^b|\mathcal O_{\mathfrak f}^{\times}|
/((\Omega_K^\vee)^b\Omega_K^2)$, and its value is the projected
algebraic boundary class
\begin{equation}
\label{eq: prA algebraic specialization}
\begin{aligned}
&\mathrm{pr}_A\left(
  \left[
    \left.
      \mathrm{res}_{\infty,1}
      \bigl(\psi_u^b(f,x_{\Omega_L}^{A})\bigr)(2)
    \right|_0
  \right]
\right)
\in
R\left[\frac{1}{\mathfrak f}\right]
\left[
  \frac{1}{
    |\mathcal O_{\mathfrak f}^{\times}|\,b!\,
    N_{K/\Q}(\mathfrak b\mathfrak c)
  }
\right].
\end{aligned}
\end{equation}
\end{thm}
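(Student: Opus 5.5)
The plan has three parts: the analytic formula \eqref{eq: prA general constant term}, its continuation to $s=2$, and the integrality statement \eqref{eq: prA algebraic specialization}. The first is a direct specialization of Theorem~\ref{thm:algebraic constant term at infinity}, the second follows from Proposition~\ref{prop:functional equation of Eisenstein-Kronecker series}, and the third comes from the algebraic construction of Proposition~\ref{prop_10}.

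\textbf{Step 1: the analytic formula.} I will start from Theorem~\ref{thm:algebraic constant term at infinity} with $x=x_{\Omega_L}^{A}$. The $\mathrm{pr}_A$-component is, by definition of the projections, the coefficient of $[\omega_1^{[b]}\otimes\omega(\mathcal A)^{\otimes2}\otimes n_\sigma]$. In the analytic picture this is the $du$-term multiplying $(\omega(\mathcal A^\vee)[1,0]^t)^{[b]}$, with the generator $\omega_1$ identified with $\omega_{\mathcal A_0^\vee}(\overline\sigma)\otimes1$ and $n_\sigma$ with $du$.

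Only lattice vectors with $v_2=0$ contribute. Writing $x-t+\Lambda$ with $\Lambda=\mathfrak a\Omega_K\oplus\mathfrak a\Omega_K\alpha$ and $x=(\Omega_K,0)$, the condition $v_2=0$ forces $t_2=0$. The surviving vectors are then
\[
v_1=\Omega_K(1-t_1+a),\qquad a\in\mathfrak a .
\]
I will substitute $t\mapsto -t$ so that the parameter reads $1+\mathfrak a+t_1$, and pull out $\overline{\Omega_K}^{\,b+2}/|\Omega_K|^{2s}$.

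Next I convert the de Rham pairing factor using Proposition~\ref{prop: dual CM periods}:
\[
\langle\overline{\omega(\mathcal A)},\omega(\mathcal A^\vee)\rangle^{-b}=\frac{(2\pi i)^b}{(\Omega_K^\vee)^b\,\overline{\Omega_K}^{\,b}}.
\]
This cancels $\overline{\Omega_K}^{\,b}$ and leaves $\overline{\Omega_K}^{\,2}$, as in the stated prefactor.

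Finally I group the sum over $t_1$ and $v_1$ into $\mathcal O_{\mathfrak f}^\times$-orbits. Here I use that $f$ is invariant under the residual diagonal action $k\cdot(t_1,t_2)=(kt_1,k^{-1}t_2)$, and that the summand $\overline v_1^{\,b+2}/|v_1|^{2s}$ is invariant because $\overline k^{\,b+2}=1$. Grouping the pairs into orbits produces the factor $|\mathcal O_{\mathfrak f}^\times|$. I have to track whether the orbit counts create stabilizer corrections; I expect none, since the action on the shifted cosets is free away from $0$.

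\textbf{Step 2: continuation and the value at $s=2$.} The inner sums are one-variable Eisenstein--Kronecker series with $|\mu|=b+2>0$. By Proposition~\ref{prop:functional equation of Eisenstein-Kronecker series} (in rank one), or by the corresponding continuation in Theorem~\ref{thm:algebraic constant term at infinity}, they have no pole at $s=2$. At $s=2$ the prefactor becomes
\[
\frac{\overline{\Omega_K}^{\,2}}{|\Omega_K|^4}=\frac{1}{\Omega_K^2}.
\]

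\textbf{Step 3: integrality (the main obstacle).} I will show that this number is the $\mathrm{pr}_A$-coordinate of an algebraically defined class, so it lies in the coefficient ring. The argument runs as follows.
\begin{enumerate}
\item Proposition~\ref{prop_10} and Proposition~\ref{prop: F-torsion restriction} give ${}_{[\mathfrak F]}EK^b_{\Gamma_\infty,\mathcal B}(f,x)$, with coefficients in $R[1/\mathfrak f][1/N(\mathfrak c)]$.
\item Restricting to $\Gamma_{N,\infty}$ and applying $\mathrm{pr}_A$ lands in the coefficient ring after inverting $b!$, the lattice index, and $|\Gamma_\infty/\Gamma_{N,\infty}|$ (the last needed for the Hochschild--Serre step). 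These account for the denominators $|\mathcal O_{\mathfrak f}^\times|\,b!$.
\item The algebraic generators $\omega_1^{[b]}\otimes\omega(\mathcal A)^{\otimes2}$ must be matched with the analytic ones. Proposition~\ref{prop_4} makes the bases compatible through $[\mathfrak F]^*([\mathfrak B]^*)^{-1}$; the étaleness needed there is where $N(\mathfrak b)$ gets inverted.
\item Theorem~\ref{prop:functional equation for currents} together with Corollary~\ref{cor: algebraic boundary restriction class} identifies the complex realization of this class with the analytic constant term from Step~1.
\end{enumerate}

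The delicate point is this basis comparison. After passing to $\mathcal B=\mathcal A(\mathfrak F^{-1})$, the coordinate $\mathrm{pr}_A$ taken in the algebraic basis of $\mathcal B$ must agree exactly with the analytic coefficient, with no stray period or $\mathrm{vol}(\mathcal F)$ factor. I would check this first on $du$: the generator $n_\sigma$ must match $du$ with trivial scaling, and the lattice of $\Gamma_{N,\infty}(\mathfrak f)$ enters only through its index, which is already inverted.
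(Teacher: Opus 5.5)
Your proposal is correct and follows the paper's proof essentially step for step: specialize Theorem~\ref{thm:algebraic constant term at infinity} at $x_{\Omega_L}^{A}$ with $t\mapsto -t$, reduce to $t_2=0$, rewrite the pairing factor via Proposition~\ref{prop: dual CM periods}, and group into free $\mathcal O_{\mathfrak f}^{\times}$-orbits to obtain the factor $|\mathcal O_{\mathfrak f}^{\times}|$; integrality is then read off from Propositions~\ref{prop_9}, \ref{prop_10} and \ref{prop: F-torsion restriction} with the indicated localizations. The one step you leave implicit is the replacement of $\langle\overline{\omega(\mathcal A)},\omega(\mathcal A^\vee)\rangle_{dR,\mathcal A}$ by the corresponding pairing for $\mathcal A_0$ before applying the dual-period formula, which the paper handles through the compatible Serre-tensor Hodge bases, in line with the basis comparison you anticipate in Step~3.
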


\begin{proof}
By Theorem~\ref{thm:algebraic constant term at infinity}, after
replacing $t$ by $-t$, the first scalar coefficient is
\[
\frac{1}{
  \left\langle
    \overline{\omega(\mathcal A)},
    \omega(\mathcal A^\vee)
  \right\rangle_{dR,\mathcal A}^{\,b}
}
\sum_{t\in\mathfrak C^{-1}\Lambda/\Lambda}
f(-t)
\sum_{\substack{
  v\in(x_{\Omega_L}^{A}+t+\Lambda)\setminus\{0\}\\
  v_2=0
}}
\frac{\overline{v_1}^{\,b+2}}{|v_1|^{2s}}.
\]
Recall that
$\Lambda=\mathfrak a\Omega_K\oplus
\mathfrak a\Omega_K\alpha$ and that
$x_{\Omega_L}^{A}$ is represented by $(\Omega_K,0)$.
Writing $t=(t_1\Omega_K,t_2\Omega_K)$, with
$t_1,t_2\in\mathfrak c^{-1}\mathfrak a/\mathfrak a$, the condition
$v_2=0$ forces $t_2=0$ in this quotient.
For these torsion parameters, the surviving first coordinate runs
through
$((1+t_1)\Omega_K+\mathfrak a\Omega_K)\setminus\{0\}$.
The compatible Serre-tensor Hodge bases identify the pairing factor
with
$\langle\overline{\omega(\mathcal A_0)},
\omega(\mathcal A_0^\vee)\rangle_{dR,\mathcal A_0}^{-b}$.

The diagonal action of
$k\in\mathcal O_{\mathfrak f}^{\times}$ preserves the locus $t_2=0$
and sends $(t_1,v_1)$ to $(kt_1,kv_1)$.
Since $k-1\in\mathfrak f\subset\mathfrak a$, this action is compatible
with the corresponding first-coordinate cosets. It is free on pairs
with $v_1\neq0$, and both $f$ and the summand are invariant, by the
$\Gamma(\mathfrak f)$-invariance of $f$ and the observation following
Lemma~\ref{lem: distribution relation}.
Passing to unit orbits therefore introduces the factor
$|\mathcal O_{\mathfrak f}^{\times}|$.
Proposition~\ref{prop: dual CM periods} gives
\[
\Omega_K^\vee
=
\frac{
  2\pi i
  \left\langle
    \overline{\omega(\mathcal A_0)},
    \omega(\mathcal A_0^\vee)
  \right\rangle_{dR,\mathcal A_0}
}{
  \overline{\Omega_K}
}.
\]
Factoring $\Omega_K$ out of the surviving lattice coordinate now
gives \eqref{eq: prA general constant term}.
Its meromorphic continuation and holomorphy at $s=2$ follow from
Theorem~\ref{thm:algebraic constant term at infinity};
the stated specialization of the prefactor uses
$\overline{\Omega_K}^{\,2}/|\Omega_K|^4=1/\Omega_K^2$.

The same constant-term theorem identifies this value with the
complex realization of the algebraic boundary restriction.
Proposition~\ref{prop_9} defines its first rank-one projection after
inverting $b!$, while passage through the finite unit quotient is
defined after inverting $|\mathcal O_{\mathfrak f}^{\times}|$.
Propositions~\ref{prop_10} and
\ref{prop: F-torsion restriction} give compatibility with the
isogenies and restriction to the cusp stabilizer.
The remaining localization
$R[1/\mathfrak f][1/N_{K/\Q}(\mathfrak b\mathfrak c)]$
is precisely the one required for the isogenies, compatible Serre
twists, and torsion data entering this specialization.
\end{proof}

\begin{cor}
\label{cor: prA partial L values}
Retain the arithmetic data and the character $\chi_A$ fixed above,
allowing also $\mathfrak f=\mathcal O_K$ in the second assertion.
The following partial-value formulas hold.
\begin{enumerate}[label=(\arabic*),font=\normalfont]
\item
If $\mathfrak f\neq\mathcal O_K$, then
$x_{\Omega_L}^{A}\neq e$, and specialization to
$f_{L,[\mathfrak C]}$ on
$\mathcal D=\mathcal A(\mathfrak A)[\mathfrak C]$ gives
\begin{equation}
\label{eq: prA partial L value}
\begin{aligned}
&\frac{(2\pi i)^b}{(\Omega_K^\vee)^b\Omega_K^2}
\Bigl(
  N_{K/\Q}(\mathfrak c)^2\chi_A(\mathfrak c)
  L_{\mathfrak f}(\chi_A,0,[\mathfrak b])
  -
  L_{\mathfrak f}(\chi_A,0,[\mathfrak b\mathfrak c])
\Bigr)
\in
\mathcal O_E R\left[\frac{1}{\mathfrak f}\right]
\left[
  \frac{1}{
    |\mathcal O_{\mathfrak f}^{\times}|\,b!\,
    N_{K/\Q}(\mathfrak b\mathfrak c)
  }
\right].
\end{aligned}
\end{equation}

\item
If $\mathfrak f=\mathcal O_K$, then $x_{\Omega_L}^{A}=e$.
Let $\mathfrak c'\subset\mathcal O_K$ be a nontrivial integral ideal
such that $\mathfrak b,\mathfrak c,\mathfrak c'$ are pairwise
coprime, and put $\mathfrak C'=\mathfrak c'\mathcal O_L$.
Then
\begin{equation}
\label{eq: prA trivial conductor partial L value}
\begin{aligned}
&\frac{(2\pi i)^b}{(\Omega_K^\vee)^b\Omega_K^2}
\Bigl(
  N_{K/\Q}(\mathfrak c)^2\chi_A(\mathfrak c)
  L_{\mathcal O_K}(\chi_A,0,[\mathfrak b\mathfrak c'])
  -
  N_{K/\Q}(\mathfrak c)^2
  \chi_A(\mathfrak c)\chi_A(\mathfrak c')
  L_{\mathcal O_K}(\chi_A,0,[\mathfrak b])
\\
&\qquad\qquad\qquad\qquad\qquad
  +
  \chi_A(\mathfrak c')
  L_{\mathcal O_K}(\chi_A,0,[\mathfrak b\mathfrak c])
  -
  L_{\mathcal O_K}
  (\chi_A,0,[\mathfrak b\mathfrak c\mathfrak c'])
\Bigr)
\in
\mathcal O_E R
\left[
  \frac{1}{
    |\mathcal O_K^\times|\,b!\,
    N_{K/\Q}(\mathfrak b\mathfrak c\mathfrak c')
  }
\right].
\end{aligned}
\end{equation}
\end{enumerate}
\end{cor}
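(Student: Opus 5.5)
Part (1) follows by substituting $f=f_{L,[\mathfrak C]}$ into \eqref{eq: prA general constant term} and evaluating at $s=2$. By definition, $f_{L,[\mathfrak C]}(-(t_1\Omega_K,0))=f^{A}_{L,[\mathfrak C]}(t_1)=N_{K/\Q}(\mathfrak c)^2 1_0(t_1)-1$. The torsion sum therefore splits into two pieces.

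\textbf{The $t_1=0$ term.} It contributes $N_{K/\Q}(\mathfrak c)^2$ times $\sum_{v\in\mathcal O_{\mathfrak f}^\times\backslash((1+\mathfrak a)\setminus\{0\})}G_A(v)$. By Lemma~\ref{lem:Ray-class parametrization} this lattice sum is $L_{\mathfrak f}(\chi_A,\cdot,[\mathfrak b])$ up to the factor $\chi_A(\mathfrak b)^{-1}N(\mathfrak b)^{s}$ and the reindexing by $\lambda=v$. Here $1+\mathfrak a=1+\mathfrak f\mathfrak b^{-1}$, and for $\lambda\equiv1$ one has $\chi_A((\lambda))|\lambda|^{-2s'}=\overline\lambda^{\,b+2}/|\lambda|^{2(s'+2)}$. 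Hence $G_A$ at $s$ corresponds to the $L$-value at $s'=s-2$, and $s=2$ gives $L(\chi_A,0)$.

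\textbf{The constant term $-1$.} By Lemma~\ref{lem: distribution relation} with $G=G_A$, this piece equals $-\sum_{v\in\mathcal O_{\mathfrak f}^\times\backslash((1+\mathfrak c^{-1}\mathfrak a)\setminus\{0\})}G_A(v)$. Rescaling $v=w/c$ with $\mathfrak c^{-1}$ requires care, because $\mathfrak c$ need not be principal. I would instead apply Lemma~\ref{lem:Ray-class parametrization} directly to the ideal $\mathfrak b\mathfrak c$, since $1+\mathfrak f(\mathfrak b\mathfrak c)^{-1}=1+\mathfrak c^{-1}\mathfrak a$. This identifies the sum with $\chi_A(\mathfrak b\mathfrak c)^{-1}N(\mathfrak b\mathfrak c)^{2}L_{\mathfrak f}(\chi_A,0,[\mathfrak b\mathfrak c])$, where the norm power comes from the shift $s=s'+2$ at $s'=0$.

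\textbf{Normalisation.} Multiplying through by $\chi_A(\mathfrak b\mathfrak c)N(\mathfrak b\mathfrak c)^{-2}$ turns the bracket into the displayed combination $N(\mathfrak c)^2\chi_A(\mathfrak c)L(\cdot,[\mathfrak b])-L(\cdot,[\mathfrak b\mathfrak c])$. Since $\chi_A(\mathfrak b\mathfrak c)\in\mathcal O_E$ after localisation at $N(\mathfrak b\mathfrak c)$, and the prefactor $|\mathcal O_{\mathfrak f}^\times|$ is inverted, the membership \eqref{eq: prA algebraic specialization} yields \eqref{eq: prA partial L value}. All identities are first established for $\operatorname{Re}(s)\gg0$ and then continued meromorphically to $s=2$, where Theorem~\ref{thm: Special values of partial raw functions} guarantees holomorphy. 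The check that $x^A_{\Omega_L}\neq e$ is the nonvanishing argument already given before Lemma~\ref{lem: distribution relation}.

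\textbf{Part (2).} When $\mathfrak f=\mathcal O_K$ the section $x^A_{\Omega_L}$ is the zero section, which lies in $\mathcal D$, so Theorem~\ref{thm: Special values of partial raw functions} does not apply directly. I would use the auxiliary ideal $\mathfrak c'$ to supply a genuine torsion point: take $\mathfrak f:=\mathfrak c'$ in the role of the torsion modulus, while $\chi_A$ keeps conductor $\mathcal O_K$ and is regarded as having conductor dividing $\mathfrak c'$. Part (1) then gives the combination with $L_{\mathfrak c'}$ in place of $L_{\mathcal O_K}$.

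The imprimitive partial value $L_{\mathfrak c'}(\chi_A,0,[\cdot])$ over a ray class mod $\mathfrak c'$ sums over ideals prime to $\mathfrak c'$. Summing over the ray classes mod $\mathfrak c'$ lying over a fixed class of $\operatorname{Cl}(K)$ and removing the Euler factor at $\mathfrak c'$ should express it as $L_{\mathcal O_K}(\chi_A,0,[\mathfrak b])-\chi_A(\mathfrak c')L_{\mathcal O_K}(\chi_A,0,[\mathfrak b\mathfrak c'^{-1}])$. This produces the four-term expression, after replacing $\mathfrak b$ by $\mathfrak b\mathfrak c'$ to clear the inverse. This inclusion–exclusion step, together with tracking the $1/\mathfrak c'$-localisation and showing it can be removed, is what I expect to be the main obstacle. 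I would try first to remove it by varying $\mathfrak c'$ over two coprime choices and intersecting the resulting rings, but the statement's ring $\mathcal O_ER[1/(|\mathcal O_K^\times|b!N(\mathfrak b\mathfrak c\mathfrak c'))]$ suggests the localisation at $\mathfrak c'$ is simply absorbed by $N(\mathfrak c')$.
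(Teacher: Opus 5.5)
Your part (1) follows the paper's argument. You split $f^{A}_{L,[\mathfrak C]}=N_{K/\Q}(\mathfrak c)^2 1_0-1$, collapse the constant piece with Lemma~\ref{lem: distribution relation}, and identify both lattice sums through Lemma~\ref{lem:Ray-class parametrization} for $\mathfrak b$ and $\mathfrak b\mathfrak c$. Your norm bookkeeping is wrong, though. On $1+\mathfrak f\mathfrak m^{-1}$ one has $G_A(v)=\chi_A((v))N_{K/\Q}(v)^{-(s-2)}$. The lemma therefore gives $\sum G_A=\chi_A(\mathfrak m)^{-1}N_{K/\Q}(\mathfrak m)^{s-2}L_{\mathfrak f}(\chi_A,s-2,[\mathfrak m])$, and no norm factor survives at $s=2$. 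With your factors $N_{K/\Q}(\mathfrak b)^{2}$ and $N_{K/\Q}(\mathfrak b\mathfrak c)^{2}$, your normalization would produce $\chi_A(\mathfrak c)L_{\mathfrak f}(\chi_A,0,[\mathfrak b])-L_{\mathfrak f}(\chi_A,0,[\mathfrak b\mathfrak c])$, losing $N_{K/\Q}(\mathfrak c)^2$. The correct step is multiplication by $\chi_A(\mathfrak b\mathfrak c)/|\mathcal O_{\mathfrak f}^\times|$.

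Part (2) has a genuine gap, at the step you yourself flag. Write $L[\mathfrak m]:=L_{\mathcal O_K}(\chi_A,0,[\mathfrak m])$.
\begin{itemize}
\item Applying part (1) with modulus $\mathfrak c'$ is possible in principle, after enlarging $R$ so that the $\mathfrak c'\mathcal O_L$-torsion section is rational. But it controls only one ray class modulo $\mathfrak c'$.
\item To reach a class of $\operatorname{Cl}(K)$ you must sum over all ray classes above it. Each needs its own integral representative $\mathfrak b_i$ prime to $\mathfrak c\mathfrak c'$, and the norms $N_{K/\Q}(\mathfrak b_i)$ enter the denominators. In particular, the class $[\mathfrak b\mathfrak c']$ needed after your renaming cannot be represented by $\mathfrak b\mathfrak c'$ itself.
\item More seriously, your depletion identity $L^{(\mathfrak c')}[\mathfrak b]=L[\mathfrak b]-\chi_A(\mathfrak c')L[\mathfrak b\mathfrak c'^{-1}]$ holds only for prime $\mathfrak c'$. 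For $\mathfrak c'=\mathfrak p^2$ the removed part is $\chi_A(\mathfrak p)L[\mathfrak b\mathfrak p^{-1}]$, and several prime factors give a M\"obius sum. So for composite $\mathfrak c'$ your route yields a different combination from the stated four-term one.
\end{itemize}
One could try to rescue it by proving the prime case and telescoping. Indeed, the target equals $F(\mathfrak b\mathfrak c')-\chi_A(\mathfrak c')F(\mathfrak b)$ with $F(\mathfrak m):=N_{K/\Q}(\mathfrak c)^2\chi_A(\mathfrak c)L[\mathfrak m]-L[\mathfrak m\mathfrak c]$. You would still have to clear the representative denominators, and none of this is in your proposal.

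The paper avoids modulus $\mathfrak c'$ altogether. It keeps $\mathfrak f=\mathcal O_K$ and $x=e$, and instead removes $e$ from the support. Take $\mathcal D=\mathcal A(\mathfrak A)[\mathfrak C\mathfrak C']\setminus e(S)$, where $\mathcal A(\mathfrak A)[\mathfrak C\mathfrak C']\cong\mathcal A(\mathfrak A)[\mathfrak C']\times\mathcal A(\mathfrak A)[\mathfrak C]$. On it the paper uses
\[
  \widetilde f_{L,[\mathfrak C]}
  =
  \bigl(1-1_0\circ\mathrm{pr}_{\mathfrak C'}\bigr)\cdot
  \mathrm{pr}_{\mathfrak C}^*f_{L,[\mathfrak C]}.
\]
This has trace zero because $f_{L,[\mathfrak C]}$ has trace zero on every fibre over a nonzero $\mathfrak C'$-point. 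Now rerun Theorem~\ref{thm: Special values of partial raw functions} with torsion ideal $\mathfrak c\mathfrak c'$ and $x=e$; the section $e$ is still represented by $(\Omega_K,0)$ because $1\in\mathfrak b^{-1}$. The first-coordinate coefficient becomes $(1-1_0(t_1'))(N_{K/\Q}(\mathfrak c)^21_0(t_1'')-1)$. Expanding it and applying the distribution relation in both coordinates gives lattice sums over $\mathfrak m^{-1}\setminus\{0\}$ for $\mathfrak m=\mathfrak b\mathfrak c',\mathfrak b,\mathfrak b\mathfrak c,\mathfrak b\mathfrak c\mathfrak c'$. Their coefficients are $N_{K/\Q}(\mathfrak c)^2,-N_{K/\Q}(\mathfrak c)^2,1,-1$, and each sum equals $\chi_A(\mathfrak m)^{-1}L[\mathfrak m]$ at $s=2$. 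Multiplying by $\chi_A(\mathfrak b\mathfrak c\mathfrak c')/|\mathcal O_K^\times|$ gives the four-term formula for every nontrivial $\mathfrak c'$. Only the single representative $\mathfrak b$ is involved, so the result lands exactly in the stated ring.
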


\begin{proof}
Suppose first that $\mathfrak f\neq\mathcal O_K$ and take
$f=f_{L,[\mathfrak C]}$.
Since
$f_{L,[\mathfrak C]}^{A}(t_1)
=N_{K/\Q}(\mathfrak c)^2\,1_0(t_1)-1$,
Lemma~\ref{lem: distribution relation} replaces the double sum in
\eqref{eq: prA general constant term} by
$N_{K/\Q}(\mathfrak c)^2$ times the sum over
$\mathcal O_{\mathfrak f}^{\times}
\backslash((1+\mathfrak a)\setminus\{0\})$
minus the sum over
$\mathcal O_{\mathfrak f}^{\times}
\backslash((1+\mathfrak c^{-1}\mathfrak a)\setminus\{0\})$.
Using the infinity type of $\chi_A$ and
$N_{K/\Q}(v)=|v|^2$,
Lemma~\ref{lem:Ray-class parametrization} identifies these sums as
\begin{align*}
\sum_{
  v\in\mathcal O_{\mathfrak f}^{\times}
  \backslash((1+\mathfrak a)\setminus\{0\})
}
\frac{\overline v^{\,b+2}}{|v|^{2s}}
&=
N_{K/\Q}(\mathfrak b)^{s-2}
\chi_A(\mathfrak b)^{-1}
L_{\mathfrak f}(\chi_A,s-2,[\mathfrak b]),
\\
\sum_{
  v\in\mathcal O_{\mathfrak f}^{\times}
  \backslash
  ((1+\mathfrak c^{-1}\mathfrak a)\setminus\{0\})
}
\frac{\overline v^{\,b+2}}{|v|^{2s}}
&=
N_{K/\Q}(\mathfrak b\mathfrak c)^{s-2}
\chi_A(\mathfrak b\mathfrak c)^{-1}
L_{\mathfrak f}
(\chi_A,s-2,[\mathfrak b\mathfrak c]).
\end{align*}
These identities are initially valid in the region of absolute
convergence and then hold by meromorphic continuation.
Evaluate at $s=2$ and apply
Theorem~\ref{thm: Special values of partial raw functions}.
Multiplying the resulting value by
$\chi_A(\mathfrak b\mathfrak c)/
|\mathcal O_{\mathfrak f}^{\times}|$
gives the expression in \eqref{eq: prA partial L value}.
A ray-principal power of $\mathfrak b\mathfrak c$, together with
the defining infinity type, shows that
$\chi_A(\mathfrak b\mathfrak c)
\in\mathcal O_E[1/N_{K/\Q}(\mathfrak b\mathfrak c)]$;
the unit-group order is already inverted in the stated localization.

Now suppose that $\mathfrak f=\mathcal O_K$.
The chosen section is $e$, so the distinguished function on the
full $\mathfrak C$-torsion cannot be used directly for specialization
at this section. With $\mathfrak c'$ as in the statement, coprimality
gives the Chinese remainder identification
\[
\mathcal A(\mathfrak A)[\mathfrak C\mathfrak C']
\cong
\mathcal A(\mathfrak A)[\mathfrak C']
\times
\mathcal A(\mathfrak A)[\mathfrak C].
\]
On
$\mathcal D=
\mathcal A(\mathfrak A)[\mathfrak C\mathfrak C']
\setminus e(S)$,
define $\widetilde f_{L,[\mathfrak C]}$ by pulling back
$f_{L,[\mathfrak C]}$ along the second projection wherever the first
factor is nonzero, and by setting it equal to zero wherever the
first factor is zero.
This construction is $\Gamma(\mathfrak f)$-invariant.
Its trace vanishes because its restriction over each nonzero
$\mathfrak C'$-torsion point has the same trace as
$f_{L,[\mathfrak C]}$, while it vanishes over the zero
$\mathfrak C'$-torsion point. Thus
\[
\widetilde f_{L,[\mathfrak C]}
\in
R\left[
  \frac{1}{N_{K/\Q}(\mathfrak c\mathfrak c')}
\right]
[\mathcal D]^{0,\Gamma(\mathfrak f)},
\]
and $e$ now factors through $\mathcal U_{\mathcal D}$.

Here $\mathfrak a=\mathfrak b^{-1}$ contains $1$ and
$\mathcal O_{\mathfrak f}^{\times}=\mathcal O_K^\times$.
On the first-coordinate locus, excluding the zero
$\mathfrak C'$-torsion component subtracts the original lattice
sum from the $\mathfrak c'$-enlarged one.
Applying Lemma~\ref{lem: distribution relation} successively to the
$\mathfrak c'$- and $\mathfrak c$-torsion coordinates therefore
gives four lattice sums, corresponding respectively to
$[\mathfrak b\mathfrak c']$, $[\mathfrak b]$,
$[\mathfrak b\mathfrak c]$, and
$[\mathfrak b\mathfrak c\mathfrak c']$, with coefficients
$N_{K/\Q}(\mathfrak c)^2$,
$-N_{K/\Q}(\mathfrak c)^2$, $1$, and $-1$.
By Lemma~\ref{lem:Ray-class parametrization}, their values at
$s=2$ are the corresponding partial $L$-values multiplied by the
inverse character values of these ideals.
The calculation and algebraic-specialization argument of
Theorem~\ref{thm: Special values of partial raw functions} apply to
$\widetilde f_{L,[\mathfrak C]}$ on this torsion subscheme, with
auxiliary torsion ideal $\mathfrak c\mathfrak c'$ and $x=e$.
Multiplying the resulting value by
$\chi_A(\mathfrak b\mathfrak c\mathfrak c')/
|\mathcal O_K^\times|$
gives \eqref{eq: prA trivial conductor partial L value}.
The same ray-principal-power argument gives
$\chi_A(\mathfrak b\mathfrak c\mathfrak c')
\in
\mathcal O_E[1/N_{K/\Q}(\mathfrak b\mathfrak c\mathfrak c')]$,
so this multiplication requires no further localization.
\end{proof}

The second rank-one summand is obtained by taking
$x=x_{\Omega_L}^{B}$, represented by $(0,\Omega_K)$.
Unlike the first contribution, the fibers of the second-coordinate
projection may contain several unipotent orbits. Their contribution
therefore involves the following normalized multiplicity.

\begin{lem}[Normalized fiber multiplicity]
\label{lem: normalized fiber multiplicity}
Let $t=(t_1\Omega_K,t_2\Omega_K)$, with
$t_1,t_2\in\mathfrak c^{-1}\mathfrak a/\mathfrak a$, and let
$v_2=y\Omega_K$, where
$y\in(1+t_2+\mathfrak a)\setminus\{0\}$.
Under the identification
$\Gamma_{N,\infty}(\mathfrak f)\cong\mathfrak f$, put
\[
\mathfrak d(t_2)
:=
\operatorname{Ann}_{\mathcal O_K}(t_2)
=
\{r\in\mathcal O_K\mid rt_2\in\mathfrak a\}.
\]
Then
$\Gamma_{N,\infty,t}(\mathfrak f)=\mathfrak f\mathfrak d(t_2)$,
and
\[
[\Gamma_{N,\infty}(\mathfrak f):
 \Gamma_{N,\infty,t}(\mathfrak f)]
=
N_{K/\Q}(\mathfrak d(t_2)).
\]
Moreover, the first-coordinate fiber satisfies
\[
\#\left(
  \mathrm{pr}_2^{-1}(v_2)/
  v_2\Gamma_{N,\infty,t}(\mathfrak f)
\right)
=
N_{K/\Q}((y)\mathfrak b\mathfrak d(t_2)).
\]
Consequently,
\begin{equation}
\label{eq: normalized prB fiber multiplicity}
\frac{
  \#\left(
    \mathrm{pr}_2^{-1}(v_2)/
    v_2\Gamma_{N,\infty,t}(\mathfrak f)
  \right)
}{
  [\Gamma_{N,\infty}(\mathfrak f):
   \Gamma_{N,\infty,t}(\mathfrak f)]
}
=
N_{K/\Q}((y)\mathfrak b).
\end{equation}
\end{lem}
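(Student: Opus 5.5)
We prove the three assertions in turn, computing everything in the coordinates of the cusp $\infty$. There, $n(r)$ with $r\in\mathfrak f$ acts on a vector $w=(w_1,w_2)=(s_1\Omega_K,s_2\Omega_K)$ by $(s_1,s_2)\mapsto(s_1+rs_2,s_2)$.

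\emph{The stabilizer.} A torsion point $t=(t_1\Omega_K,t_2\Omega_K)$ is fixed by $n(r)$ exactly when $rt_2\in\mathfrak a$, i.e.\ when $r\in\mathfrak d(t_2)$. Intersecting with the translation lattice gives
\[
\Gamma_{N,\infty,t}(\mathfrak f)=\mathfrak f\cap\mathfrak d(t_2).
\]
Since $t_2\in\mathfrak c^{-1}\mathfrak a/\mathfrak a$, we have $\mathfrak c\subset\mathfrak d(t_2)$, so $\mathfrak d(t_2)$ divides $\mathfrak c$. As $\mathfrak f$ and $\mathfrak c$ are coprime, $\mathfrak f\cap\mathfrak d(t_2)=\mathfrak f\mathfrak d(t_2)$. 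The index is then $[\mathfrak f:\mathfrak f\mathfrak d(t_2)]=N_{K/\Q}(\mathfrak d(t_2))$, by multiplicativity of the absolute norm for integral ideals.

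\emph{The fibre.} Now fix $v_2=y\Omega_K$ in the shifted lattice $x^B_{\Omega_L}+t+\Lambda$. The first coordinates lying over $v_2$ form the coset $(t_1+\mathfrak a)\Omega_K$; note that $x^B$ has first coordinate $0$. The group $\Gamma_{N,\infty,t}$ acts on this coset by translation by $y\,\mathfrak f\mathfrak d(t_2)\Omega_K$. The number of orbits is therefore the generalized index
\[
[\mathfrak a : y\,\mathfrak f\mathfrak d(t_2)] = N_{K/\Q}\bigl(y\,\mathfrak f\mathfrak d(t_2)\bigr)\,N_{K/\Q}(\mathfrak a)^{-1}.
\]
This formula is valid because $y\mathfrak f\mathfrak d(t_2)\subset\mathfrak a$. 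Indeed, $y\in\mathfrak b^{-1}$ (all the relevant cosets live in $\mathfrak b^{-1}$), and $\mathfrak d(t_2)y\subset\mathfrak d(t_2)(1+t_2)+\mathfrak d(t_2)\mathfrak a\subset\mathfrak b^{-1}$, hence $\mathfrak f\mathfrak d(t_2)y\subset\mathfrak f\mathfrak b^{-1}=\mathfrak a$. In general we would verify this containment using $\mathfrak f\mathfrak d(t_2)\cdot(1+t_2)\subset\mathfrak a$, which follows from $\mathfrak d(t_2)t_2\subset\mathfrak a$ and $\mathfrak f\subset\mathfrak a$.

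Substituting $N_{K/\Q}(\mathfrak a)=N_{K/\Q}(\mathfrak f)N_{K/\Q}(\mathfrak b)^{-1}$ gives
\[
N_{K/\Q}\bigl((y)\mathfrak d(t_2)\bigr)\,N_{K/\Q}(\mathfrak b)=N_{K/\Q}\bigl((y)\mathfrak b\,\mathfrak d(t_2)\bigr),
\]
which is the asserted fibre count. The case $y=0$ is excluded, so $y\neq0$ and the covolume computation is legitimate.

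\emph{The ratio.} Dividing the fibre count by the index $N_{K/\Q}(\mathfrak d(t_2))$ yields $N_{K/\Q}((y)\mathfrak b)$, which is~\eqref{eq: normalized prB fiber multiplicity}.

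\emph{Main obstacle.} The delicate point is bookkeeping: we must make sure the fibre is a full coset of $\mathfrak a\Omega_K$ and not a smaller set. This needs the unipotent action to be read off correctly on the \emph{shifted} lattice, together with the correct identification of which ideal $y$ ranges in. We would first check that $\mathrm{pr}_2^{-1}(v_2)$ is exactly $\{(w_1,v_2):w_1\in(t_1+\mathfrak a)\Omega_K\}$, using the product decomposition $\Lambda=\mathfrak a\Omega_K\oplus\mathfrak a\Omega_K\alpha$. Only then would we count orbits by comparing covolumes.
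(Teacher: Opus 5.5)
Your proof is correct and follows the paper's argument step for step: the stabilizer $\mathfrak f\cap\mathfrak d(t_2)=\mathfrak f\mathfrak d(t_2)$ via $\mathfrak c\subseteq\mathfrak d(t_2)$ and $(\mathfrak f,\mathfrak c)=1$, the fiber $(t_1+\mathfrak a)\Omega_K$ translated by $y\mathfrak f\mathfrak d(t_2)\Omega_K$, and the count $[\mathfrak a:y\mathfrak f\mathfrak d(t_2)]=N_{K/\Q}((y)\mathfrak b\mathfrak d(t_2))$ obtained from $\mathfrak a^{-1}=\mathfrak b\mathfrak f^{-1}$. One aside is wrong but does no harm: the parenthetical claim $y\in\mathfrak b^{-1}$ fails whenever $t_2\not\equiv0$, because coprimality of $\mathfrak c$ and $\mathfrak f$ gives $\mathfrak c^{-1}\mathfrak a\cap\mathfrak b^{-1}=\mathfrak a$; the containment you actually use, $\mathfrak d(t_2)y\subseteq\mathcal O_K+\mathfrak a\subseteq\mathfrak b^{-1}$, does not depend on it, and it is equivalent to the paper's check that $uy=u+ut_2+ua\in\mathfrak a$ for $u\in\mathfrak f\mathfrak d(t_2)$.
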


\begin{proof}
For $u\in\mathfrak f$, the unipotent element $n(u)$ acts on the
torsion coordinates by
$(t_1,t_2)\mapsto(t_1+ut_2,t_2)$.
It fixes $t$ precisely when $ut_2\in\mathfrak a$, so
$\Gamma_{N,\infty,t}(\mathfrak f)
=\mathfrak f\cap\mathfrak d(t_2)$.
Since
$\mathfrak c\subseteq\mathfrak d(t_2)\subseteq\mathcal O_K$
and $(\mathfrak f,\mathfrak c)=1$, the ideals
$\mathfrak f$ and $\mathfrak d(t_2)$ are coprime.
Their intersection is therefore
$\mathfrak f\mathfrak d(t_2)$, whose index in $\mathfrak f$ is
$N_{K/\Q}(\mathfrak d(t_2))$.

For $v_2=y\Omega_K$, the first-coordinate fiber is
$\mathrm{pr}_2^{-1}(v_2)=(t_1+\mathfrak a)\Omega_K$,
and its stabilizer acts by translations in
$y\mathfrak f\mathfrak d(t_2)\Omega_K$.
If $u\in\mathfrak f\mathfrak d(t_2)$, then
$u\in\mathfrak f\subset\mathfrak a$ and $ut_2\in\mathfrak a$;
as $y\in1+t_2+\mathfrak a$ and $u\in\mathcal O_K$, this gives
$uy\in\mathfrak a$.
Thus the translation lattice is contained in
$\mathfrak a\Omega_K$, and
\[
\#\left(
  \mathrm{pr}_2^{-1}(v_2)/
  v_2\Gamma_{N,\infty,t}(\mathfrak f)
\right)
=
[\mathfrak a:y\mathfrak f\mathfrak d(t_2)]
=
N_{K/\Q}((y)\mathfrak b\mathfrak d(t_2)),
\]
where the last equality uses
$\mathfrak a^{-1}=\mathfrak b\mathfrak f^{-1}$.
Dividing by the stabilizer index gives
\eqref{eq: normalized prB fiber multiplicity}.
\end{proof}

\begin{lem}[Covolume of the unipotent lattice]
\label{lem: volume of the unipotent fundamental domain}
Under the identification
$\Gamma_{N,\infty}(\mathfrak f)\cong\mathfrak f$,
the volume of the fundamental domain $\mathcal F$, computed with
respect to the oriented form $du\wedge d\overline u$, is
\[
\mathrm{vol}(\mathcal F)
=
-N_{K/\Q}(\mathfrak f)\sqrt{d_K}.
\]
\end{lem}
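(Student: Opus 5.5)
The plan is to reduce the statement to two standard facts: the Euclidean covolume of $\mathcal O_K\subset\C$, and the index of $\mathfrak f$ in $\mathcal O_K$. Then convert the Euclidean area into an integral of the oriented form $du\wedge d\overline u$.

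First, under $n(a)\mapsto a$ the lattice $\Gamma_{N,\infty}(\mathfrak f)$ is the ideal $\mathfrak f\subset\mathcal O_K\subset\C$, embedded via the fixed $\sigma\in\Sigma_K$. A fundamental domain $\mathcal F$ for translation by $\mathfrak f$ is a union of $[\mathcal O_K:\mathfrak f]=N_{K/\Q}(\mathfrak f)$ translates of a fundamental domain for $\mathcal O_K$. Hence its Lebesgue area is $N_{K/\Q}(\mathfrak f)$ times the covolume of $\mathcal O_K$. Using the integral basis $\{1,\omega_K\}$, the standard discriminant computation gives
\[
\operatorname{covol}(\mathcal O_K)=\bigl|\operatorname{Im}\omega_K\bigr|=\tfrac12\sqrt{|d_K|}.
\]
Here $\omega_K=\tfrac{\sqrt{d_K}}2$ or $\tfrac{1+\sqrt{d_K}}2$; equivalently $\operatorname{covol}(\mathcal O_K)=\tfrac12|\det(\sigma(e_j),\overline\sigma(e_j))|$, whose square is $|d_K|/4$.

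Second, convert to the oriented form. Write $u=x+iy$. Then
\[
du\wedge d\overline u=(dx+i\,dy)\wedge(dx-i\,dy)=-2i\,dx\wedge dy.
\]
With the standard orientation of $\C$ this gives
\[
\mathrm{vol}(\mathcal F)=-2i\cdot N_{K/\Q}(\mathfrak f)\cdot\tfrac12\sqrt{|d_K|}=-i\,N_{K/\Q}(\mathfrak f)\sqrt{|d_K|}.
\]

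Finally, since $d_K<0$, with the principal branch (the one compatible with $K\subset\C$ via $\sigma$) we have $\sqrt{d_K}=i\sqrt{|d_K|}$. This yields $\mathrm{vol}(\mathcal F)=-N_{K/\Q}(\mathfrak f)\sqrt{d_K}$.

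The only delicate point is the sign and branch convention. I would check that the orientation of $\mathcal F$ used in Proposition~\ref{prop: zeroth Fourier coefficients} is the standard one on $\C$, where $\int_{\C}(|w|^2+r^2)^{-s}\,dw\wedge d\overline w=-2\pi i\,r^{2-2s}/(s-1)$ was used with the same convention. I would also check that $\sqrt{d_K}$ denotes the root with positive imaginary part under the fixed embedding. Everything else is routine.
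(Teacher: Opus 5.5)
Your proof is correct and is essentially the paper's argument. The paper parametrizes the fundamental parallelogram of a positively oriented $\mathbb Z$-basis $\eta_1,\eta_2$ of $\mathfrak f$ to get $\mathrm{vol}(\mathcal F)=\eta_1\overline{\eta_2}-\eta_2\overline{\eta_1}$, whose square is $N_{K/\Q}(\mathfrak f)^2d_K$, and fixes the sign via $du\wedge d\overline u=-2i\,dx\wedge dy$. Your index-times-covolume computation encodes the same discriminant fact, and your explicit choice $\sqrt{d_K}=i\sqrt{|d_K|}$ makes precise the branch that the paper leaves implicit when it says ``negative square root.''
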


\begin{proof}
Choose a positively oriented $\Z$-basis $\eta_1,\eta_2$ of
$\mathfrak f$ and compute the volume using its fundamental
parallelogram, parametrized by
$u=r_1\eta_1+r_2\eta_2$ with $0\leq r_1,r_2<1$.
Then
\[
\mathrm{vol}(\mathcal F)
=
\eta_1\overline{\eta_2}-\eta_2\overline{\eta_1},
\qquad
\mathrm{vol}(\mathcal F)^2
=
N_{K/\Q}(\mathfrak f)^2d_K.
\]
For $u=x+iy$, the identity
$du\wedge d\overline u=-2i\,dx\wedge dy$
and the chosen orientation select the negative square root.
\end{proof}

\begin{thm}[The $\mathrm{pr}_B$-contribution]
\label{thm: prB contribution}
For
$f\in R[1/N_{K/\Q}(\mathfrak c)]
[\mathcal D]^{0,\Gamma(\mathfrak f)}$,
the $\mathrm{pr}_B$-component of the zeroth Fourier coefficient of
$\mathrm{res}_{\infty,1}
(\psi_u^b(f,x_{\Omega_L}^{B}))(s)$ is, for
$\operatorname{Re}(s)\gg0$, given by
\begin{equation}
\label{eq: prB general constant term}
\scalebox{1}{$\displaystyle
\begin{aligned}
&\mathrm{pr}_B\left(
  \left[
    \left.
      \mathrm{res}_{\infty,1}
      \bigl(\psi_u^b(f,x_{\Omega_L}^{B})\bigr)(s)
    \right|_0
  \right]
\right)=\frac{
  (2\pi i)^{b+1}\overline{\Omega_K}^{\,2}
  |\mathcal O_{\mathfrak f}^{\times}|
}{
  (s-1)(\Omega_K^\vee)^b
  |\Omega_K|^{2s}\mathrm{vol}(\mathcal F)
}
\sum_{
  t_2\in\mathcal O_{\mathfrak f}^{\times}
  \backslash(\mathfrak c^{-1}\mathfrak a/\mathfrak a)
}
\\ &\qquad\qquad\qquad\qquad\qquad\qquad
\left(
  \sum_{t_1\in\mathfrak c^{-1}\mathfrak a/\mathfrak a}
  f\bigl(-(t_1\Omega_K,t_2\Omega_K)\bigr)
\right)
\sum_{
  y\in\mathcal O_{\mathfrak f}^{\times}
  \backslash
  \bigl(
    (1+\mathfrak a+\mathcal O_{\mathfrak f}^{\times}t_2)
    \setminus\{0\}
  \bigr)
}
N_{K/\Q}((y)\mathfrak b)
\frac{\overline y^{\,b+2}}{|y|^{2s}}.
\end{aligned}
$}
\end{equation}
This expression admits meromorphic continuation in $s$ and is
holomorphic at $s=2$.
At this value, its prefactor simplifies to
$(2\pi i)^{b+1}|\mathcal O_{\mathfrak f}^{\times}|
/((\Omega_K^\vee)^b\Omega_K^2\mathrm{vol}(\mathcal F))$,
and its value is the projected algebraic boundary class
\begin{equation}
\label{eq: prB algebraic specialization}
\mathrm{pr}_B\left(
  \left[
    \left.
      \mathrm{res}_{\infty,1}
      \bigl(\psi_u^b(f,x_{\Omega_L}^{B})\bigr)(2)
    \right|_0
  \right]
\right)
\in
R\left[\frac{1}{\mathfrak f}\right]
\left[
  \frac{1}{
    |\mathcal O_{\mathfrak f}^{\times}|\,b!\,
    N_{K/\Q}(\mathfrak b\mathfrak c)
  }
\right].
\end{equation}
\end{thm}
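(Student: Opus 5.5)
The plan mirrors the proof of Theorem~\ref{thm: Special values of partial raw functions}, now starting from the second summand of Theorem~\ref{thm:algebraic constant term at infinity}. First I will isolate the $d\overline u$-term of the constant-term formula. After replacing $t$ by $-t$, its scalar coefficient relative to the generator $[\omega_2^{[b]}\otimes\omega(\mathcal A)^{\otimes2}\otimes n_{\bar\sigma}]$ is
\[
\frac{2\pi i}{(s-1)\mathrm{vol}(\mathcal F)\langle\overline{\omega(\mathcal A)},\omega(\mathcal A^\vee)\rangle_{dR,\mathcal A}^{\,b}}
\sum_{t}\frac{f(-t)}{[\Gamma_{N,\infty}:\Gamma_{N,\infty,t}]}
\sum_{v_2\neq0}\#\bigl(\mathrm{pr}_2^{-1}(v_2)/v_2\Gamma_{N,\infty,t}\bigr)\frac{\overline v_2^{\,b+2}}{|v_2|^{2s}}.
\]
Here the coefficient vector $(\omega(\mathcal A^\vee)[\overline u,1]^t)^{[b]}$ is the moving-frame representative of the coinvariant generator, because modulo $\operatorname{im}N_b$ only the $\omega_2^{[b]}$ component survives. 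I then use $x^B_{\Omega_L}=(0,\Omega_K)$ and $\Lambda=\mathfrak a\Omega_K\oplus\mathfrak a\Omega_K\alpha$. With these, the second coordinates range over $v_2=y\Omega_K$ with $y\in(1+t_2+\mathfrak a)\setminus\{0\}$. Lemma~\ref{lem: normalized fiber multiplicity} replaces the ratio of fiber count to stabilizer index by $N_{K/\Q}((y)\mathfrak b)$. The latter depends only on $t_2$ and $y$, so the sum over $t_1$ factors out as the inner sum $\sum_{t_1}f(-(t_1\Omega_K,t_2\Omega_K))$ in \eqref{eq: prB general constant term}.

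Next I pass to $\mathcal O_{\mathfrak f}^\times$-orbits. The diagonal class $\mathrm{diag}(k,k^{-1})$ acts by $(t_1,t_2)\mapsto(kt_1,k^{-1}t_2)$. The summed $t_1$-factor is invariant, because $f$ is $\Gamma(\mathfrak f)$-invariant and $t_1$ is summed over a full set. The summand $N((y)\mathfrak b)\overline y^{b+2}|y|^{-2s}$ is invariant by the remark following Lemma~\ref{lem: distribution relation}, using $\overline{k^{-1}}=k$ and $k^{b+2}=1$. The action is free on pairs with $y\ne0$, which produces the factor $|\mathcal O_{\mathfrak f}^\times|$ and the orbit sums as written. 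For the periods, I rewrite the pairing factor via Proposition~\ref{prop: dual CM periods} and the Serre-twist comparison, exactly as for $\mathrm{pr}_A$. This gives the factor $(2\pi i)^b\overline{\Omega_K}^{\,b}/(\Omega_K^\vee)^b$. Pulling $\Omega_K$ out of $\overline v_2^{\,b+2}/|v_2|^{2s}$ then leaves $\overline{\Omega_K}^{\,2}/|\Omega_K|^{2s}$, and combining with the extra $2\pi i$ yields the stated prefactor. At $s=2$ one has $(s-1)^{-1}=1$ and $\overline{\Omega_K}^2/|\Omega_K|^4=\Omega_K^{-2}$, which is the simplification claimed in the statement.

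Meromorphic continuation and holomorphy at $s=2$ follow from Theorem~\ref{thm:algebraic constant term at infinity} and Corollary~\ref{cor:zero-fourier-representative-boundary-class}. The pole of $(s-1)^{-1}$ is away from $2$, and the underlying Eisenstein--Kronecker series have multi-index degree $b+2>0$, so Proposition~\ref{prop:functional equation of Eisenstein-Kronecker series} rules out a pole at $s=2$. For integrality I follow the last paragraph of the $\mathrm{pr}_A$ proof:
\begin{enumerate}
\item the value is the complex realization of the algebraic class ${}_{[\mathfrak F]}EK^b_{\Gamma,\mathcal B}(f,x^B_{\Omega_L})$ restricted to the cusp, by Propositions~\ref{prop_10} and \ref{prop: F-torsion restriction};
\item its $\mathrm{pr}_B$-coordinate lies in $R[1/b!]$ by Proposition~\ref{prop_9}, after inverting $|\mathcal O_{\mathfrak f}^\times|$ to apply Hochschild--Serre;
\item the remaining localizations $R[1/\mathfrak f]$ and $N_{K/\Q}(\mathfrak b\mathfrak c)^{-1}$ come from the étaleness hypotheses of Proposition~\ref{prop_4} and the torsion data.
\end{enumerate}

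The main obstacle is the identification in the first step. I must check that the coinvariant projection $\mathrm{pr}_B$ really extracts the full $d\overline u$-coefficient, with no contamination from the $du$-term or from the nonconstant moving frame. I also need the analytic factor $\mathrm{vol}(\mathcal F)$, which equals $-N(\mathfrak f)\sqrt{d_K}$ by Lemma~\ref{lem: volume of the unipotent fundamental domain}, to be consistent with the algebraic normalization of $n_{\bar\sigma}$, so that no stray transcendental factor survives beyond those displayed.
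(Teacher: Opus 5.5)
Your proposal is correct and follows the paper's proof essentially step for step. The shared steps are: isolating the $d\overline u$-coefficient from Theorem~\ref{thm:algebraic constant term at infinity}; using $x^B_{\Omega_L}=(0,\Omega_K)$ and Lemma~\ref{lem: normalized fiber multiplicity} to produce $N_{K/\Q}((y)\mathfrak b)$ and factor out the $t_1$-sum; grouping under $\mathrm{diag}(k,k^{-1})$; converting the pairing via Proposition~\ref{prop: dual CM periods}; and deducing integrality from Propositions~\ref{prop_10}, \ref{prop: F-torsion restriction} and \ref{prop_9}.

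One bookkeeping slip: Proposition~\ref{prop: dual CM periods} gives $\langle\overline{\omega(\mathcal A_0)},\omega(\mathcal A_0^\vee)\rangle_{dR,\mathcal A_0}^{-b}=(2\pi i)^b/\bigl((\Omega_K^\vee)^b\overline{\Omega_K}^{\,b}\bigr)$, with $\overline{\Omega_K}^{\,b}$ in the denominator, not the numerator. It is this factor that cancels against $\overline{\Omega_K}^{\,b+2}$ from $v_2=y\Omega_K$ to leave the stated $\overline{\Omega_K}^{\,2}/|\Omega_K|^{2s}$.
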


\begin{proof}
By Theorem~\ref{thm:algebraic constant term at infinity}, after
replacing $t$ by $-t$, the second scalar coefficient is
\begin{equation*}
  \scalebox{0.94}{$\displaystyle
\begin{aligned}
&\frac{
  2\pi i
}{
  (s-1)\mathrm{vol}(\mathcal F)
  \left\langle
    \overline{\omega(\mathcal A)},
    \omega(\mathcal A^\vee)
  \right\rangle_{dR,\mathcal A}^{\,b}
}
\sum_{t\in\mathfrak C^{-1}\Lambda/\Lambda}
\frac{
  f(-t)
}{
  [\Gamma_{N,\infty}(\mathfrak f):
   \Gamma_{N,\infty,t}(\mathfrak f)]
}
\sum_{\substack{
  v_2\in
  \mathrm{pr}_2((x_{\Omega_L}^{B}+t+\Lambda)\setminus\{0\})\\
  v_2\neq0
}}
\#\left(
  \mathrm{pr}_2^{-1}(v_2)/
  v_2\Gamma_{N,\infty,t}(\mathfrak f)
\right)
\frac{\overline{v_2}^{\,b+2}}{|v_2|^{2s}}.
\end{aligned}
  $}
\end{equation*}
In the fixed coordinates,
$x_{\Omega_L}^{B}=(0,\Omega_K)$ and
$t=(t_1\Omega_K,t_2\Omega_K)$, so
\[
x_{\Omega_L}^{B}+t+\Lambda
=
(t_1\Omega_K+\mathfrak a\Omega_K)
\oplus
((1+t_2)\Omega_K+\mathfrak a\Omega_K).
\]
Thus every nonzero second coordinate is $v_2=y\Omega_K$ with
$y\in(1+t_2+\mathfrak a)\setminus\{0\}$.
Lemma~\ref{lem: normalized fiber multiplicity} replaces the ratio
of the fiber cardinality and the stabilizer index by
$N_{K/\Q}((y)\mathfrak b)$.
This is the additional norm factor arising from the second
projection. Since it is independent of $t_1$, the torsion
coefficients are summed over the first coordinate.
The compatible Serre-tensor Hodge bases identify the pairing factor
with
$\langle\overline{\omega(\mathcal A_0)},
\omega(\mathcal A_0^\vee)\rangle_{dR,\mathcal A_0}^{-b}$.

For $k\in\mathcal O_{\mathfrak f}^{\times}$, the diagonal action is
\[
(t_1,t_2,y)\longmapsto(kt_1,k^{-1}t_2,k^{-1}y).
\]
Because $k^{-1}-1\in\mathfrak f\subset\mathfrak a$, it carries
each second-coordinate coset to the coset indexed by $k^{-1}t_2$.
After summing over $t_1$, reindexing and the
$\Gamma(\mathfrak f)$-invariance of $f$ make the coefficient
invariant. The norm factor is unchanged by units, and the remaining
Eisenstein factor is invariant by the observation following
Lemma~\ref{lem: distribution relation}.
The action on $(t_2,y)$ is free for $y\neq0$, so grouping into
unit orbits introduces the factor
$|\mathcal O_{\mathfrak f}^{\times}|$.
Factoring $\Omega_K$ out of $v_2=y\Omega_K$ and applying
Proposition~\ref{prop: dual CM periods} now gives
\eqref{eq: prB general constant term}.
Its meromorphic continuation and holomorphy at $s=2$ follow from
Theorem~\ref{thm:algebraic constant term at infinity};
the specialization of the prefactor uses
$\overline{\Omega_K}^{\,2}/|\Omega_K|^4=1/\Omega_K^2$.

The same constant-term theorem identifies the value at $s=2$ with
the complex realization of the algebraic boundary restriction.
Proposition~\ref{prop_9} defines its second rank-one projection
after inverting $b!$, while passage through the finite unit
quotient requires inverting
$|\mathcal O_{\mathfrak f}^{\times}|$.
Propositions~\ref{prop_10} and
\ref{prop: F-torsion restriction} give compatibility with the
isogenies and restriction to the cusp stabilizer.
The remaining localization
$R[1/\mathfrak f][1/N_{K/\Q}(\mathfrak b\mathfrak c)]$
is the one required for the isogenies, compatible Serre twists,
and torsion data entering the specialization.
\end{proof}

\begin{cor}
\label{cor: prB partial L values}
Retain the arithmetic data and the character $\chi_B$ fixed above,
allowing also $\mathfrak f=\mathcal O_K$ in the second assertion.
The following partial-value formulas hold.
\begin{enumerate}[label=(\arabic*),font=\normalfont]
\item
If $\mathfrak f\neq\mathcal O_K$, then
$x_{\Omega_L}^{B}\neq e$, and specialization to
$f_{L,[\mathfrak C]}$ on
$\mathcal D=\mathcal A(\mathfrak A)[\mathfrak C]$ gives
\begin{equation}
\label{eq: prB partial L value}
\begin{aligned}
&\frac{
  (2\pi i)^{b+1}
}{
  (\Omega_K^\vee)^b\Omega_K^2\mathrm{vol}(\mathcal F)
}
\Bigl(
  N_{K/\Q}(\mathfrak c)\chi_B(\mathfrak c)
  L_{\mathfrak f}(\chi_B,0,[\mathfrak b])
  -
  L_{\mathfrak f}(\chi_B,0,[\mathfrak b\mathfrak c])
\Bigr)
\in
\mathcal O_E R\left[\frac{1}{\mathfrak f}\right]
\left[
  \frac{1}{
    |\mathcal O_{\mathfrak f}^{\times}|\,b!\,
    N_{K/\Q}(\mathfrak b\mathfrak c)
  }
\right].
\end{aligned}
\end{equation}
Here
$\mathrm{vol}(\mathcal F)
=-N_{K/\Q}(\mathfrak f)\sqrt{d_K}$
by Lemma~\ref{lem: volume of the unipotent fundamental domain}.

\item
If $\mathfrak f=\mathcal O_K$, then $x_{\Omega_L}^{B}=e$.
Take the auxiliary ideals $\mathfrak c'$ and $\mathfrak C'$
as in the second assertion of
Corollary~\ref{cor: prA partial L values}.
Specialization to the function
$\widetilde f_{L,[\mathfrak C]}$ constructed in its proof gives
\begin{equation}
\label{eq: prB trivial conductor partial L value}
\begin{aligned}
&\frac{
  (2\pi i)^{b+1}
}{
  (\Omega_K^\vee)^b\Omega_K^2\mathrm{vol}(\mathcal F)
}
\Bigl(
  N_{K/\Q}(\mathfrak c)N_{K/\Q}(\mathfrak c')
  \chi_B(\mathfrak c)
  L_{\mathcal O_K}(\chi_B,0,[\mathfrak b\mathfrak c'])
  -
  N_{K/\Q}(\mathfrak c)\chi_B(\mathfrak c\mathfrak c')
  L_{\mathcal O_K}(\chi_B,0,[\mathfrak b])
  \\
&\qquad\quad\qquad
  +
  \chi_B(\mathfrak c')
  L_{\mathcal O_K}(\chi_B,0,[\mathfrak b\mathfrak c])
  -
  N_{K/\Q}(\mathfrak c')
  L_{\mathcal O_K}
  (\chi_B,0,[\mathfrak b\mathfrak c\mathfrak c'])
\Bigr)
\in
\mathcal O_E R
\left[
  \frac{1}{
    |\mathcal O_K^\times|\,b!\,
    N_{K/\Q}(\mathfrak b\mathfrak c\mathfrak c')
  }
\right].
\end{aligned}
\end{equation}
In this case $\mathrm{vol}(\mathcal F)=-\sqrt{d_K}$.
\end{enumerate}
\end{cor}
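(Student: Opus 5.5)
The plan follows the proof of Corollary~\ref{cor: prA partial L values} closely, starting from Theorem~\ref{thm: prB contribution} in place of Theorem~\ref{thm: Special values of partial raw functions}. For part (1), take $f=f_{L,[\mathfrak C]}$. The inner coefficient in \eqref{eq: prB general constant term}, summed over $t_1$, is then exactly $f^B_{L,[\mathfrak C]}(t_2)=N_{K/\Q}(\mathfrak c)^2 1_0(t_2)-N_{K/\Q}(\mathfrak c)$. The function $G_B(y)=N_{K/\Q}((y)\mathfrak b)\overline y^{\,b+2}/|y|^{2s}$ is $\mathcal O_{\mathfrak f}^\times$-invariant, by the remark after Lemma~\ref{lem: distribution relation}. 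That lemma therefore turns the double sum into
\[
N_{K/\Q}(\mathfrak c)^2\sum_{\mathcal O_{\mathfrak f}^\times\backslash((1+\mathfrak a)\setminus\{0\})}G_B-N_{K/\Q}(\mathfrak c)\sum_{\mathcal O_{\mathfrak f}^\times\backslash((1+\mathfrak c^{-1}\mathfrak a)\setminus\{0\})}G_B .
\]

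Next, these sums are identified with $L$-values by Lemma~\ref{lem:Ray-class parametrization}. On the first sum, write $N_{K/\Q}((y)\mathfrak b)=|y|^2N_{K/\Q}(\mathfrak b)$. Then
\[
\frac{\overline y^{\,b+2}}{|y|^{2s-2}}=\chi_B((y))|y|^{-2(s-2)},
\]
since $\chi_B((y))=\overline y^{\,b+1}/y=\overline y^{\,b+2}/|y|^2$. The first sum therefore equals $N_{K/\Q}(\mathfrak b)^{s-1}\chi_B(\mathfrak b)^{-1}L_{\mathfrak f}(\chi_B,s-2,[\mathfrak b])$. For the second sum, the parametrization is applied with $\mathfrak b\mathfrak c$ in place of $\mathfrak b$. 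The extra norm is $N_{K/\Q}((y)\mathfrak b)=N_{K/\Q}((y)\mathfrak b\mathfrak c)/N_{K/\Q}(\mathfrak c)$, and this yields $N_{K/\Q}(\mathfrak c)^{-1}N_{K/\Q}(\mathfrak b\mathfrak c)^{s-1}\chi_B(\mathfrak b\mathfrak c)^{-1}L_{\mathfrak f}(\chi_B,s-2,[\mathfrak b\mathfrak c])$. The bookkeeping of the norm factors in this step is the point to watch: it is exactly what lowers the coefficient $N(\mathfrak c)^2$ of the $\mathrm{pr}_A$ case to $N(\mathfrak c)$, and the weight of the second term to $1$. All identities hold first for $\operatorname{Re}(s)\gg0$ and extend by meromorphic continuation, holomorphic at $s=2$ by Theorem~\ref{thm: prB contribution}.

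Evaluating at $s=2$ and multiplying the algebraic value \eqref{eq: prB algebraic specialization} by $\chi_B(\mathfrak b\mathfrak c)/(|\mathcal O_{\mathfrak f}^\times|N_{K/\Q}(\mathfrak b))$ gives \eqref{eq: prB partial L value}. The factor $\chi_B(\mathfrak b\mathfrak c)$ lies in $\mathcal O_E[1/N_{K/\Q}(\mathfrak b\mathfrak c)]$, by the ray-principal-power argument already used for $\chi_A$. The factors $|\mathcal O_{\mathfrak f}^\times|$ and $N_{K/\Q}(\mathfrak b)$ are inverted in the target ring. The value of $\mathrm{vol}(\mathcal F)$ is the one given by Lemma~\ref{lem: volume of the unipotent fundamental domain}.

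For part (2), use $\widetilde f_{L,[\mathfrak C]}$ on $\mathcal A(\mathfrak A)[\mathfrak C\mathfrak C']\setminus e(S)$, with $x=e$ and $\mathfrak a=\mathfrak b^{-1}$, as in the proof of Corollary~\ref{cor: prA partial L values}(2). The main obstacle is the bookkeeping here. In the Chinese remainder decomposition, I would sum $\widetilde f$ over the full first torsion coordinate, including both its $\mathfrak c'$- and $\mathfrak c$-parts. Over a nonzero $\mathfrak c'$-component of $t_2$, this gives $f^B_{L,[\mathfrak C]}$ of the $\mathfrak c$-part, multiplied by $N(\mathfrak c')^2$ from the free $\mathfrak c'$-part of $t_1$. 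Over the zero $\mathfrak c'$-component of $t_2$, the sum over $t_1$ with nonzero $\mathfrak c'$-part gives $(N(\mathfrak c')^2-1)$ times the same function.

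This produces a combination of $1_{\mathfrak c'\text{-part}=0}$ and constants in the $\mathfrak c'$-coordinate. Applying Lemma~\ref{lem: distribution relation} successively in the $\mathfrak c'$- and $\mathfrak c$-coordinates then gives four lattice sums, attached to $[\mathfrak b\mathfrak c']$, $[\mathfrak b]$, $[\mathfrak b\mathfrak c]$ and $[\mathfrak b\mathfrak c\mathfrak c']$. Each carries the norm correction of the previous paragraph, which contributes one factor $N(\mathfrak c)^{-1}$ or $N(\mathfrak c')^{-1}$ for each enlargement of the lattice. I expect the overall scalar (a power of $N(\mathfrak c')$) to be common to all four terms and to divide out after inverting $N(\mathfrak c\mathfrak c')$. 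Finally, multiplying by $\chi_B(\mathfrak b\mathfrak c\mathfrak c')/(|\mathcal O_K^\times|N(\mathfrak b))$ and using $\mathrm{vol}(\mathcal F)=-\sqrt{d_K}$ gives \eqref{eq: prB trivial conductor partial L value}. I would verify the four coefficients directly against the displayed formula, adjusting the choice of normalizing scalar if needed.
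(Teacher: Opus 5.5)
Your strategy is the paper's in both parts. In (1) you use $f^B_{L,[\mathfrak C]}$, Lemma~\ref{lem: distribution relation} for $G_B$, and Lemma~\ref{lem:Ray-class parametrization}. In (2) you sum $\widetilde f_{L,[\mathfrak C]}$ over the first torsion coordinate and then apply the distribution relation in the $\mathfrak c'$- and $\mathfrak c$-coordinates.

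Part (1) has only a harmless normalization slip. Write $N$ for $N_{K/\Q}$. At $s=2$ both weighted sums equal $N(\mathfrak b)\chi_B(\mathfrak m)^{-1}L_{\mathfrak f}(\chi_B,0,[\mathfrak m])$ for $\mathfrak m=\mathfrak b,\mathfrak b\mathfrak c$. Your factor $N(\mathfrak c)^{-1}N(\mathfrak b\mathfrak c)$ is just $N(\mathfrak b)$. So the norm correction does not change the relative weights $N(\mathfrak c)^2:N(\mathfrak c)$, which come from $f^B_{L,[\mathfrak C]}$ itself. Multiplying by $\chi_B(\mathfrak b\mathfrak c)/(|\mathcal O_{\mathfrak f}^\times|N(\mathfrak b))$ therefore yields $N(\mathfrak c)$ times the displayed quantity. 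The paper divides by $N(\mathfrak b\mathfrak c)$ instead. Since $N(\mathfrak c)$ is inverted in the target ring, your conclusion still holds.

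Part (2), however, contains a counting error that your proposed check cannot repair. The $\mathfrak c'$-part of the first torsion coordinate ranges over $\mathfrak c'^{-1}\mathfrak b^{-1}/\mathfrak b^{-1}$. This set has $N(\mathfrak c')$ elements, not $N(\mathfrak c')^2=N_{L/\Q}(\mathfrak C')$; the latter counts both coordinates. Compare the constant $-N(\mathfrak c)$ in $f^B_{L,[\mathfrak C]}$, which comes from exactly this one-coordinate count. The correct coefficient is
\[
\widetilde f^B_{L,[\mathfrak C]}(t_2',t_2'')=\bigl(N(\mathfrak c')-1_0(t_2')\bigr)\bigl(N(\mathfrak c)^2\,1_0(t_2'')-N(\mathfrak c)\bigr).
\]
Its four monomials feed the lattice sums for $[\mathfrak b\mathfrak c']$, $[\mathfrak b\mathfrak c\mathfrak c']$, $[\mathfrak b]$, $[\mathfrak b\mathfrak c]$ with weights $N(\mathfrak c')N(\mathfrak c)^2$, $-N(\mathfrak c')N(\mathfrak c)$, $-N(\mathfrak c)^2$, $N(\mathfrak c)$. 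Multiplying by $\chi_B(\mathfrak b\mathfrak c\mathfrak c')/(|\mathcal O_K^\times|N(\mathfrak b\mathfrak c))$ then gives exactly \eqref{eq: prB trivial conductor partial L value}. Note that this again divides by $N(\mathfrak b\mathfrak c)$, not $N(\mathfrak b)$.

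With your count, the first factor becomes $N(\mathfrak c')^2-1_0(t_2')$. Then the $[\mathfrak b\mathfrak c']$ and $[\mathfrak b\mathfrak c\mathfrak c']$ terms pick up an extra factor $N(\mathfrak c')$, while the $[\mathfrak b]$ and $[\mathfrak b\mathfrak c]$ terms do not. The discrepancy is therefore not a common power of $N(\mathfrak c')$, contrary to your expectation. No choice of normalizing scalar turns your combination into the displayed one. You would instead be asserting integrality of a different four-term combination, which the boundary class does not compute. Correct the count to $N(\mathfrak c')$ and the rest of your plan goes through.
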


\begin{proof}
Suppose first that $\mathfrak f\neq\mathcal O_K$ and take
$f=f_{L,[\mathfrak C]}$.
Since
$f_{L,[\mathfrak C]}^{B}(t_2)
=N_{K/\Q}(\mathfrak c)^2\,1_0(t_2)-N_{K/\Q}(\mathfrak c)$,
Lemma~\ref{lem: distribution relation}, applied to $G_B$,
replaces the torsion sum in
\eqref{eq: prB general constant term} by
$N_{K/\Q}(\mathfrak c)^2$ times the weighted sum over
$\mathcal O_{\mathfrak f}^{\times}
\backslash((1+\mathfrak a)\setminus\{0\})$
minus $N_{K/\Q}(\mathfrak c)$ times the weighted sum over
$\mathcal O_{\mathfrak f}^{\times}
\backslash((1+\mathfrak c^{-1}\mathfrak a)\setminus\{0\})$.
For the lattice variables in these sums, the defining infinity
type gives
\[
N_{K/\Q}((y)\mathfrak b)
\frac{\overline y^{\,b+2}}{|y|^{2s}}
=
N_{K/\Q}(\mathfrak b)
\frac{\chi_B((y))}{N_{K/\Q}(y)^{s-2}}.
\]
Thus Lemma~\ref{lem:Ray-class parametrization} identifies the two
weighted sums, respectively, with
\[
\begin{gathered}
N_{K/\Q}(\mathfrak b)^{s-1}
\chi_B(\mathfrak b)^{-1}
L_{\mathfrak f}(\chi_B,s-2,[\mathfrak b]),
\qquad
N_{K/\Q}(\mathfrak b)
N_{K/\Q}(\mathfrak b\mathfrak c)^{s-2}
\chi_B(\mathfrak b\mathfrak c)^{-1}
L_{\mathfrak f}(\chi_B,s-2,[\mathfrak b\mathfrak c]).
\end{gathered}
\]
These identities are initially valid in the region of absolute
convergence and then hold by meromorphic continuation.
Evaluating at $s=2$, applying
Theorem~\ref{thm: prB contribution}, and multiplying by
$\chi_B(\mathfrak b\mathfrak c)/
\bigl(
|\mathcal O_{\mathfrak f}^{\times}|
N_{K/\Q}(\mathfrak b\mathfrak c)
\bigr)$
gives \eqref{eq: prB partial L value}.
A ray-principal power of $\mathfrak b\mathfrak c$, together with
the prescribed infinity type, shows that
$\chi_B(\mathfrak b\mathfrak c)
\in\mathcal O_E[1/N_{K/\Q}(\mathfrak b\mathfrak c)]$.
The unit-group order and the norm in the denominator are already
inverted in the stated localization.

Now suppose that $\mathfrak f=\mathcal O_K$.
Use the auxiliary trace-zero function
$\widetilde f_{L,[\mathfrak C]}$ on
$\mathcal D=
\mathcal A(\mathfrak A)[\mathfrak C\mathfrak C']
\setminus e(S)$
constructed in the proof of
Corollary~\ref{cor: prA partial L values}.
Under the Chinese remainder decomposition, write the second torsion
coordinate as $t_2=(t_2',t_2'')$, with
\[
t_2'\in
\mathfrak c'^{-1}\mathfrak b^{-1}/\mathfrak b^{-1},
\qquad
t_2''\in
\mathfrak c^{-1}\mathfrak b^{-1}/\mathfrak b^{-1}.
\]
Its sum over the first torsion coordinate is
\[
\widetilde f_{L,[\mathfrak C]}^{B}(t_2',t_2'')
=
\left(
  N_{K/\Q}(\mathfrak c')-1_0(t_2')
\right)
\left(
  N_{K/\Q}(\mathfrak c)^2\,1_0(t_2'')
  -
  N_{K/\Q}(\mathfrak c)
\right).
\]
Applying Lemma~\ref{lem: distribution relation} successively in
the $\mathfrak c'$- and $\mathfrak c$-coordinates gives four
weighted lattice sums corresponding, through
Lemma~\ref{lem:Ray-class parametrization}, to
$[\mathfrak b\mathfrak c']$, $[\mathfrak b]$,
$[\mathfrak b\mathfrak c]$, and
$[\mathfrak b\mathfrak c\mathfrak c']$.
At $s=2$, each weighted sum is
$N_{K/\Q}(\mathfrak b)$ times the corresponding partial
$L$-value divided by the character value of its indexing ideal.
The calculation and algebraic-specialization argument of
Theorem~\ref{thm: prB contribution} apply with auxiliary torsion
ideal $\mathfrak c\mathfrak c'$ and $x=e$; indeed,
$1\in\mathfrak a=\mathfrak b^{-1}$, so $(0,\Omega_K)$ still
represents this section.
Multiplying the resulting value by
$\chi_B(\mathfrak b\mathfrak c\mathfrak c')/
\bigl(
|\mathcal O_K^\times|N_{K/\Q}(\mathfrak b\mathfrak c)
\bigr)$
gives \eqref{eq: prB trivial conductor partial L value}.
The same ray-principal-power argument places
$\chi_B(\mathfrak b\mathfrak c\mathfrak c')$ in
$\mathcal O_E[1/N_{K/\Q}(\mathfrak b\mathfrak c\mathfrak c')]$,
so this multiplication requires no further localization.
\end{proof}

\begin{rem}[The rank-one contributions at the other cusps]\rm
The preceding formulas were obtained at the standard cusp $\infty$.
At another cusp, Proposition~\ref{prop:restriction-arbitrary-cusp}
transports the complete coefficient-valued differential form to
cusp-adapted coordinates; the algebraic Hodge bases and the two
rank-one projections are transported with it.
Let $\mathfrak g\subset\mathcal O_K$ represent the ideal-class
component of the cusp as in
Example~\ref{exmp: ideal-class description of the cusp data}, chosen coprime to the
ideals occurring in the arithmetic specialization.
After the Serre twist by
$\mathfrak a=\mathfrak f\mathfrak b^{-1}$, the cusp-adapted period
lattice is
\[
  \mathfrak a\mathfrak g^{-1}\Omega_K
  \oplus
  \mathfrak a\mathfrak g\Omega_K.
\]
Since
$\mathfrak a\mathfrak g^{-1}
=\mathfrak f(\mathfrak b\mathfrak g)^{-1}$
and
$\mathfrak a\mathfrak g
=\mathfrak f(\mathfrak b\mathfrak g^{-1})^{-1}$,
the first- and second-coordinate calculations give the opposite
ray-class replacements
\[
  \mathrm{pr}_A:\ [\mathfrak b]\longmapsto[\mathfrak b\mathfrak g],
  \qquad
  \mathrm{pr}_B:\ [\mathfrak b]\longmapsto[\mathfrak b\mathfrak g^{-1}].
\]
Thus the partial-value formulas of
Corollaries~\ref{cor: prA partial L values} and
\ref{cor: prB partial L values} apply with these respective
replacements, both for $\mathfrak f\neq\mathcal O_K$ and for
$\mathfrak f=\mathcal O_K$.
The auxiliary ideal $\mathfrak c$ retains its distribution role in
both contributions.
For $\mathrm{pr}_B$, one must also replace
$\mathrm{vol}(\mathcal F)$ by
$\mathrm{vol}(\mathcal F_{\mathfrak g})$, where
$\mathcal F_{\mathfrak g}$ is a fundamental domain for the transported
unipotent lattice $\mathfrak f\mathfrak g^{-2}\subset K$.
With the same oriented volume form,
Lemma~\ref{lem: volume of the unipotent fundamental domain} gives
\[
  \mathrm{vol}(\mathcal F_{\mathfrak g})
  =
  -N_{K/\Q}(\mathfrak f)
   N_{K/\Q}(\mathfrak g)^{-2}\sqrt{d_K}.
\]
The corresponding coefficient-ring statements are obtained from the
same corollaries after choosing integral representatives of the
resulting ray classes and making the additional localization at
$N_{K/\Q}(\mathfrak g)$.

These partial-value descriptions presuppose that the transported
torsion sections are represented by $(\Omega_K,0)$ and
$(0,\Omega_K)$ for $\mathrm{pr}_A$ and $\mathrm{pr}_B$, respectively.
For a general transported torsion section, the cuspwise
constant-term formula still applies, but the relevant coordinate
coset need not have the standard ray-class form, and the resulting
expression need not reduce to a single partial $L$-value of the
type used above.
Changing $\mathfrak g$ by a principal multiple transports the full
cusp datum, including the unipotent fundamental domain and the
ray-class labels, without changing the resulting boundary
cohomology class.
\end{rem}

\begin{thm}[Algebraicity of critical Hecke $L$-values]
\label{thm: algebraicity of critical Hecke L values}
Retain the notation and assumptions fixed above, allowing also
$\mathfrak f=\mathcal O_K$.
The following assertions hold.
\begin{enumerate}[label=(\arabic*),font=\normalfont]
\item
If $\mathfrak f\neq\mathcal O_K$, then
\begin{equation}
\label{eq: full prA critical L value}
\begin{aligned}
\frac{(2\pi i)^b}{(\Omega_K^\vee)^b\Omega_K^2}
\left(
  N_{K/\Q}(\mathfrak c)^2\chi_A(\mathfrak c)-1
\right)
L_{\mathfrak f}(\chi_A,0)
\in
\mathcal O_E R\left[\frac{1}{\mathfrak f}\right]
\left[
  \frac{1}{
    |\mathcal O_{\mathfrak f}^{\times}|\,b!\,
    N_{K/\Q}(\mathfrak c)
  }
\right],
\end{aligned}
\end{equation}
and
\begin{equation}
\label{eq: full prB critical L value}
\begin{aligned}
\frac{
  (2\pi i)^{b+1}
}{
  (\Omega_K^\vee)^b\Omega_K^2\mathrm{vol}(\mathcal F)
}
\left(
  N_{K/\Q}(\mathfrak c)\chi_B(\mathfrak c)-1
\right)
L_{\mathfrak f}(\chi_B,0)
\in
\mathcal O_E R\left[\frac{1}{\mathfrak f}\right]
\left[
  \frac{1}{
    |\mathcal O_{\mathfrak f}^{\times}|\,b!\,
    N_{K/\Q}(\mathfrak c)
  }
\right].
\end{aligned}
\end{equation}
Here $\mathcal F$ is the fundamental domain used in the
standard-cusp computation, and
$\mathrm{vol}(\mathcal F)
=-N_{K/\Q}(\mathfrak f)\sqrt{d_K}$ by
Lemma~\ref{lem: volume of the unipotent fundamental domain}.

\item
If $\mathfrak f=\mathcal O_K$, take a nontrivial integral ideal
$\mathfrak c'\subset\mathcal O_K$ coprime to $\mathfrak c$.
Then
\begin{equation}
\label{eq: full prA trivial conductor critical L value}
\begin{aligned}
\frac{(2\pi i)^b}{(\Omega_K^\vee)^b\Omega_K^2}
\left(1-\chi_A(\mathfrak c')\right)
\left(
  N_{K/\Q}(\mathfrak c)^2\chi_A(\mathfrak c)-1
\right)
L_{\mathcal O_K}(\chi_A,0)
\in
\mathcal O_E R
\left[
  \frac{1}{
    |\mathcal O_K^\times|\,b!\,
    N_{K/\Q}(\mathfrak c\mathfrak c')
  }
\right],
\end{aligned}
\end{equation}
whereas
\begin{equation}
\label{eq: full prB trivial conductor critical L value}
\begin{aligned}
\frac{
  (2\pi i)^{b+1}
}{
  (\Omega_K^\vee)^b\Omega_K^2\mathrm{vol}(\mathcal F)
}
\left(
  N_{K/\Q}(\mathfrak c')-\chi_B(\mathfrak c')
\right)
\left(
  N_{K/\Q}(\mathfrak c)\chi_B(\mathfrak c)-1
\right)
L_{\mathcal O_K}(\chi_B,0)
\in
\mathcal O_E R
\left[
  \frac{1}{
    |\mathcal O_K^\times|\,b!\,
    N_{K/\Q}(\mathfrak c\mathfrak c')
  }
\right].
\end{aligned}
\end{equation}
In this case $\mathrm{vol}(\mathcal F)=-\sqrt{d_K}$.
\end{enumerate}

Moreover, after enlarging the field of definition if necessary,
one may choose a number field $k\subset\C$ with
$E\subset k$ and $R\subset k$.
Choosing the auxiliary ideals so that the corresponding
regularizing factors are nonzero, one obtains
\begin{equation}
\label{eq: algebraicity full L value}
\frac{(2\pi i)^b}{(\Omega_K^\vee)^b\Omega_K^2}
L_{\mathfrak f}(\chi_A,0)\in k,
\qquad
\frac{
  (2\pi i)^{b+1}
}{
  (\Omega_K^\vee)^b\Omega_K^2\mathrm{vol}(\mathcal F)
}
L_{\mathfrak f}(\chi_B,0)\in k.
\end{equation}
\end{thm}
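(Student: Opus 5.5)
The plan is to sum the partial-value statements of Corollaries~\ref{cor: prA partial L values} and \ref{cor: prB partial L values} over ray classes. The key ingredient is the decomposition $L_{\mathfrak f}(\chi,0)=\sum_{[\mathfrak b]}L_{\mathfrak f}(\chi,0,[\mathfrak b])$ over $\mathcal I(\mathfrak f)/\mathcal P_{\mathfrak f}$. For each ray class, first choose an integral representative $\mathfrak b$ coprime to $\mathfrak f$ and $\mathfrak c$, and also to $\mathfrak c'$ when $\mathfrak f=\mathcal O_K$. This is possible because every ray class contains integral ideals avoiding any finite set of primes.

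For $\mathfrak f\neq\mathcal O_K$ and $\mathrm{pr}_A$, sum \eqref{eq: prA partial L value} over a set of such representatives. As $[\mathfrak b]$ runs through the ray class group, so does $[\mathfrak b\mathfrak c]$, so the sum telescopes to $(N_{K/\Q}(\mathfrak c)^2\chi_A(\mathfrak c)-1)L_{\mathfrak f}(\chi_A,0)$. This gives \eqref{eq: full prA critical L value}; \eqref{eq: full prB critical L value} follows in the same way from \eqref{eq: prB partial L value}.

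The main obstacle is the coefficient ring. Each summand lies in a ring in which $N_{K/\Q}(\mathfrak b)$ is inverted, and this inversion depends on the choice of $\mathfrak b$. To remove it, I would use two families of representatives $\{\mathfrak b\}$ and $\{\mathfrak b'\}$ of the ray classes with $N_{K/\Q}(\mathfrak b)$ and $N_{K/\Q}(\mathfrak b')$ coprime. This can be arranged by choosing primes in each ray class, using Chebotarev, with distinct residue characteristics. The left side is independent of these choices. Since $R\subset\C$ is a domain, an element lying in both localizations lies in their intersection, which is the stated localization inverting only $N_{K/\Q}(\mathfrak c)$. This uses that $R[1/\mathfrak f][1/m]\cap R[1/\mathfrak f][1/m']=R[1/\mathfrak f]$ for coprime integers $m,m'$; if needed, I would justify this via the intersection of localizations at maximal ideals. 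A similar argument handles the norm factor $N_{K/\Q}(\mathfrak b\mathfrak c)$ appearing in the $\mathrm{pr}_B$ normalization.

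For $\mathfrak f=\mathcal O_K$, sum \eqref{eq: prA trivial conductor partial L value} over the ideal classes. Each of the four partial sums becomes a full $L$-value with the coefficients shown, and the total factors as $(1-\chi_A(\mathfrak c'))(N_{K/\Q}(\mathfrak c)^2\chi_A(\mathfrak c)-1)L_{\mathcal O_K}(\chi_A,0)$. For $\chi_B$, \eqref{eq: prB trivial conductor partial L value} sums to $(N_{K/\Q}(\mathfrak c')-\chi_B(\mathfrak c'))(N_{K/\Q}(\mathfrak c)\chi_B(\mathfrak c)-1)L_{\mathcal O_K}(\chi_B,0)$. The same two-representative trick removes $N_{K/\Q}(\mathfrak b)$ from the denominator.

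For \eqref{eq: algebraicity full L value}, take $k$ to be a number field containing $E$ and generators of $R$, after replacing $R$ by a finitely generated subring over which all the data are defined. It remains to choose $\mathfrak c$ and $\mathfrak c'$ so that the regularizing factors are nonzero. The factor $|N_{K/\Q}(\mathfrak c)^2\chi_A(\mathfrak c)|=N_{K/\Q}(\mathfrak c)^{2+(b-2)/2}$ has absolute value greater than $1$ for a nontrivial $\mathfrak c$ (as $b\ge0$), so $N_{K/\Q}(\mathfrak c)^2\chi_A(\mathfrak c)-1\neq0$. Similarly $|N_{K/\Q}(\mathfrak c)\chi_B(\mathfrak c)|=N_{K/\Q}(\mathfrak c)^{1+b/2}>1$. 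For $\mathfrak c'$, one has $|\chi_A(\mathfrak c')|=N_{K/\Q}(\mathfrak c')^{(b-2)/2}$; this avoids the value $1$ unless $b=2$, in which case I would pick $\mathfrak c'$ with $\chi_A(\mathfrak c')\neq1$, possible since $\chi_A$ is nontrivial. Finally $|\chi_B(\mathfrak c')|=N_{K/\Q}(\mathfrak c')^{b/2}$ differs from $N_{K/\Q}(\mathfrak c')$ unless $b=2$, where again a suitable choice of $\mathfrak c'$ works. Dividing by these nonzero elements of $k$ gives the claim.
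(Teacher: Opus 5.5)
Your proposal is correct and follows the paper's proof. Both sum the partial-value corollaries over ray-class representatives coprime to the auxiliary ideals, reindex the shifted classes via multiplication by $\mathfrak c$ (and $\mathfrak c'$), remove the representative-dependent denominators by intersecting the localizations for two representative systems with coprime norms, and then divide by the nonzero regularizing factors. Your absolute-value check, showing that only $b=2$ forces a special choice of $\mathfrak c'$, adds detail the paper leaves implicit. You should make explicit that putting $R$ inside a number field $k$ relies on CM theory (the model, the algebraic bases and the torsion section can be chosen over $\overline{\mathbb Q}$), not just on passing to a finitely generated subring.
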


\begin{proof}
Choose integral ideals
$\mathfrak b_1,\ldots,\mathfrak b_h$ representing the ray classes in
$\mathcal I(\mathfrak f)/\mathcal P_{\mathfrak f}$, coprime to
the fixed auxiliary ideals, and sum
Corollaries~\ref{cor: prA partial L values} and
\ref{cor: prB partial L values} over these classes.
Multiplication by $\mathfrak c$, and by $\mathfrak c'$ when
$\mathfrak f=\mathcal O_K$, permutes the ray classes.
Consequently, the sums of the shifted partial $L$-values are again
the corresponding full $L$-values.
For $\mathfrak f\neq\mathcal O_K$, this gives the factors
$N_{K/\Q}(\mathfrak c)^2\chi_A(\mathfrak c)-1$ and
$N_{K/\Q}(\mathfrak c)\chi_B(\mathfrak c)-1$.
For $\mathfrak f=\mathcal O_K$, the four-term combinations in the
two corollaries factor as in
\eqref{eq: full prA trivial conductor critical L value} and
\eqref{eq: full prB trivial conductor critical L value}.
The resulting coefficient rings initially contain denominators
depending on the representatives $\mathfrak b_i$.
Choose a second system of integral ray-class representatives,
again coprime to the fixed auxiliary ideals, whose norms are
coprime to those of the first system.
Intersecting the two resulting localizations removes the
representative-dependent denominators and gives the coefficient
rings in the statement.

Finally, the theory of complex multiplication allows the algebraic
datum to be defined, after a finite extension, over a number field
containing $E$ and the fraction field of $R$.
Dividing the regularized values by the corresponding nonzero
regularizing factors gives \eqref{eq: algebraicity full L value}.
\end{proof}

\begin{rem}[The boundary class as a packet of partial values]\rm
The algebraic boundary restriction of
Corollary~\ref{cor: algebraic boundary restriction class} records
the constant terms at all cusps simultaneously through the
decomposition
\[
\begin{aligned}
&H^1\left(
  \partial\overline{\Gamma\backslash\mathbb H^3}^{\,\mathrm{BS}},
  \mathrm{TSym}^b(\omega_{\mathcal A^\vee}/\C)
  \otimes\omega_{\mathcal A/\C}^2
\right)
\cong
\bigoplus_{c\in\Gamma\backslash\mathbb P^1(K)}
H^1\left(
  e_c,
  \mathrm{TSym}^b(\omega_{\mathcal A^\vee}/\C)
  \otimes\omega_{\mathcal A/\C}^2
\right).
\end{aligned}
\]
In the arithmetic situations described above, individual cusp
components detect partial $L$-values, while the complete boundary
class retains the relations among these components under
$\GL_2(K)$-transport.
The critical-value formulas can therefore be viewed as arithmetic
realizations of one global boundary cohomology class, rather than
as unrelated constant-term calculations.
The passage from partial to full $L$-values in the theorem is the
separate arithmetic operation of summing over ray classes.
\end{rem}

\begin{rem}[Comparison with Kings--Sprang]\rm
The algebraicity statements obtained here, in the present
specialization, are already covered by the more general result
of \cite[Theorem~4.10]{KS_2025}; compare also
\cite[Corollary~4.13]{KS_2025} for the corresponding partial values.
Kings--Sprang obtain these values by specializing the equivariant
Eisenstein--Kronecker class along torsion sections and identifying
the resulting pullbacks with Eisenstein--Kronecker series.
Here the arithmetic information is detected through the
Borel--Serre boundary restriction, its zeroth Fourier coefficients,
and the projections $\mathrm{pr}_A$ and $\mathrm{pr}_B$.
The partial values thus occur as cuspwise components of a global
algebraic boundary class.
Rather than a new algebraicity result, the present calculation
provides a boundary-cohomological realization of the corresponding
Kings--Sprang formulas in this quadratic setting.
\end{rem}

\bibliographystyle{alpha}
\bibliography{sample}

\bigskip

\noindent
\textsc{Fakultät für Mathematik, Universität Regensburg,
93040 Regensburg, Germany}

\smallskip

\noindent
\textit{Email address:} \texttt{zhenghang.du@mathematik.uni-regensburg.de}

\end{document}